\newcommand{\ol}{\overline}
\newcommand{\RN}[1]{%
  \textup{\uppercase\expandafter{\romannumeral#1}}%
}
\newtheorem{thm}{Theorem}[section]
\newtheorem{thmx}{Theorem}[section]
\newtheorem{defn}{Definition}[section]
\newtheorem{lem}{Lemma}[section]
\newtheorem{cor}{Corollary}[section]
\newtheorem{prop}{Proposition}[section]
\newtheorem{ex}{Example}[section]
\newtheorem{rem}{Remark}[section]
\numberwithin{equation}{section}
\def\C{\mathbb{C}}
\def\E{\mathbb{E}}
\def\N{\mathbb{N}}
\def\P{\mathbb{P}}
\def\R{\mathbb{R}}
\def\DD{\mathcal{D}}
\def\EE{\mathcal{E}}
\def\HH{\mathcal{H}}
\def\II{\mathcal{I}}
\def\LL{\mathcal{L}}
\def\NN{\mathcal{N}}
\def\UU{\mathcal{U}}
\def\PP{\mathcal{P}}
\def\andd{\,\,\text{and}\,\,}
\def\range{{\rm ran}}
\def\dist{{\rm dist}}
\def\lan{\langle}
\def\ran{\rangle}
\def\pa{\partial}
\def\stst{\subset\subset}
\def\md{\mathrm{d}}
\def\sm{\setminus}
\def\al{\alpha}
\def\be{\beta}
\def\ep{\epsilon}
\def\ka{\kappa}
\def\tha{\theta}
\def\la{\lambda}
\def\La{\Lambda}
\def\si{\sigma}
\def\Si{\Sigma}
\def\Om{\Omega}
\def\de{\delta}
\def\De{\Delta}
\def\ga{\gamma}
\def\Ga{\Gamma}
\def\vp{\varphi}
\begin{document}
%---------------------------------------------------------------%
\title[Quasi-stationary distributions]{Quasi-stationary distributions of multi-dimensional diffusion processes}

\author{Alexandru Hening}
\address{Department of Mathematics, Tufts University, Bromfield-Pearson Hall, 503 Boston Avenue, Medford, MA 02155, United States}
\email{alexandru.hening@tufts.edu}
\thanks{A.H. was supported by NSF through the grant DMS 1853463. W.Q. was partially supported by a postdoctoral fellowship from the University of Alberta. Z.S. was partially supported by a start-up grant from the University of Alberta, NSERC RGPIN-2018-04371 and NSERC DGECR-2018-00353.  Y.Y. was partially supported by NSERC RGPIN-2020-04451, PIMS CRG grant, a faculty development grant from the University of Alberta, and a Scholarship from Jilin University.}

\author{Weiwei Qi}
\address{Department of Mathematical and Statistical Sciences, University of Alberta, Edmonton, AB T6G 2G1, Canada}
\email{wqi2@ualberta.ca, qiweiwei13@mails.ucas.ac.cn}

\author{Zhongwei Shen}
\address{Department of Mathematical and Statistical Sciences, University of Alberta, Edmonton, AB T6G 2G1, Canada}
\email{zhongwei@ualberta.ca}

\author{Yingfei Yi}
\address{Department of Mathematical and Statistical Sciences, University of Alberta, Edmonton, AB T6G 2G1, Canada, and School of Mathematics, Jilin University, Changchun 130012, PRC}
\email{yingfei@ualberta.ca}
%\date{March 12, 2017}

\begin{abstract}

The present paper is devoted to the investigation of the long term behavior of a class of singular multi-dimensional diffusion processes that get absorbed in finite time with probability one. Our focus is on the analysis of quasi-stationary distributions (QSDs), which describe the long term behavior of the system conditioned on not being absorbed. Under natural Lyapunov conditions, we construct a QSD and prove the sharp exponential convergence to this QSD for compactly supported initial distributions. Under stronger Lyapunov conditions ensuring that the diffusion process comes down from infinity, we show the uniqueness of a QSD and the exponential convergence to the QSD for all initial distributions. Our results can be seen as the multi-dimensional generalization of Cattiaux et al (Ann. Prob. 2009) as well as the complement to Hening and Nguyen (Ann. Appl. Prob. 2018) which looks at the long term behavior of multi-dimensional diffusions that can only become extinct asymptotically.

The centerpiece of our approach concerns a uniformly elliptic operator that we relate to the generator, or the Fokker-Planck operator, associated to the diffusion process. This operator only has singular coefficients in its zeroth-order terms and can be handled more easily than the generator. For this operator, we establish the discreteness of its spectrum, its principal spectral theory, the stochastic representation of the semigroup generated by it, and the global regularity for the associated parabolic equation. We show how our results can be applied to most ecological models, among which cooperative, competitive, and predator-prey Lotka-Volterra systems.

\end{abstract}

\subjclass[2010]{Primary 60J60, 60J70, 34F05; secondary 92D25, 60H10}

%\date{January 1, 2001 and, in revised form, June 22, 2001.}

%\dedicatory{This paper is dedicated to our advisors.}

\keywords{diffusion process, quasi-stationary distribution, existence, uniqueness, exponential convergence, spectral theory, semigroup, stochastic representation, stochastic Lotka-Volterra system}

\maketitle

\tableofcontents

%----------------------------------------------------------------%

\section{\bf Introduction}

Absorbed diffusion processes are often used in population biology to model the evolution of interacting species. Although the eventual extinction of all species is inevitable due to finite population effects (finite resources, finite population sizes, mortality, etc.) species can typically persist for a period of time that is long compared to human timescales \cite{CM10}. It is important to understand the behavior of the ecosystem before the eventual extinction. This motivates the study of the dynamics of multi-dimensional diffusion processes conditioned on not going extinct.

To be more specific, consider the stochastic Lotka--Volterra competition system:
\begin{equation}\label{sde-LV}
\md Z^i_t=Z^i_t\left(r_i-\sum_{j=1}^d c_{ij}Z^j_t\right)\md t+\sqrt{\ga_i Z^i_t}\md W^i_t,\quad i\in\{1,\dots, d\},
\end{equation}
where $Z_{t}=(Z^i_t)\in\ol{\UU}:=[0,\infty)^{d}$ are the abundances of the species at time $t$, $\{r_i\}_{i}$ are per-capita growth rates, $\{c_{ii}\}_{i}$ are the intra-specific competition rates, $\{c_{ij}\}_{i\neq j}$ are inter-specific competition rates, $\{\ga_i\}_{i}$ are demographic parameters describing ecological timescales (see e.g. \cite{CCLMMS09, CM10}), and $\{W^i\}_{i}$ are independent standard one-dimensional Wiener processes on some probability space. It is well-known (see e.g. \cite{CM10,CV18-general}) that $Z_{t}$ reaches the boundary, also called the extinction set, $\Ga:=\left\{z=(z_i)\in \ol{\UU}:  z_i=0\text{ for some } i\in\{1,\dots, d\}\right\}$, of $\ol{\UU}$ in finite time almost surely. This corresponds to the extinction of at least one species of the considered community. Nonetheless, typical trajectories or sample paths of $Z_{t}$ will stay in $\UU:=(0,\infty)^{d}$ for a long period before hitting $\Ga$. This can be interpreted as the temporary coexistence of species, before their ultimate extinction. To understand this type of behavior, notions such as quasi-steady states and metastable states have been put forward. These concepts are often formalized in terms of the \emph{quasi-stationary distributions} (QSDs), which are stationary distributions of $Z_{t}$ conditioned on no species going extinct. In this context, it is of fundamental mathematical importance to analyze the existence, uniqueness, and domains of (exponential) attraction of QSDs.

The purpose of the present paper is to investigate the existence and uniqueness of QSDs and the exponential convergence to QSDs for a class of irreversible diffusion processes given by models of the form
\begin{equation}\label{sde-orig}
\md Z_{t}^i=b_i(Z_{t})\md t+\sqrt{a_i(Z_{t}^i)}\md W^i_t,\quad i\in \{1,\dots,d\},
\end{equation}
where $Z_{t}:=(Z_{t}^i)\in \ol{\UU}$, $b_i:\ol{\UU}\to \R$ and $a_i:[0,\infty)\to [0,\infty)$. %Our approach is based on Lyapunov conditions, spectral theory and semigroup theory. We direct the reader to the comments after the statement of main results for an exhibition of our approach as well as its differences from  those used in \cite{CM10,CV19}.
We make the following assumptions.
\begin{itemize}
\item [{\bf (H1)}] $a_i\in C^2([0,\infty))$, $a_i(0)=0$, $a'_i(0)>0$, $a_{i}>0$ on $(0,\infty)$,  $\limsup_{s\to \infty}\left[\frac{|a'_{i}(s)|^2}{a_i(s)}+a''_i(s)\right]<\infty$ and $\int_1^{\infty} \frac{\md s}{\sqrt{a_i(s)}}=\infty$ for all $i\in \{1,\dots,d\}$.

\item [{\bf (H2)}] $b_i\in C^1(\ol{\UU})$ and  $\left.b_i\right|_{z_i=0}=0$ for all $i\in \{1,\dots,d\}$, where $z_{i}=0$ means the set $\left\{z=(z_{i})\in\ol{\UU}:z_{i}=0\right\}$.

\item [{\bf (H3)}] There exists a positive function $V\in C^2(\ol{\UU})$ satisfying the following conditions.
\begin{enumerate}
\item[\rm(1)]  $\lim_{|z|\to \infty} V(z)=\infty$ and  $\lim_{|z|\to \infty} (b\cdot\nabla_{z} V)(z)= -\infty$.

\item[\rm(2)] There exists a non-negative and continuous function $ \Tilde{V}:[0,\infty)\to [0,\infty)$ satisfying
$$
\int_{1}^{\infty}\frac{e^{-\be \tilde{V}}}{a_i}\md s<\infty,\quad\forall \be>0\andd i\in \{1,\dots, d\}
$$
such that $V(z)\geq \sum_{i=1}^d\tilde{V}(z_i)$ for all $z=(z_i)\in \ol{\UU}$.

\item[\rm(3)]  The following limit holds
$$
\lim_{|z|\to\infty} \frac{1}{b\cdot\nabla_{z} V}\sum_{i=1}^d\left(| \pa_{z_i} b_i|+\frac{|a'_ib_i|}{a_i}+|a'_i\pa_{z_i}V|+|a_i \pa_{z_iz_i}^{2}V|\right)=0.
$$

\item[\rm(4)] There exist constants $C>0$ and $R>0$ such that
$$
\sum_{i=1}^d \left( a_i|\pa_{z_i} V|^2 +\frac{b_i^2}{a_i}\right)\leq -Cb\cdot \nabla_{z}V\quad\text{in}\quad \UU\sm B_{R}^+,
$$
where $B_R^+:=\{z=(z_i)\in\UU: z_{i}\in(0,R),\,\,\forall i\in\{1,\dots, d\}\}$ for $R>0$.
\end{enumerate}
\end{itemize}

Assumption {\bf (H1)} says that each $a_{i}(s)$ behaves like $a_{i}'(0)s$ near $s\approx 0$, and allows each $a_{i}(s)$ to behave like $s^{\ga}$ for some  $\ga\in(-\infty,2]$ near $s\approx\infty$. Assumption {\bf(H2)} is satisfied if $b_{i}(z)=z_{i}f_{i}(z)$ for $f_{i}\in C^{1}(\ol{\UU})$. {\bf(H1)} and {\bf(H2)} ensure that \eqref{sde-orig} generates a diffusion process $Z_t$ on $\ol{\UU}$ having $\Ga$ as an absorbing set. {\bf(H3)}(1) and the condition $\lim_{|z|\to\infty }\frac{\sum_{i=1}^{d}|a_i \pa_{z_iz_i}^{2}V|}{b\cdot\nabla_{z}V}=0$ contained in {\bf(H3)}(3) imply the dissipativity of $Z_t$, and hence, that it does not explode in finite time almost surely. Other assumptions in {\bf (H3)} are technical ones, but they are made according to examples discussed in Section \ref{sec-app}. We note that for a reversible system, the potential function is a natural choice for $V$. For irreversible systems, polynomials are usually good choices for $V$, especially when the coefficients are polynomials or rational functions -- this is often the case in applications.

We show in Proposition \ref{prop-absorbing} that $Z_t$ reaches $\Ga$ in finite time almost surely under {\bf (H1)}-{\bf (H3)}, and hence, that $Z_{t}$ does not admit a stationary distribution that has positive concentration in $\UU$. It is then natural to look at $Z_{t}$ before reaching $\Ga$ in order to understand the dynamics of $Z_{t}$. This drives us to examine quasi-stationary distributions of $Z_t$ or \eqref{sde-orig} conditioned on coexistence, i.e., $[t<T_{\Ga}]$, where $T_{\Ga}:=\inf\{t>0: Z_{t}\in \Ga\}$ is the first time when $Z_{t}$ hits $\Ga$. Denote by $\P^{\mu}$ the law of $Z_{t}$ with initial distribution $\mu$, and by $\E^{\mu}$ the expectation with respect to $\P^{\mu}$.

\begin{defn}[Quasi-stationary distribution]\label{defn-qsd}
A Borel probability measure $\mu$ on $\UU$ is called a {\em quasi-stationary distribution} (QSD) of $Z_{t}$ or \eqref{sde-orig} if for each $f\in C_b(\UU)$, one has
$$
\E^{\mu}\left[f(Z_{t})\big|t<T_{\Ga}\right]=\int_{\UU} f\md\mu,\quad\forall t\geq0.
$$
\end{defn}

The QSDs of $Z_{t}$ are simply stationary distributions of $Z_{t}$ conditioned on $[t<T_{\Ga}]$. This is why QSDs can be seen as governing the dynamics of $Z_{t}$ before extinction. It is known from the general theory of QSDs (see e.g. \cite{MV12,CMS13}) that if $\mu$ is a QSD of $Z_t$, then there exists a unique $\la>0$ such that if $Z_{0}\sim \mu$ the time $T_{\Ga}$ is exponentially distributed with rate $\la$, i.e., $\P^{\mu}\left[T_{\Ga}>t\right]=e^{-\la t}$ for all $t\geq0$. The number $\la$ is often called the \emph{extinction rate} associated to $\mu$.

% As indicated in earlier works on QSDs, this can be attributed to the study of the spectrum theory of the associated diffusion operator of \eqref{sde-orig}. However, as far as we know, this has not yet been established for general (non-symmetric) diffusion operators of \eqref{sde-orig}. In the present paper, we utilize spectral theory and the theory of  $C_0$-semigroups to investigate the aforementioned problem.

Our first result concerning the existence of QSDs and the conditioned dynamics of $Z_{t}$ is stated in the following theorem. Denote by $\PP(\UU)$ the set of Borel probability measures on $\UU$.

\begin{thmx}\label{thm-qsd-existence-dynamics}
Assume {\bf(H1)}-{\bf(H3)}.
Then, $Z_{t}$ admits a QSD $\mu_{1}$, and there exists $r_{1}>0$ such that the following hold.
\begin{itemize}
\item For any $0<\ep\ll1$ and $\mu\in\PP(\UU)$ with compact support in $\UU$ we have
$$
\lim_{t\to\infty}e^{(r_{1}-\ep)t}\left|\E^{\mu}\left[f(Z_{t})\big|t<T_{\Ga}\right]-\int_{\UU}f\md\mu_{1}\right|=0,\quad\forall f\in C_{b}(\UU).
$$

\item There exists $f\in C_{b}(\UU)$ such that for a.e. $x\in \UU$, there is a discrete set $\II_x\subset(0,\infty)$ with distances between adjacent points admitting an $x$-independent positive lower bound, such that for each $0<\de\ll 1$ we have
$$
\lim_{\substack{t\to\infty\\t\in (0,\infty)\sm\II_{x,\de}}} e^{(r_1+\ep)t}\left|\E^{x} [f(X_t)\big |t<T_{\Ga}]-\int_{\UU}f\md\mu_1\right|=\infty,\quad\forall 0<\ep\ll 1,
$$
where $\II_{x,\de}$ is the $\de$-neighbourhood of $\II_x$ in $(0,\infty)$.
\end{itemize}
\end{thmx}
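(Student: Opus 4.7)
The plan is to reduce both parts of the theorem to a principal spectral and semigroup analysis of the uniformly elliptic companion operator $\LL$ constructed earlier in the paper, and then transport the conclusions back to $Z_{t}$ through the stochastic (Feynman-Kac) representation. The generator
\[
L=\tfrac{1}{2}\sum_{i=1}^{d}a_{i}(z_{i})\pa_{z_{i}}^{2}+\sum_{i=1}^{d}b_{i}(z)\pa_{z_{i}}
\]
is degenerate on $\Ga$ because $a_{i}(0)=0$, but the change of variable $y_{i}=\int_{0}^{z_{i}}\md s/\sqrt{a_{i}(s)}$ (composed with a similarity transform) produces $\LL$, uniformly elliptic with singular coefficients only in the zeroth-order term; assumption \textbf{(H3)} is tailored so that $V$ transports to a coercive weight for $\LL$. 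From the preceding sections I take for granted: discreteness of the spectrum of $\LL$ in a suitable weighted $L^{2}$ space, a simple principal eigenvalue $\la_{1}>0$ with strictly positive smooth principal eigenfunction $\vp_{1}$, an associated positive left-eigenmeasure, a stochastic representation identifying the semigroup of $\LL$ (after transport) with $P_{t}f(x):=\E^{x}[f(Z_{t})\mb{1}_{t<T_{\Ga}}]$, and global parabolic regularity up to $\pa\UU$.

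\textbf{QSD and upper bound.} Normalising the principal left-eigenmeasure to unit mass on $\UU$ defines $\mu_{1}\in\PP(\UU)$. The left-eigenmeasure identity reads
\[
\int_{\UU}P_{t}f\,\md\mu_{1}=e^{-\la_{1}t}\int_{\UU}f\,\md\mu_{1},\qquad f\in C_{b}(\UU),
\]
and specialising to $f\equiv 1$ yields $\P^{\mu_{1}}[t<T_{\Ga}]=e^{-\la_{1}t}$; taking the ratio verifies Definition \ref{defn-qsd} for $\mu_{1}$ with extinction rate $\la_{1}$. Setting $r_{1}:=\mathrm{Re}\,\la_{2}-\la_{1}>0$ with $\la_{2}$ the next eigenvalue of $-\LL$, discreteness of spectrum together with parabolic smoothing provides an expansion
\[
P_{t}f(x)=e^{-\la_{1}t}\vp_{1}(x)\int_{\UU}f\,\md\mu_{1}+e^{-\la_{2}t}h_{f}(x)+O\!\left(e^{-(\la_{1}+r_{1}+\ep)t}\right),
\]
where $h_{f}$ is the spectral projection of $f$ onto the second eigenspace. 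For any $\mu\in\PP(\UU)$ with compact support one has $\mu(\vp_{1})>0$ by strict positivity of $\vp_{1}$; integrating against $\mu$ and dividing by $\P^{\mu}[t<T_{\Ga}]\sim e^{-\la_{1}t}\mu(\vp_{1})$ yields the stated upper bound at rate $r_{1}-\ep$.

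\textbf{Sharpness.} Choose $f\in C_{b}(\UU)$ with $h_{f}\not\equiv 0$; since $h_{f}$ is a non-trivial eigenfunction it is non-zero a.e. in $\UU$. After conditioning and multiplying by $e^{(r_{1}+\ep)t}$, the second-mode contribution becomes $e^{\ep t}h_{f}(x)/\vp_{1}(x)$ (up to its oscillatory part), and the remainder is $o(1)$. If $\la_{2}\in\R$ the second-mode contribution has a definite sign and the divergence holds on all of $(0,\infty)$, so one may take $\II_{x}=\emptyset$. If $\la_{2}\notin\R$, the real part of $e^{-\la_{2}t}h_{f}(x)$ is a sinusoid in $t$ of angular frequency $|\mathrm{Im}\,\la_{2}|$, whose zero set is an arithmetic progression with uniform spacing $\pi/|\mathrm{Im}\,\la_{2}|$; this provides $\II_{x}$ with an $x$-independent lower bound on consecutive gaps, and thickening by $\de$ absorbs the error terms.

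\textbf{Main obstacle.} The delicate point is transporting the principal spectral theory of $\LL$ back to the degenerate operator $L$ while simultaneously ensuring that $\mu_{1}$ puts no mass on $\Ga$ (so it is a genuine QSD on $\UU$), controlling the boundary behaviour of $\vp_{1}$ on $\Ga$ at the rate dictated by \textbf{(H1)}--\textbf{(H2)}, and justifying the Feynman-Kac representation up to the random time $T_{\Ga}$ rather than only strictly before it. A secondary difficulty is ensuring the uniform spacing of $\II_{x}$: this relies on spectral rigidity, namely that $\mathrm{Im}\,\la_{2}$ is intrinsic to $-\LL$ and independent of $x$, which is exactly what the discreteness-of-spectrum step delivers.
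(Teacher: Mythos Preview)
Your overall strategy matches the paper's: transform to the uniformly elliptic companion operator, use its discrete spectrum and principal eigenvalue, establish a stochastic representation, and read off the QSD and the spectral gap convergence. However, two genuine technical gaps in your sketch are precisely what the paper spends most of Section~5 overcoming.

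First, the pointwise expansion $P_{t}f(x)=e^{-\la_{1}t}\vp_{1}(x)\int f\,\md\mu_{1}+\cdots$ that you invoke does not come for free. The semigroup $(T^{*}_{t})$ generated by $\LL^{*}_{\beta_{0}}$ lives on $L^{2}(\UU)$, and the stochastic representation (Theorem~\ref{thm-4-22-1}) only identifies $T^{*}_{t}\tilde f$ with $e^{-Q/2-\beta_{0}U}\E^{\bullet}[f(X_{t})\mathbbm{1}_{t<S_{\Ga}}]$ when $\tilde f=e^{-Q/2-\beta_{0}U}f\in L^{2}$, which fails for generic $f\in C_{b}(\UU)$ since $e^{-Q/2-\beta_{0}U}$ blows up on $\Ga$. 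The paper handles this by (i) using a one-step Markov shift $g=\E^{\bullet}[f(X_{1})\mathbbm{1}_{1<S_{\Ga}}]$ together with an $L^{2_{*}}\to L^{2}$ estimate (Lemma~\ref{lem-4-25-1}, Lemma~\ref{lem-6-12-1}) to force $\tilde g\in L^{2}$, and (ii) a global Moser iteration (Lemma~\ref{lem-5-28-1}, Corollary~\ref{cor-L2-infty}) to pass from $L^{2}$ spectral-gap decay to the $L^{\infty}$ bound you need before integrating against a compactly supported $\mu$. Your appeal to ``parabolic smoothing'' hides exactly this work; in particular, extending the spectral projection $\PP^{*}_{2}$ to $\tilde f\notin L^{2}$ requires the explicit finite-rank formula of Lemma~\ref{lem-5-28-3}(2) together with the integrability $\int e^{-Q/2-\beta_{0}U}|\tilde v^{(2)}_{i,j}|\,\md x<\infty$ from Lemma~\ref{lem-6-5-1}.

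Second, your sharpness argument is too optimistic. The set $\{\la:\Re\la=\la_{2}\}$ may contain several eigenvalues $\la_{2,0},\dots,\la_{2,N_{*}-1}$, and each may be a pole of order $N_{j}>1$, so the second block of $T^{*}_{t}$ is $e^{-\la_{2}t}\sum_{j}e^{-i\Im\la_{2,j}t}\sum_{k<N_{j}}\frac{t^{k}}{k!}(\LL^{*}_{\beta_{0}}+\la_{2,j})^{k}\PP^{*}_{2,j}$ rather than a single eigenfunction term. The claim ``$h_{f}$ is a non-trivial eigenfunction, hence non-zero a.e.'' is therefore not available; instead the paper (Lemma~\ref{lem-5-28-2}(4)) selects the top-degree coefficient in $t$, invokes strong unique continuation to get a.e.\ non-vanishing of each $(\LL^{*}_{\beta_{0}}+\la_{2,j})^{\tilde N_{j}}f_{j}$, and shows the resulting finite trigonometric sum in $t$ has zeros forming a discrete set with an $x$-independent lower bound on adjacent gaps. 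Your single-frequency sinusoid picture is the special case $N_{*}\le 2$ with semisimple $\la_{2}$.
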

\begin{rem}
The first conclusion in Theorem \ref{thm-qsd-existence-dynamics} actually holds for a much larger class of initial distributions (see Remark \ref{rem-on-sharp-convergence} for more details).
\end{rem}

We point out that the sharp exponential convergence rate $r_{1}$ is given by the spectral gap, between the principal eigenvalue and the rest of the spectrum, of the Fokker-Planck operator associated to $Z_{t}$ in an appropriate weighted function space. The QSD is essentially given by the positive eigenfunction associated to the principal eigenvalue, and the associated extinction rate is just the absolute value of the principal eigenvalue. Such characterizations of the QSD and the exponential convergence rate have been obtained in \cite{CCLMMS09,CM10} in the reversible case. Our result is the first of this type for the general setting when $Z_t$ is irreversible.  Theorem \ref{thm-qsd-existence-dynamics} applies to a large class of population models including stochastic Lotka-Volterra models, models with Holling type functional responses, and Beddington-DeAngelis models. We refer the reader to Section \ref{sec-app} for more details.

The set $\II_{x}$ in the second conclusion more or less corresponds to the zeros of the function $t\mapsto\E^{x} [f(X_t)\big |t<T_{\Ga}]-\int_{\UU}f\md\nu_1$. For irreversible systems one generally has complex eigenvalues, which give rise to oscillations. As a result, the zeros of the above function exist and form a discrete set as described in the statement of Theorem \ref{thm-qsd-existence-dynamics}.

%Due to the existence of complex eigenvalues (which is generally the case for irreversible systems) giving rise to oscillations, these zeros in general exist and form a discrete set as described in the statement of Theorem \ref{thm-qsd-existence-dynamics}.

Although the QSD $\mu_{1}$ obtained in Theorem \ref{thm-qsd-existence-dynamics} attracts all compactly supported initial distributions, there is no assertion that it is the unique QSD of the process $Z_t$. To study the uniqueness, we make the following additional assumption.
\begin{itemize}
\item [{\bf (H4)}] There exist positive constants $C$, $\ga$ and $R_*$ such that
\begin{gather*}
\lim_{|z|\to \infty}V^{-\ga-2}\sum_{i=1}^{d}a_i|\pa_{z_i} V|^2 = 0\quad \andd \quad \frac{1}{2}\sum_{i=1}^{d}a_i\pa^2_{z_i z_i}V+b\cdot\nabla_{z} V\leq -CV^{\ga+1}\quad \text{in}\quad \UU\sm B_{R_*}^+.
\end{gather*}
\end{itemize}

\begin{thmx}\label{thm-uniqueness}
Assume {\bf (H1)}-{\bf(H4)}. Let $\mu_{1}$ and $r_{1}$ be as in Theorem \ref{thm-qsd-existence-dynamics}.
Then, $\mu_{1}$ is the unique QSD of $Z_{t}$, and for any $0<\ep\ll1$ and $\mu\in\PP(\UU)$, there holds
    $$
    \lim_{t\to\infty}e^{(r_{1}-\ep)t}\left|\E^{\mu}\left[f(Z_{t})\big|t<T_{\Ga}\right]-\int_{\UU}f\md\mu_{1}\right|=0,\quad\forall f\in C_{b}(\UU).
    $$
\end{thmx}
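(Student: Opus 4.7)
The plan is to derive Theorem B from Theorem A by upgrading the exponential convergence to arbitrary initial distributions; the uniqueness of $\mu_{1}$ is then automatic. Indeed, if $\nu\in\PP(\UU)$ is any QSD, the defining identity gives $\nu=\E^{\nu}[\,\cdot\mid t<T_{\Ga}]$ for every $t\ge 0$; passing to the limit $t\to\infty$ in the displayed convergence of Theorem B yields $\int_{\UU}f\md\nu=\int_{\UU}f\md\mu_{1}$ for all $f\in C_{b}(\UU)$, forcing $\nu=\mu_{1}$. Thus everything reduces to showing sharp exponential convergence for \emph{every} $\mu\in\PP(\UU)$.

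The core of the argument will exploit the ``coming down from infinity'' phenomenon encoded in {\bf (H4)}. Applying It\^o's formula to a suitable function of $V$ (for instance $V^{-\ga}$, or a concave modification $\Phi(V)$ adapted to the super-linear dissipation $-CV^{\ga+1}$), and using the first limit of {\bf (H4)} to absorb the martingale bracket $\sum_{i}a_{i}|\pa_{z_{i}}V|^{2}$, I expect to obtain a uniform-in-$z$ moment bound of the form
$$
\sup_{z\in\UU}\E^{z}\!\left[V(Z_{t})^{p}\mathbbm{1}_{t<T_{\Ga}}\right]\le \Psi_{p}(t)\,\P^{z}[t<T_{\Ga}],\qquad t\ge t_{0},
$$
for some exponent $p>0$ and finite $\Psi_{p}(t)$. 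Coercivity of $V$ and Chebyshev's inequality then give uniform tightness of the conditioned family $\{\nu_{t_{0}}^{\mu}:\mu\in\PP(\UU)\}$, where $\nu_{t}^{\mu}(\cdot):=\P^{\mu}[Z_{t}\in\cdot\mid t<T_{\Ga}]$: for every $\de>0$ there is a compact $K_{\de}\stst\UU$ with $\nu_{t_{0}}^{\mu}(K_{\de})\ge 1-\de$ uniformly in $\mu$.

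With this in hand, I would use the Markov property in the form
$$
\E^{\mu}\!\left[f(Z_{t+t_{0}})\,\big|\,t+t_{0}<T_{\Ga}\right]=\frac{\int_{\UU}\E^{z}\!\left[f(Z_{t})\mathbbm{1}_{t<T_{\Ga}}\right]\nu_{t_{0}}^{\mu}(\md z)}{\int_{\UU}\P^{z}[t<T_{\Ga}]\,\nu_{t_{0}}^{\mu}(\md z)},
$$
and split $\nu_{t_{0}}^{\mu}=\nu_{t_{0}}^{\mu}|_{K_{\de}}+\nu_{t_{0}}^{\mu}|_{\UU\sm K_{\de}}$. On the compact part, Theorem A (suitably strengthened to yield an exponential rate uniform over compact families of initial laws, using that the principal eigenfunction of the Fokker--Planck operator is bounded from above and below on compacts of $\UU$) gives convergence to $\int f\md\mu_{1}$ at the sharp rate $r_{1}-\ep$. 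The tail part is controlled by the moment estimate above: its numerator is bounded by $\|f\|_{\infty}\nu_{t_{0}}^{\mu}(\UU\sm K_{\de})\,\P^{\cdot}[t<T_{\Ga}]$, so after normalization by the denominator its contribution is at most $\|f\|_{\infty}\cdot O(\de)$, which can be made arbitrarily small before the exponential rate is extracted.

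The main obstacle, as I see it, is establishing the strengthened uniform-in-initial-data version of Theorem A for compactly supported initial distributions with a clean quantitative dependence: the rate $r_{1}-\ep$ must come with a prefactor that is controlled on compact subsets of $\UU$. This requires revisiting the spectral decomposition of the Fokker--Planck semigroup used in the proof of Theorem A, identifying $e^{\la t}\E^{z}[f(Z_{t})\mathbbm{1}_{t<T_{\Ga}}]$ with the projection onto the principal eigenfunction $\eta$ plus an $O(e^{-r_{1}t})$ remainder whose $z$-dependence is uniformly bounded by $C(1+V(z))$. Condition {\bf (H4)} is decisive here: its super-linear dissipation yields the polynomial-in-$V$ a priori bound that makes the remainder small after conditioning, so the same spectral gap $r_{1}$ that drives convergence on compact sets also governs the tail correction, and the global convergence in Theorem B follows.
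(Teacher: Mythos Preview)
Your high-level strategy is sound: uniqueness follows from global convergence, and {\bf (H4)} should be used to ``come down from infinity'' and reduce to the compactly supported case of Theorem~\ref{thm-qsd-existence-dynamics}. But the implementation you sketch does not deliver the sharp rate $r_{1}-\ep$.

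The difficulty is the fixed-time split at $t_{0}$. Your tail error from $\nu_{t_{0}}^{\mu}|_{\UU\sm K_{\de}}$ is $O(\de)$ \emph{uniformly in $t$}; it does not decay. So at best you get
\[
\left|\E^{\mu}\bigl[f(Z_{t+t_{0}})\mid t+t_{0}<T_{\Ga}\bigr]-\int_{\UU} f\,\md\mu_{1}\right|\le C(K_{\de})\,e^{-(r_{1}-\ep)t}+O(\de),
\]
which yields convergence (first $t\to\infty$, then $\de\to 0$) but not the displayed exponential bound. Letting $\de=\de(t)\to 0$ forces $K_{\de(t)}$ to exhaust $\UU$ and $C(K_{\de(t)})$ to blow up; balancing these is precisely the quantitative issue left open. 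Your hope for a spectral remainder bounded by $C(1+V(z))$ is also off: the pointwise estimate actually available (Lemma~\ref{lem-5-28-4}) carries a prefactor $e^{Q/2+\be_{0}U}$, which is \emph{exponential} in $V$, so it cannot be integrated against an arbitrary $\mu\in\PP(\UU)$.

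The paper closes this gap by replacing the fixed time $t_{0}$ with the random hitting time $S_{R}=\inf\{t\ge 0:X_{t}\in B_{R}^{+}\}$ of a large ball. Assumption {\bf (H4)} is used not for moment bounds on $V(Z_{t})$ but to build a test function $w=\exp(-\ep\,U^{-\ga})$ satisfying $\LL^{X}w+\la w\le 0$ outside $B_{R}^{+}$, giving the \emph{uniform} exponential moment $\sup_{x\notin B_{R}^{+}}\E^{x}[e^{\la S_{R}}]<\infty$ for \emph{every} $\la>0$ (Lemma~\ref{lem-5-29-1}). For $x\notin B_{R}^{+}$ one then writes, by the strong Markov property,
\[
\E^{x}\bigl[f(X_{t})\mathbbm{1}_{\{t<S_{\Ga}\}}\bigr]=\E^{x}\bigl[\,\cdots\,\mathbbm{1}_{\{t<S_{R}\}}\bigr]+\E^{x}\bigl[h(X_{S_{R}},t-S_{R})\,\mathbbm{1}_{\{S_{R}\le t\}}\bigr],
\]
with $h(y,s)=\E^{y}[f(X_{s})\mathbbm{1}_{\{s<S_{\Ga}\}}]$. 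The first term is $O(e^{-\la t})$ by Markov's inequality on $e^{\la S_{R}}$, with $\la>\la_{2}$. For the second, $X_{S_{R}}\in\pa B_{R}^{+}\cap\UU$ lies in a bounded set on which $e^{Q/2+\be_{0}U}$ is finite, so Lemma~\ref{lem-5-28-4} applies there with a uniform constant; the resulting factor $e^{-(\la_{2}-\ep)(t-S_{R})}=e^{-(\la_{2}-\ep)t}e^{(\la_{2}-\ep)S_{R}}$ is controlled by the exponential moment of $S_{R}$. This recovers the full rate $\la_{2}-\la_{1}-\ep=r_{1}-\ep$ for every initial distribution, with no residual $O(\de)$. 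The stopping-time decomposition, rather than a fixed-time tightness argument, is the missing idea.
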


Assumption {\bf (H4)} concerns the strong dissipativity of $Z_{t}$ near infinity, and implies in particular that $Z_{t}$ \textit{comes down from infinity} (see Remark \ref{lem-5-29-1-1}), that is, for each $\la>0$, there exists $R=R(\la)>0$ such that $\sup_{z\in \UU\sm B^+_R}\E^{z}\left[e^{\la T_R}\right]<\infty$, where $T_R:=\inf\left\{t\geq0: Z_t\not\in\UU\sm B_R^+\right\}$. This property plays a crucial role in the proof of Theorem \ref{thm-uniqueness}. It says that with high probability the process $Z_{t}$ quickly enters a bounded region. This happens even if the initial distribution of $Z_t$ has a heavy tail near $\infty$. As a result, it makes no difference to the QSD $\mu_{1}$ whether the initial distribution of $Z_{t}$ is compactly supported or not. Theorem \ref{thm-uniqueness} applies to a large class of biological models including in particular the stochastic competition system \eqref{sde-LV} and the stochastic weak cooperation system (i.e., the system \eqref{sde-LV} with $\{-c_{ij}\}_{i\neq j}$ being positive and small in comparison to $\{c_{ii}\}_{i}$). See Section \ref{sec-app} for more details.

\textbf{Comparison to existing literature.} Due to their popularity in describing non-stationary states that are often observed in applications, QSDs have been attracting significant attention. We refer the reader to \cite{Pollet,MV12,CMS13} and references therein for an overview of the theory, developments and applications of QSD. We next present the current state of the art for diffusion processes. The investigation of QSDs for one-dimensional diffusion processes has attracted a lot of attention. We refer the reader to \cite{Mandl61,CMSM95,MSM04,SE07,KS12,ZH16,CV16,CV17,CV18} and references therein for the analysis of the regular case.

For singular diffusion processes including in particular \eqref{sde-LV} and \eqref{sde-orig} in the one-dimensional setting, the work \cite{CCLMMS09} lays the foundation and is generalized in \cite{Lit12, Miu14, CV18, HK19}. In contrast, there have not been many studies of QSDs for multi-dimensional diffusion processes. Regular diffusion processes restricted to a bounded domain and killed on the boundary have been studied in \cite{Pinsky85,GQZ88,CV18-general}. The stochastic competition system \eqref{sde-LV} has been studied in \cite{CM10} in the reversible case, and in \cite{CV19} in the irreversible case. In both of the above papers, the exponential convergence to the unique QSD is established. In \cite{CM10}, the authors also deal with the model in the weak cooperation case. The model treated in \cite{CV19} has a more general deterministic vector field. In \cite{CV18-general}, the authors study general multi-dimensional diffusion processes,  establish the existence and convergence to the QSD. However, they do not look at the uniqueness problem.

The approaches used in \cite{CM10}, \cite{CV19} and \cite{CV18-general} for treating singular diffusion processes in higher dimensions are quite different. The work done in \cite{CM10} relies on the spectral theory of the generator in the weighted space $L^{2}(\UU,d\mu)$ with $\mu$ being the infinite Gibbs measure, and the density of the Markov semigroup with respect to $\mu$. These tools are developed earlier in \cite{CCLMMS09} for one-dimensional singular diffusion processes. We note that the assumptions from \cite{CM10} make the diffusion process reversible, and much easier to analyze. However, most multi-dimensional diffusion processes would be irreversible. In \cite{CV19}, the authors study general Markov processes and apply their abstract results in particular to \eqref{sde-LV}. The main purpose of \cite{CV19} is to find practical sufficient conditions in order to use the necessary and sufficient condition of Doeblin-type established in their earlier work \cite{CV16} for the exponential convergence to the unique QSD. Their sufficient conditions are composed of a Lyapunov condition involving a pair of functions, a local Doeblin-type condition and a ``coming down from infinity" condition. In \cite{CV18-general}, the authors almost only impose a commonly used dissipative Lyapunov condition except the Lyapunov constant is assumed to be greater than some exponential rate related to the first exit time.

In comparison to \cite{CM10,CV19,CV18-general}, the main novelty of the present paper lies in the approach that allows us to treat $Z_{t}$ or \eqref{sde-orig} under elementary Lyapunov conditions. The results we obtain are as strong as those established in \cite{CCLMMS09,CM10} for reversible diffusion processes, even though we work in the much more general setting of irreversible processes. The centerpiece of our approach is the spectral theory of an elliptic operator derived from the Fokker-Planck operator associated to $Z_{t}$ through a two-stage change of variables. We establish the discreteness of the  spectrum  of this operator as well as its principal spectral theory. These results allow us to use the semigroup generated by this operator in order to establish its stochastic representation and to study the fine dynamical properties of $Z_{t}$. A direct consequence of our approach is the characterization of the sharp exponential convergence rate in Theorem \ref{thm-qsd-existence-dynamics} -- this was previously unknown for singular diffusion processes in higher dimensions. Moreover, since spectral theory and the stochastic representation of semigroups are important tools, our results go beyond the study of QSD and are of independent interest.

\textbf{Demographic and environmental stochasticity.}
Consider an isolated ecosystem of interacting species. Due to finite population effects and demographic stochasticity, extinction of all species is certain to occur in finite time for all populations. However, the time to extinction can be large and the species densities can fluctuate before extinction occurs.

One way of capturing this behaviour is ignoring the effects of demographic stochasticity (i.e. finite population effects) and focusing on models with environmental stochasticity where extinction can only be asymptotic as $t\to\infty$. This approach led to the development of the field of modern coexistence theory (MCT), started by Lotka \cite{L26} and Volterra \cite{V28}, and later developed by Chesson \cite{C82, C20} and other authors \cite{T77, G84, SBA11,BHS08}. Recently, there have been powerful results that have led to a general theory of coexistence and extinction \cite{HN18, B18, HNC20}.

A second way of analyzing the long term dynamics of the species is by including demographic stochasticity and studying the QSDs of the system - this is the approach we took in this paper. Our work can be seen as complementary to the work done for systems with environmental stochasticity.

\textbf{Overview of proofs.} The proofs of Theorem \ref{thm-qsd-existence-dynamics} and Theorem \ref{thm-uniqueness} use techniques from PDE, spectral theory, semigroup theory and probability theory, and are rather involved. For the reader's convenience, we outline the strategy of the proofs with the help of Figure \ref{figure-outline}.

 %\begin{figure}\label{figure-outline}
 %   \centering
 %   \includegraphics[scale=0.65]{}
  %  \caption{Strategy of proofs}
    %\label{fig:my_label}
%\end{figure}

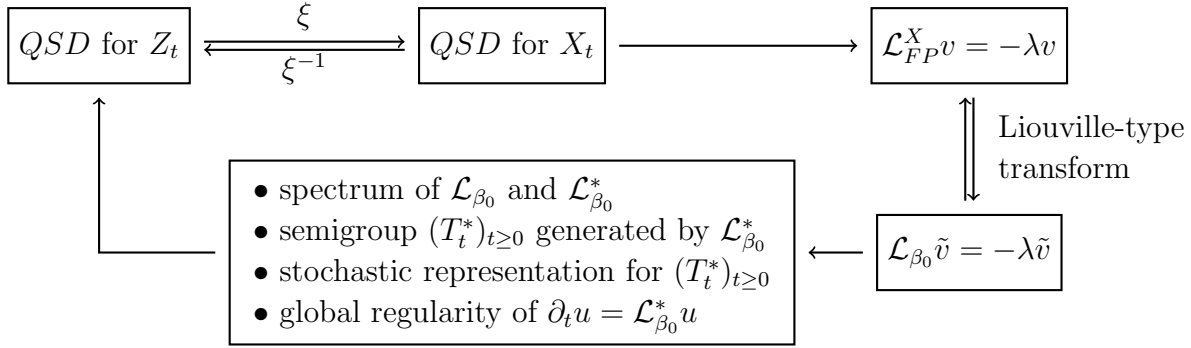
\begin{figure}\label{figure-outline}
	\begin{center}
		\begin{tikzpicture}[
			thick,
			nonterminal/.style = {
				%the shape
				rectangle,			
				% the size
				minimum size = 10mm,			
				% the border
				thick,
				% draw=red!50!black!50,  %50% red, 50% black, and that mixed with 50% white			
				draw = black,
				%the filling
				% top color = white,
				% bottom color = red!50!black!20,
				%font
				% font = \itshape
			}
		]
			\matrix [row sep=10mm, column sep=5mm] {
				%first row
				\node (qsdz) [nonterminal] {$QSD$ for $Z_t$}; &
				\node (qsdx) [nonterminal] {$QSD$ for $X_t$}; && %, right = of qsdz
				\node (lfp) [nonterminal] {$\mathcal{L}^X_{FP}v = -\lambda v$}; \\ %, right = of qsdx
				%second row
				&
				\node (lba) [nonterminal] {
						\begin{tabular}{l}
							$\bullet$ spectrum of $\mathcal{L}_{\beta_0}$ and $\mathcal{L}^*_{\beta_0}$ \\
							$\bullet$ semigroup $(T^*_t)_{t\geq 0}$ generated by $\mathcal{L}^*_{\beta_0}$ \\
							$\bullet$ stochastic representation for $(T^*_t)_{t\geq 0}$ \\
							$\bullet$ global regularity of $\partial_t u  = \mathcal{L}^*_{\beta_0} u$
						\end{tabular}
					}; && %, left = of lbeta
				\node (lbeta) [nonterminal] {$\mathcal{L}_{\beta_0}\tilde{v} = -\lambda \tilde{v}$};	\\
			};
			
			% \graph{
			% 	(qsdz) -> (qsdx) -> (lfp) -> (lbeta) -> (lba) -> [to path = { -| (\tikztotarget) } ] (qsdz);
			% };
			\path (qsdz) edge[->, shorten <=5pt, shorten >=5pt, transform canvas={yshift=0.3ex}] node[above]{$\xi$} node[below]{$\xi^{-1	}$} (qsdx);
			\path (qsdx) edge[->, shorten <=5pt, shorten >=5pt, transform canvas={yshift=-0.3ex}] (qsdz);
			\path (qsdx) edge[->, shorten <=5pt, shorten >=5pt] (lfp);
			\path (lfp) edge[->, shorten <=5pt, shorten >=5pt, transform canvas={xshift=0.3ex}] node[right]{\begin{tabular}{l} Liouville-type \\ transform\end{tabular}} (lbeta);
			\path (lbeta) edge[->, shorten <=5pt, shorten >=5pt, transform canvas={xshift=-0.3ex}] (lfp);
			\path (lbeta) edge[->, shorten <=5pt, shorten >=5pt ] (lba);
			\path (lba) edge[->, shorten <=5pt, shorten >=5pt, to path = {-| (\tikztotarget)}] (qsdz);
		\end{tikzpicture}
	\end{center}
	    \caption{Overview of proofs.}
\end{figure}
\begin{itemize}
\item (Equivalent formalism) Theoretically, the study of QSDs of $Z_t$ can be accomplished by investigating the (principal) spectral theory of $\LL^Z_{\bf FP}$, the Fokker-Planck operator associated to $Z_{t}$. However, the degeneracy of $\LL^Z_{\bf FP}$ on $\Ga$ would cause significant drawbacks. To circumvent this, we introduce a homeomorphism $\xi:\ol{\UU}\to\ol{\UU}$ and define a new process  $X_t=\xi(Z_t)$ whose Fokker-Planck operator $\LL^X_{\bf FP}$ has  $\frac{1}{2}\De$ as its second-order term.

Although $\LL^X_{\bf FP}$ has the best possible second-order term, the coefficients of its first-order terms unfortunately have blow-up singularities on $\Ga$. Introducing a Liouville-type transform, we convert $\LL^X_{\bf FP}$ into a uniformly elliptic operator
$$
\LL_{\be_0}:=e^{\frac{Q}{2}+\beta_{0}U}\LL_{\bf FP}^{X}e^{-\frac{Q}{2}-\beta_{0}U}
$$
whose blow-up singularities on $\Ga$ only appear in the coefficients of the zeroth-order terms. Here $U=V\circ\xi^{-1}$ and $Q$, given in \eqref{eqn-function-Q}, has singularities near $\Ga$ (see Remark \ref{rem-about-Q-near-Ga}).

The number $\beta_{0}$ is chosen so that $\LL_{\be_0}$ satisfies certain a priori estimates. The details are presented in Subsection \ref{subsec-equivalent-formalism}. The number $\beta_{0}$ is fixed in Lemma \ref{lem-3-24-2} (3).

\item (Spectral analysis) Our spectral analysis focuses on the operator $\LL_{\be_0}$ in $L^{2}(\UU;\C)$ as well as its adjoint $\LL_{\beta_{0}}^{*}$. According to the behavior of the coefficients of $\LL_{\be_0}$ near $\Ga$ and infinity, we design a weight function and define a weighted first-order Sobolev space $\HH^{1}(\UU;\C)$ that is compactly embedded into $L^2(\UU;\C)$. Applying the a priori estimates of $\LL_{\be_0}$, we are able to solve the elliptic problem for $\LL_{\be_0}-M$ for some $M\gg1$ in $\HH^{1}(\UU;\C)$. The discreteness of the spectrum and principal spectral theory of $\LL_{\be_0}$ and $\LL_{\beta_{0}}^{*}$ then follow.

The details are given in Subsection \ref{spectrum} and Subsection \ref{subsec-adjoint-operator}.

\item (Semigroup and stochastic representation) The operator $\LL^*_{\be_0}$ generates an analytic and eventually compact semigroup $(T^*_t)_{t\geq 0}$ on $L^{2}(\UU;\C)$ that can be ``block-diagonalized" according to spectral projections. We establish the representation of $(T^*_t)_{t\geq 0}$ in terms of $X_{t}$ before reaching $\Ga$, and therefore, connect the dynamics of $(T^*_t)_{t\geq 0}$ with that of $X_{t}$ conditioned on $[t<S_{\Ga}]$, where $S_{\Ga}$ is the first time that $X_{t}$ hits $\Ga$. More precisely, we show that for each $f\in C_b(\UU;\C)$ satisfying $\tilde{f}:=fe^{-\frac{Q}{2}-\be_0 U}\in L^2(\UU;\C)$, there holds
\begin{equation*}
T^{*}_t\tilde{f}=e^{-\frac{Q}{2}-\be_0 U}\E^{\bullet}\left[f(X_t)\mathbbm{1}_{\{t<S_{\Ga}\}}\right],\quad\forall t\geq0.
\end{equation*}

The semigroups are given in Subsection \ref{spectrum} and Subsection \ref{subsec-adjoint-operator}. The stochastic representation of $(T^*_t)_{t\geq 0}$ is established in Subsection \ref{subsec-stochastic-representation}.

\item (Global regularity and conclusions) The spectral theory and stochastic representation allow us to prove the results as in Theorem \ref{thm-qsd-existence-dynamics} and Theorem \ref{thm-uniqueness} for the process $X_{t}$. While proving the existence of QSDs is pretty straightforward, we run into significant technical difficulties trying to establish the convergence even for compactly supported initial distributions. This is due to: (i) the limitations of the stochastic representation because of the unboundedness of the Liouville-type transform and its inverse (i.e., $e^{\frac{Q(x)}{2}+\beta_{0}U(x)}$ grows to $\infty$ as $|x|\to\infty$ and $e^{-\frac{Q}{2}-\beta_{0}U}$ blows up at $\Ga$); (ii) the requirement of $L^{\infty}$ properties of $(T^*_t)_{t\geq 0}$. These issues are overcome by establishing the global regularity of solutions of $\partial_{t}u=\LL_{\beta_{0}}^{*}u$ leading in particular to the global regularity of $(T^*_t)_{t\geq 0}$.

The details are given in Section \ref{sec-exi-uniq-conv}.
\end{itemize}

\medskip

The rest of the paper is organized as follows. In Section \ref{sec-pre}, we provide some preliminaries including the proof of $Z_t$ being absorbed by $\Ga$ in finite time almost surely, the derivation of the operator $\LL_{\be_0}$, and results related to the approximation of $S_{\Ga}$. In Section \ref{sec-spectrum-semigroup}, we study the spectral theory of $\LL_{\be_0}$ and its adjoint operator $\LL^{*}_{\beta_{0}}$, and establish the associated semigroups $(T_t)_{t\geq 0}$ and $(T^*_t)_{t\geq 0}$. Section \ref{sec-stoch-repre} is devoted to the stochastic representation of $(T^*_t)_{t\geq 0}$. In Section \ref{sec-exi-uniq-conv}, we investigate the existence and uniqueness of QSDs and the exponential convergence to QSDs of $X_t$ conditioned on the coexistence. Theorem \ref{thm-qsd-existence-dynamics} and Theorem \ref{thm-uniqueness} are proven in this section. In the last section, Section \ref{sec-app}, we discuss applications of Theorem \ref{thm-qsd-existence-dynamics} and Theorem \ref{thm-uniqueness} to a wider variety of ecological models including stochastic Lotka-Volterra systems, and  models with Holling  type or Beddington-DeAngelis functional responses. Appendix \ref{sec-app-proof-technical-lem} is included to provide the proof of some technical lemmas.

%----------------------------------------------------------------%
%-----------------------------------------------------------%

\section{\bf Preliminaries}\label{sec-pre}

%-----------------------------------------------------------%

In Subsection \ref{subsec-absorb}, we show that $Z_{t}$ hits $\Ga$ in finite time almost surely. In Subsection \ref{subsec-equivalent-formalism}, we present equivalent formulations for studying the existence of QSDs, and derive the operator we shall focus on in later sections. In Subsection \ref{subsec-first-exit-times}, we fix a family of first exit times and present an approximation result.

\subsection{Hitting the absorbing boundary}\label{subsec-absorb}

We prove that $Z_{t}$ reaches $\Ga$ in finite time almost surely. Denote by $\LL^{Z}$ the diffusion operator associated to $Z_{t}$, namely,
$$
\LL^{Z}=\frac{1}{2}\sum_{i=1}^{d}a_i\pa^2_{z_iz_i}+b\cdot\nabla_z.
$$

\begin{prop}\label{prop-absorbing}
Assume {\bf (H1)}-{\bf(H3)}. Then, $\P^z[T_{\Ga}<\infty]=1$ for each $z\in\UU$.
\end{prop}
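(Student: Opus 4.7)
The plan is to combine a Lyapunov argument forcing $Z_{t}$ to recurrently visit a bounded region with a one-dimensional Feller-type estimate giving a uniform positive probability of absorption from that region; the strong Markov property will then close the argument.

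First, I would establish recurrence to a bounded set. Under {\bf (H3)}(1) and {\bf (H3)}(3), one has $\LL^{Z}V(z)=\frac{1}{2}\sum_{i}a_{i}\pa_{z_{i}z_{i}}^{2}V+b\cdot\nabla_{z}V=(b\cdot\nabla_{z}V)(1+o(1))\to -\infty$ as $|z|\to\infty$ in $\UU$, so there exists $R_{0}>0$ with $\LL^{Z}V\leq -1$ on $\UU\sm B_{R_{0}}^{+}$. It\^o's formula applied to $V(Z_{t\wedge\tau})$, with $\tau$ the first exit time from $B_{R}^{+}\sm\ol{B_{R_{0}}^{+}}$, together with $\lim_{|z|\to\infty}V(z)=\infty$ from {\bf (H3)}(1), would yield both non-explosion and a Dynkin-type bound $\E^{z}[\tau_{0}]\leq V(z)$, where $\tau_{0}:=\inf\{t>0:Z_{t}\in\ol{B_{R_{0}}^{+}}\cup\Ga\}$; in particular $\tau_{0}<\infty$ almost surely.

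The crucial — and most delicate — step is the uniform lower bound on the probability of absorption from $\ol{B_{R_{0}}^{+}}$. I would apply a Lamperti-type transform to each coordinate via $\phi_{i}(s):=\int_{0}^{s}a_{i}(u)^{-1/2}\md u$, finite-valued on $[0,\infty)$ by {\bf (H1)} (the integrand behaves like $(a_{i}'(0)u)^{-1/2}$ near $0$). It\^o's formula gives
$$
\md\phi_{i}(Z_{t}^{i})=\left[\frac{b_{i}(Z_{t})}{\sqrt{a_{i}(Z_{t}^{i})}}-\frac{a_{i}'(Z_{t}^{i})}{4\sqrt{a_{i}(Z_{t}^{i})}}\right]\md t+\md W_{t}^{i}.
$$
Writing $b_{i}(z)=z_{i}\wt{b}_{i}(z)$ with $\wt{b}_{i}$ continuous (permitted by {\bf (H2)}), the first drift term is bounded on bounded subsets of $\UU$ and in fact vanishes like $\sqrt{Z_{t}^{i}}$ as $Z_{t}^{i}\to 0$, while the second diverges like $-c/\sqrt{Z_{t}^{i}}$. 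The main obstacle here is that $b_{i}(Z_{t})$ couples all coordinates, so one-dimensional Feller theory does not apply off the shelf; the remedy is to restrict attention to the event that the process stays in $B_{2R_{0}}^{+}$, which makes $|b_{i}|$ uniformly bounded, and then to run a standard scale-and-speed analysis for the resulting strongly-attracted-to-zero one-dimensional SDE. This yields $p_{0}>0$ with
$$
\P^{z}\left[T_{\Ga}\leq \tau_{2R_{0}}^{\rm out}\wedge 1\right]\geq p_{0},\quad \forall z\in \ol{B_{R_{0}}^{+}},
$$
where $\tau_{2R_{0}}^{\rm out}:=\inf\{t>0:Z_{t}\notin B_{2R_{0}}^{+}\}$.

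Finally, I would conclude by iteration. Setting $\sigma_{0}:=0$ and, for $k\geq 0$, $\sigma_{k+1}$ to be the first time after $\sigma_{k}+1$ at which $Z_{t}\in\ol{B_{R_{0}}^{+}}$, each $\sigma_{k+1}$ is almost surely finite on $\{\sigma_{k}<T_{\Ga}\}$ by the first step. The strong Markov property together with the preceding lower bound implies that, conditionally on $\sigma_{k}<T_{\Ga}$, the probability of being absorbed before $\sigma_{k}+1$ is at least $p_{0}$. A Borel--Cantelli argument then forces $\P^{z}[T_{\Ga}<\infty]=1$ for every $z\in\UU$.
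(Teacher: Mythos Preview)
Your overall architecture --- Lyapunov recurrence to a bounded box, a uniform positive probability of absorption from that box, and a strong-Markov/Borel--Cantelli closure --- is exactly the paper's four-step scheme. The Lyapunov step and the iteration step are fine and match the paper's Steps~1 and~4.

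The gap is in your absorption step. Bounding $|b_{i}|$ on $B_{2R_{0}}^{+}$ does \emph{not} turn $\phi_{i}(Z_{t}^{i})$ into a one-dimensional diffusion: its drift $\frac{b_{i}(Z_{t})}{\sqrt{a_{i}(Z_{t}^{i})}}-q_{i}(\phi_{i}(Z_{t}^{i}))$ still depends on the full vector $Z_{t}$ through $b_{i}$, so scale-and-speed or Feller's test do not apply directly. Some decoupling device is required. The paper supplies one via the one-dimensional comparison theorem (Ikeda--Watanabe): setting $\bar{b}_{i}:=\sup_{B_{2R}^{+}}b_{i}$ and letting $Y_{t}^{i}$ solve $\md Y_{t}^{i}=\bar{b}_{i}\,\md t+\sqrt{a_{i}(Y_{t}^{i})}\,\md W_{t}^{i}$ with $Y_{0}^{i}=z_{i}$ and the \emph{same} Brownian motion, one has $Z_{t}^{i}\leq Y_{t}^{i}$ pathwise until exit from $B_{2R}^{+}$. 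Feller's test (the integrability of $1/\sqrt{a_{i}}$ near $0$ from {\bf(H1)}) shows each $Y^{i}$ hits $0$ a.s.\ in finite time; monotonicity in the initial condition then yields the uniform lower bound $\inf_{z\in\ol{B_{R}^{+}}}\P^{z}[Z_{\tau_{2R}}\in\Ga]>0$. This is precisely the missing link in your sketch. Once you insert a comparison argument (or, alternatively, absorb the bounded drift $p_{i}$ via Girsanov on a fixed time interval), your proof goes through and coincides with the paper's.
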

\begin{proof}
Note that {\bf (H1)}-{\bf (H2)} ensure the pathwise uniqueness as well as the strong Markov property of solutions of \eqref{sde-orig} until the explosion time. Recall that for $R>0$,
$$
B^+_{R}=\left\{z=(z_{i})\in \UU: z_i\in(0,R),\,\,\forall i\in\{1,\dots, d\}\right\}.
$$
The result is proven in four steps.

\medskip

\paragraph{\bf Step 1} We claim the existence of $R>0$ such that $\P^z[T_{R}<\infty]=1$ for each $z\in \UU$, where $T_{R}:=\inf\left\{t\in [0,T_{\Ga}]:Z_t\in B^+_R\right\}$.

By the assumptions {\bf (H3)}(1)(3), there is $R>0$ such that
$\LL^{Z}V\leq -1$ in $\UU\sm B_{R}^+$. For each $z\in \UU$, It\^o's formula  gives
\begin{equation*}
\begin{split}
\E^z\left[V(Z_{t\wedge T_{R}})\right]=V(z)+\E^z\left[\int_0^{t\wedge T_{R}}\LL^{Z}V(Z_s)\md s\right]\leq V(z)-\E^z\left[t\wedge T_{R}\right],\quad \forall t\geq 0.
\end{split}
\end{equation*}
Passing to the limit $t\to \infty$ yields $\E^z\left[T_{R}\right]\leq V(z)<\infty$.
The claim follows.

\medskip

\paragraph{\bf Step 2} We prove $\P^z[\tau_{2R}<\infty]=1$ for each $z\in B^+_{2R}$, where $\tau_{2R}:=\inf\left\{t\geq 0:Z_t\notin B^+_{2R}\right\}$.

%Suppose on the contrary that $\P^z[\tau_{2R}=\infty]>0$. Denote $\Om_0:=[\tau_{2R}^{z}=\infty]$. Then, $Z_t(\om)\in \B^+_{2R}$ for all $t\geq 0$ and $\om \in \Om_0$. %Recall that
%$$
%\md Z^i_t=b_i(Z)\md t+\sqrt{a_i(Z_i)}\md W^i_t,\quad i\in \{1,\dots, d\},
%$$
%where $W^i$, $i\in \{1,\dots, d\}$ are  independent standard one-dimensional Wiener processes on the probability space $(\Om, \FF,\{\FF_t\}_{t\geq 0}, \P)$.

For each $i\in \{1,\dots, d\}$, we set $\ol{b}_i:=\sup_{B^+_{2R}}b_i$, denote by $Y_{t}^{i,y_{i}}$ the solution of the following one-dimensional SDE
\begin{equation*}\label{eqn-6-14-1}
    \md Y^i_t=\ol{b}_i\md t+\sqrt{a_i(Y_i)}\md W^i_t
\end{equation*}
with initial condition $Y_{0}^{i,y_{i}}=y_{i}\in[0,\infty)$, and let $\tau_{i}^{y_{i}}$ be the first time that $Y_{t}^{i,y_{i}}$ hits $0$, namely, $\tau_{i}^{y_{i}}=\inf\left\{t\geq0:Y_{t}^{i,y_{i}}=0\right\}$.
The assumptions on $a_{i}$ and \cite[Theorem VI-3.2]{IW81} guarantee that $\P\left[\tau_{i}^{y_{i}}<\infty\right]=1$ for all $y_{i}\in[0,\infty)$ and $i\in\{1,\dots,d\}$.

Let $z=(z_{i})\in B^+_{2R}$. By the comparison theorem for one-dimensional SDEs (see e.g. \cite[Theorem VI-1.1]{IW81}) and the fact that $\P\left[\tau_{i}^{z_{i}}<\infty\right]=1$ for each $i\in\{1,\dots,d\}$, we find up to a set of  probability zero,
\begin{equation*}\label{eqn-6-14-2}
[\tau_{2R}=\infty]\subset\left[Z^i_t\leq Y^{i,z_{i}}_t,\,\,\forall t\in[0,\tau_{i}^{z_{i}}],\,\,i\in\{1,\dots,d\}\right]\subset [\tau_{2R}<\infty].
\end{equation*}
From this we conclude that $\P^{z}[\tau_{2R}=\infty]=0$.

\medskip

\paragraph{\bf Step 3} We show that $\inf_{z\in \ol{B^+_R}}\P^z[Z_{\tau_{2R}}\in \Ga]>0$.

Fix $i\in\{1,\dots, d\}$. Calculating the probability that the process $Y^{i,R}_{t}$ first exits the interval $(0,\frac{3R}{2})$ through $\frac{3R}{2}$
(see \cite[Theorem VI-3.1]{IW81}), we find $\P\left[Y^{i,R}_{t}\in \left[0,3R/2\right),\,\,\forall t\in [0,\tau_i^R]\right]>0$. Since
$$
\P\left[Y^{i,y_i}_t\leq Y^{i,R}_t,\,\,\forall t\in [0, \tau^{y_i}_i]\right]=1,\quad\forall y_{i}\in[0,R]
$$
due to the comparison theorem (see e.g. \cite[Theorem VI-1.1]{IW81}), we deduce
\begin{equation*}\label{eqn-2020-06-17}
    \inf_{y_i\in [0,R]}\P\left[Y^{i,y_i}_{t}\in \left[0,3R/2\right),\,\,\forall t\in[0,\tau^{y_i}_i]\right]\geq \P\left[Y^{i,R}_{t}\in \left[0,3R/2\right),\,\,\forall t\in[0,\tau^{R}_i]\right]>0.
\end{equation*}
This together with the comparison theorem yields for each $z=(z_{i})\in \ol{B^+_R}$,
\begin{equation*}
\begin{split}
    \P^z[Z_{\tau_{2R}}\in\Ga]&\geq\P\left[Y^{i,z_i}_{t}\in \left[0,3R/2\right),\,\,\forall t\in[0,\tau^{z_i}_i],\,\,i\in\{1,\dots,d\}\right]\\
    &=\prod_{i=1}^d\P\left[Y^{i,z_i}_{t}\in \left[0,3R/2\right),\,\,\forall t\in[0,\tau^{z_i}_i]\right]\\
    &\geq\prod_{i=1}^d \inf_{y_i\in [0,R]}\P\left[Y^{i,y_i}_{t}\in \left[0,3R/2\right),\,\,\forall t\in[0,\tau^{y_i}_i]\right]>0,
\end{split}
\end{equation*}
where we used the independence of $Y_{t}^{i,z_{i}}$, $i\in\{1,\dots,d\}$ in the equality. The claim follows.

 \medskip

\paragraph{\bf Step 4} We finish the proof of the proposition. By {\bf Step 3}, $p:=\inf_{z\in \pa B^+_{R}\sm \Ga}\P^z[Z_{\tau_{2R}}\in \Ga]>0$. Set
$$
T^{(1)}_{R}:=\inf\left\{t\in [0,T_{\Ga}]:Z_t\in B^+_R\right\}\quad\text{and}\quad S^{(1)}_{2R}:=\inf\left\{t\geq T^{(1)}_R:Z_t\notin B^+_{2R}\right\},
$$
and recursively define for each $n\geq 1$,
$$
T^{(n+1)}_{R}:=\inf\left\{t\in [S^{(n)}_{2R},T_{\Ga}]:Z_t\in B^+_R\right\}\quad\andd\quad S^{(n+1)}_{2R}:=\inf\left\{t\geq T^{(n+1)}_R:Z_t\notin B^+_{2R}\right\}.
$$
Fix $z\in\UU$. Since {\bf Step 1}, {\bf Step 2} and the strong Markov property ensure $\P^{z}\left[T^{(n)}_{R}<\infty\right]=1$ and $\P^{z}\left[S^{(n)}_{2R}<\infty\right]=1$ for all $n\in\N$, we find
$\P^{z}\left[Z_{S^{(n)}_{2R}}\in \pa B^+_{2R}\sm \Ga\right]\leq (1-p)^n$ for all $n\in\N$. As a result
$$
\P^{z}\left[T_{\Ga}=\infty\right]=\P^{z}\left[S^{(n)}_{2R}<\infty,\forall n\in\N\right]\leq \lim_{n\to\infty}(1-p)^n=0.
$$
This completes the proof.
\end{proof}

\begin{rem}
The assumptions {\bf(H3)}(2)(4) are not needed in the proof of Proposition \ref{prop-absorbing}.
\end{rem}

%-----------------------------------------------------------%
\subsection{Equivalent formulation}\label{subsec-equivalent-formalism}

Denote by $\LL_{\bf FP}^{Z}$ the Fokker-Planck operator associated to $Z_{t}$ or \eqref{sde-orig}, namely,
\begin{equation}\label{fpe-orig}
\LL_{\bf FP}^{Z}u:=\frac{1}{2}\sum_{i=1}^{d}\pa^2_{z_iz_i}(a_iu)-\nabla_{z}\cdot(bu)\quad\text{in}\quad\UU.
\end{equation}

\begin{prop}\label{prop-5-7-1}
Assume {\bf (H1)}-{\bf (H2)}. Let $\mu$ be a QSD of $Z_{t}$. Then, $\mu$ admits a positive density $u\in C^2(\UU)$ that satisfies  $-\LL_{\bf FP}^{Z}u=\la_1 u$ in $\UU$, where $\la_{1}$ is the extinction rate associated to $\mu$.
\end{prop}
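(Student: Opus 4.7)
The plan is to pass from the integral QSD identity to a pointwise eigenvalue equation via the sub-Markov semigroup of the killed process. Define $P_tf(z):=\E^{z}[f(Z_t)\mathbf{1}_{\{t<T_\Ga\}}]$ for $f\in C_b(\UU)$. Combining Definition \ref{defn-qsd} with the fact $\P^{\mu}[T_\Ga>t]=e^{-\la_1 t}$ recalled just before the statement, the QSD condition is equivalent to
\begin{equation*}
\int_{\UU}P_tf\,\md\mu \;=\; e^{-\la_1 t}\int_{\UU}f\,\md\mu,\qquad \forall f\in C_b(\UU),\ t\ge 0,
\end{equation*}
or, as measures on $\UU$, to $\mu P_t=e^{-\la_1 t}\mu$.

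First, I would invoke classical parabolic regularity for the killed diffusion. Under \textbf{(H1)}-\textbf{(H2)}, the coefficients of $\LL^{Z}$ are smooth on $\UU$ and the operator is uniformly elliptic on every compact subset of $\UU$. Standard Schauder/Friedman theory for the Cauchy-Dirichlet problem with absorption on $\Ga$ then produces a transition density $p\in C^{\infty}((0,\infty)\times\UU\times\UU)$ with $P_t(z,A)=\int_{A}p(t,z,z')\,\md z'$, satisfying $\partial_t p=\LL_{\bf FP}^{Z}p$ in the $z'$-variable and $\partial_t p=\LL^{Z}p$ in the $z$-variable. Consequently, for each $t>0$ the measure $\mu P_t$ is absolutely continuous with density $u_t(z'):=\int_{\UU}p(t,z,z')\,\mu(\md z)$, and interior Gaussian-type bounds on $p$ over compact subsets of $\UU$ justify differentiation under the integral sign, so that $u_t\in C^{\infty}(\UU)$.

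Second, the identity $\mu P_t=e^{-\la_1 t}\mu$ forces $\mu$ to be absolutely continuous with smooth density $u:=e^{\la_1 t}u_t$. This expression is independent of $t>0$ (two choices of $t$ give smooth Radon-Nikodym representatives of the same measure, hence coincide pointwise), so $u\in C^{\infty}(\UU)\subset C^{2}(\UU)$ is unambiguously defined. Since $v(t,z'):=u_t(z')=e^{-\la_1 t}u(z')$ solves $\partial_t v=\LL_{\bf FP}^{Z}v$ classically in $\UU$, equating $\partial_t v=-\la_1 e^{-\la_1 t}u$ with $\LL_{\bf FP}^{Z}v=e^{-\la_1 t}\LL_{\bf FP}^{Z}u$ yields $-\LL_{\bf FP}^{Z}u=\la_1 u$ in $\UU$. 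Positivity is then read off the Harnack inequality for non-negative solutions of the linear elliptic equation $(\LL_{\bf FP}^{Z}+\la_1)u=0$ on the connected open set $\UU$: as $\int_{\UU}u\,\md z=\mu(\UU)=1$, the function $u$ is not identically zero, so $u>0$ everywhere in $\UU$.

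The main technical point is the construction and smoothness of the transition density $p$ for the killed diffusion on the unbounded open set $\UU$, together with the dominated-convergence-type justification of differentiating $u_t$ under the integral. This reduces to purely local parabolic statements since the desired eigenvalue equation is pointwise in $\UU$, and inside any compact $K\subset\UU$ the operator $\LL^{Z}$ is uniformly elliptic with smooth coefficients; neither the unbounded geometry of $\UU$ nor the boundary behaviour near $\Ga$ plays a role in establishing $u\in C^{2}(\UU)$ and the PDE it satisfies. Everything else is routine.
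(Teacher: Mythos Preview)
Your argument is correct but takes a different route from the paper. The paper's proof is a two-line citation: \cite[Proposition~4]{MV12} gives directly that any QSD satisfies $-\LL_{\bf FP}^{Z}\mu=\la_1\mu$ in the sense of distributions, and then elliptic regularity (as in \cite{BKR01}) upgrades this to a $C^2$ density; positivity is implicit via Harnack. You instead go through the transition density $p(t,z,z')$ of the killed process and parabolic regularity: you show $\mu P_t$ has a smooth density, deduce that $\mu$ itself does, and read off the eigenvalue equation from the forward equation $\partial_t p=\LL_{\bf FP}^{Z}p$.

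Both approaches are sound. The paper's is shorter because it outsources the distributional identity to the general QSD literature and then invokes a single elliptic regularity result. Yours is more self-contained and makes the mechanism visible, at the cost of having to justify differentiation of $u_t(z')=\int_{\UU}p(t,z,z')\,\mu(\md z)$ under the integral. That step does go through: for $z'$ in a compact $K\subset\UU$, interior parabolic Schauder estimates for the forward equation bound $|\partial_{z'}^{k}p(t,z,z')|$ by a constant times $\|p(\cdot,z,\cdot)\|_{L^{1}(K'\times(t/2,t))}$ for some $K\subset\subset K'\subset\subset\UU$, and the latter is at most $t/2$ uniformly in $z$ since $\int_{K'}p(s,z,z')\,\md z'\le 1$. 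It would strengthen your write-up to state this uniformity explicitly rather than leaving it as ``Gaussian-type bounds''.
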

\begin{proof}
It follows from \cite[Proposition 4]{MV12} and the elliptic regularity theory (see e.g. \cite{BKR01}).
\end{proof}

Proposition \ref{prop-5-7-1} suggests studying the principal spectral theory of the operator $-\LL^{Z}$ in order to find a QSD for $Z_{t}$. Direct analysis of the operator $-\LL^{Z}$ is however difficult due to the degeneracy of the diffusion matrix $\text{diag}\{a_{1},\dots,a_{d}\}$ on the boundary $\Ga$ of $\UU$. To resolve this issue, we define a new process that is equivalent to $Z_{t}$ and whose Fokker-Planck operator or diffusion operator is uniformly non-degenerate in $\UU$. We proceed as follow.

For each $i\in \{1,\dots, d\}$, we define $\xi_i:[0,\infty)\to [0,\infty)$ by setting
$$
\xi_i(z_i):=\int_0^{z_i}\frac{1}{\sqrt{a_i(s)}}\md s,\quad z_i\in [0,\infty).
$$
By {\bf(H1)}, each $\xi_{i}$ is increasing and onto, and thus, $\xi_{i}^{-1}$ is well-defined. %Obviously, $\xi'_i=\frac{1}{\sqrt{a_i}}$ and $\xi''_i=-\frac{1}{2}a^{-\frac{3}{2}}_i a'_i.$
Set
$$
\xi:=(\xi_i):\ol{\UU}\to \ol{\UU}\quad\text{and}\quad \xi^{-1}:=(\xi_i^{-1}):\ol{\UU}\to \ol{\UU}.
$$
Clearly, $\xi:\ol{\UU}\to\ol{\UU}$ is a homeomorphism with inverse $\xi^{-1}$, and satisfies $\xi(\Ga)=\Ga$ and $\xi(\UU)=\UU$.

Define a new process $X_{t}=(X^{i}_{t})$ by setting
$$
X^i_t:=\xi_i(Z^i_t),\quad i\in \{1,\dots, d\},\quad\text{or simply},\quad X_{t}=\xi(Z_{t}),\quad t\geq 0.
$$
It is clear that $\Ga$ is also an absorbing set for the process $X_{t}$, and $X_{t}$ reaches $\Ga$ in finite time almost surely. Moreover, QSDs of $Z_{t}$ and $X_{t}$ are in an one-to-one correspondence as shown in the next result whose proof is straightforward.

\begin{prop}\label{prop-equi-qsd}
Let $\mu$ be a Borel probability measure on $\UU$. Then, $\mu$ is a QSD of $Z_{t}$ if and only if $\xi_*\mu$ is a QSD of $X_{t}$, where $\xi_*$ is the pushforward operator induced by $\xi$. Moreover, $\mu$ and $\xi_*\mu$ have the same extinction rates.
\end{prop}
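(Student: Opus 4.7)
The plan is to exploit the simple observation that, because $\xi:\ol{\UU}\to\ol{\UU}$ is a homeomorphism with $\xi(\Ga)=\Ga$ and $\xi(\UU)=\UU$, the processes $Z_t$ and $X_t=\xi(Z_t)$ are deterministic measurable transforms of one another, and their hitting times of $\Ga$ coincide pathwise. All the content of the proposition is then a routine change-of-variables against the definition of QSD.

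First I would record the key identity $S_{\Ga}=T_{\Ga}$ for the coupled pair $(Z_t,X_t=\xi(Z_t))$. Indeed,
$$
S_{\Ga}=\inf\{t>0:X_t\in\Ga\}=\inf\{t>0:Z_t\in\xi^{-1}(\Ga)\}=\inf\{t>0:Z_t\in\Ga\}=T_{\Ga},
$$
using $\xi^{-1}(\Ga)=\Ga$. Next I would observe that if $Z_0\sim\mu$ then $X_0=\xi(Z_0)\sim\xi_*\mu$, so the law $\P^{\xi_*\mu}$ of $X_t$ is exactly the pushforward of $\P^{\mu}$ under the pathwise map $Z\mapsto\xi(Z)$. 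Combined with the hitting-time identity, this gives, for any $g\in C_b(\UU)$,
$$
\E^{\xi_*\mu}\!\left[g(X_t)\,\big|\,t<S_{\Ga}\right]=\E^{\mu}\!\left[(g\circ\xi)(Z_t)\,\big|\,t<T_{\Ga}\right].
$$

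Now assume $\mu$ is a QSD of $Z_t$. Since $g\circ\xi\in C_b(\UU)$ whenever $g\in C_b(\UU)$, applying the QSD definition for $Z_t$ to the test function $f=g\circ\xi$ yields
$$
\E^{\xi_*\mu}\!\left[g(X_t)\,\big|\,t<S_{\Ga}\right]=\int_{\UU}(g\circ\xi)\md\mu=\int_{\UU}g\md(\xi_*\mu),
$$
which is the QSD identity for $\xi_*\mu$ and $X_t$. The converse direction is identical with the roles of $\xi$ and $\xi^{-1}$ swapped (using the homeomorphism $\xi^{-1}:\ol{\UU}\to\ol{\UU}$ that sends $\Ga$ to $\Ga$), and recovers $\mu=(\xi^{-1})_*(\xi_*\mu)$ as a QSD of $Z_t$. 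Finally, the extinction rate $\la$ associated to $\mu$ is characterized by $\P^{\mu}[T_{\Ga}>t]=e^{-\la t}$; since $\P^{\xi_*\mu}[S_{\Ga}>t]=\P^{\mu}[T_{\Ga}>t]$ by the coupling, the extinction rate of $\xi_*\mu$ also equals $\la$.

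There is essentially no obstacle here: the proof is a one-line change of variables once the pathwise identity $S_{\Ga}=T_{\Ga}$ is in hand. The only mildly subtle point worth spelling out is that $\xi^{-1}(\Ga)=\Ga$ (so that the hitting times really coincide), which follows because $\xi$ is a bijection of $\ol{\UU}$ preserving the stratification $\ol{\UU}=\UU\sqcup\Ga$ componentwise via $\xi_i(0)=0$ and $\xi_i:(0,\infty)\to(0,\infty)$.
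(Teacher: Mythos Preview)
Your proposal is correct and is exactly the straightforward change-of-variables argument the paper has in mind; in fact the paper does not write out a proof at all, merely stating that the result ``is straightforward.'' Your care in verifying $\xi^{-1}(\Ga)=\Ga$ so that $S_{\Ga}=T_{\Ga}$ pathwise, and then pushing test functions through the homeomorphism, is precisely what is needed.
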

%\begin{proof}
%Let $\md\mu=u(z)\md z$ on $\UU$ be a QSD of \eqref{sde-orig} and $Z_{t}$ be the solution of \eqref{sde-orig} with initial distribution $\mu$. Set $X_{t}:=\xi(Z_{t})$ for $t\geq 0$. It follows that $\E^{\mu} f(Z_{t})=\E^{\nu} f(\xi^{-1}(X_{t}))$ for all $f\in C_b(\UU)$, that is,
% $$
% \int_{\UU} f(z) u(z)\md z=\int_{\UU} f(\xi^{-1}(x)) v(x)\md x,\quad \forall f\in C_b(\UU).
% $$
%Changing variables, we find
%$$
%\int_{\UU}f(z) u(z)\md z=\int_{\UU}f(\xi^{-1}(x)) u(\xi^{-1}(x)) |D\xi^{-1}|(x)\md x, \quad \forall f\in C_b(\UU).
%$$
%As $|D\xi^{-1}|(x)=\left[\prod_{i=1}^d a_i(\xi^{-1}_i(x_i))\right]^{\frac{1}{2}}$, \eqref{formula-u-v} follows readily form the arbitrariness of $f\in C_b(\UU)$.
%\end{proof}

It\^o's formula gives
\begin{equation}\label{sde-trans}
\begin{split}
\md X_{t}^i=\left[p_i(X_{t})-q_i(X_{t}^i)\right]\md t+\md W_t^i,\quad i\in \{1,\dots, d\}\quad\text{in}\quad\UU,
\end{split}
\end{equation}
where $p_i:\UU\to \R$ and $q_i:(0,\infty)\to \R$ are given by
\begin{equation*}\label{definition-p-and-q}
    p_i(x):=\frac{b_i(\xi^{-1}(x))}{\sqrt{a_i(\xi^{-1}_i(x_i))}}\quad \andd\quad q_i(x_i):=\frac{a'_i(\xi^{-1}_i(x_i))}{4\sqrt{a_i(\xi^{-1}_i(x_i))}},\quad x=(x_{i})\in \UU.
\end{equation*}
Denote by $\LL^{X}_{\bf FP}$ the Fokker-Planck operator associated to \eqref{sde-trans}, namely,
\begin{equation*}\label{FP-operator-for-X-process}
\LL^{X}_{\bf FP}v=\frac{1}{2}\De v-\nabla\cdot\left((p-q)v \right)\quad \text{in}\quad \UU,
\end{equation*}
where $p=(p_{i})$ and $q=(q_{i})$. Then, Proposition \ref{prop-5-7-1} has a counterpart for QSDs of $X_{t}$.

\begin{prop}\label{prop-2020-05-19}
Assume {\bf (H1)}-{\bf (H2)}. Let $\nu$ be a QSD of $X_{t}$ with extinction rate $\la_{1}$. Then, $\nu$ admits a positive density $v\in C^2(\UU)$ that satisfies $-\LL^{X}_{\bf FP}v=\la_1 v$ in $\UU$.
\end{prop}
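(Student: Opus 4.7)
The plan is to mirror the proof of Proposition \ref{prop-5-7-1} verbatim, applying \cite[Proposition 4]{MV12} and elliptic regularity directly to the process $X_{t}$. The SDE \eqref{sde-trans} for $X_{t}$ has unit diffusion matrix and drift $p-q$ which is smooth in $\UU$ (with singularities only on $\Ga$, which are harmless inside $\UU$). Combining Proposition \ref{prop-absorbing} with $\xi(\Ga)=\Ga$ and $X_{t}=\xi(Z_{t})$ shows that $X_{t}$ is absorbed at $\Ga$ in finite time almost surely, so the abstract QSD theory applies.

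First, \cite[Proposition 4]{MV12} gives that $\nu$ solves the stationary Fokker-Planck equation $-\LL_{\bf FP}^{X}\nu = \la_{1}\nu$ in the distributional sense on $\UU$. Second, since $\LL_{\bf FP}^{X}$ is uniformly elliptic with smooth coefficients on every compact subset of $\UU$, the elliptic regularity results of \cite{BKR01} upgrade $\nu$ to possess a density $v\in C^{2}(\UU)$ satisfying $-\LL_{\bf FP}^{X}v = \la_{1}v$ pointwise in $\UU$.

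The only remaining point is to establish $v>0$ in $\UU$. The cleanest route uses the transfer already built into the paper: by Proposition \ref{prop-equi-qsd}, $\mu := (\xi^{-1})_{*}\nu$ is a QSD of $Z_{t}$ with extinction rate $\la_{1}$, so by Proposition \ref{prop-5-7-1} its density $u$ is strictly positive on $\UU$. Since $\xi$ restricts to a $C^{2}$-diffeomorphism of $\UU$ onto itself under {\bf(H1)} (each $\xi_{i}$ has positive derivative $1/\sqrt{a_{i}}$ on $(0,\infty)$), the change of variables for densities yields the explicit formula
$$
v(x) = u(\xi^{-1}(x))\prod_{i=1}^{d}\sqrt{a_{i}(\xi_{i}^{-1}(x_{i}))}, \quad x=(x_{i})\in\UU,
$$
which is strictly positive throughout $\UU$. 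Alternatively, the strong maximum principle applied to $-\LL_{\bf FP}^{X}v = \la_{1}v$, together with $v\geq 0$ and $v\not\equiv 0$, yields the same conclusion. I do not anticipate any serious obstacle here, as every ingredient is standard and the whole statement is essentially a translation of Proposition \ref{prop-5-7-1} through the diffeomorphism $\xi$; if anything, the only subtlety to watch is that one must work in the open set $\UU$, avoiding $\Ga$, because $p-q$ and the Jacobian factor both degenerate on the boundary.
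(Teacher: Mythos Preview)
Your proposal is correct and mirrors the paper's approach, which simply treats Proposition \ref{prop-2020-05-19} as the direct counterpart of Proposition \ref{prop-5-7-1} with the same one-line justification via \cite[Proposition 4]{MV12} and elliptic regularity. The only quibble is that your appeal to Proposition \ref{prop-absorbing} is both unnecessary (the QSD $\nu$ is given by hypothesis, so there is nothing to verify before invoking \cite{MV12}) and formally out of reach here, since that proposition requires {\bf (H3)} while the present statement assumes only {\bf (H1)}--{\bf (H2)}; simply drop that sentence.
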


\begin{rem}\label{rem-X-t-sde}
Note that the process $X_{t}$ and the process generated by solutions of \eqref{sde-trans} are not really the same, as \eqref{sde-trans} is only defined in $\UU$. However, the two processes agree as long as $X_{t}$ stays in $\UU$. More precisely, if we denote by $S_{\Ga}$ the first time that $X_{t}$ reaches $\Ga$, that is, $S_{\Ga}=\inf\left\{t\geq0:X_{t}\in\Ga\right\}$, then $X_{t}$ satisfies \eqref{sde-trans} on the event $[t<S_{\Ga}]$.
\end{rem}

As indicated by Proposition \ref{prop-2020-05-19},  QSDs of $X_{t}$ are closely related to positive eigenfunctions of $-\LL_{\bf FP}^{X}$, and therefore, it is natural to investigate the associated eigenvalue problem, namely,
\begin{equation}\label{fpe-trans}
-\LL^{X}_{\bf FP}v=\la v\quad \text{in}\quad \UU.
\end{equation}
Note that the operator $\LL_{\bf FP}^{X}$ is uniformly elliptic in $\UU$, but the functions $q_i$, $i\in\{1,\dots,d\}$ appearing in its first-order terms satisfy $q_{i}(x_{i})\to\infty$ as $x_{i}\to0^{+}$ for each $i\in\{1,\dots,d\}$. Such blow-up singularities make the investigation of the above eigenvalue problem very hard. In the following, we transform \eqref{fpe-trans} into the eigenvalue problem of another elliptic operator that has blow-up singularities only in the zeroth-order term and thus is easier to deal with.

Set
\begin{equation}\label{definition-U}
    U:=V\circ\xi^{-1}\quad\text{in}\quad\UU,
\end{equation}
where $V$ is given in {\bf (H3)}, and
\begin{equation}\label{eqn-function-Q}
    Q(x):=\sum_{i=1}^d\int_{1}^{x_{i}}2q_{i}(s)ds=\frac{1}{2}\sum_{i=1}^d \left[ \ln a_i(\xi_i^{-1}(x_i))-\ln a_i(\xi_i^{-1}(1))\right],\quad x\in \UU.
\end{equation}
For each $\beta>0$, we use the Liouville-type transform to define the differential operator
$$
\LL_{\beta}:=e^{\frac{Q}{2}+\beta U}\LL_{\bf FP}^{X}e^{-\frac{Q}{2}-\beta U}.
$$
It is straightforward to check that
\begin{equation}\label{main-operator}
\LL_{\be}=\frac{1}{2}\De-(p+\be\nabla U)\cdot\nabla-e_{\be}\quad\text{in}\quad \UU,
\end{equation}
where
\begin{equation}\label{eqn-5-8-1}
    e_{\be}=\frac{1}{2}\left(\be\De U-\be^2|\nabla U|^2\right)-\be p\cdot\nabla U+ \frac{1}{2}\sum_{i=1}^d (q_i^2-q'_i)-p\cdot q +\nabla \cdot p.
\end{equation}
Note that the coefficient of the first-order term $-(p+\be\nabla U)$ behaves nicely near $\Ga$, and the term $\frac{1}{2}\sum_{i=1}^d (q_i^2-q'_i)-p\cdot q$ blows up at the boundary $\Ga$, but it appears in the zeroth-order term of $\LL_{\beta}$.

The following proposition establishes the ``equivalence" between the eigenvalue problem \eqref{fpe-trans} and  the eigenvalue problem associated to the operator $\LL_{\beta}$.

\begin{prop}\label{prop-2020-05-10}
Suppose $v\in W^{2,1}_{loc}(\UU)$ and $\la\in\R$. Set $\tilde{v}:=ve^{\frac{Q}{2}+\be U}$. Then, $(v,\la)$ solves \eqref{fpe-trans} if and only if $-\LL_{\beta}\tilde{v}=\la\tilde{v}$ in $\UU$.
\end{prop}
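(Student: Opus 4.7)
The core idea is that, by construction, $\LL_{\be}$ is the similarity transform of the Fokker--Planck operator $\LL_{\bf FP}^{X}$ under conjugation by the positive weight $w:=e^{Q/2+\be U}$, so the substitution $\tilde v=wv$ is precisely what is needed to convert one eigenvalue equation into the other. The proof therefore reduces to a pointwise-a.e.\ rescaling once the regularity of $w$ has been verified.

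First, I would check that $w,w^{-1}\in C^{2}(\UU)$. Since $a_{i}>0$ on $(0,\infty)$ by \textbf{(H1)}, each $\xi_{i}^{-1}$ is $C^{2}$ on $(0,\infty)$, hence $U=V\circ\xi^{-1}\in C^{2}(\UU)$ by \textbf{(H3)}, and $Q\in C^{2}(\UU)$ directly from \eqref{eqn-function-Q}. Consequently multiplication by $w^{\pm 1}$ is a self-bijection of $W^{2,1}_{loc}(\UU)$, so $v\in W^{2,1}_{loc}(\UU)$ iff $\tilde v\in W^{2,1}_{loc}(\UU)$, and the Leibniz rule applies a.e.\ in $\UU$ to products of $v$ or $\tilde v$ with $w^{\pm 1}$.

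Next, unfolding the definition $\LL_{\be}=w\,\LL_{\bf FP}^{X}\,w^{-1}$ and using $w^{-1}\tilde v=v$, one obtains the pointwise-a.e.\ identity
$$
\LL_{\be}\tilde v \;=\; w\,\LL_{\bf FP}^{X}(w^{-1}\tilde v) \;=\; w\,\LL_{\bf FP}^{X}v\quad\text{in }\UU.
$$
Multiplying the eigenvalue equation $-\LL_{\bf FP}^{X}v=\la v$ by the positive function $w$ then gives $-\LL_{\be}\tilde v=\la wv=\la\tilde v$, while dividing $-\LL_{\be}\tilde v=\la\tilde v$ by $w$ returns $-\LL_{\bf FP}^{X}v=\la v$. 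This yields both implications of the equivalence.

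The only mildly technical point, and not a genuine obstacle, is to justify that the explicit form \eqref{main-operator}--\eqref{eqn-5-8-1} of $\LL_{\be}$ really agrees with the similarity-transform definition. This amounts to a routine Leibniz-rule expansion of $\De(w^{-1}\tilde v)$ and $\nabla\cdot((p-q)w^{-1}\tilde v)$, gathering terms by the order of derivatives of $\tilde v$; the zeroth-order coefficient that emerges matches $e_{\be}$ after substituting $\nabla Q/2=q$ (a direct consequence of \eqref{eqn-function-Q}). With either formulation of $\LL_{\be}$ at hand, the conclusion of the proposition follows from the one-line rescaling above.
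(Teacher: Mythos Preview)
Your proposal is correct and matches the paper's approach: the paper states the proposition without proof, treating it as an immediate consequence of the definition $\LL_{\be}=e^{Q/2+\be U}\LL^{X}_{\bf FP}e^{-Q/2-\be U}$, which is exactly the conjugation identity you invoke. Your regularity verification that $w,w^{-1}\in C^{2}(\UU)$ (so that multiplication by $w^{\pm1}$ preserves $W^{2,1}_{loc}(\UU)$ and the Leibniz rule applies a.e.) is the only point needing care, and you handle it correctly.
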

%\begin{proof}
%Since
%\begin{equation*}
%    \begin{split}
%        \pa_i e^{-\frac{Q}{2}-\be U}&=(-q_i-\be \pa_i U)e^{-\frac{Q}{2}-\be U},\\
%\pa^2_{ii} e^{-\frac{Q}{2}-\be U}&=\left[(-q'_i-\be \pa^2_{ii} U) +(q_i+\be\pa_i U)^2\right] e^{-\frac{Q}{2}-\be U},
%    \end{split}
%\end{equation*}
% we calculate
%\begin{equation*}
%\begin{split}
%\frac{1}{2}\De v&= \frac{1}{2}(\De\tilde{v}) e^{-\frac{Q}{2}-\be U}+\frac{1}{2}\tilde{v}\De e^{-\frac{Q}{2}-\be U}+\nabla \tilde{v}\cdot\nabla e^{-\frac{Q}{2}-\be U}\\
%&=\left\{\frac{1}{2}\De\tilde{v}+\frac{1}{2}\tilde{v}\sum_{i=1}^d\left[(-q'_i-\be \pa^2_{ii} U) +(q_i+\be\pa_i U)^2\right]+\pa_i\tilde{v}(-q_i-\be\pa_i U) \right\}e^{-\frac{Q}{2}-\be U}
%\end{split}
%\end{equation*}
%and
%\begin{equation*}
%\begin{split}
%-\pa_i[(p_i-q_i)v]&=-(p_i-q_i) \left[(\pa_i\tilde{v})e^{-\frac{Q}{2}-\be U}+\tilde{v}\pa_ie^{-\frac{Q}{2}-\be U}\right]-\left(\nabla \cdot p- \sum_{i=1}^d q'_i\right)\tilde{v}e^{-\frac{Q}{2}-\be U}\\
%&=-(p_i-q_i) \left[ \pa_i\tilde{v}-\tilde{v}(q_i+\be\pa_i U)\right]e^{-\frac{Q}{2}-\be U}-\left(\nabla \cdot p- \sum_{i=1}^d q'_i\right)\tilde{v}e^{-\frac{Q}{2}-\be U}.
%\end{split}
%\end{equation*}
%Hence,
%\begin{equation*}
%\frac{1}{2}\De v-\pa_i[(p_i-q_i)v]=\left[\frac{1}{2}\De \tilde{v}-(p_i+\be\pa_i U)\pa_i\tilde{v}-e_{\be}\tilde{v}\right]e^{-\frac{Q}{2}-\be U}.
%\end{equation*}
%The conclusion follows readily.
%\end{proof}

According to Proposition \ref{prop-2020-05-10}, the investigation of QSDs of $X_{t}$ is reduced to the exploration of the principal spectral theory of $-\LL_{\be}$ (with a fixed $\beta$), something which we will do by choosing an appropriate function space.

%%%%%%%%%%%%%%%%%%%%%%%%%%%%%%%%%

%\subsection{Asymptotics of coefficients}

\subsection{Approximation by first exit times}\label{subsec-first-exit-times}

Let $\{\UU_n\}_{n\in\N}$ be a sequence of arbitrarily fixed bounded, connected and open sets in $\UU$ with $C^2$ boundaries that satisfy
$$
\UU_{n}\stst\UU_{n+1}\stst\UU, \quad\forall n\in\N\quad\text{and}\quad \UU=\bigcup_{n\in\N}\UU_n.
$$
For each $n\in\N$, denote by $\tau_{n}$ the first  time that $X_{t}$ exits $\UU_{n}$, namely,
$$
\tau_n=\inf\left\{t\geq 0:X_t\not\in \UU_n\right\}.
$$

Recall that $S_{\Ga}$ is the first time that $X_{t}$ hits $\Ga$. The following result turns out to be useful.
\begin{lem}\label{lem-exit-times-approximation}
Assume {\bf(H1)}-{\bf(H3)}. For each $x\in\UU$, one has $\P^{x}\left[\lim_{n\to \infty} \tau_n=S_{\Ga}\right]=1$ and
\begin{equation*}\label{eqn-6-27-1}
\lim_{n\to \infty}\E^x\left[f(X_t)\mathbbm{1}_{\{t<\tau_n\}}\right]=\E^x\left[f(X_t)\mathbbm{1}_{\{t<S_{\Ga}\}}\right],\quad\forall f\in C_{b}(\UU).
\end{equation*}
\end{lem}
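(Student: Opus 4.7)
The plan is to establish $\tau_n\uparrow S_\Ga$ almost surely and then obtain the second assertion by dominated convergence. For the a.s.\ limit I show $\tau_\infty:=\lim_n\tau_n$ is both $\leq S_\Ga$ and $\geq S_\Ga$. The upper bound is immediate: $\UU_n\stst\UU_{n+1}$ yields monotonicity $\tau_n\leq\tau_{n+1}$, while $\UU_n\stst\UU$ gives $\dist(\pa\UU_n,\Ga)>0$, so the path $X_t=\xi(Z_t)$, which is continuous (inherited from $Z_t$ and continuity of $\xi$) and absorbed on $\Ga$ (since $\xi(\Ga)=\Ga$), must exit $\UU_n$ strictly before reaching $\Ga$. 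Hence $\tau_n\leq S_\Ga$ and $\tau_\infty\leq S_\Ga$.

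For the matching lower bound, I suppose for contradiction that $\P^x[\tau_\infty<S_\Ga]>0$. Proposition~\ref{prop-absorbing} together with the identity $S_\Ga=T_\Ga$ (which follows from $\xi$ being a homeomorphism fixing $\Ga$) yields $S_\Ga<\infty$ $\P^x$-a.s., so $\tau_\infty<\infty$ on the bad event, and moreover $X_t\in\UU$ for every $t\in[0,\tau_\infty]$ because $\tau_\infty<S_\Ga$. By path continuity, $K:=\{X_t:0\leq t\leq\tau_\infty\}$ is then a compact subset of $\UU$, and the exhaustion $\UU=\bigcup_n\UU_n$ with $\UU_n\stst\UU_{n+1}$ lets me pick $n_0$ so that $K\stst\UU_{n_0}$. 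In particular $X_{\tau_\infty}\in\UU_{n_0}\stst\UU_{n_0+1}$, and continuity at $\tau_\infty$ then forces $X_t$ to remain in $\UU_{n_0+1}$ for a strictly positive amount of time past $\tau_\infty$. This gives $\tau_{n_0+1}>\tau_\infty$, contradicting $\tau_{n_0+1}\leq\tau_\infty$.

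With $\tau_n\uparrow S_\Ga$ a.s.\ in hand, the events $\{t<\tau_n\}$ increase to $\{t<\sup_n\tau_n\}=\{t<S_\Ga\}$, so $\mathbbm{1}_{\{t<\tau_n\}}\to\mathbbm{1}_{\{t<S_\Ga\}}$ pointwise a.s. The integrand $f(X_t)\mathbbm{1}_{\{t<\tau_n\}}$ is well-defined because $X_t\in\UU_n\subset\UU$ on $\{t<\tau_n\}$, and it is bounded by $\|f\|_\infty$, so dominated convergence yields the second assertion. The only non-routine step is the contradiction argument for $\tau_\infty\geq S_\Ga$, which combines continuity of sample paths with the compact-exhaustion property of $\{\UU_n\}$; this is the main obstacle, but is essentially standard once phrased this way.
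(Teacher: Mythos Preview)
Your proof is correct and follows essentially the same approach as the paper: both exploit path continuity, non-explosion, and the exhaustion $\UU=\bigcup_n\UU_n$ to identify $\lim_n\tau_n$ with $S_\Ga$, and then conclude via dominated convergence. The only cosmetic difference is that the paper argues directly that $X_{\tau_\infty}\in\Ga$ (since $X_{\tau_n}\in\pa\UU_n$ and any accumulation point must lie on $\Ga$ once explosion is ruled out), whereas you phrase the same idea as a contradiction via compactness of the path segment $\{X_t:0\le t\le\tau_\infty\}$ on the event $\{\tau_\infty<S_\Ga\}$.
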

\begin{proof}
Fix $x\in\UU$. Obviously, $\tau_n<\tau_{n+1}$ for each $n\in\N$. Set $\tau:=\lim_{n\to\infty}\tau_n$. The first conclusion follows if we show $\P^x[\tau=S_{\Ga}]=1$.

Clearly, $\tau_n<S_{\Ga}$ for each $n\in\N$, leading to $\tau\leq S_{\Ga}$.
Since $X_t=\xi(Z_t)$ for $t\geq 0$, we find from Proposition \ref{prop-absorbing} that $\P^x[S_{\Ga}<\infty]=\P^{\xi^{-1}(x)}[T_{\Ga}<\infty]=1$. Therefore, $\P^x[\tau<\infty]=1$.

Noting that arguments in the proof of Proposition \ref{prop-absorbing} ensure that $Z_t$ and $X_t$ do not explode in finite time, we derive  $|X_{\tau}|=\lim_{n\to\infty} |X_{\tau_n}|<\infty$. Moreover, since $X_{\tau_n}\in \pa\UU_n$ and $\UU=\cup_{n\in\N}\UU_n$, it follows that $X_{\tau}\in \Ga$. As $S_{\Ga} $ is the first hitting time of  the boundary $\Ga$ and $\tau\leq S_{\Ga}$, one has $\tau=S_{\Ga}$.

Since $\tau_n$ increases to $S_{\Ga}$ $\P$-a.s., we find  $\lim_{n\to\infty}\mathbbm{1}_{\{t<\tau_n\}}=\mathbbm{1}_{\{t<S_{\Ga}\}}$ for each $t\geq 0$. The second conclusion then follows from the dominated convergence theorem. This completes the proof.
\end{proof}

%----------------------------------------------------------------%
\section{\bf Spectral theory and semigroup}\label{sec-spectrum-semigroup}

In this section we study for some appropriately fixed $\beta$ the spectral theory of $-\LL_{\be}$ in an appropriate function space. We also analyze the semigroup generated by $\LL_{\beta}$. In Subsection \ref{subsec-alpha} we define a weighted Hilbert space. In Subsection \ref{subsec-some-estimates} we derive some important estimates and meanwhile fix a special $\beta$, denoted by $\beta_{0}$. In Subsection \ref{spectrum} we study the (principal) spectral theory of $-\LL_{\beta_{0}}$ and the semigroup generated by $\LL_{\beta_{0}}$. In Subsection \ref{subsec-adjoint-operator} the spectral theory of $-\LL_{\beta_{0}}^{*}$,  where $\LL_{\beta_{0}}^{*}$ is the adjoint operator of $\LL_{\beta_{0}}$, and the semigroup generated by $\LL_{\beta_{0}}^{*}$ are investigated.

%------------------------------------------------------%
\subsection{A weighted Hilbert space}\label{subsec-alpha}

For $\de\in(0,1)$, let
$$
\Ga_{\de}:=\left\{x=(x_{i})\in\UU: x_i\leq \de \text{ for some } i\in\{1,\dots, d\}\right\}.
$$

It is easy to see from {\bf (H3)}(1)  that there exists $R_0\gg 1$ such that $-(b\cdot\nabla_{z}V)\circ\xi^{-1}>0$ in $\UU\sm B^+_{R_0}$, where we recall $B^+_R=\left\{x=(x_{i})\in\UU: x_{i}\in(0,R),\,\,\forall i\in\{1,\dots, d\}\right\}$ for $R>0$. Fix some $\de_{0}\in(0,1)$. Let $\al:\UU\to\R$ be defined by
\begin{equation}\label{eqn-3-28-2}
\al(x):=\begin{cases}
\displaystyle\sum_{i=1}^d \max\left\{\frac{1}{x_i^2},1\right\},&  x\in \Ga_{\de_0}\bigcap B^+_{R_0},\\
\displaystyle\sum_{i=1}^d \max\left\{\frac{1}{x_i^2},1\right\}-(b\cdot\nabla_{z} V)(\xi^{-1}(x)),&  x\in \Ga_{\de_0}\bigcap (\UU\sm B^+_{R_0}),\\
-(b\cdot\nabla_{z}V)(\xi^{-1}(x)),&  x\in (\UU\sm\Ga_{\de_0})\bigcap (\UU\sm B^+_{R_0}),\\
1,& \text{otherwise}.
\end{cases}
\end{equation}
Obviously, $\inf_{\UU}\al>0$. This $\al$ is defined according to the behavior of the coefficients of $-\LL_{\beta}$ near $\Ga$ and $\infty$. Its significance is partially reflected in Lemma \ref{lem-3-24-2} below. See Remark \ref{rem-5-8-1} after Lemma \ref{lem-3-24-2} for more comments.

Denote by $\HH^{1}(\UU;\C)$ the space of all weakly differentiable complex-valued functions $\phi:\UU\to \C$ satisfying
$$
\|\phi\|_{\HH^{1}}:=\left(\int_{\UU}\al|\phi|^2\md x \right)^{\frac{1}{2}}+\left( \int_{\UU} |\nabla \phi|^2\md x\right)^{\frac{1}{2}}<\infty.
$$
It is not hard to verify that $\HH^{1}(\UU;\C)$ is a Hilbert space  with the inner product:
$$
\lan\phi,\psi\ran_{\HH^{1}}:=\int_{\UU}\al\phi\ol{\psi}\md x+\int_{\UU}\nabla \phi\cdot\nabla\ol{\psi}\md x,\quad \forall \phi\andd\psi\in\HH^{1}(\UU;\C),
$$
where $\ol{\psi}$ denotes the complex conjugate of $\psi$.
%In Subsection \ref{spectrum}, we explore the spectrum of $\LL^*_{\be}$ in $L^2(\UU)$ with domain $\HH\cap H^2_{loc}(\UU)$.

\begin{lem}\label{lem-3-28-1}
Assume {\bf(H3)}. Then, $\HH^{1}(\UU;\C)$ is compactly embedded into $L^2(\UU;\C)$.
\end{lem}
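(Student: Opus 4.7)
My plan is the standard argument for compact embedding of weighted Sobolev spaces: localize via the classical Rellich-Kondrachov theorem on compact subsets of $\UU$ that stay away from both $\Ga$ and infinity, and use the growth of $\al$ outside such sets to kill the tails.

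First, for $\de\in(0,\de_0)$ and $R>R_0$, I would set $K_{\de,R}:=\{x\in\UU:\de\leq x_i\leq R,\ i=1,\dots,d\}$; this is a closed hyperrectangle (hence has Lipschitz boundary) on which the weight $\al$ in \eqref{eqn-3-28-2} is bounded above and below by positive constants depending only on $\de$ and $R$ (one checks this piecewise from the definition, using that $-(b\cdot\nabla_z V)\circ\xi^{-1}$ is continuous on $\UU$). It then follows that the restriction map $\HH^1(\UU;\C)\to H^1(K_{\de,R};\C)$ is continuous, and therefore by Rellich-Kondrachov the composition $\HH^1(\UU;\C)\to L^2(K_{\de,R};\C)$ is compact.

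The key new input beyond the classical setup is the tail bound. Directly from \eqref{eqn-3-28-2}, if some coordinate $x_i$ lies in $(0,\de)$ with $\de<\de_0$, then $\al(x)\geq 1/\de^2$, so the $\Ga$-tail is controlled by $\de$. For the tail at infinity, I would use that $\xi:\ol{\UU}\to\ol{\UU}$ is a homeomorphism with each $\xi_i^{-1}$ mapping $[0,\infty)$ onto $[0,\infty)$, so that $|\xi^{-1}(x)|\to\infty$ as $|x|\to\infty$; combined with {\bf(H3)}(1), this gives $-(b\cdot\nabla_z V)(\xi^{-1}(x))\to\infty$ as $|x|\to\infty$. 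Consequently, for any $\ep>0$ one may choose $\de=\de(\ep)$ small and $R=R(\ep)$ large such that $\al\geq 1/\ep$ on $\UU\sm K_{\de,R}$, whence
\[
\int_{\UU\sm K_{\de,R}}|\phi|^2\md x\leq \ep\int_{\UU}\al|\phi|^2\md x\leq \ep\|\phi\|_{\HH^1}^2.
\]

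Given a bounded sequence $\{\phi_n\}$ in $\HH^1(\UU;\C)$, I would then run a diagonal extraction along an exhaustion $K_{\de_k,R_k}\uparrow\UU$ to obtain a subsequence that converges in $L^2(K_{\de_k,R_k};\C)$ for every $k$, and combine this with the tail bound in a standard three-$\ep$ argument to conclude that the extracted subsequence is Cauchy, hence convergent, in $L^2(\UU;\C)$. I do not foresee any real obstacle — the only point requiring modest care is the verification that $\al$ blows up uniformly at infinity via $|\xi^{-1}(x)|\to\infty$ and {\bf(H3)}(1), so that the tail estimate is uniform in $n$; everything else is either a citation of Rellich-Kondrachov or immediate from the piecewise definition of $\al$.
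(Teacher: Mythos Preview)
Your proposal is correct and follows the same standard weighted-Sobolev argument as the paper: Rellich--Kondrachov on an exhaustion of bounded Lipschitz domains, a diagonal extraction, and a uniform tail estimate using the growth of $\al$. The only noteworthy difference is the choice of exhausting sets. You localize to $K_{\de,R}=[\de,R]^d$, staying away from both $\Ga$ and infinity, and then need tail control on both sides: the $1/x_i^2$ part of $\al$ for the $\Ga$-strip and the $-(b\cdot\nabla_z V)\circ\xi^{-1}$ part for the region at infinity. The paper instead exhausts by the cubes $B_R^+=(0,R)^d$, which already reach down to $\Ga$; since $B_R^+$ is a bounded Lipschitz domain, Rellich--Kondrachov applies directly, and the only tail to control is at infinity via $\inf_{\UU\sm B_R^+}\al\to\infty$. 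So the paper's route is marginally more economical (one tail instead of two), while yours makes more visible use of the full structure of $\al$; both are valid and equally rigorous.
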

\begin{proof}
Let $\{\phi_n\}_{n\in\N}\subset\HH^{1}(\UU;\C)$ satisfy $\sup_{n\in\N}\|\phi_n\|_{\HH^{1}}\leq 1$. Fix $R>0$. Since $H^1(B^+_R;\C)$ is compactly embedded into $L^2(B^+_R;\C)$, there is a subsequence, still denoted by $\{\phi_{n}\}_{n\in\N}$, and a measurable function $\phi_R\in L^2(B_R^+;\C)$, such that $\phi_{n}(x)\to \phi_R(x)$  for a.e. $x\in B_{R}^{+}$ and $\lim_{n\to \infty}\int_{B^+_R}|\phi_{n}-\phi_R|^2\md x=0$.

Let $\{R_m\}_{m}\subset(0,\infty)$ satisfy $R_m\to\infty$ as $m\to\infty$. Then, the above results hold for each $R_m$ in place of $R$. We apply the standard diagonal argument to find a subsequence, still denoted by $\{\phi_{n}\}_{n\in\N}$, and a measurable function $\phi:\UU\to\C$ such that $\phi_n\to \phi$ a.e. in $\UU$ as $n\to\infty$, and
\begin{equation}\label{eqn-3-30-4}
\lim_{n\to \infty}\int_{B^+_R}|\phi_{n}-\phi|^2\md x=0,\quad\forall R>0.
\end{equation}

Applying Fatou's lemma, we find $\int_{\UU}\al \phi^2\md x\leq \liminf_{n\to\infty}\int_{\UU}\al \phi_{n}^2\md x\leq 1$. It follows from \eqref{eqn-3-30-4} that
\begin{equation*}
\limsup_{n\to\infty} \int_{\UU} |\phi_{n}-\phi|^2\md x\leq  \limsup_{n\to\infty}\int_{\UU\sm B^+_R}|\phi_{n}-\phi|^2\md x,\quad\forall R>0.
\end{equation*}
Note that
\begin{equation*}
\begin{split}
\int_{\UU\sm B^+_R}|\phi_{n}-\phi|^2\md x
\leq \frac{2}{\inf_{\UU\sm B^+_R}\al}\int_{\UU\sm B^+_R}\al \left(\phi_{n}^2+\phi^2\right)\md x\leq \frac{2}{\inf_{\UU\sm B^+_R}\al},
\end{split}
\end{equation*}
which together with the fact $\al (x)\to\infty$ as $|x|\to\infty$ yields $\limsup_{n\to\infty} \int_{\UU\sm B^+_R} |\phi_{n}-\phi|^2\md x=0$, and hence, $\lim_{n\to\infty} \int_{\UU} |\phi_{n}-\phi|^2\md x=0$. This completes the proof.
\end{proof}

%-----------------------------------------------------------------%

\subsection{Some estimates}\label{subsec-some-estimates}

%In this subsection, we study asymptotic behaviors of some coefficients appearing in $\LL_{\be}$, which play crucial roles in analyzing the spectrum of $-\LL_{\be}$.

We recall from \eqref{eqn-5-8-1} the definition of $e_{\be}$ and define for $N\geq 1$,
\begin{equation}\label{e-beta-N}
\begin{split}
e_{\be,N}:&=e_{\be}-\frac{N-1}{N}(\nabla\cdot p+\be \De U)\\
&=\left(\frac{1}{N}-\frac{1}{2}\right)\be\De U-\frac{\be^2}{2}|\nabla U|^2-\be p\cdot\nabla U+\frac{1}{2}\sum_{i=1}^d(q_i^2-q_i')-p\cdot q+\frac{\nabla\cdot p}{N}.
\end{split}
\end{equation}

Obviously, $e_{\be,1}=e_{\be}$ for all $\be>0$. The main reason for introducing $e_{\beta,N}$ is that they arise naturally in deriving a priori estimates for both sesquilinear forms and partial differential equations related to $\LL_{\be}$ or its adjoint (see Lemma \ref{lem-3-25-1} and Lemma \ref{lem-4-21-2}).

%In order to derive asymptotic behaviors of $e_{\be,N}$ for $\be>0$ and $N\geq 1$, we need the following additional assumptions.

\begin{lem}\label{lem-3-24-2}
Assume {\bf (H1)-(H3)}. Then, the following hold.
\begin{enumerate}
\item[\rm(1)] There exists $C>0$ such that
\begin{equation*}
|\nabla U|^2+|p|^2\leq C \al \quad \text{in}\quad \UU,
\end{equation*}
where $\al$ is defined in \eqref{eqn-3-28-2}.

\item[\rm(2)] For each $\be>0$, there exists $C(\be)>0$ such that
\begin{equation*}
|e_{\be,N}|\leq C(\be)\al \quad \text{in}\quad \UU,\quad\forall N\geq 1.
\end{equation*}

\item[\rm(3)] There exist positive constants $\be_0$, $M$ and $C_*$ such that,
$$
e_{\be_0,N}+M\geq C_*\al \quad \text{in}\quad \UU,\quad\forall N\geq 1.
$$
\end{enumerate}
\end{lem}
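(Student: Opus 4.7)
The plan is to split $\UU$ into the four regions indicated by the piecewise definition of $\al$, namely
\begin{equation*}
A_1=\Ga_{\de_0}\cap B_{R_0}^+,\ A_2=\Ga_{\de_0}\cap(\UU\sm B_{R_0}^+),\ A_3=(\UU\sm\Ga_{\de_0})\cap(\UU\sm B_{R_0}^+),\ A_4=(\UU\sm\Ga_{\de_0})\cap B_{R_0}^+,
\end{equation*}
and to verify each inequality region by region using two pervasive chain-rule identities:
\begin{equation*}
\pa_{x_i}U=\sqrt{a_i(\xi_i^{-1}(x_i))}\,(\pa_{z_i}V)\circ\xi^{-1},\qquad p\cdot\nabla U=(b\cdot\nabla_z V)\circ\xi^{-1}.
\end{equation*}
The second identity is pivotal for part (3) because it makes $-\be\,p\cdot\nabla U$ coincide, up to a factor $\be$, with the infinity-piece of $\al$. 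The near-$\Ga$ behaviour follows from {\bf(H1)}: $a_i(z_i)\sim a_i'(0)z_i$ near $z_i=0$ gives $\xi_i^{-1}(x_i)\sim a_i'(0)x_i^2/4$, so $q_i(x_i)\sim 1/(2x_i)$ and $q_i^2-q_i'=\tfrac{3(a_i')^2}{16a_i}-\tfrac{a_i''}{4}\sim 3/(4x_i^2)$, which is exactly what the $\max\{1/x_i^2,1\}$ piece of $\al$ is calibrated against.

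For part (1), I use $|\nabla U|^2=\sum_i a_i(\xi_i^{-1}(x_i))\,|\pa_{z_i}V|^2\circ\xi^{-1}$ and $|p|^2=\sum_i b_i^2/a_i\circ\xi^{-1}$. On $A_2\cup A_3$ (far from the origin), {\bf(H3)}(4) directly gives $|\nabla U|^2+|p|^2\leq -C(b\cdot\nabla_z V)\circ\xi^{-1}\leq C\al$; on the bounded set $A_1\cup A_4$, the asymptotic $a_i(\xi_i^{-1}(x_i))\sim Cx_i^2$ together with $b_i/z_i\in C^0(\ol\UU)$ (from {\bf(H2)}) makes $|\nabla U|^2$ and $|p|^2$ continuous and bounded, while $\al\geq 1$. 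For part (2) I estimate each term of \eqref{e-beta-N} separately: the $|\nabla U|^2$, $|p|^2$ and (by Cauchy--Schwarz) $p\cdot\nabla U$ terms are handled by (1); the quantities $\De U=\sum_i[a_i\pa_{z_iz_i}^2V+\tfrac12 a_i'\pa_{z_i}V]\circ\xi^{-1}$, $\nabla\cdot p=\sum_i[\pa_{z_i}b_i-b_ia_i'/(2a_i)]\circ\xi^{-1}$ and $p\cdot q=\sum_i b_ia_i'/(4a_i)\circ\xi^{-1}$ are bounded on bounded subsets of $\ol\UU$ by the same $\Ga$-asymptotics and are $o(-b\cdot\nabla_z V)=o(\al)$ at infinity by {\bf(H3)}(3); finally $q_i^2-q_i'$ is controlled by the $1/x_i^2$-piece of $\al$ near $\Ga$ and by the $\limsup$ condition in {\bf(H1)} at infinity. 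All bounds are uniform in $N\geq 1$ because the coefficients $1/N$ and $1/N-1/2$ lie in bounded intervals.

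For part (3), on $A_2\cup A_3$ the pivotal identity $-\be_0\,p\cdot\nabla U=\be_0\al_\infty$, with $\al_\infty:=-(b\cdot\nabla_z V)\circ\xi^{-1}$, together with the bound $(\be_0^2/2)|\nabla U|^2\leq(C\be_0^2/2)\al_\infty$ from {\bf(H3)}(4) and the $o(\al)+O(1)$ control on the remaining terms from {\bf(H3)}(3) and part (2), yields
\begin{equation*}
e_{\be_0,N}\geq\bigl(\be_0-C\be_0^2/2-o(1)\bigr)\al_\infty+\tfrac12\sum_i(q_i^2-q_i')-O(1).
\end{equation*}
I fix $\be_0>0$ small enough that $\be_0-C\be_0^2/2>0$ and enlarge $R_0$ so that the $o(1)$ is dominated, obtaining $e_{\be_0,N}\geq(\be_0/2)\al-O(1)$ on $A_3$; the extra $\tfrac12\sum_i(q_i^2-q_i')$ picks up the $1/x_i^2$-part of $\al$ on $A_2$. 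On the bounded $A_1$, every term is bounded by part (2) except $\tfrac12\sum_i(q_i^2-q_i')$, which (after shrinking $\de_0$ if needed) satisfies $\tfrac12(q_i^2-q_i')\geq C_*/x_i^2-C$ whenever $x_i\leq\de_0$; the bounded contributions $\max\{1/x_j^2,1\}\leq 1/\de_0^2$ from indices with $x_j>\de_0$ are absorbed into $M$, and on $A_4$ both $\al$ and $e_{\be_0,N}$ are bounded so a large $M$ closes the estimate. The main obstacle is the simultaneous calibration of $\be_0$, $\de_0$, $R_0$ and $M$ across all four regions while keeping the constants uniform in $N\geq 1$; this succeeds because $\be_0$ is determined solely by the constant from {\bf(H3)}(4) and the remaining parameters may then be chosen in sequence.
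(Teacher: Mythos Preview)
Your proposal is correct and follows essentially the same route as the paper: the same four-region decomposition, the same chain-rule identities $\pa_{x_i}U=\sqrt{a_i}\,(\pa_{z_i}V)\circ\xi^{-1}$ and $p\cdot\nabla U=(b\cdot\nabla_zV)\circ\xi^{-1}$, the asymptotic $q_i^2-q_i'\sim 3/(4x_i^2)$ near $\Ga$ (Lemma \ref{lem-3-24-1} in the paper), and the choice of $\be_0$ small determined by the constant in {\bf(H3)}(4) so that $\be_0-\text{const}\cdot\be_0^2>0$.

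The one presentational difference worth flagging: you speak of ``enlarging $R_0$'' and ``shrinking $\de_0$ if needed'', i.e.\ you treat the parameters defining $\al$ as free. The paper keeps $\de_0,R_0$ (hence $\al$) fixed and instead introduces auxiliary thresholds $\de_*\in(0,\de_0)$ and $R_*\geq R_0$ to refine the region-splitting; it then checks that on each refined piece the inequality matches the \emph{original} piecewise formula for $\al$. Both approaches succeed---yours because the paper explicitly allows $\de_0\in(0,1)$ arbitrary and $R_0$ arbitrarily large---but the paper's version is logically cleaner since Lemma \ref{lem-3-24-2} is stated after $\al$ has been fixed. If you want your write-up to match the lemma as stated, replace the adjustments to $\de_0,R_0$ by auxiliary cut-offs and verify the final bounds against the already-fixed $\al$; the extra bookkeeping is minor (e.g.\ on the intermediate strip $\de_*<x_i\leq\de_0$ the quantity $q_i^2-q_i'$ is merely bounded and gets absorbed into $M$).
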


Since the proof of this lemma is long and relatively independent, we postpone it to Appendix \ref{appendix-1} for the sake of readability.

\begin{rem}\label{rem-5-8-1}
Note that $e_{\beta,1}=e_{\beta}$ is the zeroth-order term of the operator $\LL_{\beta}$ (see \eqref{main-operator}) that has blow-up singularities at $\Ga$ as mentioned earlier. Lemma \ref{lem-3-24-2} says in particular that $e_{\beta}$ is well-controlled by the weight function $\al$, laying the foundation for our analysis.

In what follows, the positive constants $\be_0$, $M$ and $C_*$ are fixed such that the conclusion in Lemma \ref{lem-3-24-2} (3) holds.
\end{rem}

%---------------------------------------------------------%

\subsection{Spectrum and semigroup}\label{spectrum}

We investigate the spectral theory of  $-\LL_{\be_{0}}$ and the semigroup generated by $\LL_{\be_{0}}$. Corresponding results are stated in Theorem \ref{thm-3-28-2} and Theorem \ref{thm-4-26-2}.

Denote by $\EE_{\beta_{0}}:\HH^{1}(\UU;\C)\times\HH^{1}(\UU;\C)\to \C$ the sesquilinear form associated to $-\LL_{\be_{0}}$, namely,
$$
\EE_{\be_{0}}(\phi,\psi)=\frac{1}{2}\int_{\UU}\nabla\phi\cdot\nabla\ol{\psi}\md x+\int_{\UU}(p+\be_{0}\nabla U)\cdot\nabla\phi\ol{\psi}\md x+\int_{\UU}e_{\be_{0}}\phi\ol{\psi}\md x,\quad \forall \phi,\psi\in\HH^{1}(\UU;\C).
$$

The following lemma addresses the boundedness and ``coercivity" of $\EE_{\beta_{0}}$.

\begin{lem}\label{lem-3-25-1}
Assume {\bf(H1)}-{\bf(H3)}.
\begin{enumerate}
\item[\rm(1)] There exists $C>0$ such that $\left|\EE_{\be_{0}}(\phi,\psi)\right|\leq C \|\phi\|_{\HH^{1}} \|\psi\|_{\HH^{1}}$ for all $\phi,\psi\in\HH^{1}(\UU;\C)$.

\item[\rm(2)] For each $\phi=\phi_1+i\phi_2\in\HH^{1}(\UU;\C)$, we have
\begin{equation*}
\EE_{\be_{0}}(\phi,\phi)
=\frac{1}{2}\int_{\UU}|\nabla\phi|^2\md x+\int_{\UU} e_{\be_{0},2}|\phi|^2\md x+i\int_{\UU}(p+\be_{0}\nabla U)\cdot(\phi_1 \nabla \phi_2-\phi_2\nabla\phi_1)\md x,
\end{equation*}
where $e_{\be_{0},2}$ is defined in \eqref{e-beta-N}. In particular,
\begin{equation*}
\Re\EE_{\be_0}(\phi,\phi)+M\|\phi\|^2_{L^{2}}\geq \min\left\{\frac{1}{2},C_*\right\}\|\phi\|_{\HH^{1}}^2,\quad\forall \phi\in\HH^{1}(\UU;\C).
\end{equation*}
\end{enumerate}
\end{lem}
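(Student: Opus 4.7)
The proof combines the pointwise bounds from Lemma \ref{lem-3-24-2} with a single integration by parts.

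For (1), I would apply Cauchy--Schwarz to each of the three integrals in $\EE_{\be_0}(\phi,\psi)$. Lemma \ref{lem-3-24-2}(1) gives the pointwise bound $|p+\be_0\nabla U|\leq C\al^{1/2}$, so the drift term is dominated by $C\|\nabla\phi\|_{L^2}\|\al^{1/2}\psi\|_{L^2}$. Lemma \ref{lem-3-24-2}(2) with $N=1$ gives $|e_{\be_0}|\leq C\al$, so the zeroth-order term is dominated by $C\|\al^{1/2}\phi\|_{L^2}\|\al^{1/2}\psi\|_{L^2}$. These two estimates, together with the Dirichlet piece $\frac{1}{2}\|\nabla\phi\|_{L^2}\|\nabla\psi\|_{L^2}$, are all bounded by $C\|\phi\|_{\HH^1}\|\psi\|_{\HH^1}$.

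For the identity in (2), I would first work on $C_c^\infty(\UU;\C)$. Writing $\phi=\phi_1+i\phi_2$, the product $\ol\phi\,\nabla\phi$ decomposes as $\frac{1}{2}\nabla|\phi|^2+i(\phi_1\nabla\phi_2-\phi_2\nabla\phi_1)$, whose real part is a pure gradient. Integration by parts converts the real drift contribution $\frac{1}{2}\int_{\UU}(p+\be_0\nabla U)\cdot\nabla|\phi|^2\,\md x$ into $-\frac{1}{2}\int_{\UU}(\nabla\cdot p+\be_0\De U)|\phi|^2\,\md x$, and adding this to $\int_{\UU}e_{\be_0}|\phi|^2\,\md x$ reproduces $\int_{\UU}e_{\be_0,2}|\phi|^2\,\md x$ by the very definition \eqref{e-beta-N} of $e_{\be_0,2}$ with $N=2$. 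To pass from test functions to general $\phi\in\HH^1(\UU;\C)$, I would establish the density of $C_c^\infty(\UU;\C)$ via a standard cutoff-and-mollify scheme. This should succeed because $\al$ blows up both at $\Ga$ (like $x_i^{-2}$) and at infinity, forcing $\HH^1$-functions to decay toward those sets in the weighted $L^2$-sense. Continuity of $\EE_{\be_0}$ from part (1), together with dominated convergence on the right, then allows passage to the limit.

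The coercivity follows immediately: taking real parts of the identity gives $\Re\EE_{\be_0}(\phi,\phi)=\frac{1}{2}\|\nabla\phi\|_{L^2}^2+\int_{\UU}e_{\be_0,2}|\phi|^2\,\md x$, and Lemma \ref{lem-3-24-2}(3) supplies $e_{\be_0,2}+M\geq C_*\al$, yielding $\Re\EE_{\be_0}(\phi,\phi)+M\|\phi\|_{L^2}^2\geq\min\{\frac{1}{2},C_*\}\|\phi\|_{\HH^1}^2$ (up to the equivalence of the sum-of-square-roots $\HH^1$-norm and the one induced by the inner product $\lan\cdot,\cdot\ran_{\HH^1}$). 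The one delicate step is justifying the integration by parts on $\HH^1$-functions, since $p+\be_0\nabla U$ is singular near $\Ga$; the saving grace is the very same pointwise bound $|p+\be_0\nabla U|^2\leq C\al$ that drove continuity in (1), which controls the would-be boundary-flux contributions in any truncation scheme and makes them vanish in the limit.
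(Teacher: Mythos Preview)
Your proposal is correct and matches the paper's argument in all essential respects: part (1) is identical, and for part (2) both approaches rest on the decomposition $\ol\phi\,\nabla\phi=\tfrac12\nabla|\phi|^2+i(\phi_1\nabla\phi_2-\phi_2\nabla\phi_1)$ and on the same cutoff estimate $|\nabla\eta_n|^2\lesssim\al$ that kills the truncation errors. The only organizational difference is that you package the truncation as a density statement ($C_c^\infty$ dense in $\HH^1$) and then invoke the continuity of both sides, whereas the paper works directly with a fixed $\phi\in\HH^1$, computes $\EE_{\be_0}(\phi,\eta_n^2\phi)$, and sends $n\to\infty$ term by term; the technical content is the same and your route is arguably more modular.

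One small correction to your closing remark: the ``saving grace'' that makes the boundary-flux terms vanish in the density (or cutoff) argument is not the drift bound $|p+\be_0\nabla U|^2\leq C\al$ but rather that $\al$ itself dominates $|\nabla\eta_n|^2$ on the supports of $\nabla\eta_n$ (since $\al\gtrsim x_i^{-2}$ near $\Ga$ and $\al\to\infty$ at infinity). This is what gives $\int_{\UU}|\nabla\eta_n|^2|\phi|^2\,\md x\to 0$ by dominated convergence, hence $\eta_n\phi\to\phi$ in $\HH^1$; the drift bound is then only needed for continuity of the form, which you already have from (1).
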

\begin{proof}
(1) Let $ \phi,\psi\in\HH^{1}(\UU;\C)$. Applying H\"older's inequality, we derive
\begin{equation*}
\begin{split}
|\EE_{\be_{0}}(\phi,\psi)|
&\leq \frac{1}{2}\left(\int_{\UU}|\nabla \phi|^2\md x \right)^{\frac{1}{2}} \left( \int_{\UU}|\nabla \psi|^2\md x \right)^{\frac{1}{2}}\\
&\quad +  \left(\int_{\UU}|\nabla \phi|^2\md x \right)^{\frac{1}{2}} \left(\int_{\UU}|p+\be_{0}\nabla U|^2|\psi|^2\md x \right)^{\frac{1}{2}}\\
&\quad+ \left(\int_{\UU}|e_{\be_{0}}||\phi|^2\md x \right)^{\frac{1}{2}} \left(\int_{\UU}|e_{\be_{0}}||\psi|^2 \md x\right)^{\frac{1}{2}}.
\end{split}
\end{equation*}

Since Lemma \ref{lem-3-24-2} (1) ensures the existence of $C>0$ such that
$$
\int_{\UU}|p+\be_{0}\nabla U|^2|\psi|^2\md x\leq C(1+\be_{0}^2)\int_{\UU}\al |\psi|^2\md x,
$$
the conclusion follows readily from Lemma \ref{lem-3-24-2} (2) and the definition of the norm $\|\cdot\|_{\HH^{1}}$.

(2) Let $\{\eta_n\}_{n\in\N}$ be a sequence of smooth functions on $\UU$ taking values in $[0,1]$ and satisfying
\begin{equation*}
\eta_n(x)=
\begin{cases}
1,&  x\in\left(\UU\sm \Ga_{\frac{2}{n}} \right)\bigcap B^+_{\frac{n}{2}}, \\
0,&  x\in \Ga_{\frac{1}{n}}\bigcup\left(\UU\sm B^+_{n} \right),
\end{cases}
\quad
\text{and}
\quad
|\nabla \eta_n(x)|\leq
\begin{cases}
2n,& x\in\Ga_{\frac{2}{n}}\sm \Ga_{\frac{1}{n}},\\
4,& x\in\left(\UU\sm \Ga_{\frac{2}{n}} \right)\bigcap \left( B^+_{n}\sm B^+_{\frac{n}{2}}\right).
\end{cases}
\end{equation*}

Fix $\phi\in \HH^1(\UU;\R)$. We calculate
\begin{equation}\label{eqn-3-21-2}
\begin{split}
\EE_{\be_{0}}(\phi,\eta_n^2\phi)&=\frac{1}{2}\int_{\UU}\nabla\phi\cdot\nabla(\eta^2_n\ol{\phi})\md x+\int_{\UU} (p+\be_{0}\nabla U)\cdot\nabla\phi(\eta_n^2\ol{\phi})\md x+\int_{\UU}e_{\be_{0}}\eta_n^2|\phi|^2\md x\\
&=\frac{1}{2}\int_{\UU}\eta^2_n |\nabla \phi|^2\md x+\int_{\UU}\eta_n\ol{\phi}\nabla\phi\cdot\nabla\eta_n\md x\\
&\quad+\int_{\UU} (p+\be_{0}\nabla U)\cdot\nabla\phi(\eta_n^2\ol{\phi})\md x+\int_{\UU}e_{\be_{0}}\eta_n^2|\phi|^2\md x\\
&=:I_1(n)+I_2(n)+I_3(n)+I_4(n).
\end{split}
\end{equation}

We find $\lim_{n\to\infty}I_{1}(n)=\frac{1}{2}\int_{\UU} |\nabla \phi|^2\md x$ from $\int_{\UU}|\nabla\phi|^2\md x<\infty$ and the dominated convergence theorem.

Applying H\"older's inequality, we find
$|I_{2}(n)|\leq \left(\int_{\UU}\eta^2_n|\nabla \phi|^2\md x \right)^{\frac{1}{2}} \left( \int_{\UU}|\nabla \eta_n|^2|\phi|^2\md x\right)^{\frac{1}{2}}$. From the construction of $\eta_n$, we see that
\begin{equation*}
|\nabla \eta_n|^2|\phi|^2\leq
\begin{cases}
4n^2|\phi|^2&\quad\text{in}\quad\Ga_{\frac{2}{n}}\sm \Ga_{\frac{1}{n}},\\
16|\phi|^2&\quad\text{in}\quad \left(\UU\sm \Ga_{\frac{2}{n}} \right)\bigcap \left( B^+_{n}\sm B^+_{\frac{n}{2}}\right),\\
0&\quad \text{otherwise}.
\end{cases}
\end{equation*}
Since $n^2\leq \sum_{i=1}^d \max\left\{\frac{1}{x_i^2},1\right\}$ in $\Ga_{\frac{2}{n}}\sm \Ga_{\frac{1}{n}}$ for $n\geq 1$, the definition of $\al$ yields the existence of $C_1>0$ such that $|\nabla \eta_n|^2|\phi|^2\leq C_1\al|\phi|^2$ in $\UU$ for all $n\gg 1$. Since $\lim_{n\to \infty}|\nabla \eta_n|=0$, we apply the dominated convergence theorem to conclude
\begin{equation}\label{eqn-3-21-3}
\lim_{n\to\infty}\int_{\UU}|\nabla \eta_n|^2|\phi|^2\md x=0,
\end{equation}
which leads to $\lim_{n\to\infty} I_{2}(n)=0$.

Denote $\phi=\phi_1+i\phi_2$. We calculate for each $j\in\{1,\dots, d\}$,
\begin{equation*}
    (\pa_{j}\phi)\ol{\phi}=(\pa_{j}\phi_1+i\pa_{j}\phi_2)(\phi_1-i\phi_2)=\frac{1}{2}\pa_{j} |\phi|^2+i(\phi_1\pa_{j}\phi_2-\phi_2\pa_{j}\phi_1).
\end{equation*}
Integration by parts yields
\begin{equation*}
\begin{split}
I_{3}(n)&=\frac{1}{2}\int_{\UU} (p+\be_{0}\nabla U)\cdot\eta_n^2\nabla|\phi|^2\md x+i\int_{\UU}(p+\be_{0}\nabla U)\cdot\eta_n^2(\phi_1\nabla\phi_2-\phi_2\nabla\phi_1)\md x\\
&=\frac{1}{2}\int_{\UU} (p+\be_{0}\nabla U)\cdot\nabla(\eta^2_n|\phi|^2)\md x-\int_{\UU} (p+\be_{0}\nabla U)\cdot\nabla\eta_n (\eta_n|\phi|^2)\md x\\
&\qquad+i\int_{\UU}(p+\be_{0}\nabla U)\cdot\eta_n^2(\phi_1\nabla\phi_2-\phi_2\nabla\phi_1)\md x\\
&=-\frac{1}{2}\int_{\UU}(\nabla\cdot p+\be_{0}\De U) \eta_n^2|\phi|^2\md x-\int_{\UU} (p+\be_{0}\nabla U)\cdot\nabla \eta_n (\eta_n|\phi|^2)\md x\\
&\qquad+i\int_{\UU}(p+\be_{0}\nabla U)\cdot\eta_n^2(\phi_1\nabla\phi_2-\phi_2\nabla\phi_1)\md x.
\end{split}
\end{equation*}
It follows that
\begin{equation}\label{eqn-3-25-1}
\begin{split}
I_3(n)+I_4(n)%&=\int_{\UU} (p+\be_{0}\nabla U)\cdot\nabla\phi(\eta_n^2\ol{\phi})\md x+\int_{\UU}e_{\be_{0}}\eta_n^2|\phi|^2\md x\\
&=-\frac{1}{2}\int_{\UU}(\nabla \cdot p+\be_{0}\De U) \eta_n^2|\phi|^2\md x-\int_{\UU} (p+\be_{0} \nabla U)\cdot\nabla \eta_n (\eta_n|\phi|^2)\md x\\ &\quad+\int_{\UU}e_{\be_{0}}\eta_n^2|\phi|^2\md x+i\int_{\UU}(p+\be_0\nabla U)\cdot\eta_n^2(\phi_1\nabla\phi_2-\phi_2\nabla\phi_1)\md x\\
&=-\int_{\UU} (p+\be_{0}\nabla U)\cdot\nabla\eta_n (\eta_n|\phi|^2)\md x +\int_{\UU}e_{\be_{0},2} \eta_n^2|\phi|^2\md x\\
&\quad+i\int_{\UU}(p+\be_{0}\nabla U)\cdot\eta_n^2(\phi_1\nabla\phi_2-\phi_2\nabla\phi_1)\md x.
\end{split}
\end{equation}

We apply H\"older's inequality to find
\begin{equation*}
\begin{split}
\left|\int_{\UU} (p+\be_{0}\nabla U)\cdot\nabla\eta_n (\eta_n|\phi|^2)\md x\right|
&\leq\left(\int_{\UU}|p+\be_{0}\nabla U|^2 \eta_n^2|\phi|^2\md x \right)^{\frac{1}{2}} \left(\int_{\UU}|\nabla \eta_n|^2|\phi|^2\md x \right)^{\frac{1}{2}} \\
&\leq \left(\int_{\UU}|p+\be_{0}\nabla U|^2 |\phi|^2\md x \right)^{\frac{1}{2}} \left(\int_{\UU}|\nabla \eta_n|^2|\phi|^2\md x \right)^{\frac{1}{2}}.
\end{split}
\end{equation*}
Note that Lemma \ref{lem-3-24-2} (1) gives
$\int_{\UU}|p+\be_{0}\nabla U|^2 |\phi|^2\md x\leq C_2\int_{\UU}\al |\phi|^2\md x$ for some $C_2>0$, which together with \eqref{eqn-3-21-3}, yields
\begin{equation}\label{eqn-3-25-2}
\lim_{n\to\infty} \int_{\UU} (p+\be_{0}\nabla U)\cdot\nabla \eta_n (\eta_n|\phi|^2)\md x=0.
\end{equation}
It follows from Lemma \ref{lem-3-24-2} (2) that $|e_{\be_{0},2}|\eta_n^2|\phi|^2\leq C_3 \al |\phi|^2$ for some $C_3>0$. Together with the fact $\phi\in\HH^{1}(\UU;\R)$ and the dominated convergence theorem this yields
\begin{equation}\label{eqn-2020-05-11}
\lim_{n\to\infty} \int_{\UU}e_{\be_{0},2}\eta_n^2|\phi|^2\md x=\int_{\UU}e_{\be_{0},2}|\phi|^2\md x.
\end{equation}

Since Young's inequality gives
\begin{equation*}
    \begin{split}
    \left|(p+\be_{0}\nabla U)\cdot\eta_n^2(\phi_1\nabla\phi_2-\phi_2\nabla\phi_1)\right|&\leq \frac{1}{2}\eta_n^2|\nabla \phi|^2+\frac{1}{2}|p+\be_{0}\nabla U|^2\eta_n^2|\phi|^2\\
    &\leq \frac{1}{2}|\nabla \phi|^2+\frac{1}{2}|p+\be_{0}\nabla U|^2|\phi|^2,
    \end{split}
\end{equation*}
we deduce from the dominated convergence theorem that
$$
\lim_{n\to\infty}\int_{\UU}(p+\be_{0}\nabla U)\cdot\eta_n^2(\phi_1\nabla\phi_2-\phi_2\nabla\phi_1)\md x=\int_{\UU}(p+\be_{0}\nabla U)\cdot(\phi_1\nabla\phi_2-\phi_2\nabla\phi_1)\md x.
$$

Letting $n\to\infty$ in \eqref{eqn-3-25-1}, we conclude from \eqref{eqn-3-25-2}, \eqref{eqn-2020-05-11} and the above equality that
\begin{equation*}
\begin{split}
\lim_{n\to\infty}\left[I_{3}(n)+I_{4}(n)\right]=\int_{\UU}e_{\be_{0},2}\phi^2\md x+i\int_{\UU}(p+\be_{0}\nabla U)\cdot(\phi_1\nabla\phi_2-\phi_2\nabla\phi_1)\md x.
\end{split}
\end{equation*}

Passing to the limit $n\to\infty$ in \eqref{eqn-3-21-2}, we derive the expected identity from the limits of $I_{1}(n)$, $I_{2}(n)$, $I_{3}(n)$ and $I_{4}(n)$ as $n\to\infty$.

The last part of claim (2) is an immediate consequence of Lemma \ref{lem-3-24-2} (3).
\end{proof}

For $f\in L^2(\UU;\C)$, we consider the following problem:
\begin{equation}\label{eqn-elliptic}
(-\LL_{\be_0}+M)u=f\quad\text{in}\quad\UU,
\end{equation}
and look for solutions in $\HH^{1}(\UU;\C)$.

\begin{defn}
\emph{A function $u\in\HH^{1}(\UU;\C)$ is called a {\em weak solution} of \eqref{eqn-elliptic} if
$$
\EE_{\be_0}(u,\phi)+M\lan u,\phi\ran_{L^{2}}=\lan f,\phi\ran_{L^{2}},\quad \forall \phi\in\HH^{1}(\UU;\C),
$$
where $\lan\cdot,\cdot\ran_{L^{2}}$ is the usual inner product on $L^{2}(\UU;\C)$.
}
\end{defn}

%The next result follows from Lemma \ref{lem-3-23}, Lemma \ref{lem-3-25-1} and the Lax-Milgram theorem.

\begin{lem}\label{thm-3-25-1}
Assume {\bf (H1)-(H3)}. Then, for any $f\in L^{2}(\UU;\C)$, \eqref{eqn-elliptic} admits a unique weak solution $u_{f}$ in $\HH^{1}(\UU;\C)$. Moreover, the following hold.
\begin{enumerate}
\item[\rm(1)] There is a constant $C>0$ such that $\|u_f\|_{\HH^{1}}\leq C\|f\|_{L^2}$ for all $f\in L^2(\UU;\C)$.

\item[\rm(2)] We have $u_{f}\in H_{loc}^{2}(\UU;\C)$ and
$$
(-\LL_{\be_0}+M)u_{f}=f\quad \text{a.e. in}\quad\UU,
$$
and
\begin{equation*}
    \EE_{\be_0}(u_{f},\phi)=\lan-\LL_{\be_0}u_{f},\phi\ran_{L^{2}},\quad\forall \phi\in\HH^{1}(\UU;\C).
\end{equation*}

\item[\rm(3)] If $f\in L^{2}(\UU;\C)$ satisfies $f\geq 0$ a.e. in $\UU$, then $u_f\geq 0$ a.e. in $\UU$. If in addition $f>0$ on a set of positive Lebesgue measure, then $u_f>0$ in $\UU$.
\end{enumerate}
\end{lem}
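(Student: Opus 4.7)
The plan is to reduce the well--posedness to the (non--symmetric) Lax--Milgram theorem applied to the shifted sesquilinear form $\tilde{\EE}(u,v):=\EE_{\be_0}(u,v)+M\langle u,v\rangle_{L^{2}}$ on $\HH^{1}(\UU;\C)$. Lemma \ref{lem-3-25-1}(1) provides continuity of $\tilde{\EE}$, and Lemma \ref{lem-3-25-1}(2) provides the coercivity bound $\Re\tilde{\EE}(\phi,\phi)\ge \min\{1/2,C_*\}\|\phi\|_{\HH^{1}}^{2}$. Since $\inf_{\UU}\al>0$, the embedding $\HH^{1}(\UU;\C)\hookrightarrow L^{2}(\UU;\C)$ is continuous, so $\phi\mapsto\langle f,\phi\rangle_{L^{2}}$ is a bounded antilinear functional on $\HH^{1}$. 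Lax--Milgram then furnishes a unique weak solution $u_f\in\HH^{1}(\UU;\C)$, and testing the coercivity bound against $\phi=u_f$ together with $|\langle f,u_f\rangle_{L^{2}}|\le\|f\|_{L^{2}}\|u_f\|_{L^{2}}\le\|f\|_{L^{2}}\|u_f\|_{\HH^{1}}$ yields the estimate in (1).

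For (2), I would invoke interior elliptic regularity. On any bounded subdomain $\Om\stst\UU$ with $\ol\Om\cap\Ga=\emptyset$ the coefficients of $-\LL_{\be_0}+M$ are smooth and the operator is uniformly elliptic. Any $\phi\in C_c^{\infty}(\Om;\C)$, extended by zero, lies in $\HH^{1}(\UU;\C)$ because $\al$ is locally bounded on compact subsets of $\UU$. Hence the weak formulation for $u_f$ restricts to the weak formulation of a classical uniformly elliptic equation on $\Om$, and standard $H^{2}$ interior regularity yields $u_f\in H^{2}_{loc}(\UU;\C)$ together with the pointwise identity $(-\LL_{\be_0}+M)u_f=f$ a.e.\ in $\UU$. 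Since $u_f,f\in L^{2}(\UU;\C)$, this forces $\LL_{\be_0}u_f=Mu_f-f\in L^{2}(\UU;\C)$, and the sesquilinear form identity in (2) then follows purely algebraically by subtracting $M\langle u_f,\phi\rangle_{L^{2}}$ from both sides of the weak formulation.

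For (3), if $f$ is real then $\ol{u_f}$ is also a weak solution: the coefficients of $\LL_{\be_0}$ are real, and conjugating the weak identity recovers it with right--hand side $\ol{f}=f$; uniqueness therefore forces $u_f$ to be real--valued. To deduce non--negativity, I would test the weak formulation against $u_f^-:=\max\{-u_f,0\}$, which belongs to $\HH^{1}(\UU;\C)$ because $|u_f^-|\le|u_f|$ and $|\nabla u_f^-|\le|\nabla u_f|$ a.e. The standard identities $u_f u_f^-=-(u_f^-)^{2}$ and $\nabla u_f\cdot\nabla u_f^-=-|\nabla u_f^-|^{2}$ (both valid a.e.) give $\tilde{\EE}(u_f,u_f^-)=-\tilde{\EE}(u_f^-,u_f^-)$, so $\tilde{\EE}(u_f^-,u_f^-)=-\langle f,u_f^-\rangle_{L^{2}}\le 0$. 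Coercivity then forces $u_f^-\equiv 0$, i.e., $u_f\ge 0$. Strict positivity follows from the strong maximum principle applied locally: on every $\Om\stst\UU$ with $\ol\Om\cap\Ga=\emptyset$ the function $u_f\in H^{2}(\Om)$ is a non--negative supersolution of a uniformly elliptic equation whose zeroth--order coefficient $e_{\be_0}+M$ is non--negative by Lemma \ref{lem-3-24-2}(3); if $u_f$ vanished at an interior point, the maximum principle (combined with connectedness of $\UU$) would force $u_f\equiv 0$, contradicting $f\not\equiv 0$.

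The main obstacle I anticipate is the rigorous justification that truncations such as $u_f\mapsto u_f^-$ remain admissible test functions in the weighted space $\HH^{1}(\UU;\C)$; a smooth Lipschitz approximation of $t\mapsto t^-$, combined with the dominated convergence theorem using the envelope $|\nabla u_f^-|\le|\nabla u_f|$, should turn the formal chain--rule manipulations into honest identities. The Lax--Milgram and interior regularity steps are otherwise routine given Lemma \ref{lem-3-25-1}, since the regularity question is local and avoids the singular behaviour of the coefficients at $\Ga$ and at infinity.
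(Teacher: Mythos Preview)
Your proposal is correct and follows essentially the same route as the paper: Lax--Milgram on the shifted form $\tilde\EE$ using Lemma~\ref{lem-3-25-1}, the $\HH^1$--estimate by testing with $u_f$, interior $H^2$--regularity, and the negative-part test function for non-negativity in (3).

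Two minor points of comparison are worth noting. For the sesquilinear identity in (2), your ``algebraic'' derivation (use $(-\LL_{\be_0}+M)u_f=f$ a.e.\ and $\LL_{\be_0}u_f\in L^2$, then subtract $M\langle u_f,\phi\rangle_{L^2}$ from the weak formulation) is more economical than what the paper does: there the identity is obtained by computing $\langle-\LL_{\be_0}u_f,\eta_n^2\phi\rangle_{L^2}$ via integration by parts on compact supports and then passing to the limit $n\to\infty$ through a cutoff sequence. Both arguments are valid; yours avoids the approximation step entirely. For strict positivity in (3), the paper invokes the weak Harnack inequality (Gilbarg--Trudinger, Theorem~8.18), which applies directly to non-negative $W^{1,2}$ supersolutions; your phrasing via the strong maximum principle (``if $u_f$ vanished at an interior point'') implicitly assumes $u_f$ is continuous, which $H^2_{loc}$ alone does not guarantee when $d\ge4$. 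The fix is easy---either cite the weak Harnack inequality as the paper does, or note that elliptic regularity with locally bounded coefficients and a bootstrap gives $u_f\in W^{2,p}_{loc}$ for all $p<\infty$, hence continuity---but the Harnack route is cleaner.
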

\begin{proof}
Fix $f\in L^2(\UU;\C)$. H\"older's inequality gives
\begin{equation}\label{eqn-4-18-3}
\left|\lan f,\phi\ran_{L^{2}}\right|\leq \left(\int_{\UU}\frac{1}{\al}|f|^2\md x\right)^{\frac{1}{2}} \left( \int_{\UU}\al |\phi|^2\md x\right)^{\frac{1}{2}}\leq \frac{1}{\left(\inf_{\UU} \al\right)^{\frac{1}{2}}}\|f\|_{L^2}\|\phi\|_{\HH^{1}},\quad\forall \phi\in\HH^{1}(\UU;\C).
\end{equation}
Hence, $\phi\mapsto\lan f,\phi\ran_{L^{2}}:\HH^{1}(\UU;\C)\to \C$ is a continuous linear functional.

By Lemma \ref{lem-3-25-1}  and the fact $\|\phi\|_{L^2}\leq \left(\inf_{\UU}\al\right)^{-\frac{1}{2}}\|\phi\|_{\HH^1}$ for $\phi\in \HH^1(\UU;\C)$ one has
\begin{equation*}\label{eqn-4-18-2}
 |\EE_{\be_0}(\phi,\psi)|+M|\lan\phi,\psi\ran_{L^{2}}|\leq C_1\|\phi\|_{\HH^{1}} \|\psi\|_{\HH^{1}},\quad\forall \phi\andd\psi\in\HH^{1}(\UU;\C)
\end{equation*}
for some $C_1>0$, and
\begin{equation}\label{eqn-4-18-2-1}
\Re\EE_{\be_0}(\phi,\phi)+M\|\phi\|^2_{L^{2}}\geq \min\left\{\frac{1}{2},C_*\right\}\|\phi\|_{\HH^{1}}^2,\quad\forall \phi\in\HH^{1}(\UU;\C).
\end{equation}
We apply the Lax-Milgram theorem (see e.g. \cite{GT01}) to find a unique $u_f\in\HH^{1}(\UU;\C)$ such that
\begin{equation}\label{eqn-5-20-1}
\EE_{\be_0}(u_f,\phi)+M\lan u_f,\phi\ran_{L^{2}}=\lan f,\phi\ran_{L^{2}},\quad\forall \phi\in\HH^{1}(\UU;\C).
\end{equation}
This shows that $u_f$ is the unique weak solution of \eqref{eqn-elliptic}.

\medskip

(1) Setting $\phi=u$  in \eqref{eqn-5-20-1}, we derive from \eqref{eqn-4-18-3} and \eqref{eqn-4-18-2-1} that
$$
\min\left\{\frac{1}{2},C_*\right\}\|u_f\|^2_{\HH^{1}}\leq\Re\EE_{\be_0}(u_f,u_f)+M\|u_f\|^2_{L^{2}}\leq  \frac{1}{(\inf_{\UU} \al)^{\frac{1}{2}}}\|f\|_{L^2}\|u_f\|_{\HH^{1}}.
$$

\medskip
(2) The classical regularity theory of elliptic equations ensures $u_f\in H^2_{loc}(\UU;\C)$. Hence, $u_f$ is a strong solution.

Fix $\phi\in\HH^{1}(\UU;\C)$. Let $\{\eta_n\}_{n\in\N}$ be the sequence of non-negative functions constructed in the proof of Lemma \ref{lem-3-25-1}. Then, $0\leq \eta_n\leq 1$ for each $n\in\N$ and $\lim_{n\to\infty}\eta_n=1$ in $\UU$. Integration by parts gives
\begin{equation}\label{eqn-5-15-2}
\begin{split}
\lan-\LL_{\be_0}u_{f},\eta_n^2\phi\ran_{L^{2}}&=\frac{1}{2}\int_{\UU}\nabla u_{f}\cdot\nabla (\eta_n^2\ol{\phi})\md x+\int_{\UU}(p+\be_0\nabla U)\cdot\nabla u_{f} (\eta_n^2\ol{\phi})\md x+\int_{\UU}e_{\be_0}\eta_n^2 u_{f}\ol{\phi}\md x\\
&=\frac{1}{2}\int_{\UU}\eta_n^2\nabla u_{f}\cdot\nabla\ol{\phi}\md x+\int_{\UU}\eta_n\ol{\phi}\nabla u_{f}\cdot\nabla \eta_n\md x\\
&\quad+\int_{\UU}(p+\be_{0}\nabla U)\cdot\nabla u_{f} (\eta_n^2\ol{\phi})\md x+\int_{\UU}e_{\be_0}\eta_n^2 u_{f}\ol{\phi}\md x.
\end{split}
\end{equation}

As $\LL_{\be_0}u_{f}\in L^2(\UU;\C)$ and $\phi\in\HH^{1}(\UU;\C)\subset L^2(\UU;\C)$, we derive from the dominated convergence theorem that
$\lim_{n\to\infty}\lan-\LL_{\be_0}u_{f},\eta_n^2\phi\ran_{L^{2}}=\lan-\LL_{\be_0}u_{f},\phi\ran_{L^{2}}$. Since $\nabla u_{f},\nabla\ol{\phi}\in L^2(\UU;\C)$, the dominated convergence theorem ensures that
$\lim_{n\to\infty}\frac{1}{2}\int_{\UU}\eta_n^2\nabla u_{f}\cdot\nabla\ol{\phi}\md x=\frac{1}{2}\int_{\UU}\nabla u_{f}\cdot\nabla\ol{\phi}\md x$. Arguing as in the proof of Lemma \ref{lem-3-25-1}, we find $\lim_{n\to\infty}\int_{\UU}\eta_n\ol{\phi}\nabla u_{f}\cdot\nabla \eta_n\md x=0$. Thanks to Lemma \ref{lem-3-24-2}, we apply H\"older's inequality to find positive constants $C_2$ and $C_3$ such that
\begin{equation*}
\begin{split}
   \left|\int_{\UU}(p+\be_{0}\nabla U)\cdot\nabla u_{f} \ol\phi\md x\right|&\leq \left(\int_{\UU}|\nabla u_{f}|^2\md x\right)^{\frac{1}{2}} \left(\int_{\UU}|p+\be_{0}\nabla U|^2|\phi|^2\md x \right)^{\frac{1}{2}}\\
   &\leq C_2 \left( \int_{\UU}\al |u_{f}|^2\md x\right)^{\frac{1}{2}}\left( \int_{\UU}\al |\phi|^2\md x\right)^{\frac{1}{2}},
   \end{split}
\end{equation*}
and
\begin{equation*}
   \left|\int_{\UU}e_{\be_0}u_{f}\ol\phi\md x \right|\leq C_3\left( \int_{\UU}\al |u_{f}|^2\md x\right)^{\frac{1}{2}}\left( \int_{\UU}\al |\phi|^2\md x\right)^{\frac{1}{2}}.
\end{equation*}
Therefore, the dominated convergence theorem yields
$$
\lim_{n\to\infty}\int_{\UU}(p+\be_{0}\nabla U)\cdot\nabla u_{f} (\eta_n^2\ol\phi)\md x+\int_{\UU}e_{\be_0}\eta_n^2 u_{f}\ol\phi\md x=\int_{\UU}(p+\be_{0}\nabla U)\cdot\nabla u_{f}\ol\phi\md x+\int_{\UU}e_{\be_0}u_{f}\ol\phi\md x.
$$
Letting $n\to\infty$ in \eqref{eqn-5-15-2}, the result follows.

\medskip
(3) Suppose $f\geq 0$ a.e. in $\UU$. In this case, $u_{f}$ must be real-valued. It is easy to verify that $u_f^-\in\HH^{1}(\UU;\C)$.  Thanks to (2), we obtain $\EE_{\be_0}(u_f,u_f^-)+M\langle u_f, u_f^{-}\rangle_{L^2}=\lan f,u_f^-\ran_{L^{2}}\geq 0$. It follows from Lemma \ref{lem-3-25-1} (2) and the fact $\EE_{\be_0}(u_f,u_f^-)=-\EE_{\be_0}(u_f^-,u_f^-)$ and $\langle u_f, u_f^{-}\rangle_{L^2}=-\langle u_f^{-}, u_f^{-}\rangle_{L^2}$ that
\begin{equation*}
    \begin{split}
        \min\left\{\frac{1}{2},C_*\right\}\|u_f^-\|_{\HH^{1}}^2\leq \EE_{\be_0}(u_f^-,u_f^-)+M\|u_f^-\|_{L^{2}}^2\leq 0.
    \end{split}
\end{equation*}
This implies $u_f^-=0$, and hence that $u_f\geq 0$. The last part follows from the weak Harnack's inequality of weak solutions of elliptic equations (see e.g. \cite[Theorem 8.18]{GT01}).
\end{proof}

By Lemma \ref{thm-3-25-1} and Lemma \ref{lem-3-28-1}, the operator
$$
(-\LL_{\beta_{0}}+M)^{-1}:L^{2}(\UU;\C)\to L^{2}(\UU;\C),\quad f\mapsto u_{f}
$$
is linear, positive and compact. In light of Lemma \ref{thm-3-25-1}, we define the domain of $\LL_{\be_0}$ as follows:
\begin{equation*}
\begin{split}
\DD:&=(-\LL_{\beta_{0}}+M)^{-1}L^{2}(\UU;\C)=\left\{\phi\in \HH^{1}(\UU;\C):\LL_{\be_0}\phi\in L^2(\UU;\C)\right\}.
\end{split}
\end{equation*}

%The following lemma indicates the natural connection between $\EE_{\be_0}$ and  $\LL_{\be_0}$.
%\begin{lem}\label{lem-5-15-1}
%For each $u\in\DD$, there holds
%\begin{equation*}
%    \EE_{\be_0}(u,\phi)=-\int_{\UU}\ol{\phi}\LL_{\be_0}u\md x,\quad\forall \phi\in\HH^{1}(\UU;\C).
%\end{equation*}
%\end{lem}
%\begin{proof}
%\end{proof}

%We are ready to study the spectral theory, especially the principal spectral theory, of $\LL^*_{\be_0}$ considered as an operator in $L^{2}(\UU)$ with domain $D^{*}$, and the semigroup generated by $\LL^*_{\be_0}$. To do so, we shall focus on the operator $-(\LL^*_{\be_0}-M)^{-1}:L^{2}(\UU)\to L^{2}(\UU)$, which is positive and compact by Lemma \ref{thm-3-25-1} (3) and Lemma \ref{lem-3-28-1}, respectively. Since $-(\LL^*_{\be_0}-M)^{-1}$ is not self-adjoint, it is necessary to consider the complex extension of $-(\LL^*_{\be_0}-M)^{-1}$ to an operator in the complexification of $L^{2}(\UU)$. As this procedure is well-established (see e.g. \cite{Schaefer74}), we shall not repeat this again, and just point out that parallel results, in particular those in Lemma \ref{thm-3-25-1} and Lemma \ref{lem-3-28-1}, hold for the complex extension. Moreover, since $L^{2}(\UU)$ is a Hilbert space, the complex extension is just. In this regard, we shall not distinguish $-(\LL^*_{\be_0}-M)^{-1}$ from its complex extension.

The next result collects basic spectral properties of $-\LL_{\be_0}$.

\begin{thm}\label{thm-3-28-2}
Assume {\bf (H1)}-{\bf (H3)}. Then, the following hold.
\begin{enumerate}
\item[\rm(1)] The operator $-\LL_{\be_0}$ has a discrete spectrum and is contained in $\{\la\in\C:\Re\la>-M\}$.

\item[\rm(2)] The number $\la_1:=\inf\left\{\Re\la:\la\in\si(-\LL_{\be_0})\right\}$ is a simple eigenvalue of $-\LL_{\be_0}$, and is dominating, in the sense that $\inf\left\{\Re\la:\la\in\si(-\LL_{\be_0})\sm\{\la_{1}\}\right\}>\la_{1}$.

\item[\rm(3)] The eigenspace of $\la_{1}$ is spanned over $\C$ by $\tilde{v}_1$ for some  $\tilde{v}_{1}\in\DD$ a.e. positive in $\UU$.
\end{enumerate}
\end{thm}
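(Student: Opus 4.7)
My plan is to decompose the conclusion into three stages, corresponding to (1)--(3), and to exploit the Lax-Milgram setup that has already been carried out.

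\textbf{Stage 1 (Discreteness and left-half-plane bound).} Define the resolvent $T_M := (-\LL_{\be_0}+M)^{-1}$. By Lemma \ref{thm-3-25-1}(1), $T_M$ is a bounded map $L^2(\UU;\C)\to\HH^{1}(\UU;\C)$, and composing with the compact embedding $\HH^{1}(\UU;\C)\hookrightarrow L^2(\UU;\C)$ from Lemma \ref{lem-3-28-1}, $T_M$ is compact on $L^2(\UU;\C)$. The Riesz-Schauder spectral theorem then gives that $\si(T_M)\sm\{0\}$ consists of isolated eigenvalues of finite algebraic multiplicity accumulating only at $0$. Pulling back through $\mu=(M+\la)^{-1}$, the spectrum $\si(-\LL_{\be_0})$ is discrete. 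For the bound $\Re\la>-M$: if $-\LL_{\be_0}\phi=\la\phi$ with $\phi\in\DD\sm\{0\}$, Lemma \ref{thm-3-25-1}(2) gives $\EE_{\be_0}(\phi,\phi)=\la\|\phi\|_{L^2}^2$, and taking real parts and invoking the coercivity estimate of Lemma \ref{lem-3-25-1}(2) yields $(\Re\la+M)\|\phi\|_{L^2}^2\geq\min\{1/2,C_*\}\|\phi\|_{\HH^{1}}^2>0$.

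\textbf{Stage 2 (Krein-Rutman for $T_M$).} By Lemma \ref{thm-3-25-1}(3), $T_M$ is positive and in fact strongly positive: $f\geq 0$ with $f\not\equiv 0$ forces $T_M f>0$ pointwise in $\UU$. Since $T_M$ is compact and strongly positive on the Banach lattice $L^2(\UU;\R)$ (with complexification $L^2(\UU;\C)$), the Krein-Rutman theorem for irreducible positive compact operators applies and gives that $r:=r(T_M)>0$, that $r$ is an algebraically simple eigenvalue of $T_M$ with a strictly positive eigenfunction $\tilde v_1\in\DD$, and that $|\mu|<r$ for every $\mu\in\si(T_M)\sm\{r\}$. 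Setting $\la_*:=r^{-1}-M\in\R$, we obtain an algebraically simple real eigenvalue of $-\LL_{\be_0}$ with eigenfunction $\tilde v_1$ a.e. positive in $\UU$, settling (3) provided we identify $\la_*$ with $\la_1$.

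\textbf{Stage 3 (Identification $\la_*=\la_1$ and real-part dominance).} The modulus-dominance $|M+\la|>M+\la_*$ for $\la\in\si(-\LL_{\be_0})\sm\{\la_*\}$ coming from Krein-Rutman does \emph{not} directly give $\Re\la>\la_*$, since a complex $\la=c+id$ with $c<\la_*$ and $|d|$ large can still satisfy $(M+c)^2+d^2>(M+\la_*)^2$. To close this gap I will appeal to the Perron-Frobenius property for positive $C_0$-semigroups with compact resolvent: $\LL_{\be_0}$ generates an irreducible positive semigroup (the positivity inherits from that of $T_M$, and the construction is carried out in Theorem \ref{thm-4-26-2}), so the spectral bound $s(\LL_{\be_0})=\sup\{\Re\mu:\mu\in\si(\LL_{\be_0})\}$ is an algebraically simple eigenvalue of $\LL_{\be_0}$ with positive eigenfunction, strictly dominating all other eigenvalues in real part. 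Transferring via $\la\mapsto-\la$, the smallest real part of $\si(-\LL_{\be_0})$ is attained at a real eigenvalue with a positive eigenfunction, which by algebraic simplicity must coincide with $\la_*$; hence $\la_1=\la_*$ and $\Re\la>\la_1$ for every other $\la\in\si(-\LL_{\be_0})$.

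\textbf{Main obstacle.} Stage 3 is the delicate step. Pure Krein-Rutman on $T_M$ gives modulus-dominance but not real-part dominance of $\la_*$, so an additional input is essential. The cleanest route is the Perron-Frobenius theorem for positive semigroups above, but one could alternatively argue through the adjoint: let $\tilde v_1^*>0$ be the positive eigenfunction of $\LL_{\be_0}^*$ at $\la_*$ (constructed in Subsection \ref{subsec-adjoint-operator}); the bi-orthogonality $\lan\phi,\tilde v_1^*\ran=0$ for any eigenfunction $\phi$ at a different eigenvalue, combined with a careful sign/modulus analysis of $|\phi|$ tested against $\tilde v_1^*$, can also rule out eigenvalues with $\Re\la\leq\la_*$.
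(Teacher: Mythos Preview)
Your Stages 1 and 2 are essentially the paper's argument. The one caveat is terminological: the paper, in the remark immediately following the theorem, explicitly points out that the positive cone $L^2_+(\UU)$ has empty interior, so the classical Kre\v{\i}n--Rutman theorem for strongly positive compact operators does not apply. The paper instead invokes Sawashima's framework of \emph{nonsupporting} positive operators (equivalently, the irreducible Banach-lattice version due to Schaefer/Marek), which is exactly what you are using under the label ``Kre\v{\i}n--Rutman for irreducible positive compact operators on a Banach lattice.'' So this is a naming issue, not a mathematical one --- but you should be precise about which version you mean, since the classical one genuinely fails here.

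Your Stage 3 observation is sharp and in fact isolates a point the paper's proof dispatches in a single phrase (``the theorem then follows from the spectral mapping theorem''). You are right that modulus dominance $|\la+M|>\la_1+M$ for the resolvent does not by itself yield $\Re\la>\la_1$: a complex $\la$ with $\Re\la\leq\la_1$ and large $|\Im\la|$ is not excluded. Your proposed remedy via Perron--Frobenius theory for irreducible positive analytic (hence eventually compact) semigroups is correct and standard. Note only that Theorem \ref{thm-4-26-2}'s proof cites Theorem \ref{thm-3-28-2} to place $(0,\infty)$ in the resolvent set --- but that citation uses only part (1), which you have already established independently, so the logical order (1) $\Rightarrow$ semigroup generation $\Rightarrow$ (2)--(3) is consistent. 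An alternative partial patch, closer in spirit to the paper: since Lemma \ref{thm-3-25-1} holds verbatim with $M$ replaced by any $M'\geq M$, Sawashima applied to $(-\LL_{\be_0}+M')^{-1}$ gives $|\la+M'|>\la_1+M'$ for every $M'\geq M$, and sending $M'\to\infty$ forces $\Re\la\geq\la_1$. Ruling out the boundary case $\Re\la=\la_1$ with $\la\neq\la_1$ still requires an extra ingredient --- either the semigroup route you propose, or the Kato-inequality/adjoint-eigenfunction argument you sketch at the end.
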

\begin{proof}
Due to compactness, $(-\LL_{\be_0}+M)^{-1}$ has a discrete spectrum. Since Lemma \ref{thm-3-25-1} (3) ensures the a.e. positivity of $(-\LL_{\be_0}+M)^{-1}\phi$ in $\UU$ for each $\phi\in L^{2}_{+}(\UU)\sm\{0\}$, the operator $(-\LL_{\be_0}+M)^{-1}$ is in particular positive and nonsupporting (in the language of I. Sawashima \cite{Sawashima64}). As a result, we are able to apply the results in \cite{Sawashima64} (also see \cite[Theeorem 2.3]{Mar70}) to conclude
\begin{itemize}
\item the spectral radius $r_{1}$ of $(-\LL_{\be_0}+M)^{-1}$ is a positive and simple eigenvalue of $(-\LL_{\be_0}+M)^{-1}$;

\item the eigenspace of $r_{1}$ is spanned over $\C$ by $\tilde{v}_1$ for some $\tilde{v}_1>0$ a.e. in $\UU$;

\item $r_{1}$ is dominating in the sense that  $\sup\left\{|\la|:\la\in\si((-\LL_{\be_0}+M)^{-1})\sm\{r_{1}\}\right\}<r_{1}$.
\end{itemize}
The theorem then follows from the spectral mapping theorem.
\end{proof}

\begin{rem}
We point out that the positive cone $L^{2}_{+}(\UU)$ has empty interior so that the celebrated Kre\v{\i}n-Rutman theorem \cite{KR50} for compact and strongly positive operators, often used to treat elliptic operators on bounded domains, does not apply here. Restricting $-\LL_{\beta_{0}}$ to a smaller space does not help as $\UU$ is unbounded.

The number $\la_{1}$ is often called \emph{the principal eigenvalue} of $-\LL_{\beta_{0}}$. So far, it is not clear whether $\la_{1}$ is positive. The positivity of $\la_{1}$ is shown later by means of the absorbing properties of the process $X_{t}$.
\end{rem}

%Recall that $K$ is the positive cone of $L^{2}(\UU)$.

The following result concerns the semigroup generated by $\LL_{\beta_{0}}$.

\begin{thm}\label{thm-4-26-2}
Assume {\bf (H1)}-{\bf(H3)}. Then, $(\LL_{\be_0}, \DD)$ generates a $C_0$-semigroup $(T_t)_{t\geq 0}$ on $L^2(\UU;\C)$. Moreover, $(T_t)_{t\geq 0}$ is positive (i.e., $T_{t}L^{2}_{+}(\UU)\subset L^{2}_{+}(\UU)$ for all $t\geq0$), extends to an analytic semigroup and is immediately compact.
\end{thm}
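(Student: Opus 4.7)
The plan is to realize $\LL_{\be_0}$ as the operator associated with the sesquilinear form $\EE_{\be_0}$ via the classical form method (Kato/Ouhabaz), and then read off generation, analyticity, positivity, and compactness from the structural properties of $\EE_{\be_0}$ already established in Lemma \ref{lem-3-25-1} and Lemma \ref{thm-3-25-1}.

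First I would check that $\EE_{\be_0}$ is a densely defined, closed, sectorial form on $L^{2}(\UU;\C)$. Density follows because $C_{c}^{\infty}(\UU)\subset\HH^{1}(\UU;\C)$ and is dense in $L^{2}(\UU;\C)$. Closedness amounts to the observation that the form norm $(\Re\EE_{\be_0}(\phi,\phi)+M\|\phi\|_{L^{2}}^{2})^{1/2}$ is equivalent to $\|\cdot\|_{\HH^{1}}$ by Lemma \ref{lem-3-25-1}(2), and $\HH^{1}(\UU;\C)$ is complete by construction. For sectoriality, Lemma \ref{lem-3-25-1}(2) gives the explicit formula
\begin{equation*}
\Im\EE_{\be_0}(\phi,\phi)=\int_{\UU}(p+\be_{0}\nabla U)\cdot(\phi_{1}\nabla\phi_{2}-\phi_{2}\nabla\phi_{1})\md x,
\end{equation*}
so by Cauchy–Schwarz together with Lemma \ref{lem-3-24-2}(1) one has
\begin{equation*}
|\Im\EE_{\be_0}(\phi,\phi)|\leq\Bigl(\int_{\UU}|p+\be_{0}\nabla U|^{2}|\phi|^{2}\md x\Bigr)^{1/2}\|\nabla\phi\|_{L^{2}}\leq C\|\phi\|_{\HH^{1}}^{2},
\end{equation*}
and the lower bound in Lemma \ref{lem-3-25-1}(2) then yields $|\Im\EE_{\be_0}(\phi,\phi)|\leq C'(\Re\EE_{\be_0}(\phi,\phi)+M\|\phi\|_{L^{2}}^{2})$. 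Thus $\EE_{\be_0}$ is sectorial with vertex $-M$, and the standard form-to-operator correspondence identifies its associated operator precisely with $(\LL_{\be_0},\DD)$ in view of Lemma \ref{thm-3-25-1}(2).

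Given sectoriality, the Lumer–Phillips/form generation theorem (see, e.g., Kato, Ouhabaz, or Pazy) implies that $\LL_{\be_0}$ generates an analytic, hence $C_{0}$, semigroup $(T_{t})_{t\geq 0}$ on $L^{2}(\UU;\C)$, with sector of analyticity determined by the sectoriality constant. For immediate compactness, I would combine analyticity with compactness of the resolvent: by Lemma \ref{thm-3-25-1}(1), $(-\LL_{\be_0}+M)^{-1}$ maps $L^{2}(\UU;\C)$ continuously into $\HH^{1}(\UU;\C)$, and Lemma \ref{lem-3-28-1} gives the compact embedding $\HH^{1}(\UU;\C)\hookrightarrow L^{2}(\UU;\C)$, so the resolvent is compact. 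For analytic semigroups on a Banach space, compactness of one (hence every) resolvent is equivalent to $T_{t}$ being compact for every $t>0$, which yields immediate compactness.

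Finally, for positivity I would invoke the exponential (Post–Widder) formula
\begin{equation*}
T_{t}f=\lim_{n\to\infty}\Bigl(\frac{n}{t}\Bigr)^{n}\Bigl(\frac{n}{t}I-\LL_{\be_0}\Bigr)^{-n}f=\lim_{n\to\infty}\Bigl(\frac{n}{t}\Bigr)^{n}\bigl(-\LL_{\be_0}+\tfrac{n}{t}\bigr)^{-n}f,
\end{equation*}
valid in $L^{2}(\UU;\C)$ for any $C_{0}$-semigroup; since Lemma \ref{thm-3-25-1}(3) proves that $(-\LL_{\be_0}+M)^{-1}$ preserves $L^{2}_{+}(\UU)$, and the same proof applies verbatim with $M$ replaced by any larger constant $n/t\geq M$ (the form coercivity estimate only improves), each resolvent in the formula is positive, whence $T_{t}$ is positive. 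The one step I expect to require care is verifying the sectoriality bound with the correct dependence on $M$ and the implicit constants from Lemma \ref{lem-3-24-2}; the rest is a routine application of the abstract form-generation machinery.
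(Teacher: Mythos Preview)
Your proposal is correct and takes essentially the same approach as the paper: the paper also establishes the sectoriality estimate $|\Im\EE_{\be_0}(\phi,\phi)|\leq C(\Re\EE_{\be_0}(\phi,\phi)+M\|\phi\|_{L^2}^2)$ via Lemma~\ref{lem-3-25-1}(2) and Lemma~\ref{lem-3-24-2}(1), deduces analyticity from it (using the numerical-range criterion in \cite{Pa83} rather than the form language of Kato/Ouhabaz), and obtains positivity from Lemma~\ref{thm-3-25-1}(3) and immediate compactness from compactness of the resolvent exactly as you do. The only cosmetic difference is that the paper first invokes Hille--Yosida to get the $C_0$-semigroup and then upgrades to analyticity, whereas you obtain both at once from the sectorial form; the underlying estimates are identical.
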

\begin{proof}
Note that it is equivalent to study the operator $\LL_{\be_0}-M$ with domain $\DD$. The proof is broken into two steps.

\medskip

\paragraph{\bf Step 1} We claim:
\begin{enumerate}
\item [(a)] $(\LL_{\be_0}-M, \DD)$ is closed and $\DD$ is dense in $L^2(\UU;\C)$;

\item [(b)] The resolvent set of $\LL_{\be_0}-M$ contains $(0,\infty)$, and for each $\la>0$ one has
\begin{equation*}
\|(\la+M-\LL_{\be_0})^{-1}\|_{L^2\to L^2}\leq \frac{1}{\la}.
\end{equation*}
\end{enumerate}
Thus, $(\LL_{\be_0}-M,\DD)$ generates a $C_0$-semigroup of contractions $\{T_{t}\}_{t\geq0}$ in $L^2(\UU;\C)$ by the Hille-Yosida theorem (see e.g. \cite{Pa83,EN00}). By Lemma \ref{thm-3-25-1} (3), this semigroup must be positive.

\medskip

(a) Obviously, $C_0^{\infty}(\UU;\C)\subset\DD$. The density of $\DD$ in $L^2(\UU;\C)$ follows readily. To see the closedness of $(\LL_{\be_0}-M,\DD)$, we take sequences of functions $\{\phi_n\}_{n\in\N}\subset\DD$ and $\{f_n\}_{n\in\N}\subset L^2(\UU;\C)$ that converge respectively to $\phi$ and $f$ in $L^2(\UU;\C)$ and satisfy $(\LL_{\be_0}-M)\phi_n=f_n$ for all $n\in\N$. By Lemma \ref{thm-3-25-1}, we find $\tilde{\phi}:=(\LL_{\be_0}-M)^{-1}f\in\DD$ and $\|\phi_n-\tilde{\phi}\|_{\HH^{1}}\leq C\|f_n-f\|_{L^2}$ for all $n\in\N$, where $C>0$ is independent of $n$. Passing to the limit as  $n\to\infty$, we conclude $\phi=\tilde{\phi}\in\DD$, and hence, $(\LL_{\be_0}-M)\phi=f$. This proves (a).
%Multiplying the above equation by $\psi\in C_0^{\infty}(\UU)$ and integrating by parts, we find
%\begin{equation}\label{eqn-4-18-1}
%\EE_{\be_0}(\phi_n,\psi)+M\int_{\UU}\phi_n\psi\md x=\int_{\UU}f_n\psi\md x.
%\end{equation}
%Since $C_0^{\infty}(\UU)$ is dense in $H^1_0(\UU)$ which contains $\HH$, we could argue similarly as in  Lemma \ref{lem-3-25-1} (1) to see that \eqref{eqn-4-18-1} holds with $\psi$ replaced by $\phi_n$, leading to
%$$
%\EE_{\be_0}(\phi_n,\phi_n)+M\int_{\UU}\phi_n^2\md x=\int_{\UU}f_n\phi_n\md x.
%$$
%Consequently, Lemma \ref{lem-3-25-1} gives $C>0$ such that
%$$
%\|\phi_n\|\leq C \|f_n\|_{L^2(\UU)}.
%$$
%Therefore, the linearity of $\LL^*_{\be_0}-M$ ensures that $\{\phi_n\}_{n\in\N}$ is a Cauchy sequence in $\HH$. As $\phi_n\to \phi$ in $L^2(\UU)$ as $n\to \infty$, there must hold
%$$
%\phi\in \HH\quad\andd\quad\lim_{n\to \infty}\|\phi_n-\phi\|=0.
%$$
%Then, we follow standard arguments to see from \eqref{eqn-4-18-1} that
%\begin{equation*}\label{eqn-4-18-2}
%\EE_{\be_0}(\phi,\psi)+M\int_{\UU}\phi\psi\md x=\int_{\UU}f\psi\md x,
%\end{equation*}
%which indicates $\phi$ is the solution of $(\LL^*_{\be_0}-M)\phi=f$ in the weak sense. Thanks to the classical regularity theory of elliptic equations, we have $\phi\in H^2_{loc}(\UU)$ and $(\LL^*_{\be_0}-M)\phi=f$. This shows (a).

\medskip

(b) Fix $\la>0$. By Theorem \ref{thm-3-28-2}, $\la$ is in the resolvent set of $\LL_{\beta_{0}}-M$. To establish the upper bound of  $\|(\la+M-\LL_{\be_0})^{-1}\|_{L^2\to L^2}$, we let $f\in L^2(\UU;\C)$ and $u\in \DD$ be such that $(\la+M-\LL_{\be_0})u=f$. It follows from Lemma \ref{thm-3-25-1} (2) that
$\EE_{\be_0}(u,\phi)+(\la+M)\lan u,\phi\ran_{L^{2}}=\lan f,\phi\ran_{L^{2}}$ for all $\phi\in \HH^1(\UU;\C)$. As a result, Lemma \ref{lem-3-25-1} (2) ensures
$$
\min\left\{\frac{1}{2},C_*\right\}\|u\|_{\HH^{1}}^2+\la\|u\|_{L^{2}}^2\leq \Re\EE_{\be_0}(u,\phi)+\la\|u\|^2_{L^{2}}\leq \|f\|_{L^2}\|u\|_{L^2},
$$
yielding the expected upper bound.

\medskip

\paragraph{\bf Step 2}
To show that $(T_t)_{t\geq 0}$ extends to an analytic semigroup, we set
$$
S:=\left\{\lan-(\LL_{\be_0}-M)u,u\ran_{L^{2}}:u\in\DD, \,\,\|u\|_{L^2}=1\right\}.
$$
We claim there exists $\tha\in (0,\frac{\pi}{2})$ such that $S\subset \{\la\in \C: |\arg\la|\leq \tha\}$. Then, for fixed $\tha_*\in (\tha,\frac{\pi}{2})$,  $\Si_{\tha_*}:=\{\la\in \C:|\arg\la|>\tha_*\}\subset \C\sm \ol{S}$ and there is $C_1>0$ such that $d(\la, \ol{S})\geq C_1|\la|$ for all $\la \in \Si_{\tha_*}$.

Since $(-\infty,0)$ is contained in the resolvent of $\LL_{\be_0}-M$, an application of  \cite[Theorem 1.3.9]{Pa83} yields that $\Si_{\tha_*}$ is contained in the resolvent of $\LL_{\be_0}-M$ and
$$
\|(\la+M-\LL_{\be_0})^{-1}\|_{L^2\to L^2}\leq \frac{1}{d(\la, \ol{S})}\leq \frac{1}{C_1|\la|},\quad\forall\la\in \Si_{\tha_*}.
$$
As a result,  \cite[Theorem 2.5.2]{Pa83} enables us to extend $(T_t)_{t\geq 0}$ to an analytic semigroup. Moreover, as $(\LL_{\be_0}-M)^{-1}$ is compact by Lemma \ref{lem-3-28-1}, the immediate compactness of $(T_t)_{t\geq 0}$ follows from \cite[Theorem II.4.29]{EN00}.

It suffices to prove the claim. To do so, we fix $u\in \DD$. Note that Lemma \ref{thm-3-25-1} (2) gives
\begin{equation*}
\lan-(\LL_{\be_0}-M)u,u\ran_{L^{2}}=\EE_{\be_0}(u,u)+M\|u\|^2_{L^{2}}.
\end{equation*}
We see from Lemma \ref{lem-3-25-1} (2) that
\begin{equation*}
\Re\lan-(\LL_{\be_0}-M)u,u\ran_{L^{2}}=\Re\EE_{\be_0}(u,u)+M\|u\|^2_{L^{2}}\geq\min\left\{\frac{1}{2},C_*\right\}\|u\|_{\HH^1}.
\end{equation*}
Applying Young's inequality, we derive from  Lemma \ref{lem-3-24-2} that
\begin{equation*}
    \begin{split}
        \left|\Im\lan-(\LL_{\be_0}-M)u,u\ran_{L^{2}}\right|&=|\Im \EE_{\be_0}(u,u)|\\
        &=\left|\int_{\UU}(p+\be_0\nabla U)\cdot(u_1\nabla u_2-u_2\nabla u_1)\right|\\
        &\leq \frac{1}{2}\int_{\UU}|\nabla u|^2\md x+\frac{1}{2}\int_{\UU}|p+\be_0\nabla U|^2|u|^2\md x\\
        &\leq \frac{1}{2}\int_{\UU}|\nabla u|^2\md x+\frac{C_2}{2}\int_{\UU}\al |u|^2\md x,
    \end{split}
\end{equation*}
where $u:=u_1+iu_2$ and $C_2>0$ is independent of $u\in\DD$. Therefore,
$$
0\leq \frac{\left|\Im\lan-(\LL_{\be_0}-M)u,u\ran_{L^{2}}\right|}{\Re\lan-(\LL_{\be_0}-M)u,u\ran_{L^{2}}}\leq \frac{\frac{1}{2}+\frac{C_2}{2}}{\min\left\{\frac{1}{2},C_*\right\}}.
$$
The claim follows. This completes the proof.
\end{proof}

%%%%%%%%%%%%%%%%%%%%%%%%%%%%%%%%%%%%%%%%

%%%%%%%%%%%%%%%%%%%%%%%%%%%%%%%%%%%%%%%%

\subsection{Adjoint operator and semigroup}\label{subsec-adjoint-operator}

Let $(\LL^{*}_{\beta_{0}},\DD^*)$ be the adjoint operator of $(\LL_{\beta_{0}},\DD)$ in $L^2(\UU;\C)$. Then, $\DD^*$ is given by
$$
\DD^{*}:=\left\{w\in L^2(\UU;\C): \exists f\in L^2(\UU;\C)\,\,\text{s.t.}\,\,\langle w,\LL_{\be_0}\phi\rangle_{L^2}=\langle f,\phi\rangle_{L^2},\,\,\forall \phi\in \DD\right\}.
$$
For each $w\in \DD^*$, $\LL_{\be_0}^{*}w$ is  the unique element in $L^2(\UU;\C)$ such that $\langle w,\LL_{\be_0}\phi\rangle_{L^2}=\langle\LL_{\be_0}^*{w},\phi\rangle_{L^2}$ for all $\phi\in \DD$. Integration by parts yields
\begin{equation*}\label{eqn-5-19-1}
\LL^{*}_{\be_0}w=\frac{1}{2}\De w+\nabla\cdot\left((p+\be_0\nabla U) w\right)-e_{\be_0} w, \quad  w\in C_0^{\infty}(\UU;\C).
\end{equation*}

The following lemma summarizes some properties of the operator $-\LL_{\beta_{0}}^{*}$.

\begin{lem}\label{lem-characterization-domain}
Assume {\bf (H1)-(H3)}. Then, the following hold.
\begin{enumerate}
\item[\rm(1)] $\si(-\LL^{*}_{\be_0})=\si(-\LL_{\be_0})\subset\{\la\in\C:\Re\la>-M\}$.

\item[\rm(2)] $\DD^{*}=\left\{w\in \HH^{1}(\UU;\C):\LL^*_{\be_0}w\in L^2(\UU;\C) \right\}$.

\item[\rm(3)] For each $ \phi\in \HH^1(\UU;\C)$ and $w\in \DD^*$ one has
$\langle\phi,-\LL^*_{\be_0}w\rangle_{L^2}=\EE_{\be_0}(\phi, w)$.

\item[\rm(4)] $\la_1$ is a simple and dominating eigenvalue of $-\LL^*_{\be_0}$ with the associated eigenspace spanned over $\C$ by $\tilde{v}_{1}^{*}$ for some $\tilde{v}_{1}^{*}\in\DD^{*}$ a.e. positive in $\UU$.
\end{enumerate}
\end{lem}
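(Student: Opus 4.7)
\textbf{Part (1)} follows from the general principle that for any closed densely defined operator on a complex Hilbert space, $\si(A^*) = \{\bar\la : \la \in \si(A)\}$, combined with the fact that since $\LL_{\be_0}$ has real coefficients, complex conjugation maps resolvents of $-\LL_{\be_0}$ to resolvents, so $\si(-\LL_{\be_0})$ is symmetric under conjugation. The bound $\Re\la > -M$ is inherited from Theorem \ref{thm-3-28-2}(1). For \textbf{Part (2)}, I would introduce the adjoint sesquilinear form $\EE^*_{\be_0}(w,\phi) := \ol{\EE_{\be_0}(\phi,w)}$, which satisfies the same boundedness and coercivity estimates as $\EE_{\be_0}$ (the coercivity identity in Lemma \ref{lem-3-25-1}(2) depends only on the real part, where the first-order drift contributes no sign). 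Applying Lax-Milgram to $\EE^*_{\be_0}$, for each $g\in L^2(\UU;\C)$ there is a unique $w_g\in\HH^1(\UU;\C)$ satisfying the weak equation; taking conjugates and restricting to $\phi\in\DD$, and using Lemma \ref{thm-3-25-1}(2) to replace $\EE_{\be_0}(\phi,w_g)$ by $\langle-\LL_{\be_0}\phi,w_g\rangle_{L^2}$, yields $\langle(-\LL_{\be_0}+M)\phi, w_g\rangle_{L^2} = \langle\phi, g\rangle_{L^2}$, hence $w_g\in\DD^*$ with $(-\LL^*_{\be_0}+M)w_g=g$. Since $M$ lies in the resolvent of $-\LL^*_{\be_0}$ by Part (1), any $w\in\DD^*$ with $(-\LL^*_{\be_0}+M)w=g$ coincides with $w_g\in\HH^1(\UU;\C)$, proving $\DD^*\subset\HH^1(\UU;\C)$. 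The reverse inclusion requires an adjoint version of Lemma \ref{thm-3-25-1}(2), namely $\EE^*_{\be_0}(w,\phi)=\langle-\LL^*_{\be_0}w,\phi\rangle_{L^2}$ whenever $w\in\HH^1(\UU;\C)$ and $\LL^*_{\be_0}w\in L^2(\UU;\C)$, proved by the same cutoff-and-integration-by-parts argument.

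For \textbf{Part (3)}, the identity holds for $\phi\in\DD$ by Lemma \ref{thm-3-25-1}(2) combined with the definition of the adjoint. To extend to $\phi\in\HH^1(\UU;\C)$, I would establish density of $C_0^\infty(\UU;\C)\subset\DD$ in $\HH^1(\UU;\C)$ via the standard cutoff-and-mollify procedure (using the $\{\eta_n\}$ of Lemma \ref{lem-3-25-1}, in particular the estimate $\int_\UU|\nabla\eta_n|^2|\phi|^2\md x\to 0$ obtained there), then pass to the limit using that both sides of the identity are continuous in $\phi\in\HH^1(\UU;\C)$ -- the right-hand side by Lemma \ref{lem-3-25-1}(1), the left-hand side by Cauchy-Schwarz and $\|\phi\|_{L^2}\leq(\inf_\UU\al)^{-1/2}\|\phi\|_{\HH^1}$. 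For \textbf{Part (4)}, I would replicate the proof of Theorem \ref{thm-3-28-2}(2)-(3) for the operator $(-\LL^*_{\be_0}+M)^{-1}$, which by Part (2) and the compact embedding $\HH^1(\UU;\C)\hookrightarrow L^2(\UU;\C)$ is compact on $L^2(\UU;\C)$. Positivity and the non-supporting property come from an adjoint analog of Lemma \ref{thm-3-25-1}(3), proved by testing the adjoint weak formulation against $w_g^-$ for $g\geq 0$ and applying the weak Harnack inequality to the elliptic equation $(-\LL^*_{\be_0}+M)w_g=g$. Sawashima-Marek then yields a simple dominating positive eigenvalue of $(-\LL^*_{\be_0}+M)^{-1}$, and the spectral mapping theorem combined with Part (1) identifies the corresponding eigenvalue of $-\LL^*_{\be_0}$ as precisely $\la_1$.

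The main obstacle is the reverse inclusion in Part (2): verifying the adjoint variational identity $\EE^*_{\be_0}(w,\phi)=\langle-\LL^*_{\be_0}w,\phi\rangle_{L^2}$ requires repeating the technical cutoff argument of Lemma \ref{lem-3-25-1} with careful bookkeeping of the signs arising from integrating by parts on the first-order drift term $-(p+\be_0\nabla U)\cdot\nabla$, which in the adjoint becomes $+\nabla\cdot((p+\be_0\nabla U)\,\cdot\,)$. Conceptually nothing new happens relative to Lemma \ref{lem-3-25-1}, but this single delicate piece is what underpins both the positivity analysis in Part (4) and the density-based extension in Part (3); once it is in place, the remaining arguments are mechanical transcriptions of the primal ones.
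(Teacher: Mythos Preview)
Your proposal is correct and follows essentially the same route as the paper: Part~(1) via $\si(A^*)=\overline{\si(A)}$ plus conjugation symmetry from real coefficients, Part~(2) via Lax--Milgram for the adjoint form together with the resolvent, Part~(3) via density of $C_0^\infty(\UU;\C)$ in $\HH^1(\UU;\C)$, and Part~(4) via Sawashima--Marek applied to the compact positive adjoint resolvent.

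One remark on your self-identified ``main obstacle'': you have the two inclusions in Part~(2) reversed in difficulty relative to the paper. The paper regards $\{w\in\HH^1:\LL^*_{\be_0}w\in L^2\}\subset\DD^*$ as the easy direction, and it is, because Lemma~\ref{thm-3-25-1}(2) already gives $\EE_{\be_0}(\phi,w)=\langle-\LL_{\be_0}\phi,w\rangle_{L^2}$ for every $\phi\in\DD$ and $w\in\HH^1(\UU;\C)$; combining this with the distributional identity $\langle\LL^*_{\be_0}w,\phi\rangle_{L^2}=-\overline{\EE_{\be_0}(\phi,w)}$ for $\phi\in C_0^\infty$ and passing to $\phi\in\DD$ by $\HH^1$-density (continuity of both sides being immediate) yields the inclusion without redoing any cutoff computation. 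The direction the paper treats as requiring work is $\DD^*\subset\HH^1$, handled exactly as you describe: write $u^*=(-\LL^*_{\be_0}+M+\la)^{-1}f^*$ for $\la>0$ and invoke the adjoint Lax--Milgram argument. So your adjoint variational identity is not a genuine obstacle---it is already contained in Lemma~\ref{thm-3-25-1}(2) applied with the roles of the two slots read correctly.
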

\begin{proof}
(1) Note that $\si(-\LL^{*}_{\be_0})=\ol{\si(-\LL_{\be_0})}$. Since the spectrum of $-\LL_{\be_0}$ consists of eigenvalues due to Lemma \ref{thm-3-28-2} (1), and the coefficients of $-\LL_{\be_0}$ are real-valued, we have $\La\in\si(-\LL_{\be_0})$ if and only if $\ol{\La}\in\si(-\LL_{\be_0})$. Hence, $\ol{\si(-\LL_{\be_0})}=\si(-\LL_{\be_0})$, which leads to $\si(-\LL^{*}_{\be_0})=\si(-\LL_{\be_0})$.

(2) Let $\DD^{*}_{1}:=\left\{w\in \HH^{1}(\UU;\C):\LL^*_{\be_0}w\in L^2(\UU;\C) \right\}$. It is easy to verify that $\DD^{*}_{1}\subset\DD^{*}$. To prove the converse, we take $u^{*}\in\DD^{*}$. Then, there exists a unique $f\in L^2(\UU;\C)$ such that $-\LL^{*}_{\beta_{0}}u^{*}=f$. Fix $\la>0$. Setting $f^*:=f+(M+\la)u^*\in L^2(\UU;\C)$, we find $(-\LL^*_{\be_0}+M+\la)u^{*}=f^{*}$. It follows from (1) that $-\la$ belongs to the resolvent set of $-\LL_{\beta_{0}}^{*}+M$, and therefore,  $u^{*}=(-\LL^*_{\be_0}+M+\la)^{-1}f^*$.

Arguing as in the proof of Lemma \ref{thm-3-25-1}, we deduce $(-\LL_{\beta_{0}}^{*}+M+\la)^{-1}L^{2}(\UU;\C)\subset\DD^{*}_{1}$,
and hence, $u^{*}\in \DD^{*}_{1}$. It follows that $\DD^{*}=\DD_{1}^{*}$.

(3) Note that $\langle\phi,-\LL^*_{\be_0}w\rangle_{L^2}=\langle-\LL_{\be_0}\phi,w\rangle_{L^2}$ for all $\phi\in C_0^{\infty}(\UU;\C)$ and $w\in \DD^*$. It follows from Lemma \ref{thm-3-25-1} (2) that $\langle\phi,-\LL^*_{\be_0}w\rangle_{L^2}=\EE_{\be_0}(\phi, w)$ for all $\phi\in C_0^{\infty}(\UU;\C)$ and $w\in \DD^*$. Since $C_0^{\infty}(\UU;\C)$ is dense in $\HH^1(\UU;\C)$, the conclusion follows from approximation arguments as in the proof of Lemma \ref{thm-3-25-1} (2).

(4) This follows from (1) and arguments as in the proof of Theorem \ref{thm-3-28-2}.
\end{proof}

Denote by $(T^{*}_t)_{t\geq 0}$ the dual semigroup of $(T_t)_{t\geq 0}$. It is well-known (see e.g. \cite[Corollary 1.10.6]{Pa83}) that $(T^{*}_t)_{t\geq 0}$ is a $C_{0}$-semigroup with infinitesimal generator $(\LL^{*}_{\be_0},\DD^{*})$.

\begin{thm}\label{thm-5-22-1}
Assume {\bf(H1)}-{\bf(H3)}. Then, $(T^{*}_t)_{t\geq 0}$ is an analytic semigroup. Moreover, it is positive, i.e., $T^{*}_{t}L_{+}^{2}(\UU)\subset L_{+}^{2}(\UU)$ for all $t\geq0$, and immediately compact.
\end{thm}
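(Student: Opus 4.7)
The plan is to transfer the three claimed properties from the primal semigroup $(T_{t})_{t\geq 0}$ established in Theorem \ref{thm-4-26-2} to the adjoint semigroup $(T^{*}_{t})_{t\geq 0}$, relying on the self-duality of the Hilbert space $L^{2}(\UU;\C)$ and standard duality arguments.

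For \emph{immediate compactness}, Theorem \ref{thm-4-26-2} gives that $T_{t}$ is a compact operator on $L^{2}(\UU;\C)$ for every $t>0$, and Schauder's theorem then ensures that its Hilbert-space adjoint $T^{*}_{t}$ is compact as well. For \emph{positivity}, I would fix $g\in L^{2}_{+}(\UU)$; for every $f\in L^{2}_{+}(\UU)$, Theorem \ref{thm-4-26-2} yields $T_{t}f\in L^{2}_{+}(\UU)$, whence
\[
\lan T^{*}_{t}g,f\ran_{L^{2}}=\lan g,T_{t}f\ran_{L^{2}}\geq 0.
\]
Since $L^{2}_{+}(\UU)$ is a self-dual cone, this gives $T^{*}_{t}g\geq 0$ a.e.\ in $\UU$, i.e., $T^{*}_{t}L^{2}_{+}(\UU)\subset L^{2}_{+}(\UU)$; reality of $T^{*}_{t}g$ is automatic from the real-valuedness of the coefficients of $\LL^{*}_{\be_{0}}$.

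For \emph{analyticity}, I would mirror Step 2 of the proof of Theorem \ref{thm-4-26-2} applied to $-(\LL^{*}_{\be_{0}}-M)$. Setting
\[
S^{*}:=\left\{\lan-(\LL^{*}_{\be_{0}}-M)w,w\ran_{L^{2}}:w\in\DD^{*},\,\|w\|_{L^{2}}=1\right\},
\]
Lemma \ref{lem-characterization-domain} (3) gives $\lan w,-\LL^{*}_{\be_{0}}w\ran_{L^{2}}=\EE_{\be_{0}}(w,w)$, and hence $\lan -\LL^{*}_{\be_{0}}w,w\ran_{L^{2}}=\ol{\EE_{\be_{0}}(w,w)}$. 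Since the sector $\{\la\in\C:|\arg\la|\leq\tha\}$ from the proof of Theorem \ref{thm-4-26-2} is symmetric under complex conjugation, $S^{*}$ lies in the same sector. Together with the identification $\si(-\LL^{*}_{\be_{0}})=\si(-\LL_{\be_{0}})\subset\{\Re\la>-M\}$ from Lemma \ref{lem-characterization-domain} (1), \cite[Theorem 1.3.9]{Pa83} then yields a resolvent bound of the form $\|(\la+M-\LL^{*}_{\be_{0}})^{-1}\|_{L^{2}\to L^{2}}\leq C/|\la|$ on a larger sector $\{|\arg\la|>\tha_{*}\}$ with $\tha_{*}\in(\tha,\pi/2)$, and \cite[Theorem 2.5.2]{Pa83} delivers the desired analytic extension of $(T^{*}_{t})_{t\geq 0}$.

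The main point of care is the analyticity argument, but as described it reduces to a nearly verbatim repetition of the numerical-range computation from Theorem \ref{thm-4-26-2}, the only new input being the complex-conjugate symmetry between the numerical ranges of $-\LL_{\be_{0}}$ and $-\LL^{*}_{\be_{0}}$ furnished by Lemma \ref{lem-characterization-domain} (3). Alternatively, this step can be shortcut by invoking the abstract principle that, on a reflexive Banach space, the adjoint of a bounded $C_{0}$-analytic semigroup is itself $C_{0}$-analytic.
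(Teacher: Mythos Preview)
Your proposal is correct. The paper's own proof is even terser: for analyticity it invokes precisely the shortcut you mention at the end, namely the identity $\|(\la+M-\LL_{\be_{0}}^{*})^{-1}\|_{L^{2}\to L^{2}}=\|(\ol{\la}+M-\LL_{\be_{0}})^{-1}\|_{L^{2}\to L^{2}}$, which transfers the sectorial resolvent estimate from $\LL_{\be_{0}}-M$ directly and feeds into \cite[Theorem 2.5.2]{Pa83} without re-running the numerical-range computation. For positivity and immediate compactness the paper merely says ``arguments as in the proof of Theorem \ref{thm-4-26-2}'', meaning one re-derives the resolvent positivity and compactness for $\LL_{\be_{0}}^{*}$ and then applies \cite[Theorem II.4.29]{EN00}; your route via the self-duality of $L^{2}_{+}(\UU)$ and Schauder's theorem is a cleaner and more direct alternative.
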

\begin{proof}
Note that the resolvent set of $\LL_{\be_0}^*-M$ coincides with that of $\LL_{\be_0}-M$. Thanks to \cite[Theorem 2.5.2]{Pa83}, the conclusion is a straightforward consequence of the analyticity of $(T_t)_{t\geq 0}$ and the fact
$\|(\la+M-\LL_{\be_0}^*)^{-1}\|_{L^2\to L^2}=\|(\ol{\la}+M-\LL_{\be_0})^{-1}\|_{L^2\to L^2}$ for each $\la\in\C$ with $\Re\la>0$. The positivity and immediate compactness follow from arguments as in the proof of Theorem \ref{thm-4-26-2}.
\end{proof}

%------------------------------------------------------------%
\section{\bf Stochastic representation of semigroups}\label{sec-stoch-repre}

In this section, we study the stochastic representation of the semigroup $(T^{*}_{t})_{t\geq0}$. Subsection \ref{subsec-stochastic-represent-bd-domain} and Subsection \ref{subsec-estimate-semigroup-bd} are respectively devoted to the stochastic representation and estimates of semigroups generated by $\LL_{\beta_{0}}^{*}$ restricted to bounded domains with zero Dirichlet boundary condition. In Subsection \ref{subsec-stochastic-representation}, we establish the stochastic representation for $(T^{*}_{t})_{t\geq0}$.

\subsection{Stochastic representation in bounded domains}\label{subsec-stochastic-represent-bd-domain}

Let $\Om\stst \UU$ be a bounded and connected subdomain with $C^2$ boundary. Denote by $\LL^{X}$ the diffusion operator associated to $X_{t}$ or \eqref{sde-trans}, namely,
$$
\LL^{X}=\frac{1}{2}\De+(p-q)\cdot\nabla.
$$

For each $N>1$, let $\LL^{X}_{N}|_{\Om}$ be $\LL^{X}$ considered as an operator in $L^{N}(\Om;\C)$ with domain $W^{2,N}(\Om;\C)\cap W^{1,N}_0(\Om;\C)$. It is well-known (see e.g. \cite{GT01,Pa83,EN00}) that the spectrum of  $-\LL^{X}_{N}|_{\Om}$ is discrete and contained in $\{\la\in\C:\Re\la >0\}$ and  $\LL^{X}_{N}|_{\Om}$ generates an analytic semigroup $(S^{(\Om,N)}_{t})_{t\geq0}$ of contractions on $L^{N}(\Om;\C)$ that satisfies $S_{t}^{(\Om,N)}L^{N}_{+}(\Om)\subset L^{N}_{+}(\Om)$ for all $t\geq0$. Moreover, the following stochastic representation holds: for each $f\in C(\ol{\Om};\C)$,
\begin{equation}\label{stochastic-representation}
S^{(\Om,N)}_{t}f(x)=\E^x\left[f(X_t)\mathbbm{1}_{\{t<\tau_{\Om}\}}\right],\quad \forall (x,t)\in \ol{\Om}\times[0,\infty),
\end{equation}
where $\tau_{\Om}:=\inf\{t\geq 0: X_{t}\not\in\Om\}$ is the first time that $X_{t}$ exits $\Om$.

For $N>1$, let $\LL_{\beta_{0}}^{*,N}|_{\Om}$ be $\LL^{*}_{\beta_{0}}$ considered as an operator in $L^{N}(\Om;\C)$ with domain $W^{2,N}(\Om;\C)\cap W^{1,N}_{0}(\Om;\C)$.

\begin{prop}\label{lem-5-3-3}
The following statements hold.
\begin{enumerate}
\item[\rm(1)] The spectrum of $-\LL_{\beta_{0}}^{*,N}|_{\Om}$ is discrete and is contained in $\{\la\in\C:\Re\la>0\}$.

\item[\rm(2)] $\LL_{\beta_{0}}^{*,N}|_{\Om}$ generates an analytic semigroup of contractions $(T_t^{(*,\Om,N)})_{t\geq 0}$ on $L^N(\Om;\C)$ that is positive, namely, $T_t^{(*,\Om,N)}L^{N}_{+}(\Om)\subset L^{N}_{+}(\Om)$ for all $t\geq0$.

\item[\rm(3)] For each $f\in L^{N}(\Om;\C)$ one has
$$
T_{t}^{(*,\Om,N)}\tilde{f}=e^{-\frac{Q}{2}-\be_0 U}S_{t}^{(\Om,N)}f,\quad\forall t\geq0,
$$
where $\tilde{f}:=e^{-\frac{Q}{2}-\be_0 U} f$.

\item[\rm(4)] For each $f\in C(\ol{\Om};\C)$ one has
$$
T_t^{(*,\Om,N)} \tilde{f}(x)=e^{-\frac{Q(x)}{2}-\be_0 U(x)}\E^x\left[f(X_t)\mathbbm{1}_{\{t<\tau_{\Om}\}}\right],\quad \forall (x,t)\in \ol{\Om}\times[0,\infty),
$$
where $\tilde{f}:=e^{-\frac{Q}{2}-\be_0 U} f$.

\item[\rm(5)] For any $N_1, N_2>1$, $T_t^{(*,\Om,N_1)}$ and $T_t^{(*,\Om,N_2)}$ coincide on $L^{N_1}(\Om;\C)\cap L^{N_2}(\Om;\C)$ for all $t\geq 0$.
\end{enumerate}
\end{prop}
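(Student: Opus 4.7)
The plan is to exploit the fact that on a bounded subdomain $\Om\stst\UU$, far from the singular boundary $\Ga$, the operator $\LL_{\beta_0}^*$ is similar---via multiplication by the smooth positive weight $\phi:=e^{Q/2+\beta_0 U}$---to the diffusion generator $\LL^X$ of the process $X_t$. I would then invoke the classical theory for uniformly elliptic operators on bounded $C^2$ domains (spectrum, analytic semigroup, Feynman--Kac representation) for $\LL^X$ and transfer those conclusions to $\LL_{\beta_0}^*$ via conjugation.

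First I would establish the algebraic identity. Taking the formal $L^2$-adjoint of the defining relation $\LL_{\beta_0}=e^{Q/2+\beta_0 U}\LL_{\bf FP}^X e^{-Q/2-\beta_0 U}$ and using $(\LL_{\bf FP}^X)^*=\LL^X$, a direct computation on smooth test functions gives
$$
\LL_{\beta_0}^* v \;=\; e^{-Q/2-\beta_0 U}\,\LL^X\!\bigl(e^{Q/2+\beta_0 U}\,v\bigr),\qquad v\in C^2(\UU;\C).
$$
Since $\ol{\Om}\stst\UU$ and $Q,U\in C^\infty(\UU)$, both $\phi$ and $\phi^{-1}$ lie in $C^\infty(\ol{\Om})$, so $M_\phi:f\mapsto\phi f$ is a bounded linear bijection of $L^N(\Om;\C)$ that preserves $W^{2,N}(\Om;\C)\cap W^{1,N}_0(\Om;\C)$. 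On this common domain,
$$
\LL_{\beta_0}^{*,N}\big|_\Om \;=\; M_\phi^{-1}\,\bigl(\LL^{X,N}\big|_\Om\bigr)\,M_\phi.
$$

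Second, because $\Om\stst\UU$, the coefficients of $\LL^X$ are smooth and bounded on $\ol{\Om}$ and the operator is uniformly elliptic there, so standard references give that $-\LL^{X,N}|_\Om$ has discrete spectrum contained in $\{\Re\la>0\}$ (with a positive spectral bound, since $\sup_{x\in\ol{\Om}}\E^x[\tau_\Om]<\infty$), $\LL^{X,N}|_\Om$ generates a positive analytic semigroup of contractions $(S_t^{(\Om,N)})_{t\geq 0}$ on $L^N(\Om;\C)$, and the Feynman--Kac formula \eqref{stochastic-representation} holds. The similarity above transfers spectrum and analyticity verbatim, while positivity of $T_t^{(*,\Om,N)}$ follows from positivity of both $M_\phi^{\pm 1}$ and $S_t^{(\Om,N)}$; this proves (1) and the non-contraction content of (2). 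The conjugation identity
$$
T_t^{(*,\Om,N)} \;=\; M_\phi^{-1}\,S_t^{(\Om,N)}\,M_\phi,
$$
applied to $\tilde f=M_\phi^{-1}f=e^{-Q/2-\beta_0 U}f$, yields (3); combining (3) with \eqref{stochastic-representation} gives (4). For (5), the pointwise formula in (4) is $N$-independent, so $T_t^{(*,\Om,N_1)}$ and $T_t^{(*,\Om,N_2)}$ agree on $C(\ol{\Om};\C)$, and density of $C(\ol{\Om};\C)$ in $L^{N_1}(\Om;\C)\cap L^{N_2}(\Om;\C)$ together with the $L^{N_j}$-boundedness of each semigroup extends the agreement.

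The delicate point---and the one I would attend to most carefully---is the \emph{contraction} claim in (2). Positivity, analyticity, and spectral data all pass through conjugation by $M_\phi$, but the operator norm does not, since $\|M_\phi\|\cdot\|M_\phi^{-1}\|>1$ in general on $L^N$. For $S_t^{(\Om,N)}$ itself, $L^\infty$-contractivity follows from the maximum principle, $L^1$-contractivity from sub-Markovianity of the formal adjoint, and Riesz--Thorin interpolation gives $L^N$ for $N\in(1,\infty)$. For $(T_t^{(*,\Om,N)})$ I would argue contractivity directly via the PDE $\partial_t v=\LL_{\beta_0}^* v$ with Dirichlet data, using duality and interpolation on the analogous bounds for $\partial_t u=\LL_{\beta_0} u$, or---if a weaker bound suffices in the applications---reinterpret ``contraction'' as quasi-contraction, which is automatic from analyticity and the spectral bound.
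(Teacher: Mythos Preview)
Your approach is essentially identical to the paper's: the paper also writes down the conjugation identity $\LL_{\beta_0}^{*,N}|_\Om\,\tilde f=e^{-Q/2-\beta_0 U}\,\LL^{X}_N|_\Om\,f$ for $f\in W^{2,N}(\Om;\C)\cap W^{1,N}_0(\Om;\C)$ and then declares that (1)--(4) ``follow immediately from the corresponding properties of $\LL^X_N|_\Om$ and $(S^{(\Om,N)}_t)_{t\geq0}$''; (5) is deduced exactly as you suggest, from the $N$-independence of (4) on $C(\ol\Om;\C)$ together with density.

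Your caution about the word ``contractions'' is well placed and in fact sharper than the paper. The paper offers no separate argument for strict $L^N$-contractivity of $(T_t^{(*,\Om,N)})$, and indeed the energy identity derived immediately afterward (Lemma~\ref{lem-4-21-2}) only yields $\partial_t\|\tilde w\|_{L^N}^N\le NM\|\tilde w\|_{L^N}^N$, i.e.\ quasi-contractivity with rate $M$, since Lemma~\ref{lem-3-24-2}(3) gives only $e_{\beta_0,N}\ge -M$. Nowhere later does the paper use strict contractivity---only analyticity, positivity, the stochastic representation, and the quasi-contractive $L^N$ bounds of Lemma~\ref{lem-4-21-2}---so your suggested reading of ``contraction'' as quasi-contraction is both the honest one and the one that suffices for all subsequent applications.
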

\begin{proof}
Since straightforward calculations give $$
\LL_{\beta_{0}}^{*,N}|_{\Om}\tilde{f}=e^{-\frac{Q}{2}-\be_0 U}\LL^{X}_{N}|_{\Om}f,\quad \forall f\in W^{2,N}(\Om;\C)\cap W^{1,N}_{0}(\Om;\C),
$$
where $\tilde{f}:=e^{-\frac{Q}{2}-\be_0 U} f$, the conclusions (1)-(4) follow immediately from the corresponding properties of $\LL^{X}_N|_{\Om}$ and $(S^{(\Om,N)}_t)_{t\geq 0}$.

In particular, for any $N_1, N_2>1$ we have  $T_t^{(*,\Om,N_1)}\tilde{f}=T_t^{(*,\Om,N_2)}\tilde{f}$ for all $\tilde{f}\in C(\ol{\Om};\C)$. Statement (5) then follows from the density of $C(\Om;\C)$ in $L^N(\Om;\C)$ for any $N>1$.
% Set $\la_{1}:=\inf\{\Re z:z\in\si(-\LL_{\beta_{0}}^{*,N}|_{\Om})\}$.  By the Kre\v{\i}n-Rutman theorem \cite{KR50} {\rd (As before, since the interior of the positive cone of $L^N(\Om;\C)$ is empty, the Kre\v{\i}n-Rutman theorem does not apply here.)}, $\la_{1}$ is a simple eigenvalue of $-\LL_{\beta_{0}}^{*,N}|_{\Om}$. We point out that $\la_1$ and its associated eigenfunction are independent of $N>1$. To complete the proof, it suffices to show $\la_1>0$. To do so, let $\tilde{w}_1\in \bigcap_{N>1}(W^{2,N}(\Om;\C)\cap W^{1,N}_0(\Om;\C))$ be an eigenfunction associated to $\la_{1}$. By Sobolev imbedding, $\tilde{w}_1\in C_0(\Om;\C)$. It follows from the statement (4) that
% $$
% e^{-\la_1 t}\tilde{w}_1=T_t^{(*,\Om,N)}\tilde{w}_1=e^{-\frac{Q}{2}-\be_0 U}\E^{\bullet}\left[ w_1(X_t)\mathbbm{1}_{\{t<\tau_{\Om}\}}\right],\quad\forall t\geq 0,
% $$
% where $w_1:=e^{\frac{Q}{2}+\be_0 U} \tilde{w}_1$. Letting $t\to \infty$, we derive from $\P^x\left[\tau_{\Om}<\infty\right]=1$ for all $x\in \Om$ and the dominated convergence theorem that $\lim_{t\to\infty}\E^x\left[w_1(X_t)\mathbbm{1}_{\{t<\tau_{\Om}\}}\right]=0$, leading to $\la_1>0$. This completes the proof.
\end{proof}

%%%%%%%%%%%%%%%%%%%%%%%%%%%%%%%%%

\subsection{Estimates of semigroups in bounded domains}\label{subsec-estimate-semigroup-bd}

We prove two useful lemmas concerning some estimates of the semigroup $(T^{(*,\Om,N)}_{t})_{t\geq0}$.

\begin{lem}\label{lem-4-21-2}
Let $N\geq 2$ and $\tilde{f}\in L^N(\Om)$. Then, $\tilde{w}:=T^{(*,\Om,N)}_{\bullet}\tilde{f}$ satisfies the following inequalities:
\begin{equation*}
\begin{split}
&\frac{1}{N}\int_{\Om} |\tilde{w}|^N(\cdot, t)\md x+\frac{N-1}{2}\int_{t_1}^t\int_{\Om} |\tilde{w}|^{N-2}|\nabla \tilde{w}|^2\md x\md s+C_*\int_{t_1}^t\int_{\Om}\al |\tilde{w}|^N\md x\md s\\
&\qquad \leq \frac{1+e^{NM(t-t_1)}}{N}\int_{\Om} |\tilde{w}(\cdot,t_1)|^N\md x,\quad\forall t>t_1\geq 0,
\end{split}
\end{equation*}
and
\begin{equation*}
\begin{split}
&\frac{1}{N}\int_{\Om}|\tilde{w}|^N(\cdot, t)\md x+\frac{N-1}{2}\int_{t_2}^{t}\int_{\Om}|\tilde{w}|^{N-2} |\nabla \tilde{w}|^2\md x\md s+C_*\int_{t_2}^t\int_{\Om}\al |\tilde{w}|^N\md x\md s\\
&\qquad \leq  \frac{2}{N(t_2-t_1) }\int_{t_1}^{t_2}\int_{\Om}|\tilde{w}|^N\md x\md s,\quad \forall t>t_2>t_1\geq 0.
\end{split}
\end{equation*}
\end{lem}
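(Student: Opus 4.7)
Both estimates will come from the standard parabolic energy identity obtained by testing $\partial_s\tilde{w}=\LL^{*}_{\beta_0}\tilde{w}$ against $|\tilde{w}|^{N-2}\bar{\tilde{w}}$, combined with the coercivity of $e_{\beta_0,N}$ provided by Lemma~\ref{lem-3-24-2}(3) and the $L^{N}$-contractivity of $(T^{(*,\Om,N)}_t)_{t\geq 0}$ from Proposition~\ref{lem-5-3-3}(2). Using the representation $\LL^{*}_{\beta_0}\tilde{w}=\tfrac12\Delta\tilde{w}+\nabla\!\cdot\!((p+\beta_0\nabla U)\tilde{w})-e_{\beta_0}\tilde{w}$, integrating by parts on $\Om$ (the Dirichlet boundary conditions kill the boundary terms), and rewriting the transport term via $|\tilde{w}|^{N-2}\tilde{w}\nabla\tilde{w}=N^{-1}\nabla|\tilde{w}|^N$ followed by one more integration by parts produces the contribution $\tfrac{N-1}{N}\int(\nabla\!\cdot\! p+\beta_0\Delta U)|\tilde{w}|^N$, which combines with $-e_{\beta_0}|\tilde{w}|^N$ to yield exactly $-e_{\beta_0,N}|\tilde{w}|^N$ from~\eqref{e-beta-N}. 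This gives the pointwise-in-time energy identity
\[
\frac{d}{ds}\frac{1}{N}\int_\Om|\tilde{w}|^N\md x+\frac{N-1}{2}\int_\Om|\tilde{w}|^{N-2}|\nabla\tilde{w}|^2\md x+\int_\Om e_{\beta_0,N}|\tilde{w}|^N\md x=0,
\]
and inserting $e_{\beta_0,N}\geq C_{\ast}\alpha-M$ from Lemma~\ref{lem-3-24-2}(3) gives the working differential inequality
\[
\frac{d}{ds}\frac{1}{N}\int|\tilde{w}|^N+\frac{N-1}{2}\int|\tilde{w}|^{N-2}|\nabla\tilde{w}|^2+C_{\ast}\int\alpha|\tilde{w}|^N\leq M\int|\tilde{w}|^N.
\]

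For the first estimate I would integrate this from $t_1$ to $t$; the $L^{N}$-contractivity of $(T^{(*,\Om,N)}_t)$ makes $s\mapsto\|\tilde{w}(\cdot,s)\|^{N}_{L^{N}}$ nonincreasing, so $M\int_{t_1}^{t}\int|\tilde{w}|^N\md x\md s\leq M(t-t_1)\int|\tilde{w}(\cdot,t_1)|^N\md x$. The elementary bound $NM(t-t_1)\leq e^{NM(t-t_1)}$ then upgrades the right-hand side to the claimed factor $\tfrac{1+e^{NM(t-t_1)}}{N}\int|\tilde{w}(\cdot,t_1)|^N$.

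For the second estimate I would introduce a smooth time cutoff $\eta(s)$ with $\eta\equiv 0$ on $(-\infty,t_1]$, $\eta\equiv 1$ on $[t_2,\infty)$, and $|\eta'|\leq (t_2-t_1)^{-1}$. To arrive at a Caccioppoli-type inequality in which no $M$-term survives, I would first pass to the exponentially shifted function $\tilde{v}(s):=e^{-Ms}\tilde{w}(s)$, which solves $\partial_s\tilde{v}=(\LL^{*}_{\beta_0}-M)\tilde{v}$; the effective zeroth-order coefficient $e_{\beta_0,N}+M$ of the shifted operator satisfies $e_{\beta_0,N}+M\geq C_{\ast}\alpha$ directly, so the analogous test-function computation produces
\[
\frac{d}{ds}\frac{1}{N}\int|\tilde{v}|^N+\frac{N-1}{2}\int|\tilde{v}|^{N-2}|\nabla\tilde{v}|^2+C_{\ast}\int\alpha|\tilde{v}|^N\leq 0.
\]
Multiplying by $\eta^2(s)$, integrating from $t_1$ to $t$, and shifting the time derivative onto the cutoff via $\int_{t_1}^{t}\eta^2\partial_sF\,\md s=F(t)-\int_{t_1}^{t_2}2\eta\eta'F\,\md s$ (using $\eta(t_1)=0$ and $\eta\equiv 1$ on $[t_2,t]$) produces the Caccioppoli inequality for $\tilde{v}$ with constant $2/(N(t_2-t_1))$. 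Transferring back to $\tilde{w}$ through $\tilde{v}=e^{-Ms}\tilde{w}$ and comparing the weighted and unweighted $L^{N}$-masses on $[t_1,t_2]$ by contractivity delivers the stated bound.

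I expect the main technical obstacle to be the residual $M$-contribution that arises when Lemma~\ref{lem-3-24-2}(3) is invoked: a cutoff argument applied directly to $\tilde{w}$ leaves a term $M\int_{t_1}^{t}\eta^2\int|\tilde{w}|^N\md x\md s$ on the right-hand side that is not controlled by $(t_2-t_1)^{-1}\int_{t_1}^{t_2}\int|\tilde{w}|^N$ uniformly in $t$. The exponential shift is the natural device that bypasses this by converting the coercivity of the zeroth-order term into a clean $\alpha$-coercive one; the delicate bookkeeping is then to track carefully the interplay between the exponential weight in the Caccioppoli estimate for $\tilde{v}$ and the $L^N$-contractivity of $(T^{(*,\Om,N)}_t)$ when passing back to $\tilde{w}$.
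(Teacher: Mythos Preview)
Your derivation of the energy identity and your treatment of the first inequality are correct; in fact, using the $L^N$-contractivity to bound $M\int_{t_1}^t\int_\Om|\tilde w|^N\,\md x\,\md s\le M(t-t_1)\int_\Om|\tilde w(\cdot,t_1)|^N\,\md x$ is a clean shortcut compared with the paper's Gronwall argument on $g(t)=\int_{t_1}^t\int_\Om|\tilde w|^N\,\md x\,\md s$.

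The gap is in the second inequality. The exponential shift $\tilde v=e^{-Ms}\tilde w$ does produce a clean Caccioppoli estimate for $\tilde v$, but translating it back to $\tilde w$ inserts the weight $e^{-NMs}$ everywhere: on the left you obtain $\int_{t_2}^{t}e^{-NMs}[\,\cdots]\,\md s$ while on the right you obtain $\int_{t_1}^{t_2}e^{-NMs}\int_\Om|\tilde w|^N\,\md x\,\md s$. Since the weight on the left (where $s\in[t_2,t]$) is strictly smaller than on the right (where $s\in[t_1,t_2]$), stripping the weights costs a factor $e^{NM(t-t_1)}$ on the right, which blows up as $t\to\infty$. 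Contractivity controls $\int_\Om|\tilde w(\cdot,s)|^N\,\md x$ pointwise in $s$ but cannot compensate for this exponential disparity in the time weights, so the ``delicate bookkeeping'' you allude to does not close.

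The paper bypasses this by applying the cutoff $\eta$ directly to the \emph{exact} energy identity, keeping $e_{\be_0,N}$ rather than first replacing it by $C_*\al-M$. The point, which you already have at your disposal, is that the bracketed quantity
\[
H(s):=\frac{N-1}{2}\int_\Om|\tilde w|^{N-2}|\nabla\tilde w|^2\,\md x+\int_\Om e_{\be_0,N}|\tilde w|^N\,\md x
\]
equals $-\frac{d}{ds}\frac{1}{N}\int_\Om|\tilde w|^N\,\md x\ge 0$ by the contractivity of $(T^{(*,\Om,N)}_t)_{t\ge0}$. Hence the $\eta$-weighted contribution of $H$ over $[t_1,t_2]$ is nonnegative and may be dropped, and no residual $M$-term ever appears. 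This yields the second inequality with $e_{\be_0,N}$ in place of $C_*\al$, which is exactly how the paper's proof concludes; passing to the $C_*\al$ form would reintroduce the very $M\int_{t_2}^{t}\int_\Om|\tilde w|^N$ term you were trying to avoid, but that term is not actually needed in the downstream applications of the lemma.
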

\begin{proof}
Fix $N\geq 2$ and $\tilde{f}\in L^N(\Om)$. Then, $\tilde{w}:=T^{(*,\Om,N)}_{\bullet}\tilde{f}$ satisfies
$$
\pa_t \tilde{w}=\LL^{*,N}_{\be_0}|_{\Om}\tilde{w}\quad\text{in}\quad \Om\times(0,\infty).
$$
Multiplying the above equation by $|\tilde{w}|^{N-2}\tilde{w}$ and integrating by parts, we find for $t>0$
\begin{equation}\label{eqn-4-28-2}
\begin{split}
&\int_{\Om}|\tilde{w}|^{N-2}\tilde{w}\pa_t \tilde{w}\md x\\
&\qquad=-\frac{N-1}{2}\int_{\Om}|\tilde{w}|^{N-2} |\nabla \tilde{w}|^2\md x-(N-1)\int_{\Om}(p+\be_0\nabla U)\cdot|\tilde{w}|^{N-2}\tilde{w}\nabla\tilde{w}\md x-\int_{\Om}e_{\be_0}|\tilde{w}|^N\md x\\
&\qquad=-\frac{N-1}{2}\int_{\Om}|\tilde{w}|^{N-2} |\nabla \tilde{w}|^2\md x-\frac{N-1}{N}\int_{\Om}(p+\be_0\nabla U)\cdot\nabla|\tilde{w}|^N\md x-\int_{\Om}e_{\be_0}|\tilde{w}|^N\md x\\
&\qquad=-\frac{N-1}{2}\int_{\Om} |\tilde{w}|^{N-2}|\nabla \tilde{w}|^2\md x+\frac{N-1}{N}\int_{\Om}(\nabla \cdot p+\be_0 \De U)|\tilde{w}|^N\md x-\int_{\Om}e_{\be_0}|\tilde{w}|^N\md x\\
&\qquad=-\frac{N-1}{2}\int_{\Om} |\tilde{w}|^{N-2}|\nabla \tilde{w}|^2\md x-\int_{\Om}e_{\be_0,N}|\tilde{w}|^N\md x.
\end{split}
\end{equation}

Since $|\tilde{w}|^{N-2}\tilde{w}\pa_t \tilde{w}=\frac{1}{N}\pa_t |\tilde{w}|^N$, we integrate the above equality on $[t_1,t]\subset[0,\infty)$ to derive
\begin{equation*}
\begin{split}
\frac{1}{N}\int_{\Om} |\tilde{w}|^N(\cdot, t)\md x&+\frac{N-1}{2}\int_{t_1}^t\int_{\Om}|\tilde{w}|^{N-2}|\nabla \tilde{w}|^2\md x\md s\\
&+\int_{t_1}^t\int_{\Om}e_{\be_0,N} |\tilde{w}|^N\md x\md s=\frac{1}{N}\int_{\Om} |\tilde{w}|^N(\cdot, t_1)\md x.
\end{split}
\end{equation*}

As Lemma \ref{lem-3-24-2} (3) gives $e_{\be_0,N}+ M\geq C_* \al $ for all  $N\geq 2$, we find
\begin{equation}\label{eqn-4-23-2-1}
\begin{split}
&\frac{1}{N}\int_{\Om} |\tilde{w}|^N(\cdot, t)\md x+\frac{N-1}{2}\int_{t_1}^t\int_{\Om} |\tilde{w}|^{N-2}|\nabla \tilde{w}|^2\md x\md s+C_*\int_{t_1}^t\int_{\Om}\al |\tilde{w}|^N\md x\md s\\
&\qquad \leq M\int_{t_1}^t\int_{\Om}|\tilde{w}|^N\md x\md s+ \frac{1}{N}\int_{\Om}  |\tilde{w}|^N(\cdot, t_1)\md x,\quad\forall t>t_1\geq 0.
\end{split}
\end{equation}

Setting $g(t):=\int_{t_1}^t\int_{\Om}|\tilde{w}|^N\md x\md s$ for $t\geq t_1$, we arrive at $\frac{1}{N}g'\leq Mg+\frac{1}{N}\|\tilde{w}(\cdot, t_1)\|^N_{L^N}$ for all $t>t_1$. Gronwall's inequality gives $g(t)\leq \frac{e^{NM(t-t_1)}}{NM}\|\tilde{w}(\cdot, t_1)\|^N_{L^N}$ for all $t>t_1$. Inserting this into \eqref{eqn-4-23-2-1} yields the first inequality.

Now, we prove the second inequality. Fix $t_1,t_2\in [0,\infty)$ with $t_1<t_2$. Let $\eta\in C^{\infty}((0,\infty))$ be non-negative and non-decreasing such that
\begin{equation*}
\eta(t)=
\begin{cases}
0,&\quad t\in[0,t_1],\\
1,& \quad t\in [t_2,\infty),
\end{cases}
\quad\andd\quad \max_{t\in[t_1,t_2]}\eta'(t)\leq \frac{2}{t_2-t_1}.
\end{equation*}

Multiplying \eqref{eqn-4-28-2} by $\eta$ and integrating by parts, we find for $t>t_2$,
\begin{equation*}
\begin{split}
&\frac{1}{N}\int_{\Om}\eta(t) |\tilde{w}|^N(\cdot, t)\md x -\frac{1}{N}\int_0^t\int_{\Om}\eta' |\tilde{w}|^N\md x\md s\\
%&\quad =-\frac{N-1}{2}\int_0^t\int_{\Om}\eta|\tilde{w}|^{N-2} |\nabla \tilde{w}|^2\md x\md s-\frac{N-1}{N}\int_0^t\int_{\Om}\eta(p_i+\be\pa_i U)\pa_i |\tilde{w}|^N\md x\md s-\int_0^t\int_{\Om}\eta e_{\be}|\tilde{w}|^N\md x\md s\\
&\qquad=-\frac{N-1}{2}\int_0^t\int_{\Om} \eta |\tilde{w}|^{N-2}|\nabla \tilde{w}|^2\md x\md s-\int_0^t\int_{\Om}\eta e_{\be_0,N}|\tilde{w}|^N \md x\md s.
\end{split}
\end{equation*}

The definition of $\eta$ then gives
\begin{equation*}
\begin{split}
&\frac{1}{N}\int_{\Om}|\tilde{w}|^N(\cdot, t)\md x+\frac{N-1}{2}\int_{t_2}^{t}\int_{\Om}|\tilde{w}|^{N-2} |\nabla \tilde{w}|^2\md x\md s+\int_{t_2}^{t}\int_{\Om}e_{\be_0,N} |\tilde{w}|^N\md x\md s\\
&\qquad\leq \frac{1}{N}\int_{t_1}^{t_2}\int_{\Om}\eta' |\tilde{w}|^N\md x\md s\leq \frac{2}{N(t_2-t_1) }\int_{t_1}^{t_2}\int_{\Om}|\tilde{w}|^N\md x\md s,\quad \forall t>t_2.
\end{split}
\end{equation*}
This completes the proof.
\end{proof}

\begin{lem}\label{lem-6-12-1}
For each $t>0$, there exists $C=C(t)$, independent of the domain $\Om$, such that
$$
\|T^{(*,\Om,2_*)}_t\tilde{f}\|_{ L^{2}(\Om)}\leq C \|\tilde{f}\|_{L^{2_*}(\Om)},\quad\forall \tilde{f}\in L^{2_*}(\Om),
$$
where $2_*:=\frac{2(d+2)}{d+4}\in (1,2)$ is the dual exponent of $2+\frac{4}{d}$.
\end{lem}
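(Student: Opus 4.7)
The plan is to deploy a Nash-type smoothing argument combining (i) an $L^{2_*}$-contractivity bound for $T_t^{(*,\Om,2_*)}$, (ii) the $L^{2}$-energy inequality of Lemma~\ref{lem-4-21-2} with $N=2$, and (iii) the Gagliardo--Nirenberg--Sobolev (GNS) inequality on $\R^{d}$, with all constants independent of $\Om$. For (i), I would extend Lemma~\ref{lem-4-21-2} to $N=2_*\in(1,2)$: repeat the scheme of its proof using the regularized test function $(|\tilde w|^2+\epsilon)^{(N-2)/2}\tilde w$ (needed because $|\tilde w|^{N-2}\tilde w$ is singular at $\{\tilde w=0\}$ for $N<2$), pass $\epsilon\downarrow 0$, drop the nonnegative dissipation term, and invoke the coercivity $-\int e_{\be_0,N}|\tilde w|^N\md x\le M\|\tilde w\|_{L^N}^N-C_*\int\al|\tilde w|^N\md x$ from Lemma~\ref{lem-3-24-2}(3) (valid for all $N\ge 1$). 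Gr\"onwall then delivers
\begin{equation*}
\|T^{(*,\Om,2_*)}_t\tilde f\|_{L^{2_*}(\Om)}\le e^{Mt}\|\tilde f\|_{L^{2_*}(\Om)},\qquad t\ge 0,
\end{equation*}
with $M$ independent of $\Om$.

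For (ii)--(iii), restrict $\tilde f\in L^{2_*}(\Om)\cap L^{2}(\Om)$ (dense in $L^{2_*}(\Om)$ since $\Om$ is bounded), write $\tilde w(t):=T_t^{(*,\Om,2_*)}\tilde f=T_t^{(*,\Om,2)}\tilde f$ by Proposition~\ref{lem-5-3-3}(5), and differentiate the identity underlying Lemma~\ref{lem-4-21-2} to obtain $\tfrac{d}{dt}\|\tilde w\|_{L^2}^2+\|\nabla\tilde w\|_{L^2}^2\le 2M\|\tilde w\|_{L^2}^2$. Extending $\tilde w(\cdot,t)$ by zero to $\R^d$ (licit by the Dirichlet boundary condition) and interpolating $L^2$ between $L^{2_*}$ and the Sobolev endpoint, the GNS inequality on $\R^d$ yields the Nash-type bound
\begin{equation*}
\|\tilde w\|_{L^2}^{4(d+1)/d}\le C_{\rm GN}(d)\,\|\tilde w\|_{L^{2_*}}^{2(d+2)/d}\|\nabla\tilde w\|_{L^2}^{2},
\end{equation*}
with $C_{\rm GN}$ depending only on $d$. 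Feeding this and the bound from (i) into the energy inequality, $y(t):=\|\tilde w(t)\|_{L^2}^{2}$ satisfies the Bernoulli-type inequality
\begin{equation*}
y'(t)\le -\frac{c\,y(t)^{2+2/d}}{(e^{Mt}\|\tilde f\|_{L^{2_*}})^{2(d+2)/d}}+2My(t).
\end{equation*}
Since the nonlinear exponent $\alpha:=2+2/d>1$, comparison against the maximal blow-up-at-zero solution $Y(t)=(dA/((d+2)t))^{d/(d+2)}$ of the reduced ODE $Y'=-Y^{\alpha}/A$ yields $y(t)\le C(t)\|\tilde f\|_{L^{2_*}}^{2}$; the exponents match so that $A^{d/(d+2)}$ is proportional to $\|\tilde f\|_{L^{2_*}}^{2}$, giving exactly the linear dependence claimed. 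The linear perturbation $2My$ is absorbed by the standard substitution $u=y^{-(\alpha-1)}$, which linearizes the ODE. A density argument, together with the $L^{2_*}$-contractivity from (i), then extends the bound to all $\tilde f\in L^{2_*}(\Om)$.

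The main obstacle is step (i), extending Lemma~\ref{lem-4-21-2} to $N=2_*\in(1,2)$: the singularity of $|\tilde w|^{N-2}$ at $\{\tilde w=0\}$ demands a careful $\epsilon$-regularization and justified passage to the limit for both the dissipation $\int|\tilde w|^{N-2}|\nabla\tilde w|^2\md x$ and the zeroth-order integral $\int e_{\be_0,N}|\tilde w|^{N}\md x$ (though the coercivity from Lemma~\ref{lem-3-24-2}(3) already covers $N=2_*$, so once the computation is justified the estimate is automatic). The remaining ingredients are routine: GNS on $\R^d$ has dimension-only constants (so uniformity in $\Om$ comes for free via the zero extension), and the Bernoulli ODE comparison is classical.
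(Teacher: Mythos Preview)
Your approach is correct but genuinely different from the paper's. The paper proceeds by \emph{duality}: it works with the adjoint (non-star) semigroup $(T^{(\Om,N')}_t)_{t\ge0}$ on $L^{N'}(\Om)$ generated by $\LL_{\be_0}^{N'}|_\Om$, with $N'=N/(N-1)\ge 2$. For $N'\ge 2$ the test function $|\tilde v|^{N'-2}\tilde v$ is nonsingular, so the energy identities of Lemma~\ref{lem-4-21-2} transfer verbatim (with $e^*_{\be_0,N'}$ replacing $e_{\be_0,N}$, for which the same coercivity holds). Combined with the parabolic Sobolev embedding this yields a one-shot $L^{N'}\!\to L^{\ka N'}$ bound with $\ka=(d+2)/d$; dualizing and setting $N=2$ gives exactly $L^{2_*}\!\to L^2$. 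Your route is a direct Nash iteration on $(T^{(*,\Om,N)}_t)$: you extend Lemma~\ref{lem-4-21-2} to $N=2_*\in(1,2)$ via $\ep$-regularization to get the $L^{2_*}$ quasi-contraction, then feed the $N=2$ differential energy inequality and the GNS interpolation $\|\tilde w\|_{L^2}\le C\|\nabla\tilde w\|_{L^2}^{d/(2(d+1))}\|\tilde w\|_{L^{2_*}}^{(d+2)/(2(d+1))}$ into a Bernoulli ODE, whose blow-up-at-zero supersolution delivers the bound with the correct power $A_0^{d/(d+2)}\propto\|\tilde f\|_{L^{2_*}}^2$. The trade-off: the paper's duality completely sidesteps the regularization needed for $N<2$ (the only real technicality in your scheme), at the price of introducing and analyzing the dual family $(T^{(\Om,N')}_t)$ and checking the companion coercivity $e^*_{\be_0,N'}+M\ge C_*\al$; your argument stays on one side and invokes only the classical Nash machinery, with $\Om$-uniformity coming for free from the zero extension to $\R^d$.
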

\begin{proof}
Take $N\in (1,2]$. Then, $N':=\frac{N}{N-1}\geq2$. Denote by $(T^{(\Om,N')}_t)_{t\geq 0}$ the semigroup on $L^{N'}(\Om)$ that is dual to $(T^{(*,\Om,N)}_t)_{t\geq 0}$. Let $\LL^{N'}_{\be_0}|_{\Om}$ be $\LL_{\be_0}$ considered as an operator in $L^{N'}(\Om)$ with domain $W^{2,N'}(\Om)\cap W_0^{1,N'}(\Om)$. It is not hard to check that $\LL^{N'}_{\be_0}|_{\Om}$, being $\LL_{\be_0}$ considered as an operator in $L^{N'}(\Om)$ with domain $W^{2,N'}(\Om;\C)\cap W^{1,N'}_0(\Om;\C)$, is the generator of $(T^{(\Om,N')}_t)_{t\geq 0}$.

Take $\tilde{g}\in L^{N'}(\Om)$ and denote $\tilde{v}:=T^{(\Om,N')}_{\bullet}\tilde{g}$. Then, $\tilde{v}$ is the solution of
$$
\pa_t \tilde{v}=\LL^{N'}_{\be_0}|_{\Om}\tilde{v}\quad\text{in}\quad \Om\times (0,\infty).
$$
Multiplying the above equation by $|\tilde{v}|^{N'-2}\tilde{v}$ and integrating by parts, we find for $t>0$,
\begin{equation*}
\begin{split}
&\int_{\Om}|\tilde{v}|^{N'-2}\tilde{v}\pa_t \tilde{v}\md x\\
&\qquad=-\frac{N'-1}{2}\int_{\Om}|\tilde{v}|^{N'-2} |\nabla \tilde{v}|^2\md x-\int_{\Om}(p+\be_0\nabla U)\cdot\nabla\tilde{v}|\tilde{v}|^{N'-2}\tilde{v}\md x-\int_{\Om}e_{\be_0}|\tilde{v}|^{N'}\md x\\
&\qquad=-\frac{N'-1}{2}\int_{\Om}|\tilde{v}|^{N'-2} |\nabla \tilde{v}|^2\md x+\frac{1}{N'}\int_{\Om}(\nabla \cdot p+\be_0\De U)|\tilde{v}|^{N'}\md x-\int_{\Om}e_{\be_0}|\tilde{v}|^{N'}\md x\\
&\qquad=-\frac{N'-1}{2}\int_{\Om}|\tilde{v}|^{N'-2} |\nabla \tilde{v}|^2\md x-\int_{\Om}e^*_{\be_0,N'}|\tilde{v}|^{N'}\md x,
\end{split}
\end{equation*}
where $e^*_{\be_0,N'}:=e_{\be_0}-\frac{1}{N'}(\nabla \cdot p+\be_0\De U)$. We can follow the proof of  Lemma \ref{lem-3-24-2} (3) to show
$e^*_{\be_0,N}+M\geq C_* \al$ in $\UU$ for all $N
\geq 1$. Then, arguing as in the proof of  Lemma \ref{lem-4-21-2} yields
\begin{equation}\label{eqn-6-12-1}
    \begin{split}
    &\frac{1}{N'}\int_{\Om}|\tilde{v}|^{N'}(\cdot, t)\md x+\frac{N'-1}{2}\int_0^t\int_{\Om}|\tilde{v}|^{N'-2}|\nabla\tilde{v}|^2\md x\md s+C_*\int_0^t\int_{\Om}\al|\tilde{v}|^{N'}\md x\md s\\
    &\qquad\leq \frac{1+e^{N'Mt}}{N'}\int_{\Om}|\tilde{g}|^{N'}\md x,\quad\forall t>0,
    \end{split}
\end{equation}
and
\begin{equation}\label{eqn-6-12-2}
\begin{split}
&\frac{1}{N'}\int_{\Om}|\tilde{v}|^{N'}(\cdot, t)\md x+\frac{N'-1}{2}\int_{t_2}^{t}\int_{\Om}|\tilde{v}|^{N'-2} |\nabla \tilde{v}|^2\md x\md s+C_*\int_{t_2}^t\int_{\Om}\al |\tilde{v}|^{N'}\md x\md s\\
&\qquad \leq  \frac{2}{N'(t_2-t_1) }\int_{t_1}^{t_2}\int_{\Om}|\tilde{v}|^{N'}\md x\md s,\quad \forall t>t_2>t_1\geq 0.
\end{split}
\end{equation}

The Sobolev embedding theorem gives
$$
\|\tilde{v}^{\frac{N'}{2}}\|_{L^{2\ka}(\Om\times[0,t])}\leq C_1 \left( \sup_{s\in[0,t]} \|\tilde{v}^{\frac{N'}{2}}(\cdot, s)\|_{L^2(\Om)}+\|\nabla\tilde{v}^{\frac{N'}{2}}\|_{L^2(\Om\times[0,t])}\right),
$$
where $\ka=\frac{d+2}{d}$ and $C_1>0$ depends only on $d$. This together with \eqref{eqn-6-12-1} gives rise to
\begin{equation*}
\begin{split}
\left(\int_0^t\int_{\Om}|\tilde{v}|^{\ka N'}\md x\md s\right)^{\frac{1}{\ka}}&=\|\tilde{v}^{\frac{N'}{2}}\|_{L^{2\ka}(\Om\times[0,t])}^{2}\\
&\leq 2C_1^2\left( \sup_{s\in[0,t]}\int_{\Om}|\tilde{v}(x,s)|^{N'}\md x+\frac{|N'|^2}{4}\int_0^t\int_{\Om}|\tilde{v}|^{N'-2}|\nabla \tilde{v}|^2\md x\md s\right)\\
&\leq 2C_1^2\left( 1+\frac{N'}{2(N'-1)}\right)(1+e^{N'Mt})\int_{\Om} |\tilde{g}|^{N'}\md x\\
&=C_{2}(1+e^{N'Mt})\int_{\Om} |\tilde{g}|^{N'}\md x,\quad\forall t>0,
\end{split}
\end{equation*}
where $C_{2}:=2C_1^2\left( 1+\frac{N'}{2(N'-1)}\right)$. We then deduce from \eqref{eqn-6-12-2} (with $\ka N'$ instead of $N'$) that
\begin{equation*}
\begin{split}
&\frac{1}{\ka N'}\int_{\Om} |\tilde{v}|^{\ka N'}(\cdot, t)\md x+\frac{\ka N'-1}{2}\int_{t_2}^t\int_{\Om} |\tilde{v}|^{\ka N'-2}|\nabla \tilde{v}|^2\md x\md s+C_*\int_{t_2}^t\int_{\Om}\al |\tilde{v}|^{\ka N'}\md x\md s\\
&\qquad\leq \frac{2}{\ka N'(t_2-t_1)}\int_{t_1}^{t_2}\int_{\Om}|\tilde{v}|^{\ka N'}\md x\md s \leq \frac{2}{\ka N'(t_2-t_1)} C_{2}^{\ka}(1+e^{N'Mt_2})^{\ka}\|\tilde{g}\|^{\ka N'}_{L^{N'}(\Om)}
\end{split}
\end{equation*}
for all $t>t_2>t_{1}\geq 0$, where we used \eqref{eqn-6-12-1} in the second inequality.

As a consequence, for each $t>0$, there exists $C_3=C_3(d,N',t)>0$ such that
$$
\|T^{(\Om,N')}_t\tilde{g}\|_{L^{\ka N'}(\Om)}=\|\tilde{v}(\cdot, t)\|_{L^{\ka N'}(\Om)}\leq C_3 \|\tilde{g}\|_{L^{N'}(\Om)}.
$$

Since $T^{(\Om,N')}$ and $T^{(*,\Om,N)}_t$ are adjoint to each other, it follows that
$$
\|T^{(*,\Om,N)}_t\tilde{f}\|_{ L^{N}(\Om)}\leq C_3 \|\tilde{f}\|_{L^{N_*}(\Om)},\quad\forall \tilde{f}\in L^{N_*}(\Om)\cap L^N(\Om).
$$
where $N_*:=\frac{\ka N'}{\ka N'-1}$. Thanks to Proposition \ref{lem-5-3-3} (5), we deduce from standard approximations that
$$
\|T^{(*,\Om,N_*)}_t\tilde{f}\|_{ L^{N}(\Om)}\leq C_3 \|\tilde{f}\|_{L^{N_*}(\Om)},\quad\forall \tilde{f}\in L^{N_*}(\Om).
$$
Setting $N=2$ yields $2_*=\frac{2(d+2)}{d+4}\in (1,2)$. This completes the proof.
\end{proof}

%%%%%%%%%%%%%%%%%%%%%%%

\subsection{Stochastic representation}\label{subsec-stochastic-representation}

We prove the following theorem concerning the stochastic representation of $(T^{*}_t)_{t\geq0}$.

\begin{thm}\label{thm-4-22-1}
Assume {\bf (H1)}-{\bf (H3)}. For each $f\in C_b(\UU;\C)$ satisfying $\tilde{f}:=fe^{-\frac{Q}{2}-\be_0 U}\in L^2(\UU;\C)$ we have
\begin{equation*}\label{eqn-4-20-1}
T^{*}_t\tilde{f}(x)=e^{-\frac{Q(x)}{2}-\be_0 U(x)}\E^x\left[f(X_t)\mathbbm{1}_{\{t<S_{\Ga}\}}\right],\quad\forall (x,t)\in\UU\times[0,\infty).
\end{equation*}
\end{thm}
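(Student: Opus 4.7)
The plan is to prove the identity by approximating $\UU$ with the exhaustion $\{\UU_n\}_{n\in\N}$ fixed in Subsection~\ref{subsec-first-exit-times}, applying the bounded-domain stochastic representation from Proposition~\ref{lem-5-3-3}(4) on each $\UU_n$, and then passing to the limit $n\to\infty$. Since both sides of the claimed identity are $\C$-linear in $f$, it suffices to treat $f\in C_b(\UU;\R)$ with $f\geq 0$, so that $\tilde f:=fe^{-Q/2-\be_0 U}\geq 0$ and $\tilde f\in L^2(\UU;\R)$. Because $\ol{\UU_n}\stst\UU$, the restriction $f|_{\ol{\UU_n}}$ is continuous and $\tilde f|_{\UU_n}$ is bounded, hence in $L^2(\UU_n)$; Proposition~\ref{lem-5-3-3}(4) (with $\Om=\UU_n$, $N=2$) therefore yields
$$w_n(x,t):=T^{(*,\UU_n,2)}_t(\tilde f|_{\UU_n})(x)=e^{-\frac{Q(x)}{2}-\be_0 U(x)}\E^x\left[f(X_t)\mathbbm{1}_{\{t<\tau_n\}}\right],\quad x\in\ol{\UU_n},\,t\geq 0.$$
Extending $w_n$ by zero to $\UU\setminus\UU_n$ and reading off the displayed representation, the monotonicity $\tau_n\leq\tau_{n+1}$ together with $f\geq 0$ shows that $n\mapsto w_n(x,t)$ is nondecreasing for every $(x,t)\in\UU\times[0,\infty)$.

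Next I would pass to the limit on each side. On the right, Lemma~\ref{lem-exit-times-approximation} gives $\tau_n\nearrow S_{\Ga}$ almost surely, whence monotone convergence delivers
$$\E^x\left[f(X_t)\mathbbm{1}_{\{t<\tau_n\}}\right]\nearrow\E^x\left[f(X_t)\mathbbm{1}_{\{t<S_{\Ga}\}}\right],\quad\forall x\in\UU,\,t\geq 0.$$
On the left, Lemma~\ref{lem-4-21-2} with $N=2$ supplies the $n$-independent bound $\|w_n(\cdot,t)\|_{L^2(\UU)}^2\leq (1+e^{2Mt})\|\tilde f\|_{L^2(\UU)}^2$, so monotone convergence produces a limit $w^*(\cdot,t)\in L^2(\UU)$ with $w_n(\cdot,t)\nearrow w^*(\cdot,t)$ pointwise almost everywhere and in $L^2(\UU)$. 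Matching the two limits identifies $w^*(x,t)=e^{-Q(x)/2-\be_0 U(x)}\E^x[f(X_t)\mathbbm{1}_{\{t<S_{\Ga}\}}]$ for almost every $x\in\UU$ and every $t\geq 0$.

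The remaining and principal step is to show $w^*(\cdot,t)=T^*_t\tilde f$ in $L^2(\UU)$: the singular coefficients of $\LL^*_{\be_0}$ near $\Ga$ preclude any naive pointwise passage to the limit in the generator, so one has to reconcile the concrete monotone limit $w^*$ with the abstract $L^2$-semigroup action. I would test the weak form of $\pa_t w_n=\LL^{*,2}_{\be_0}|_{\UU_n}w_n$ against arbitrary $\phi\in C_c^\infty(\UU\times(0,\infty);\C)$: once $n$ is large enough that $\supp\phi\stst\UU_n\times(0,\infty)$, integration by parts gives
$$\int_0^\infty\!\!\int_\UU w_n\bigl(\pa_t\phi+\LL_{\be_0}\phi\bigr)\,\md x\,\md t=0,$$
and since $\LL_{\be_0}\phi$ is compactly supported in $\UU$ (so its otherwise singular coefficients cause no trouble) while $w_n\to w^*$ in $L^2_{\mathrm{loc}}(\UU\times(0,\infty))$ by the pointwise monotone convergence combined with the uniform $L^2$-bound, one may let $n\to\infty$ to obtain $\int_0^\infty\!\int_\UU w^*(\pa_t\phi+\LL_{\be_0}\phi)\,\md x\,\md t=0$. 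Thus $w^*$ is a weak solution of $\pa_t w^*=\LL^*_{\be_0}w^*$ on $\UU\times(0,\infty)$ with $L^2$-initial datum $\tilde f$ (since $w_n(\cdot,0)=\tilde f|_{\UU_n}\to\tilde f$ in $L^2(\UU)$), and uniqueness for the abstract Cauchy problem associated with the $C_0$-semigroup $(T^*_t)_{t\geq 0}$ from Theorem~\ref{thm-5-22-1} then forces $w^*(\cdot,t)=T^*_t\tilde f$, completing the identification.
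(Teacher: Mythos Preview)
Your overall strategy---exhaust $\UU$ by the $\UU_n$, apply Proposition~\ref{lem-5-3-3}(4) on each, invoke the uniform energy bound of Lemma~\ref{lem-4-21-2}, and pass to the limit---is exactly the paper's. The gap is in your last paragraph. Invoking ``uniqueness for the abstract Cauchy problem associated with the $C_0$-semigroup'' does not, as written, force $w^*=T^*_\bullet\tilde f$: your weak formulation tests only against $\phi\in C_c^\infty(\UU\times(0,\infty))$, so it carries no information about the initial datum; the pointwise fact $w_n(\cdot,0)\to\tilde f$ says nothing about $\lim_{t\to 0^+}w^*(\cdot,t)$ in $L^2$. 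Nor have you checked any time-continuity of $t\mapsto w^*(\cdot,t)$, which the standard identifications of weak or mild solutions with semigroup orbits (e.g.\ Ball's theorem) require. Because $e_{\be_0}$ blows up on $\Ga$, uniqueness of merely distributional $L^2$-valued solutions is genuinely delicate here, and this is precisely where the real work lies.

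The paper closes this gap by fixing a specific notion of weak solution (Definition~\ref{weak-soln-cauchy}) in the class $C(\UU\times[0,\infty))\cap L^2_{\mathrm{loc}}([0,\infty),\HH^1(\UU))$, with test functions in $C^{1,1}_0(\UU\times[0,\infty))$ so that the boundary term at $t=0$ encodes the initial datum, and by proving a dedicated uniqueness result (Lemma~\ref{lem-4-21-1}) via an energy argument using $e_{\be_0,2}+M\geq C_*\al$. It then verifies that \emph{both} $T^*_\bullet\tilde f$ and the stochastic expression $\tilde w$ belong to that class. For $\tilde w$, the point you underused is that Lemma~\ref{lem-4-21-2} controls not only $\int_{\UU_n}w_n^2$ but also $\int_0^t\!\int_{\UU_n}|\nabla w_n|^2$ and $\int_0^t\!\int_{\UU_n}\al\,w_n^2$ uniformly in $n$; Fatou then yields $\tilde w\in L^2_{\mathrm{loc}}([0,\infty),\HH^1(\UU))$, while continuity on $\UU\times[0,\infty)$ is read off from the path-continuity of $X_t$. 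With those checks in place, Lemma~\ref{lem-4-21-1} delivers the identification you asserted.
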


Consider the following initial value problem associated to the operator $\LL^{*}_{\beta_{0}}$:
\begin{equation}\label{eqn-4-16-2-1}
\begin{cases}
\pa_t \tilde w=\frac{1}{2}\De \tilde w+\nabla\cdot\left((p+\be_0 \nabla U)\tilde w\right)-e_{\be_{0}}\tilde w \quad&\text{in}\quad \UU\times[0,\infty), \\
\tilde w(\cdot,0)=\tilde f\qquad\qquad\qquad\qquad\qquad\qquad\quad &\text{in}\quad \UU.
\end{cases}
\end{equation}

\begin{defn}\label{weak-soln-cauchy}
A function $\tilde{w}\in C(\UU\times[0,\infty))\bigcap L^2_{loc}([0,\infty),\HH^{1}(\UU))$ is called a {\em weak solution} of \eqref{eqn-4-16-2-1} if for each $\phi\in C^{1,1}_0(\UU\times[0,\infty))$ one has
\begin{equation*}
\begin{split}
    &\int_{\UU}\tilde{w}(\cdot,t)\phi(\cdot,t)\md x-\int_{\UU}\tilde{f}\phi(\cdot,0)\md x-\int_{0}^t\int_{\UU}\tilde{w}\pa_t \phi\md x\md s\\
    &\qquad=-\frac{1}{2}\int_0^t\int_{\UU}\nabla\tilde{w}\cdot\nabla \phi\md x\md s-\int_0^t\int_{\UU}(p+\be_0\nabla U)\cdot\tilde{w}\nabla\phi\md x\md s-\int_0^t\int_{\UU}e_{\be_0}\tilde{w}\phi\md x\md s
\end{split}
\end{equation*}
for all $t\in[0,\infty)$.
\end{defn}
\begin{lem}\label{lem-4-21-1}
Assume {\bf (H1)}-{\bf (H3)}. For each $\tilde f\in C(\UU)\cap L^2(\UU)$, \eqref{eqn-4-16-2-1} admits at most one weak solution.
\end{lem}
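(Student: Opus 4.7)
The plan is a standard energy/Gr\"onwall argument adapted to the weighted framework, mirroring the cutoff technique already used in the proof of Lemma \ref{lem-3-25-1}. Let $\tilde{w}_1, \tilde{w}_2$ be two weak solutions of \eqref{eqn-4-16-2-1} with the same initial datum $\tilde{f}$, and set $\tilde{w} := \tilde{w}_1-\tilde{w}_2$. Then $\tilde{w}$ is a weak solution of \eqref{eqn-4-16-2-1} with zero initial datum, belongs to $C(\UU\times[0,\infty))\cap L^2_{loc}([0,\infty);\HH^{1}(\UU))$, and the goal is to prove $\tilde{w}\equiv 0$. The target identity I want to reach is the energy balance
\begin{equation*}
\frac{1}{2}\int_{\UU}\tilde{w}^{2}(\cdot,t)\md x+\frac{1}{2}\int_{0}^{t}\int_{\UU}|\nabla\tilde{w}|^{2}\md x\md s+\int_{0}^{t}\int_{\UU}e_{\be_{0},2}\tilde{w}^{2}\md x\md s=0,\quad t\geq 0,
\end{equation*}
because once this holds, Lemma \ref{lem-3-24-2}(3) gives $e_{\be_{0},2}\geq C_{*}\al-M\geq -M$, so
\begin{equation*}
\frac{1}{2}\int_{\UU}\tilde{w}^{2}(\cdot,t)\md x\leq M\int_{0}^{t}\int_{\UU}\tilde{w}^{2}\md x\md s,
\end{equation*}
and Gr\"onwall's inequality forces $\tilde{w}(\cdot,t)\equiv 0$ for all $t\geq 0$.

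The obstacle is that $\tilde{w}$ itself is not an admissible test function in Definition \ref{weak-soln-cauchy}: it is only $C^{0}$ in time and lives in the weighted space $\HH^{1}(\UU)$, while test functions are required to be $C^{1,1}$ and compactly supported in $\UU$. The plan is therefore to approximate $\tilde{w}$ by a doubly regularized function combining a Steklov average in time with the spatial cutoff functions $\eta_{n}$ constructed in the proof of Lemma \ref{lem-3-25-1}(2). Concretely, for small $h>0$ set $\tilde{w}_{h}(x,s):=\frac{1}{h}\int_{s}^{s+h}\tilde{w}(x,\tau)\md\tau$; this is Lipschitz in $s$ with values in $\HH^{1}(\UU)$, and satisfies $\pa_{s}\tilde{w}_{h}=\frac{\tilde{w}(\cdot,s+h)-\tilde{w}(\cdot,s)}{h}$. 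I would use the weak formulation with admissible test functions of the form $\phi(x,s)=\eta_{n}^{2}(x)\chi(s)\tilde{w}_{h}(x,s)$ (after a further mollification in time to gain $C^{1,1}$-regularity, or more cleanly by passing to the Steklov-averaged version of the equation and then testing with $\eta_{n}^{2}\tilde{w}_{h}$ on a time interval $[0,t]$). Integrating by parts in $s$ on $[0,t]$ yields
\begin{equation*}
\frac{1}{2}\int_{\UU}\eta_{n}^{2}\tilde{w}_{h}^{2}(\cdot,t)\md x-\frac{1}{2}\int_{\UU}\eta_{n}^{2}\tilde{w}_{h}^{2}(\cdot,0)\md x=\int_{0}^{t}\int_{\UU}\eta_{n}^{2}\tilde{w}_{h}\,\pa_{s}\tilde{w}_{h}\md x\md s,
\end{equation*}
and substituting the (averaged) equation on the right gives an identity analogous to \eqref{eqn-3-25-1} but for $\tilde{w}_{h}$ on $[0,t]$.

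Passing $h\da 0$ is routine: $\tilde{w}_{h}\to\tilde{w}$ in $L^{2}_{loc}([0,\infty);\HH^{1}(\UU))$ by the standard properties of Steklov averages, and $\tilde{w}_{h}(\cdot,0)\to \tilde{w}(\cdot,0)=0$ in $L^{2}_{loc}(\UU)$ by continuity of $\tilde{w}$ at $t=0$. This produces, for each fixed $n$,
\begin{equation*}
\frac{1}{2}\int_{\UU}\eta_{n}^{2}\tilde{w}^{2}(\cdot,t)\md x=-\frac{1}{2}\int_{0}^{t}\!\!\int_{\UU}\eta_{n}^{2}|\nabla\tilde{w}|^{2}\md x\md s-\int_{0}^{t}\!\!\int_{\UU}\eta_{n}\tilde{w}\nabla\tilde{w}\cdot\nabla\eta_{n}\md x\md s-\int_{0}^{t}\!\!\int_{\UU}(p+\be_{0}\nabla U)\cdot\nabla\tilde{w}\,(\eta_{n}^{2}\tilde{w})\md x\md s-\int_{0}^{t}\!\!\int_{\UU}e_{\be_{0}}\eta_{n}^{2}\tilde{w}^{2}\md x\md s.
\end{equation*}
Passing $n\to\infty$ is then handled exactly as in the proof of Lemma \ref{lem-3-25-1}(2): the integration by parts $\int(p+\be_{0}\nabla U)\cdot\nabla\tilde{w}(\eta_{n}^{2}\tilde{w})=-\frac{1}{2}\int(\nabla\cdot p+\be_{0}\De U)\eta_{n}^{2}\tilde{w}^{2}-\int(p+\be_{0}\nabla U)\cdot\nabla\eta_{n}(\eta_{n}\tilde{w}^{2})$ converts the first-order term into part of $e_{\be_{0},2}$; the cross terms involving $\nabla\eta_{n}$ vanish in the limit by H\"older, Lemma \ref{lem-3-24-2}(1), and the weighted bound $\int_{0}^{t}\int_{\UU}\al\tilde{w}^{2}\md x\md s<\infty$, which comes from $\tilde{w}\in L^{2}_{loc}([0,\infty);\HH^{1}(\UU))$ and is exactly what controls the $|\nabla\eta_{n}|^{2}|\tilde{w}|^{2}$ terms, as in \eqref{eqn-3-21-3}. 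The dominated convergence theorem, together with Lemma \ref{lem-3-24-2}(2) to bound $|e_{\be_{0}}|\eta_{n}^{2}\tilde{w}^{2}\leq C\al\tilde{w}^{2}$, yields the desired energy identity.

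The principal obstacle is the careful justification of the combined time-smoothing/spatial-truncation step and the passage $h\da 0$ at the level of the weak formulation; every other step is either a direct application of Lemma \ref{lem-3-24-2}, a verbatim reuse of the limit arguments already developed for the sesquilinear form in Lemma \ref{lem-3-25-1}(2), or a one-line Gr\"onwall.
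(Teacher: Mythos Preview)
Your proposal is correct and follows essentially the same route as the paper: Steklov averaging in time combined with the spatial cutoffs $\eta_n$ from Lemma~\ref{lem-3-25-1}(2), passage to the limit $h\to 0$ then $n\to\infty$ to obtain the energy identity with $e_{\be_0,2}$, and a final Gr\"onwall using $e_{\be_0,2}+M\geq 0$. The only cosmetic difference is that the paper derives the energy identity for a general weak solution with datum $\tilde f$ and then specializes to $\tilde f=0$, while you work directly with the difference of two solutions.
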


The proof of the above lemma follows from energy methods and approximation arguments. Since it is somewhat standard we present its proof in Appendix \ref{subsec-appendix-2}.

Now, we prove Theorem \ref{thm-4-22-1}.

\begin{proof}[Proof of Theorem \ref{thm-4-22-1}]

Treating the real and imaginary parts separately, we only need to prove the theorem for $f\in C_b(\UU)$ such that $\tilde{f}:=fe^{-\frac{Q}{2}-\be_0 U}\in L^2(\UU)$. Fix such an $f$.

We show $T^*_{\bullet}\tilde{f}$ is a weak solution of \eqref{eqn-4-16-2-1}. Due to the analyticity of $(T^{*}_t)_{t\geq 0}$ (see Theorem \ref{thm-5-22-1}) and Lemma \ref{lem-characterization-domain}, we find
\begin{enumerate}
    \item[\rm(1)] $T^*_{\bullet}\tilde{f}\in C([0,\infty),L^2(\UU))\bigcap C^1((0,\infty), L^2(\UU))$;

    \item[\rm(2)] $T^*_t\tilde{f}\in \DD^*\subset \HH^1(\UU)\bigcap H^2_{loc}(\UU)$ for all $t>0$;

    \item[\rm(3)] $\frac{\md}{\md t}T^*_t\tilde{f}=\LL_{\be_0}^*T^*_t\tilde{f}$ for all $t>0$.
\end{enumerate}
Since $\tilde{f}\in C(\UU)$, the classical regularity theory of parabolic equations yields that $T^*_{\bullet}\tilde{f}\in C(\UU\times[0,\infty))$. Applying Lemma \ref{lem-3-25-1} (2) and Lemma \ref{lem-characterization-domain}, we find for each $t>0$,
\begin{equation*}
\begin{split}
    \min\left\{\frac{1}{2}, C_*\right\}\|T^*_t\tilde{f}\|^2_{\HH^1}&\leq\EE_{\be_0}(T^*_t\tilde{f},T^*_t\tilde{f})+M\|T^*_t\tilde{f}\|^2_{L^{2}}\\
    &=-\langle T^*_t\tilde{f}, \LL^*_{\be_0}T^*_t\tilde{f} \rangle_{L^2}+M\|T^*_t\tilde{f}\|^2_{L^{2}}\\
    &=-\langle T^*_t\tilde{f}, \frac{\md}{\md t}T^*_t\tilde{f}\rangle_{L^2}+M\|T^*_t\tilde{f}\|^2_{L^{2}}\\
    &=-\frac{1}{2}\frac{\md}{\md t}\|T^*_t\tilde{f}\|_{L^2}^2+M\|T^*_t\tilde{f}\|^2_{L^{2}}.
\end{split}
\end{equation*}
It follows that
\begin{equation*}
    \min\left\{\frac{1}{2}, C_*\right\}\int_0^t\|T^*_s\tilde{f}\|^2_{\HH^1}\md s\leq\frac{1}{2} \|\tilde{f}\|_{L^2}^2+M\int_0^t\|T^*_s\tilde{f}\|_{L^2}^2\md s,\quad\forall t>0.
\end{equation*}
This yields $T^*_{\bullet}\tilde{f}\in L^2_{loc}([0,\infty),\HH^1(\UU))$.  By (3), it is easy to check that the integral identity in Definition \ref{weak-soln-cauchy} holds with $\tilde{w}$ replaced by $T_{\bullet}^*\tilde{f}$. As a consequence, $T^*_{\bullet}\tilde{f}$ is a weak solution of \eqref{eqn-4-16-2-1}.

Define
$$
\tilde{w}(x,t):=e^{-\frac{Q(x)}{2}-\be_0 U(x)}\E^x\left[f(X_t)\mathbbm{1}_{\{t<S_{\Ga}\}}\right],\quad (x,t)\in\UU\times[0,\infty).
$$
We claim that $\tilde{w}$ is also a weak solution of \eqref{eqn-4-16-2-1}. Then, Lemma \ref{lem-4-21-1} yields $T^{*}_{\bullet}\tilde{f}=\tilde{w}$, leading to the conclusion of the theorem.

The continuity of $\tilde{w}$ in $\UU\times[0,\infty)$ follows from the definition and continuity properties of $X_{t}$. We show $\tilde{w}\in L^2_{loc}([0,\infty),\HH^1(\UU))$. Let $\{\UU_{n}\}_{n\in\N}$ be as in Subsection \ref{subsec-first-exit-times}. It follows from Lemma \ref{lem-exit-times-approximation} and Proposition \ref{lem-5-3-3} (4) that
$\tilde{w}=\lim_{n\to\infty} T^{(*,\UU_n,2)}_{\bullet}\tilde{f}|_{\UU_n}$ in $\UU\times[0,\infty)$, where we recall from Subsection \ref{subsec-stochastic-represent-bd-domain} that $(T_t^{(*,\UU_n,2)})_{t\geq 0}$ is the positive analytic semigroup of contractions on $L^2(\UU_n;\C)$ generated by $\LL^{*,2}_{\be_0}|_{\UU_n}$ with domain $W^{2,2}(\UU_n;\C)\cap W_0^{1,2}(\UU_n;\C)$.

For convenience, we define $\tilde{w}_n:=T^{(*,\UU_n,2)}_{\bullet}\tilde{f}|_{\UU_n}$ for $n\in\N$. Then, $\lim_{n\to\infty}\tilde{w}_n=\tilde{w}$. Lemma \ref{lem-4-21-2} (with $t_1=0$) gives for each $t\in[0,\infty)$ and $n\in\N$,
\begin{equation*}
\begin{split}
&\frac{1}{2}\int_{\UU_n}\tilde{w}_n^2(\cdot, t)\md x+\frac{1}{2}\int_0^t\int_{\UU_n} |\nabla \tilde{w}_n|^2\md x\md s+
C_*\int_0^t\int_{\UU_n}\al \tilde{w}_n^2\md x\md s\leq \frac{1+e^{2Mt}}{2}\int_{\UU_n}\tilde{f}^2\md x.
\end{split}
\end{equation*}
Letting $n\to\infty$ yields $\tilde{w}\in L^2_{loc}([0,\infty),\HH^1(\UU))$. Since  $\pa_t\tilde{w}_n=\LL^{*,2}_{\be_0}|_{\UU_n}\tilde{w}_n$ for all $t>0$ and $n\in\N$, standard approximation arguments ensure that $\tilde{w}$ is a weak solution of \eqref{eqn-4-16-2-1}. This finishes the proof.
\end{proof}

%%%%%%%%%%%%%%%%%%%%%%%%%%%%%%%%%%%%%%%%%%%%%%%%%%
%%%%%%%%%%%%%%%%%%%%%%%%%%%%%%%%%%%%%%%%%%%%%%%%%%

\section{\bf QSD: existence, uniqueness and convergence}\label{sec-exi-uniq-conv}

In this section, we study the existence and uniqueness of QSDs of $X_{t}$, as well as the exponential convergence of the process $X_{t}$ conditioned on the event $[t<S_{\Ga}]$ to QSDs. In Subsection \ref{subsec-qsd-existence}, we show the existence of QSDs of $X_{t}$. In Subsection \ref{subsec-qsd-convergence}, we study the sharp exponential convergence of $X_{t}$ with compactly supported initial distributions. In Subsection \ref{subsec-qsd-uniqueness}, we investigate the uniqueness of QSDs of $X_{t}$ and the exponential convergence of $X_{t}$ with arbitrary initial distribution. The proofs of Theorems \ref{thm-qsd-existence-dynamics} and \ref{thm-uniqueness} are outlined in Subsection \ref{subsec-proof-thm-A-B}.

%----------------------------------------------------%
\subsection{Existence}\label{subsec-qsd-existence}

We construct QSDs for $X_{t}$. Recall that $\la_1$ and $\tilde v_1$ are given in Theorem \ref{thm-3-28-2}.

\begin{thm}\label{thm-4-25-1}
Assume {\bf (H1)-(H3)}.  Then, the following statements hold.
\begin{enumerate}
\item[\rm(1)] $\la_1>0$ and $\int_{\UU}\tilde{v}_1e^{-\frac{Q}{2}-\be_0 U}\md x<\infty$.

\item[\rm(2)] For each $f\in C_b(\UU)$,
\begin{equation*}
\E^{\nu_1}\left[f(X_t)\mathbbm{1}_{\{t<S_{\Ga}\}}\right]=e^{-\la_1 t}\int_{\UU}f\md \nu_1,\quad\forall t\geq0,
\end{equation*}
where $\md \nu_1:=\frac{\tilde{v}_1e^{-\frac{Q}{2}-\be_0 U}}{\int_{\UU}\tilde{v}_1e^{-\frac{Q}{2}-\be_0 U}\md x}\md x$.

\item[\rm(3)] $\nu_1$ is a QSD of $X_{t}$ with extinction rate $\la_{1}$.
\end{enumerate}
\end{thm}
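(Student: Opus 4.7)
The plan is to read off parts (1)--(3) from a single intertwining identity relating $\tilde{v}_1$ to the semigroup $(T^{*}_t)_{t\geq 0}$ via duality, and then translate back to $X_t$ using the stochastic representation of Theorem \ref{thm-4-22-1}. Since $\tilde{v}_1\in\DD$ is real-valued and $-\LL_{\be_0}\tilde{v}_1=\la_1\tilde{v}_1$, one has $T_t\tilde{v}_1=e^{-\la_1 t}\tilde{v}_1$, and passing to the dual semigroup yields
\begin{equation*}
\int_{\UU}(T^{*}_t\phi)\,\tilde{v}_1\,\md x=\int_{\UU}\phi\,T_t\tilde{v}_1\,\md x=e^{-\la_1 t}\int_{\UU}\phi\,\tilde{v}_1\,\md x,\quad\forall\phi\in L^2(\UU),\,\,t\geq 0.
\end{equation*}

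For each $f\in C_c(\UU)$ the function $\tilde{f}:=fe^{-\frac{Q}{2}-\be_0 U}$ is bounded with compact support in $\UU$ and hence lies in $L^2(\UU)$ (the weight is locally bounded on $\UU$), so Theorem \ref{thm-4-22-1} applies and gives $T^{*}_t\tilde{f}(x)=e^{-\frac{Q(x)}{2}-\be_0 U(x)}\E^{x}[f(X_t)\mathbbm{1}_{\{t<S_{\Ga}\}}]$. Choosing $\phi=\tilde{f}$ in the above identity therefore yields
\begin{equation*}
\int_{\UU}\tilde{v}_1 e^{-\frac{Q}{2}-\be_0 U}\E^{x}\left[f(X_t)\mathbbm{1}_{\{t<S_{\Ga}\}}\right]\md x=e^{-\la_1 t}\int_{\UU}f\,\tilde{v}_1 e^{-\frac{Q}{2}-\be_0 U}\md x,\quad\forall f\in C_c(\UU).
\end{equation*}
Once the normalizing constant $Z:=\int_{\UU}\tilde{v}_1 e^{-\frac{Q}{2}-\be_0 U}\md x$ is known to be finite, dividing by $Z$ gives the identity of (2) on $C_c(\UU)$; it then extends to all $f\in C_b(\UU)$ by approximating $f$ pointwise by a uniformly bounded sequence $\{f_n\}\subset C_c(\UU)$ (e.g.\ via smooth cutoffs along the exhaustion $\{\UU_n\}$ of Subsection \ref{subsec-first-exit-times}) and invoking dominated convergence on both sides with dominating function $\|f\|_\infty\,\tilde{v}_1 e^{-\frac{Q}{2}-\be_0 U}$.

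The main obstacle is the finiteness $Z<\infty$. My plan here is a weighted Cauchy--Schwarz estimate,
\begin{equation*}
Z\leq \left(\int_{\UU}\al\,\tilde{v}_1^{\,2}\,\md x\right)^{1/2}\!\left(\int_{\UU}\frac{e^{-Q-2\be_0 U}}{\al}\,\md x\right)^{1/2},
\end{equation*}
in which the first factor is controlled by $\|\tilde{v}_1\|_{\HH^1}$ since $\tilde{v}_1\in\HH^1(\UU)$. The second factor is handled by tracking the competing singularities of the integrand against the weight $\al$ from \eqref{eqn-3-28-2}: near a face $\{x_i\approx 0\}$, {\bf(H1)} gives $a_i(\xi_i^{-1}(x_i))\sim x_i^2$, hence $e^{-Q/2}\sim x_i^{-1/2}$, while $\al\gtrsim x_i^{-2}$ and $e^{-\be_0 U}$ stays bounded, so the integrand is $\lesssim x_i$ and locally integrable; at infinity, {\bf(H3)}(1) ensures $U\to\infty$ and $\al\to\infty$ sufficiently fast, and the integrability there is reduced (after a change of variables back to $z$) to the one-dimensional condition built into {\bf(H3)}(2).

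With (2) in hand, parts (1) and (3) follow quickly. Taking $f\equiv 1$ in (2) gives $\P^{\nu_1}[t<S_{\Ga}]=e^{-\la_1 t}$; Proposition \ref{prop-absorbing} applied to $Z_t=\xi^{-1}(X_t)$, whose hitting time of $\Ga$ coincides with $S_{\Ga}$, forces $\P^{x}[S_{\Ga}<\infty]=1$ for every $x\in\UU$, so the left-hand side tends to $0$ as $t\to\infty$ and hence $\la_1>0$. Finally, dividing the identity of (2) by $\P^{\nu_1}[t<S_{\Ga}]=e^{-\la_1 t}$ produces $\E^{\nu_1}[f(X_t)\mid t<S_{\Ga}]=\int_{\UU}f\,\md\nu_1$ for all $f\in C_b(\UU)$, which is exactly Definition \ref{defn-qsd} applied to $X_t$ and identifies $\la_1$ as the extinction rate of $\nu_1$.
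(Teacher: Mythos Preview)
Your proposal is correct and follows essentially the same route as the paper: the duality identity $\langle T_t^*\tilde f,\tilde v_1\rangle=e^{-\la_1 t}\langle \tilde f,\tilde v_1\rangle$ combined with the stochastic representation (Theorem~\ref{thm-4-22-1}) for $f\in C_c(\UU)$, then approximation to reach $C_b(\UU)$, and finally reading off $\la_1>0$ and the QSD property from the case $f\equiv 1$. Your weighted Cauchy--Schwarz bound for $Z$ is exactly the content of the paper's Lemma~\ref{lem-6-5-1}. One point where your sketch is thinner than the paper: the finiteness of $\int_{\UU}\al^{-1}e^{-Q-2\be_0 U}\,\md x$ requires care not only near a single face $\{x_i\approx 0\}$ but also near corners where several coordinates vanish simultaneously (there $e^{-Q}\sim\prod_{i\in\sigma}x_i^{-1}$ while $\al\sim\sum_{i\in\sigma}x_i^{-2}$, and an AM--GM step is needed); the paper handles this by an induction on the number of small coordinates.
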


We need the following lemma. Recall that the weight function $\al$ is defined in \eqref{eqn-3-28-2}.

\begin{lem}\label{lem-6-5-1}
Assume {\bf(H1)}-{\bf (H3)}. For each $\tilde{v}\in L^2(\UU,\al\md x;\C)$ one has $\int_{\UU}|\tilde v| e^{-\frac{Q}{2}-\be_0 U}\md x<\infty$.
\end{lem}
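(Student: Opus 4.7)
The plan is Cauchy--Schwarz followed by a region-by-region integrability check after a change of variables. By Cauchy--Schwarz,
\begin{equation*}
\int_\UU |\tilde v|\,e^{-Q/2-\be_0 U}\md x \;\leq\; \|\tilde v\|_{L^2(\UU,\al\md x)}\left(\int_\UU\frac{e^{-Q-2\be_0 U}}{\al}\md x\right)^{1/2},
\end{equation*}
so the lemma reduces to showing $I:=\int_\UU e^{-Q-2\be_0 U}/\al\,\md x<\infty$. Substituting $z=\xi^{-1}(x)$ and using \eqref{eqn-function-Q} together with the Jacobian $\md x=\prod_i a_i(z_i)^{-1/2}\md z$, the integrand transforms (up to a positive constant) into
\begin{equation*}
I \;=\; C\int_\UU\frac{e^{-2\be_0 V(z)}}{\tilde\al(z)\prod_i a_i(z_i)}\md z, \qquad \tilde\al:=\al\circ\xi.
\end{equation*}

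I would then split $\UU$ into a bounded piece $K\stst\UU$ and its complement. On $K$, $e^{-2\be_0 V}$ is bounded and only the behavior near $\Ga$ matters. The asymptotics from {\bf(H1)} give $\xi_i(z_i)\sim 2\sqrt{z_i/a_i'(0)}$ and $a_i(z_i)\sim a_i'(0)z_i$ as $z_i\to 0^+$, so $\xi_i(z_i)^{-2}$ exactly cancels $a_i(z_i)^{-1}$ up to a constant near each face of $\Ga$. At a $k$-fold corner, the piecewise definition \eqref{eqn-3-28-2} yields $\tilde\al(z)\geq \sum_{i\in S}\xi_i(z_i)^{-2}\geq k\prod_{i\in S}\xi_i(z_i)^{-2/k}$ by AM--GM, where $S$ is the index set of small coordinates with $|S|=k$; this gives
\begin{equation*}
\frac{1}{\tilde\al(z)\prod_i a_i(z_i)}\;\lesssim\; \prod_{i\in S}z_i^{1/k-1}\cdot\prod_{i\notin S}\frac{1}{a_i(z_i)},
\end{equation*}
which is locally integrable since $1/k-1>-1$. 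On $\UU\sm K$, {\bf(H3)}(1) gives $\tilde\al\geq -b\cdot\nabla V\geq c>0$ once $K$ is large enough, and combining $V(z)\geq\sum_i\tilde V(z_i)$ with {\bf(H3)}(2) plus the same boundary cancellation on the small-$z_i$ axes reduces the estimate to a product of one-dimensional integrals of the form $\int_0^{\xi_i^{-1}(\de)}z_i^{\theta}e^{-2\be_0\tilde V(z_i)}\md z_i$ (integrable by the cancellation) and $\int_{\xi_i^{-1}(\de)}^\infty e^{-2\be_0\tilde V(z_i)}/a_i(z_i)\md z_i$ (finite by {\bf(H3)}(2)), each of which is finite.

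The main obstacle is the corner analysis on $\Ga$: several coordinates may vanish simultaneously, producing a product-type singularity $\prod_{i\in S}z_i^{-1}$ from $(\prod_i a_i)^{-1}$ that must be dominated by $\tilde\al^{-1}$, even though $\tilde\al$ is defined by an \emph{additive} combination $\sum \xi_i^{-2}$ in \eqref{eqn-3-28-2}. The AM--GM conversion from a sum to a balanced product, together with the matching linear vanishing of both $\xi_i^2$ and $a_i$ near $z_i=0$ from {\bf(H1)}, is what makes the singularity borderline-integrable and closes the argument.
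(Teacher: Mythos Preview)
Your Cauchy--Schwarz reduction and the AM--GM corner analysis on the bounded box are correct and coincide with the paper's core ideas. The gap is in your handling of $\UU\sm K$. That region still meets $\Ga$: if $K=(0,R)^d$, then $\UU\sm K$ contains points with some $z_j\ge R$ but other $z_i$ arbitrarily small. On this set you invoke $\tilde\al\ge -b\cdot\nabla_z V\ge c>0$ and then claim a factorization into one-dimensional integrals. But once you replace $\tilde\al$ by the constant $c$, the boundary term $\sum_i \xi_i(z_i)^{-2}$ that you used on $K$ is gone, and the factor for a small coordinate becomes
\[
\int_0^{\de}\frac{e^{-2\be_0\tilde V(z_i)}}{a_i(z_i)}\,\md z_i \;\sim\; \int_0^{\de}\frac{\md z_i}{a_i'(0)\,z_i}=\infty .
\]
So the product you write down diverges. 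Your parenthetical ``integrable by the cancellation'' presupposes exactly the piece of $\tilde\al$ you have already discarded; you cannot simultaneously use $\tilde\al\ge c$ to factorize and use $\tilde\al\ge\sum_{i\in S}\xi_i^{-2}$ to tame the corner.

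The paper fixes this by never splitting into a bounded box versus its complement. It uses the global lower bound $\al(x)\ge C_1\sum_{i=1}^d\max\{x_i^{-2},1\}$ and runs an induction on the number of active coordinates: for the $(k{+}1)$-dimensional integral, the cube $[0,1]^{k+1}$ is handled by your AM--GM trick, while on each strip $\{x_j>1\}$ one drops the $j$-th summand (it equals $1$, so this only enlarges the integrand), after which the $x_j$-integral factors out as $\int_1^\infty e^{-2\be_0\tilde V}/a_j<\infty$ by {\bf(H3)}(2) and the remaining $k$-dimensional integral is controlled by the induction hypothesis. The point is that the sum in the denominator is retained for the \emph{remaining} coordinates at every stage, so the boundary cancellation is always available. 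A minor side issue: your notation $K\stst\UU$ means $K$ has compact closure in $\UU$, hence is bounded away from $\Ga$, which contradicts your subsequent discussion of boundary behavior on $K$; presumably you intend $K=(0,R)^d$.
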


\begin{proof}
Fix $\tilde{v}\in L^2(\UU,\al\md x;\C)$. By H\"older's inequality,
$$
\int_{\UU}|\tilde{v}| e^{-\frac{Q}{2}-\be_0U}\md x\leq \left(\int_{\UU}\al|\tilde{v}|^2\md x\right)^{\frac{1}{2}} \left(\int_{\UU}\frac{1}{\al} e^{-Q-2\be_0U}\md x\right)^{\frac{1}{2}}.
$$
It suffices to verify
\begin{equation}\label{eqn-2020-06-30}
\int_{\UU}\frac{1}{\al} e^{-Q-2\be_0U}\md x<\infty.
\end{equation}

Let $\tilde{\al}(t):=\max\{\frac{1}{t^2}, 1\}$ for $t>0$. According to the definition of $\al$ given in \eqref{eqn-3-28-2} and the fact that $\inf_{\UU}\al >0$, there exists $C_1>0$ such that $\al(x)\geq C_1\sum_{i=1}^d\tilde{\al}(x_i)$ for $x\in \UU$. Since   $U(x)=V(\xi^{-1}(x))\geq \sum_{i=1}^d\tilde{V}(\xi^{-1}_i(x_i))$ for $x\in\UU$ due to {\bf (H3)}(2) and $e^{-Q}=\frac{\left[\prod_{i=1}^d a_i(\xi^{-1}_i(1))\right]^{\frac{1}{2}}}{\left[\prod_{i=1}^d a_i(\xi^{-1}_i(x_i))\right]^{\frac{1}{2}}}$, we derive
$$
\int_{\UU}\frac{1}{\al} e^{-Q-2\be_0U}\md x\leq \frac{\left[\prod_{i=1}^d a_i(\xi^{-1}_i(1))\right]^{\frac{1}{2}}}{C_1}\int_{\UU}\frac{\prod_{i=1}^d \exp\left\{-2\be_0 \tilde{V}(\xi^{-1}_i(x_i))\right\}}{\left(\sum_{i=1}^d \tilde{\al}(x_i)\right) \times \left[\prod_{i=1}^d a_i(\xi^{-1}_i(x_i)) \right]^{\frac{1}{2}}}\md x.
$$

For each $k\in \{1,\dots, d\}$, we denote by $\Si_k$ the collection of all subsets of $\{1,\dots, d\}$ with $k$ elements, and set
$$
A_k:=\sup_{\si\in \Si_k} \int_{\left\{x_{\si}=(x_i)_{i\in \si}:x_i>0,\,\,\forall i\in\si\right\}}\frac{\prod_{i\in \si} \exp\left\{-2\be_0 \tilde{V}(\xi_i^{-1}(x_i))\right\}}{\left(\sum_{i\in \si} \tilde{\al}(x_i)\right) \times \left[\prod_{i\in\si} a_i(\xi_i^{-1}(x_i)) \right]^{\frac{1}{2}}}\md x_{\si}.
$$
Clearly, \eqref{eqn-2020-06-30} holds if $A_d<\infty$. We show this by induction.

First, we show  $A_1<\infty$. Following the arguments leading to \eqref{eqn-5-9-1}, we can find $C_2>0$ such that $a_i(\xi^{-1}_i(x_i))\geq C_2^2 x_i^2$ for $x_i\in [0,1]$ and $i\in \{1,\dots, d\}$. It follows that for each $i\in \{1,\dots, d\}$,
\begin{equation*}
    \begin{split}
        \int_0^{\infty}\frac{e^{-2\be_0\tilde{V}(\xi^{-1}_i(x_i))}}{\tilde{\al}(x_i)\left[a_i(\xi^{-1}_i(x_i))\right]^{\frac{1}{2}}}\md x_i
        &\leq\frac{1}{C_2}\int_0^{1}\frac{e^{-2\be_0\tilde{V}(\xi^{-1}_i(x_i))}}{\tilde{\al}(x_i) x_i}\md x_i+\int_1^{\infty}\frac{e^{-2\be_0\tilde{V}(\xi^{-1}_i(x_i))}}{\tilde{\al}(x_i)\left[a_i(\xi^{-1}_i(x_i))\right]^{\frac{1}{2}}}\md x_i\\
        &\leq\frac{1}{C_2}\int_0^{1}x_ie^{-2\be_0\tilde{V}(\xi^{-1}_i(x_i))}\md x_i+
        \int_1^{\infty}\frac{e^{-2\be_0\tilde{V}(\xi^{-1}_i(x_i))}}{\left[a_i(\xi^{-1}_i(x_i))\right]^{\frac{1}{2}}}\md x_i\\
        &\leq\frac{1}{2C_2}+\int_1^{\infty}\frac{e^{-2\be_0\tilde{V}(\xi^{-1}_i(x_i))}}{\left[a_i(\xi^{-1}_i(x_i))\right]^{\frac{1}{2}}}\md x_i,
    \end{split}
\end{equation*}
where we used the definition of $\tilde{\al}$ in the second inequality and the non-negativity of $\tilde{V}$ in the last inequality. Changing variable, we find from {\bf (H3)}(2) that
\begin{equation*}
    \begin{split}
        \int_1^{\infty}\frac{e^{-2\be_0\tilde{V}(\xi^{-1}_i(x_i))}}{\left[a_i(\xi^{-1}_i(x_i))\right]^{\frac{1}{2}}}\md x_i=\int_{\xi^{-1}_i(1)}^{\infty}\frac{e^{-2\be_0\tilde{V}(z_i)}}{a_i(z_i)}\md z_i<\infty,
    \end{split}
\end{equation*}
leading to $A_1<\infty$.

Suppose $A_k<\infty$ for some $k\in\{ 1,\dots, d-1\}$, we show $A_{k+1}<\infty$. Without loss of generality, we only prove
$$
A_{k+1}^{1}:=\int_0^{\infty}\cdots \int_0^{\infty}\frac{\prod_{i=1}^{k+1} \exp\left\{-2\be_0 \tilde{V}(\xi_i^{-1}(x_i))\right\}}{\left(\sum_{i=1}^{k+1} \tilde{\al}(x_i)\right) \times \left[\prod_{i=1}^{k+1} a_i(\xi_i^{-1}(x_i)) \right]^{\frac{1}{2}}}\md x_1\cdots\md x_{k+1}<\infty;
$$
integrals corresponding to other $\si\in\Si_{k+1}$ can be treated in exactly the same way. Note that
\begin{equation*}
    \begin{split}
        &\int_0^1\cdots\int_0^1\frac{\prod_{i=1}^{k+1} \exp\left\{-2\be_0 \tilde{V}(\xi^{-1}_i(x_i))\right\}}{\left(\sum_{i=1}^{k+1} \tilde{\al}(x_i)\right) \times \prod_{i=1}^{k+1}\left[ a_i(\xi_i^{-1}(x_i)) \right]^{\frac{1}{2}}}\md x_1\cdots\md x_{k+1}\\
        &\qquad\leq \frac{1}{C_2^{k+1}}\int_0^1\cdots\int_0^1\frac{1}{\left(\sum_{i=1}^{k+1}\frac{1}{x_i^2}\right)\times\prod_{i=1}^{k+1}x_i}\md x_1\cdots\md x_{k+1}\\
        &\qquad\leq \frac{1}{(k+1)C_2^{k+1}}\int_0^1\cdots\int_0^1\frac{1}{\left(\prod_{i=1}^{k+1}x_i\right)^{1-\frac{2}{k+1}}}\md x_1\cdots\md x_{k+1}<\infty.
    \end{split}
\end{equation*}

Straightforward calculations give
\begin{equation*}
    \begin{split}
        &\int_0^{\infty}\cdots\int_1^{\infty}\cdots\int_0^{\infty}\frac{\prod_{i=1}^{k+1} \exp\left\{-2\be_0 \tilde{V}(\xi^{-1}_i(x_i))\right\}}{\left(\sum_{i=1}^{k+1} \tilde{\al}(x_i)\right) \times \prod_{i=1}^{k+1}\left[ a_i(\xi_i^{-1}(x_i)) \right]^{\frac{1}{2}}}\md x_1\cdots\md x_j\cdots\md x_{k+1}\\
        &\qquad\leq \int_1^{\infty}\frac{e^{-2\be_0\tilde{V}(\xi^{-1}_j(x_j))}}{\left[a_j(\xi^{-1}_j(x_j))\right]^{\frac{1}{2}}}\md x_j\\
        &\qquad\quad\times\int_0^{\infty}\cdots\int_0^{\infty}\frac{\prod_{\substack{i=1\\i\neq j}}^{k+1} \exp\left\{-2\be_0 \tilde{V}(\xi^{-1}_i(x_i))\right\}}{\left(\sum_{\substack{i=1\\i\neq j}}^{k+1} \tilde{\al}(x_i)\right) \times \prod_{\substack{i=1\\i\neq j}}^{k+1}\left[ a_i(\xi_i^{-1}(x_i)) \right]^{\frac{1}{2}}}\md x_1\cdots\widehat{\md x_j}\cdots\md x_{k+1}\\
        &\qquad\leq A_k \int_1^{\infty}\frac{e^{-2\be_0\tilde{V}(\xi^{-1}_j(x_j))}}{\left[a_j(\xi^{-1}_j(x_j))\right]^{\frac{1}{2}}}\md x_j=A_k\int_{\xi^{-1}_j(1)}^{\infty}\frac{e^{-2\be_0\tilde{V}(z_j)}}{a_j(z_j)}\md z_j<\infty,
    \end{split}
\end{equation*}
where we used {\bf (H3)}(2) in the last inequality. It follows that
\begin{equation*}
    \begin{split}
        A_{k+1}^{1}&\leq  \int_0^1\cdots\int_0^1\frac{\prod_{i=1}^{k+1} \exp\left\{-2\be_0 \tilde{V}(\xi^{-1}_i(x_i))\right\}}{\left(\sum_{i=1}^{k=1} \tilde{\al}(x_i)\right) \times \prod_{i=1}^{k+1}\left[ a_i(\xi_i^{-1}(x_i)) \right]^{\frac{1}{2}}}\md x_1\cdots\md x_{k+1}\\
        &\quad+\sum_{j=1}^{k+1}\int_0^{\infty}\cdots\int_1^{\infty}\cdots\int_0^{\infty}\frac{\prod_{i=1}^{k+1} \exp\left\{-2\be_0 \tilde{V}(\xi^{-1}_i(x_i))\right\}}{\left(\sum_{i=1}^d \tilde{\al}(x_i)\right) \times \prod_{i=1}^d\left[ a_i(\xi_i^{-1}(x_i)) \right]^{\frac{1}{2}}}\md x_1\cdots\md x_j\cdots\md x_{k+1}\\
        &<\infty.
    \end{split}
\end{equation*}
This completes the proof.
\end{proof}

\begin{proof}[Proof of Theorem \ref{thm-4-25-1}]
(1) Since $\tilde{v}_{1}\in L^2(\UU,\al\md x)$, Lemma \ref{lem-6-5-1} yields $\int_{\UU}\tilde{v}_1e^{-\frac{Q}{2}-\be_0 U}\md x<\infty$.

\medskip

To see $\la_1>0$, we fix $f\in C_0^{\infty}(\UU)$ and set $\tilde{f}:=fe^{-\frac{Q}{2}-\be_0 U}$. Obviously, $\tilde{f}\in L^2(\UU)$. Theorem \ref{thm-4-22-1} gives
$$
e^{\frac{-Q(x)}{2}-\be_0 U(x)}\E^x\left[f(X_t)\mathbbm{1}_{\{t<S_{\Ga}\}}\right]=T^{*}_t \tilde{f}(x),\quad\forall (x,t)\in\UU\times[0,\infty).
$$
Setting $v_1:=C\tilde{v}_1e^{-\frac{Q}{2}-\be_0 U}$, where $C:=\left(\int_{\UU}\tilde{v}_1e^{-\frac{Q}{2}-\be_0 U}\md x\right)^{-1}$, we deduce
\begin{equation*}
\begin{split}
\int_{\UU}v_1\E^{\bullet}\left[f(X_t)\mathbbm{1}_{\{t<S_{\Ga}\}}\right]\md x&=C\lan\tilde{v}_1, T^{*}_t \tilde{f} \ran_{L^{2}}=C\lan T_t \tilde{v}_1, \tilde f\ran_{L^{2}},\quad\forall t\geq 0,
\end{split}
\end{equation*}
which together with $T_t \tilde{v}_1=e^{-\la_1 t}\tilde{v}_1$ yields
\begin{equation}\label{eqn-4-24-6}
\int_{\UU}v_1\E^{\bullet}\left[f(X_t)\mathbbm{1}_{\{t<S_{\Ga}\}}\right]\md x=Ce^{-\la_1t}\int_{\UU}\tilde v_1 \tilde f\md x=e^{-\la_1t}\int_{\UU}v_1 f\md x,\quad\forall t\geq0.
\end{equation}

For each $x\in\UU$, the fact $\P^x\left[S_{\Ga}<\infty\right]=1$ implies $\lim_{t\to \infty} \E^x\left[f(X_t)\mathbbm{1}_{\{t<S_{\Ga}\}}\right]=0$. This together with the fact $\sup_{x\in\UU}\left| \E^x\left[f(X_t)\mathbbm{1}_{\{t<S_{\Ga}\}}\right]\right|\leq \|f\|_{\infty}$ for all $t\geq0$ and the dominated convergence theorem implies $\lim_{t\to \infty}\int_{\UU}v_1\E^{\bullet}\left[f(X_t)\mathbbm{1}_{\{t<S_{\Ga}\}}\right]\md x=0$. From which, we conclude $\la_1>0$, otherwise a contradiction can be easily derived from \eqref{eqn-4-24-6}.

\medskip

(2) Fix $f\in C_b(\UU)$ and take a sequence of functions $\{f_n\}_{n\in\N} \subset C_0^{\infty}(\UU)$ that locally uniformly converges to $f$ as $n\to\infty$ and satisfies $\|f_n\|_{\infty}\leq \|f\|_{\infty}$ for all $n\in\N$. It follows from \eqref{eqn-4-24-6} that for each $t\geq0$,
\begin{equation*}
\int_{\UU}\E^{\bullet}\left[f_n(X_t)\mathbbm{1}_{\{t<S_{\Ga}\}}\right]\md \nu_1=e^{-\la_1t}\int_{\UU} f_n\md \nu_1,\quad\forall n\in\N,
\end{equation*}
where $\md \nu_1:=v_1\md x$. Letting $n\to\infty$, we conclude the result from the dominated convergence theorem.

\medskip

(3) Applying (2) with $f=\mathbbm{1}_{\UU}$, we find $\P^{\nu_1}\left[t<S_{\Ga}\right]=\E^{\nu_1}\left[\mathbbm{1}_{\{t<S_{\Ga}\}}\right]=e^{-\la_1 t}$ for all $t\geq 0$.
Applying (2) again, we conclude
\begin{equation*}
\frac{\E^{\nu_1} \left[f(X_t)\mathbbm{1}_{\{t<S_{\Ga}\}}\right]}{\P^{\nu_1}\left[t<S_{\Ga}\right]}=\int_{\UU}f \md\nu_1,\quad \forall f\in C_b(\UU).
\end{equation*}
That is, $\nu_1$ is a QSD of $X_{t}$ and $\la_{1}$ is the associated extinction rate.
\end{proof}

%----------------------------------------------------%
\subsection{Sharp exponential convergence}\label{subsec-qsd-convergence}

We study the long-time dynamics of $X_{t}$ before reaching the boundary $\Ga$. Ahead of  stating the result, we recall and introduce some notation.

Recall that the spectra of $-\LL^*_{\be_0}$ and $-\LL_{\be_0}$ coincide, are discrete and contained in $\{\la\in\C: \Re\la>0\,\,\text{and}\,\,\arg \la \leq \tha\}$ for some $\tha\in (0,\frac{\pi}{2})$. The number $\la_{1}$ is the principal eigenvalue of both $-\LL^*_{\be_0}$ and $-\LL_{\be_0}$. Let $\tilde{v}^*_1$ be as in Lemma \ref{lem-characterization-domain} (4) and suppose it satisfies the normalization
\begin{equation}\label{normalization-v_1}
\lan\tilde{v}_{1},\tilde{v}_{1}^{*}\ran_{L^{2}}=\int_{\UU}\tilde{v}_1e^{-\frac{Q}{2}-\be_0 U}\md x,
\end{equation}
where $\tilde v_1$ is given in Theorem \ref{thm-3-28-2}. The last integral converges thanks to Lemma \ref{lem-6-5-1}. Note that $\range(\PP_{1}|_{L^{2}(\UU)})$ and $\range(\PP^*_{1}|_{L^{2}(\UU)})$ are respectively spanned over $\R$ by $\tilde{v}_{1}$ and $\tilde{v}_{1}^{*}$.

Set $\la_{2}:=\min\left\{\Re \la:\la\in \si(-\LL^*_{\be_0}) \andd \Re\la >\la_1\right\}$. Then, $\la_2>\la_1$ and $\{\la\in\si(-\LL^*_{\be_0}):\Re\la=\la_2\}$ consists of finitely many elements. For $k=1,2$, let $\PP^*_k$ and $\PP_k$ be respectively the spectral projections of $-\LL^*_{\be_0}$ and $-\LL_{\be_0}$ corresponding to $\{\la\in\si(-\LL^*_{\be_0}):\Re\la=\la_k\}$. Clearly, $\PP^*_1$ and $\PP_1$ are adjoint to each other. Since the coefficients of $-\LL^*_{\be_0}$ and $-\LL_{\be_0}$ are real-valued resulting in the symmetry of the set $\{\la\in\si(-\LL^*_{\be_0}):\Re\la=\la_2\}$ with respect to the real axis, $\PP^*_2$ and $\PP_2$ are also adjoint to each other.

Suppose the set $\{\la\in \si(-\LL^*_{\be_0}):\Re\la=\la_2\}$ consists of $N_*$ elements and is enumerated as
$$
\la_{2,i},\quad i\in\{0,\dots, N_*-1\}.
$$
Denote by $\PP^*_{2,i}$ and $\PP_{2,i}$ the spectral projections of $-\LL^*_{\be_0}$ and $-\LL_{\be_0}$  corresponding to $\la_{2,i}$ and $\ol{\la_{2,i}}$, respectively. Note that $\PP^*_{2,i}$ and $\PP_{2,i}$ are adjoint to each other. Obviously, $\PP^*_2=\sum_{i=0}^{N_*-1}\PP^*_{2,i}$ and $\PP_2=\sum_{i=0}^{N_*-1}\PP_{2,i}$.

For $i\in\{0,\dots, N_*-1\}$, we let
\begin{itemize}
    \item $N_i$ be the order of the pole $\la_{2,i}$ of the resolvent of $-\LL^*_{\be_0}$,

    \item $d_i=\dim(\range\PP^*_{2,i})$,

    \item $\{\tilde{v}^{(*,2)}_{i,j}:j\in\{1,\dots,d_i\}\}$ and $\{\tilde{v}^{(2)}_{i,j}:j\in\{1,\dots,d_i\}\}$ be generalized eigenfunctions of $-\LL^*_{\be_0}$ and $-\LL_{\be_0}$ that form bases of $\range\PP^*_{2,i}$ and $\range\PP_{2,i}$, respectively, and satisfy the normalization
    \begin{equation}\label{normalization-2020-06-30}
    \langle\Tilde{v}^{(2)}_{i,j},\Tilde{v}^{(*,2)}_{i,k}\rangle_{L^2}=\de_{jk},\quad\forall j,k\in\{1,\dots, d_i\}.
    \end{equation}

\end{itemize}

Recall that $\nu_1$ is the QSD of $X_{t}$ obtained in Theorem \ref{thm-4-25-1}, and $\{T_{t}\}_{t\geq0}$ and $\{T_{t}^{*}\}_{t\geq0}$ are positive and analytic semigroups of contractions on $L^{2}(\UU;\C)$ generated by $\LL_{\beta_{0}}$ and $\LL_{\beta_{0}}^{*}$, respectively.

The main result in this subsection is stated in the next theorem.

\begin{thm}\label{thm-convergence-according-to-gap}
Assume {\bf (H1)}-{\bf(H3)}. For each $\nu\in\PP(\UU)$ with compact support in $\UU$, there holds for each $f\in C_b(\UU)$,
\begin{equation*}
\begin{split}
&\E^{\nu} [f(X_t)\big|t<S_{\Ga}]\\
&\qquad=\int_{\UU}f\md \nu_1+\frac{e^{\la_1 t}}{\int_{\UU} e^{\frac{Q}{2}+\be_0 U}\tilde{v}^*_1\md\nu}\int_{\UU}e^{\frac{Q}{2}+\be_0 U} T^*_t\PP^*_2\left(\tilde{f}-\tilde{\mathbbm{1}}_{\UU}\int_{\UU}f\md \nu_1\right)\md\nu+o(e^{-(\la_2-\la_1)t})\\
&\qquad=\int_{\UU}f\md \nu_1+\frac{e^{-(\la_2-\la_1) t}}{\int_{\UU}e^{\frac{Q}{2}+\be_0 U}\tilde{v}^*_1\md\nu}\\
&\qquad\qquad\qquad\qquad\times\int_{\UU}e^{\frac{Q}{2}+\be_0 U}\sum_{j=0}^{N_*-1} e^{-i\Im \la_{2,j}t} \sum_{k=0}^{N_j-1}\frac{t^k}{k!}(\LL^*_{\be_0}+\la_{2,j})^k\PP^*_{2,j}\left(\tilde{f}-\tilde{\mathbbm{1}}_{\UU}\int_{\UU}f\md \nu_1\right)\md\nu\\
&\qquad\quad+o(e^{-(\la_2-\la_{1})t})\quad \text{as}\quad t\to \infty,
\end{split}
\end{equation*}
where $\tilde{f}:=e^{-\frac{Q}{2}-\be_0 U} f$ and $\tilde{\mathbbm{1}}_{\UU}:=e^{-\frac{Q}{2}-\be_0 U}\mathbbm{1}_{\UU}$.

In particular, %for each $\nu\in\PP(\UU)$ with compact support in $\UU$,
the following hold:
\begin{itemize}
\item For each $0<\ep\ll1$,
\begin{equation*}
\lim_{t\to\infty} e^{(\la_2-\la_1-\ep)t}\left|\E^{x} [f(X_t)\big |t<S_{\Ga}]-\int_{\UU}f\md\nu_1\right|=0,\quad \forall f\in C_b(\UU)\andd x\in \UU.
\end{equation*}

\item If $f\in C_b(\UU)$ is such that $\PP^*_2\left(\tilde{f}-\tilde{\mathbbm{1}}_{\UU}\int_{\UU}f\md \nu_1\right)\neq0$, then for a.e. $x\in \UU$, there is a discrete set $\II_x\subset(0,\infty)$ with distances between adjacent points admitting an $x$-independent positive lower bound, such that for each $0<\de\ll 1$ there holds
$$
\lim_{\substack{t\to\infty\\t\in (0,\infty)\sm\II_{x,\de}}} e^{(\la_2-\la_1+\ep)t}\left|\E^{x} [f(X_t)\big |t<S_{\Ga}]-\int_{\UU}f\md\nu_1\right|=\infty,\quad\forall 0<\ep\ll 1,
$$
where $\II_{x,\de}$ is the $\de$-neighbourhood of $\II_x$ in $(0,\infty)$.
% for each $0<\de\ll1$ and for a.e. $x\in \UU$, there exists a set $\II=\II(x,\de)\subset [0,\infty)$ satisfying $\sup_{t>0}\frac{| \II\cap [0,t]|}{t}>1-\de$ such that for each $0<\ep\ll1$,
% \begin{equation*}
% \lim_{\substack{t\to\infty\\t\in \II}} e^{(\la_2-\la_1+\ep)t}\left|\E^{x} [f(X_t)\big |t<S_{\Ga}]-\int_{\UU}f\md\nu_1\right|=\infty.
% \end{equation*}
\end{itemize}
\end{thm}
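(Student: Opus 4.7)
The plan is to combine the stochastic representation from Theorem~\ref{thm-4-22-1} with the spectral decomposition of the analytic, eventually compact semigroup $(T_t^*)_{t\geq 0}$ on $L^2(\UU;\C)$, backed by the global regularity of $\pa_tu=\LL_{\be_0}^*u$ alluded to in the overview. For $f\in C_b(\UU)$ and $\nu\in\PP(\UU)$ with compact support in $\UU$, I first rewrite
\[
\E^\nu\!\left[f(X_t)\mathbbm{1}_{\{t<S_{\Ga}\}}\right]=\int_{\UU}\!e^{\frac{Q}{2}+\be_0 U}T_t^*\tilde f\,\md\nu,\qquad \P^\nu[t<S_{\Ga}]=\int_{\UU}\!e^{\frac{Q}{2}+\be_0 U}T_t^*\tilde{\mathbbm{1}}_{\UU}\,\md\nu,
\]
so the conditional expectation equals their ratio. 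Because $e^{-\frac{Q}{2}-\be_0 U}$ blows up on $\Ga$, neither $\tilde f$ nor $\tilde{\mathbbm{1}}_\UU$ need lie in $L^2(\UU;\C)$; the action of $T_t^*$ on them has to be understood through global regularity and approximation by compactly supported $L^2$ cut-offs, with the stochastic representation used to identify the limits pointwise and on the local integrals against~$\nu$.

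Next, the normalisation \eqref{normalization-v_1} together with $\md\nu_1=C\tilde v_1 e^{-\frac{Q}{2}-\be_0 U}\md x$ from Theorem~\ref{thm-4-25-1} yields $\PP_1^*\tilde f=\bigl(\int_\UU f\,\md\nu_1\bigr)\tilde v_1^*$ and $\PP_1^*\tilde{\mathbbm{1}}_\UU=\tilde v_1^*$, so $g:=\tilde f-\tilde{\mathbbm{1}}_\UU\int_\UU f\,\md\nu_1\in\ker\PP_1^*$. Decomposing $T_t^*=e^{-\la_1 t}\PP_1^*+T_t^*\PP_2^*+T_t^*(I-\PP_1^*-\PP_2^*)$ and using the bound $\|T_t^*(I-\PP_1^*-\PP_2^*)\|=o(e^{-\la_2 t})$ (which follows from the discreteness of the spectrum and analyticity), I obtain
\[
T_t^*\tilde f=\Bigl(\int_\UU f\,\md\nu_1\Bigr)T_t^*\tilde{\mathbbm{1}}_\UU+T_t^*\PP_2^*g+o(e^{-\la_2 t}).
\]
Integrating against $e^{\frac{Q}{2}+\be_0 U}\md\nu$ and dividing by $\P^\nu[t<S_{\Ga}]=e^{-\la_1 t}\bigl(\int_\UU e^{\frac{Q}{2}+\be_0 U}\tilde v_1^*\,\md\nu+o(1)\bigr)$, which is strictly positive since $\tilde v_1^*>0$ and $\nu$ is compactly supported, gives the first displayed expansion. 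The second expansion then follows from the Jordan identity $T_t^*\PP_{2,j}^*=e^{-\la_{2,j}t}\sum_{k=0}^{N_j-1}\frac{t^k}{k!}(\LL_{\be_0}^*+\la_{2,j})^k\PP_{2,j}^*$, the splitting $\la_{2,j}=\la_2+i\Im\la_{2,j}$, and $\PP_2^*=\sum_j\PP_{2,j}^*$.

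The two consequences follow by specialising $\nu=\de_x$. The first bullet is immediate, since the leading remainder is bounded pointwise by $Ce^{-(\la_2-\la_1)t}(1+t)^K$ for the fixed degree $K=\max_j(N_j-1)$, which is $o(e^{-(\la_2-\la_1-\ep)t})$. For the second bullet, writing the remainder as $e^{-(\la_2-\la_1)t}g_x(t)/\mathrm{const}+o(e^{-(\la_2-\la_1)t})$ with
\[
g_x(t):=\sum_{j=0}^{N_*-1}e^{-i\Im\la_{2,j}t}\sum_{k=0}^{N_j-1}\frac{t^k}{k!}\bigl[(\LL_{\be_0}^*+\la_{2,j})^k\PP_{2,j}^*g\bigr](x),
\]
the hypothesis $\PP_2^*g\not\equiv 0$ combined with elliptic unique continuation for the generalised eigenfunctions gives $g_x(0)=[\PP_2^*g](x)\neq 0$ for a.e.~$x$; since $g_x$ is a real-analytic exponential polynomial with frequencies in the fixed finite set $\{\Im\la_{2,j}\}$ and polynomial degrees bounded by $K$, the zero set $\II_x$ is discrete. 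A normal-families/compactness argument applied to the finite-dimensional family of exponential polynomials with these prescribed data yields an $x$-independent lower bound on consecutive-zero spacings and a matching pointwise lower bound $|g_x(t)|\geq c(\de)>0$ on $(0,\infty)\sm\II_{x,\de}$, so that $e^{(\la_2-\la_1+\ep)t}\bigl|\E^x[f(X_t)|t<S_{\Ga}]-\int f\,\md\nu_1\bigr|\gtrsim e^{\ep t}\to\infty$.

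The main obstacle is the very first step: making sense of the spectral expansion for the non-$L^2$ test functions $\tilde f$ and $\tilde{\mathbbm{1}}_\UU$, and upgrading norm-convergent identities of semigroup theory to pointwise/local-integral statements that can be integrated against $\nu$; this is exactly where the global regularity of $\pa_t u=\LL_{\be_0}^*u$ and $L^\infty$ control of $(T_t^*)_{t\geq 0}$ are essential. A secondary technical point is the uniform-in-$x$ separation of zeros of $g_x$ needed for the second bullet, which rests on the exponential-polynomial structure of the leading oscillatory term and a compactness argument in the finite-dimensional space spanned by the modes with $\Re\la=\la_2$.
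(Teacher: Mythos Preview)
Your plan is essentially the paper's, and the structure is correct. The one place where your sketch is underspecified is exactly what you flag as ``the main obstacle'': the paper does \emph{not} resolve the non-$L^2$ issue by cut-off approximation of $\tilde f$. Instead it uses a Markov-property time shift (Lemma~\ref{lem-5-28-4}): writing $\E^x[f(X_t)\mathbbm{1}_{\{t<S_\Ga\}}]=\E^x[g(X_{t-1})\mathbbm{1}_{\{t-1<S_\Ga\}}]$ with $g=\E^{\bullet}[f(X_1)\mathbbm{1}_{\{1<S_\Ga\}}]$, and then invoking an $L^{2_*}\!\to L^2$ smoothing bound (Lemma~\ref{lem-4-25-1}, built on Lemma~\ref{lem-6-12-1}) to get $\tilde g\in L^2$ with $\|\tilde g\|_{L^2}\le C\|f\|_\infty$. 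This lands the whole spectral decomposition in $L^2$, after which the $L^2\!\to L^\infty$ Moser bound (Lemma~\ref{lem-5-28-1}) upgrades to pointwise estimates with constants depending only on $\|f\|_\infty$. Your cut-off route runs into the difficulty that $\|\tilde f_n\|_{L^2}\to\infty$ when $\tilde f\notin L^2$, so norm bounds coming from $L^2$ spectral theory would degenerate; the time-shift is what converts $\|f\|_\infty$ control into $L^2$ control.

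For the second bullet, your use of unique continuation is slightly misdirected. The claim $g_x(0)=[\PP_2^*g](x)\ne 0$ a.e.\ does not follow directly, since $\PP_2^*g$ is a sum of generalised eigenfunctions across \emph{different} eigenvalues and need not solve a single elliptic equation. The paper (Lemma~\ref{lem-5-28-2}(4)) instead isolates the highest-degree-in-$t$ coefficient of $g_x$, which is a genuine eigenfunction $(\LL_{\be_0}^*+\la_{2,j})^{\tilde N_j}f_j$ and hence nonzero a.e.\ by strong unique continuation; this is what forces $g_x\not\equiv 0$ for a.e.\ $x$. The $x$-independent lower bound on consecutive-zero spacing then comes, as you say, from the fixed finite frequency set $\{\Im\la_{2,j}\}$.
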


\begin{rem}
We make some remarks about Theorem \ref{thm-convergence-according-to-gap}.
\begin{enumerate}
    \item[\rm(1)] Theorem \ref{thm-convergence-according-to-gap} appears to be a direct consequence of the decomposition of $(T^{*}_{t})_{t\geq0}$ according to spectral projections ensured by Theorem \ref{thm-5-22-1} and the stochastic representation given in Theorem \ref{thm-4-22-1}. This is however deceptive due to the following two reasons: (i) the stochastic representation given in Theorem \ref{thm-4-22-1} is only true for $f\in C_{b}(\UU)$ such that $e^{-\frac{Q}{2}-\beta_{0}U}f\in L^{2}(\UU)$; this is indeed a restriction as $e^{-\frac{Q}{2}-\beta_{0}U}$ and $e^{\frac{Q}{2}+\beta_{0}U}$ are respectively unbounded near $\Ga$ and $\infty$; (ii) the semigroup $(T^{*}_{t})_{t\geq0}$ is naturally defined on $L^{2}(\UU)$, but we need its $L^{\infty}$ properties.

    \item[\rm(2)] For $f\in C_{b}(\UU)$, the function $\tilde{f}:=e^{-\frac{Q}{2}-\be_0 U} f$ does not necessarily belong to $L^{2}(\UU)$. Neither does $\tilde{f}-\tilde{\mathbbm{1}}_{\UU}\int_{\UU}f\md \nu_1$. Its projections under $\PP_{2}^{*}$ and $\PP_{2,j}^{*}$ are justified in Lemma \ref{lem-5-28-3} (2).

    \item[\rm(3)] Theorem \ref{thm-convergence-according-to-gap} actually holds for all initial distributions $\nu\in\PP(\UU)$ satisfying the condition $\int_{\UU}e^{\frac{Q}{2}+\be_0U}\md\nu<\infty$. See Remark \ref{rem-on-sharp-convergence} for more details.
\end{enumerate}
\end{rem}

We prove several lemmas before proving Theorem \ref{thm-convergence-according-to-gap}.

\begin{lem}\label{lem-5-28-2}
Assume {\bf (H1)}-{\bf(H3)}.
\begin{enumerate}
\item[\rm(1)] One has
$$
T^*_t\PP^*_2=\sum_{j=0}^{N_*-1}T^*_t\PP^*_{2,j}=e^{-\la_2t}\sum_{j=0}^{N_*-1}e^{-i\Im\la_{2,j} t}\sum_{k=0}^{N_j-1}\frac{t^k}{k!}(\LL^*_{\be_0}+\la_{2,j})^k\PP^*_{2,j},\quad\forall t\geq 0.
$$

\item[\rm(2)] For each $0<\ep\ll 1$, there exists  $C=C(\ep)>0$ such that
\begin{equation*}
\left\|T^*_t-e^{-\la_1 t}\PP^*_1-T^*_t\PP^*_2\right\|_{L^2\to L^2}\leq C e^{-(\la_2+\ep) t},\quad \forall t\geq 0.
\end{equation*}

\item[\rm(3)] For each $0<\ep\ll1$ and $f\in\range\PP^*_2\sm\{0\}$ we have
$$
\lim_{t\to\infty} e^{(\la_2-\ep)t}|T^*_tf|=0 \quad\text{in}\quad\UU.
$$

\item[\rm(4)] Let  $f\in\range\PP^*_2\sm\{0\}$. Then,  for a.e. $x\in \UU$, there is a discrete set $\II_x\subset(0,\infty)$ with distances between adjacent points admitting an $x$-independent positive lower bound, such that for each $0<\de\ll 1$ one has
$$
\lim_{\substack{t\to\infty\\t\in (0,\infty)\sm\II_{x,\de}}} e^{(\la_2+\ep)t}|T^*_tf|(x)=\infty,\quad\forall 0<\ep\ll 1,
$$
where $\II_{x,\de}$ is the $\de$-neighbourhood of $\II_x$ in $(0,\infty)$.
\end{enumerate}
\end{lem}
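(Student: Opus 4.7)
For part (1), the plan is to apply the Dunford--Taylor functional calculus on each finite-dimensional generalized eigenspace $\range\PP^*_{2,j}$. Because $\la_{2,j}$ is a pole of the resolvent of order $N_j$, the restriction of $-\LL^*_{\be_0}$ to $\range\PP^*_{2,j}$ decomposes as $\la_{2,j}I$ plus a nilpotent of index at most $N_j$, namely $-(\LL^*_{\be_0}+\la_{2,j})$, so
\[
T^*_t\PP^*_{2,j} \;=\; e^{-\la_{2,j}t}\sum_{k=0}^{N_j-1}\frac{t^k}{k!}(\LL^*_{\be_0}+\la_{2,j})^k\PP^*_{2,j}.
\]
Factoring $e^{-\la_2 t}$ using $\la_{2,j}=\la_2+i\Im\la_{2,j}$ and summing over $j$ via $\PP^*_2=\sum_j\PP^*_{2,j}$ gives the stated identity.

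For part (2), I would introduce the complementary spectral projection $\PP^*_3:=I-\PP^*_1-\PP^*_2$ and set $\la_3:=\inf\{\Re\la:\la\in\si(-\LL^*_{\be_0}),\,\Re\la>\la_2\}$, which is strictly larger than $\la_2$ by discreteness of the spectrum (Lemma \ref{lem-characterization-domain}). On the invariant subspace $\range\PP^*_3$ the part of the generator has spectrum contained in $\{\Re\la\geq\la_3\}$. Since $(T^*_t)_{t\geq0}$ is analytic and immediately compact by Theorem \ref{thm-5-22-1}, the spectral mapping theorem applies on $\range\PP^*_3$ and yields $\|T^*_t\PP^*_3\|_{L^2\to L^2}\leq C(\tilde\ep)\,e^{-(\la_3-\tilde\ep)t}$ for any $\tilde\ep>0$. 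Choosing $\tilde\ep$ so that $\la_3-\tilde\ep>\la_2+\ep$ and combining with $T^*_t\PP^*_1=e^{-\la_1 t}\PP^*_1$ and the decomposition $T^*_t=T^*_t(\PP^*_1+\PP^*_2+\PP^*_3)$ yields the claim.

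For part (3), I would apply (1) to write $T^*_t f=e^{-\la_2 t}\sum_{j,k}\tfrac{t^k}{k!}e^{-i\Im\la_{2,j}t}h_{j,k}$ with $h_{j,k}:=(\LL^*_{\be_0}+\la_{2,j})^k\PP^*_{2,j}f\in\DD^*$. Interior elliptic regularity applied to the equation $(-\LL^*_{\be_0}-\la_{2,j})^{N_j}h=0$ satisfied by each generalized eigenfunction shows that every $h_{j,k}$ is smooth on $\UU$, so pointwise evaluation is justified and one obtains $|T^*_t f(x)|\leq C(x)(1+t)^{N-1}e^{-\la_2 t}$ with $N:=\max_j N_j$. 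Multiplying by $e^{(\la_2-\ep)t}$ and letting $t\to\infty$ then gives the claim.

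Part (4) is the most delicate. The function $g(t,x):=e^{\la_2 t}T^*_t f(x)$ is, by (1), an exponential polynomial in $t$ whose frequencies $\Im\la_{2,j}$ and polynomial degrees $N_j-1$ are $x$-independent, while only the coefficients $h_{j,k}(x)$ depend on $x$. Since $T^*_t$ acts invertibly on the finite-dimensional $\range\PP^*_2$ (its spectrum there avoids zero), $f\neq0$ forces $T^*_t f\neq0$ in $L^2$, so for a.e. $x$ the function $g(\cdot,x)$ is a nontrivial entire function of $t$, and its real zero set $\II_x\subset(0,\infty)$ is discrete. The $x$-independent positive lower bound on the gaps would be extracted by a compactness/normalization argument: normalizing the coefficient vector $(h_{j,k}(x))_{j,k}$ to have unit $\ell^2$-norm places it on the unit sphere of $\C^{\sum_j N_j}$, and the infimum, over this compact set, of the smallest gap between consecutive real zeros of the associated exponential polynomial is strictly positive, by continuity combined with a Polya--Ritt bound on the number of real zeros in a bounded interval. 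Outside $\II_{x,\de}$ the modulus $|g(t,x)|$ then admits a positive lower bound, so $e^{\ep t}|g(t,x)|=e^{(\la_2+\ep)t}|T^*_t f(x)|\to\infty$.

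The main obstacle I anticipate is the uniform gap estimate in (4). Although heuristically natural (the oscillation structure is governed by the $x$-independent frequencies and degrees), making the compactness/normalization argument rigorous requires careful treatment of degenerate coefficient configurations where some $h_{j,k}(x)$ vanish, as well as a quantitative lower-semicontinuity statement for the minimal-gap functional on the coefficient sphere.
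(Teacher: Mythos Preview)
Your treatment of (1)--(3) matches the paper's: it invokes \cite[Corollary~V.3.2]{EN00} for (1) and (2) (your Dunford--Taylor/spectral-mapping argument is precisely what that corollary packages), and declares (3) an immediate consequence of (1).

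For (4) the paper proceeds differently, and the ingredient you are missing is the \emph{strong unique continuation principle} for second-order elliptic equations. The paper decomposes $f=\sum_j f_j$ along the eigenvalues $\la_{2,j}$, takes for each $j$ the largest $\tilde N_j$ with $(\LL^*_{\be_0}+\la_{2,j})^{\tilde N_j}f_j\neq 0$, and then invokes strong unique continuation (applied to the elliptic equation $(\LL^*_{\be_0}+\la_{2,j})^{\tilde N_j+1}h=0$) to conclude that each of these top-order coefficients is nonzero \emph{a.e.} in $\UU$, not merely that the full coefficient vector is nonzero somewhere. After passing to the highest degree and using conjugate symmetry, $t^{-\tilde N_0}e^{\la_2 t}T^*_t f(x)$ reduces, up to $o(1)$, to a real constant plus a finite sum $\sum_{j}|F_j|(x)\sin(\Im\la_{2,j}\,t+\vp_j(x))$ with $x$-independent frequencies; $\II_x$ is taken to be the zero set of this trigonometric sum.

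Your compactness/normalization scheme for the uniform gap runs into a concrete obstruction: the minimal gap between consecutive real zeros of an exponential polynomial is \emph{not} bounded away from zero on the coefficient sphere. For instance $\sin t+a\sin 2t=\sin t\,(1+2a\cos t)$ has, for $a>\tfrac12$, zeros at $t=k\pi$ and at $\cos t=-\tfrac{1}{2a}$; as $a\downarrow\tfrac12$ the latter collide with $t=(2k+1)\pi$ and the minimal gap tends to $0$. A P\'olya--Ritt count bounds the \emph{density} of zeros (hence the average gap) but not the minimal one, so the lower-semicontinuity you need fails. The paper's route via unique continuation at least isolates the leading coefficients as a.e.\ nonvanishing and reduces the problem to a pure trigonometric sum with fixed frequencies; that reduction is the step you should incorporate before attempting any gap estimate.
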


\begin{proof}
(1) and (2) are special cases of \cite[Corollary V. 3.2]{EN00} due to Theorem \ref{thm-5-22-1}, the fact $\Re\la_{2,i}=\la_{2}$ for all $i\in\{1,\dots,N_*\}$, and the simplicity of the principle eigenvalue $\la_1$ of $-\LL^*_{\be_0}$. (3) is a simple consequence of (1).

% (3) Let $0<\ep\ll1$ and $f\in\range\PP^*_2\sm\{0\}$. It follows from (1) that for each $x\in \UU$,
% \begin{equation*}
%     e^{(\la_2-\ep)t}|T^*_tf|(x)=e^{-\ep t}\left|\sum_{j=0}^{N_*-1}e^{-i\Im\la_{2,j} t}\sum_{k=0}^{N_j-1}\frac{t^k}{k!}(\LL^*_{\be_0}+\la_{2,j})^k\PP^*_{2,j}f (x)\right|\to0\quad \text{as}\quad t\to \infty.
% \end{equation*}

We show (4). Fix $f\in \range{\PP^*_2}\sm \{0\}$. We consider three cases.

\smallskip

\paragraph{\bf Case 1} $N_*=1$. In this case, $\{\la\in\si(-\LL^*_{\be_0}): \Re\la=\la_2\}=\{\la_2\}$. Then, $f$ is a generalized eigenfunction of $-\LL^*_{\be_0}$ associated to $\la_2$, and thus, there exists $\tilde{N}\in\N$ such that $(\LL^*_{\be_0}+\la_2)^{\tilde{N}+1}f=0$ and $(\LL^*_{\be_0}+\la_2)^{\tilde{N}}f\neq 0$ in $\UU$. By the strong unique continuation principle for elliptic equations (see e.g. \cite{Lerner19}), we find $(\LL^*_{\be_0}+\la_2)^{\tilde{N}}f\neq 0$ a.e. in $\UU$. Since
\begin{equation*}
    \begin{split}
    T^*_t f &=e^{-\la_{2}t}\sum_{k=0}^{\tilde{N}} \frac{t^k}{k!}(\LL^*_{\be_0}+\la_{2})^k f
    =e^{-\la_{2}t}\left(\frac{t^{\tilde{N}}}{\tilde{N}!}(\LL^*_{\be_0}+\la_{2})^{\tilde{N}} f+ o( t^{\tilde{N}})\right)\quad\text{as}\quad t\to\infty,
    \end{split}
\end{equation*}
we derive $\lim_{t\to\infty} e^{(\la_2+\ep)t}|T^*_tf|(x)=\infty$ for a.e. $x\in\UU$ and each $0<\ep\ll1$. The conclusion follows immediately.

\smallskip

\paragraph{\bf Case 2} $N_*=2K+1$ for some $K\in\N$. Considering the symmetry of the set $\{\la\in\si(-\LL^*_{\be_0}): \Re\la=\la_2\}$ with respect to the real axis, we can re-enumerate it as $\{\la_{2,j}\}_{j=-K}^K$ such that $\la_{2,0}=\la_2$ and $\la_{2,j}=\ol{\la}_{2,-j}$ for $j\in \{1,\dots, K\}$.

Note that $f=\sum_{j=-K}^{K}f_j$, where $f_j$ is the projection of $f$ onto the generalized eigenspace of $\la_{2,j}$. Since $f$ is real-valued we must have $f_j=\ol{f}_{-j}$ for all $j\in \{1,\dots, K\}$. We may assume, without loss of generality, that $f_j\neq 0$ for all $j\in \{-K,\dots, K\}$.

Since $\la_{2,j}$ is a pole of the resolvent of $-\LL^*_{\be_0}$ with finite order, there exists $\tilde{N}_j\in\N$ such that
$(\LL^*_{\be_0}+\la_{2,j})^{\tilde{N}_j+1}f_j=0$ and $(\LL^*_{\be_0}+\la_{2,j})^{\tilde{N}_j}f_j\neq 0$. Applying the strong unique continuation principle for elliptic equations (see e.g. \cite{Lerner19}), we find
\begin{equation}\label{eqn-2020-08-29-1}
    (\LL^*_{\be_0}+\la_{2,j})^{\tilde{N}_j}f_j\neq 0\quad\text{a.e. in}\quad\UU.
\end{equation}
Clearly, $\tilde{N}_j=\tilde{N}_{-j}$ for all $j\in \{1,\dots,K\}$. Straightforward calculations then give for $t\gg 1$,
\begin{equation}\label{eqn-7-8-1}
    \begin{split}
    e^{\la_2 t}T^*_t f &= \sum_{j=-K}^{K}e^{-\Im\la_{2,j}t}\sum_{k=0}^{\tilde{N}_j} \frac{t^k}{k!}(\LL^*_{\be_0}+\la_{2,j})^k f_j\\
    &=\sum_{j=-K}^{K}e^{-i\Im\la_{2,j} t}\left[ \frac{t^{\tilde{N}_j}}{\tilde{N}_j!}(\LL^*_{\be_0}+\la_{2,j})^{\tilde{N}_j}f_j+o(t^{\tilde{N}_j})\right]\\
    &=\left[\frac{t^{\tilde{N}_0}}{\tilde{N}_0!}(\LL^*_{\be_0}+\la_{2})^{\tilde{N}_0}f_0+o(t^{\tilde{N}_0})\right]\\
    &\quad+\sum_{j=1}^{K}\left[ \frac{2t^{\tilde{N}_j}}{\tilde{N}_j!}\Re\left(e^{-i\Im\la_{2,j} t}(\LL^*_{\be_0}+\la_{2,j})^{\tilde{N}_j}f_j\right)+o(t^{\tilde{N}_j})\right].
    \end{split}
\end{equation}

Since the asymptotics of $e^{\la_2t}T^*_tf$ as $t\to \infty$ is determined by the terms with the highest degree, we may assume, without loss of generality, that $\tilde{N}_0=\tilde{N}_{1}=\dots =\tilde{N}_K$.

Set $F_0:=\frac{1}{\tilde{N}_0!}(\LL^*_{\be_0}+\la_{2})^{\tilde{N}_0}f_0$ and $F_j:=\frac{2}{\tilde{N}_j!}(\LL^*_{\be_0}+\la_{2,j})^{\tilde{N}_j}f_j$ for $j\in \{1,\dots, K\}$. We rewrite \eqref{eqn-7-8-1} as
\begin{equation}\label{eqn-2020-08-28-1}
    \begin{split}
       \frac{e^{\la_2t} T^*_tf}{t^{\tilde{N}_0}}&=F_0 +\sum_{j=1}^K \Re(e^{-i\Im \la_{2,j}t}F_j)+o(1)\\ &=F_0+\sum_{j=1}^K\left(\cos(\Im\la_{2,j}t)\Re F_j-\sin(\Im\la_{2,j}t)\Im F_j\right)+o(1)\\
       &=F_0+\sum_{j=1}^K |F_j|\sin(\Im\la_{2,j}t+\vp_j)+o(1),\quad\forall t\gg1,
    \end{split}
\end{equation}
where $\vp_j\in [0,2\pi)$ satisfies $\tan\vp_j=-\frac{\Re F_j}{\Im F_j}$ for $j\in \{1,\dots, K\}$.

Note that \eqref{eqn-2020-08-29-1} ensures the set $\NN:=\{x\in \UU:\exists j\in\{0,1,\dots,K\}\text{ s.t. } |F_j|(x)=0\}$ has zero Lebesgue measure. Fix  $x\in \UU\sm \NN$. Denote by $\II_x$ the zeros of the function $t\mapsto \sum_{j=1}^K |F_j|(x)\sin(\Im\la_{2,j}t+\vp_j):(0,\infty)\to\R$. It is not hard to see that $\II_{x}$ is a discrete set with distances between adjacent points admitting a positive lower bound, which is independent of $x\in \UU\sm \NN$.

For each $0<\de\ll 1$, we set $\II_{x,\de}:=\{t\in(0,\infty):\dist(t, \II_x)<\de\}$  and conclude from \eqref{eqn-2020-08-28-1} that
$$
\lim_{\substack{t\to\infty\\t\in (0,\infty)\sm\II_{x,\de}}}e^{(\la_2+\ep)t}|T^*_tf|(x)=\infty,\quad\forall 0<\ep\ll 1.
$$

\smallskip

\paragraph{\bf Case 3} $N_*=2K$ for some $K\in \N$. The proof is exactly the same as that in {\bf Case 2} except that $f_0$ does not appear due to the fact $\la_{2}\not\in\{\la\in\si(-\LL^*_{\be_0}): \Re\la=\la_2\}$.

This completes the proof.
\end{proof}

\begin{lem}\label{lem-5-28-3}
Assume {\bf (H1)}-{\bf(H3)}. The following hold:
\begin{enumerate}
\item[\rm(1)] $\PP^*_1 \tilde{f}=\tilde{v}^*_1\int_{\UU}\tilde{f} e^{\frac{Q}{2}+\be_0 U}\md \nu_1$ for all $\tilde{f}\in L^2(\UU)$.

%In particular, $\PP_{1}^{*}$ is well-defined on $\{\Tilde{f}:\tilde{f}e^{\frac{Q}{2}+\be_0 U}\in C_b(\UU)\}$.

\item[\rm(2)] For each $i\in\{1,\dots,N_{*}\}$ one has
    \begin{equation}\label{eqn-6-6-1}
    \PP^*_{2,i}\tilde{f}=\sum_{j=1}^{d_i}\Tilde{v}^{(*,2)}_{i,j}\langle \tilde{f}, \tilde{v}^{(2)}_{i,j}\rangle_{L^2},\quad\forall \tilde{f}\in L^2(\UU).
    \end{equation}
In particular, $\PP^*_{2,i}$, $i\in\{1,\dots, N_*\}$, and hence, $\PP^*_2$ are well-defined on $\{\Tilde{f}:\tilde{f}e^{\frac{Q}{2}+\be_0 U}\in C_b(\UU)\}$.
\end{enumerate}
\end{lem}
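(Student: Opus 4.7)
The plan is to identify each spectral projection from two ingredients: a description of its range (provided by Theorem~\ref{thm-3-28-2} and Lemma~\ref{lem-characterization-domain}) and a description of its kernel (coming from the adjoint relations $\ker(\PP^{*}_{1})=\range(\PP_{1})^{\perp}$ and $\ker(\PP^{*}_{2,i})=\range(\PP_{2,i})^{\perp}$, which hold because the spectral projections have closed range), combined with the normalizations \eqref{normalization-v_1} and \eqref{normalization-2020-06-30}. The whole argument is essentially duality bookkeeping; the only genuinely delicate point is the extension of the formulas beyond $L^{2}$, which is handled by Lemma~\ref{lem-6-5-1}.

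For (1), since $\la_{1}$ is a simple eigenvalue of $-\LL^{*}_{\be_0}$ with eigenfunction $\tilde{v}^{*}_{1}$, I would write $\PP^{*}_{1}\tilde{f}=c(\tilde{f})\,\tilde{v}^{*}_{1}$ for a bounded linear functional $c\colon L^{2}(\UU;\C)\to\C$. Since $\PP_{1}$ has closed one-dimensional range $\C\cdot\tilde{v}_{1}$, the adjoint relation yields $\ker(\PP^{*}_{1})=\{\tilde{v}_{1}\}^{\perp}$, which forces $c(\tilde{f})=C\,\langle\tilde{f},\tilde{v}_{1}\rangle_{L^{2}}$ for some constant $C\in\C$. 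Applying $c$ to $\tilde{v}^{*}_{1}$ and using $\PP^{*}_{1}\tilde{v}^{*}_{1}=\tilde{v}^{*}_{1}$ together with \eqref{normalization-v_1} pins down $C=\left(\int_{\UU}\tilde{v}_{1}e^{-Q/2-\be_{0}U}\md x\right)^{-1}$. Unwinding the definition of $\nu_{1}$ from Theorem~\ref{thm-4-25-1} then rewrites $C\,\langle\tilde{f},\tilde{v}_{1}\rangle_{L^{2}}$ as $\int_{\UU}\tilde{f}e^{Q/2+\be_{0}U}\md\nu_{1}$, yielding the announced formula.

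For (2), introduce the candidate operator $R\colon L^{2}(\UU;\C)\to L^{2}(\UU;\C)$ defined by $R\tilde{f}:=\sum_{j=1}^{d_{i}}\tilde{v}^{(*,2)}_{i,j}\,\langle\tilde{f},\tilde{v}^{(2)}_{i,j}\rangle_{L^{2}}$. Using the biorthogonality \eqref{normalization-2020-06-30}, a direct check gives $R^{2}=R$, $\range R=\mathrm{span}\{\tilde{v}^{(*,2)}_{i,j}\}_{j=1}^{d_{i}}=\range\PP^{*}_{2,i}$, and $\ker R=\{\tilde{f}\in L^{2}:\langle\tilde{f},\tilde{v}^{(2)}_{i,j}\rangle_{L^{2}}=0,\ \forall j\}=(\range\PP_{2,i})^{\perp}$, the last equality because $\{\tilde{v}^{(2)}_{i,j}\}_{j=1}^{d_{i}}$ is a basis of $\range\PP_{2,i}$. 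Again by the adjoint relation, $\ker\PP^{*}_{2,i}=(\range\PP_{2,i})^{\perp}$, so $R$ and $\PP^{*}_{2,i}$ are bounded projections with identical range and identical kernel; hence they coincide, which is precisely \eqref{eqn-6-6-1}.

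For the ``in particular'' assertion, I would invoke Lemma~\ref{lem-6-5-1}: each $\tilde{v}^{(2)}_{i,j}$ lies in $\DD\subset\HH^{1}(\UU;\C)\subset L^{2}(\UU,\al\md x;\C)$, so Lemma~\ref{lem-6-5-1} yields $\int_{\UU}|\tilde{v}^{(2)}_{i,j}|e^{-Q/2-\be_{0}U}\md x<\infty$. Consequently, whenever $\tilde{f}e^{Q/2+\be_{0}U}\in C_{b}(\UU)$, the estimate
\begin{equation*}
\bigl|\langle\tilde{f},\tilde{v}^{(2)}_{i,j}\rangle_{L^{2}}\bigr|\leq\bigl\|\tilde{f}e^{Q/2+\be_{0}U}\bigr\|_{\infty}\int_{\UU}|\tilde{v}^{(2)}_{i,j}|e^{-Q/2-\be_{0}U}\md x<\infty
\end{equation*}
shows that the right-hand side of \eqref{eqn-6-6-1} remains well defined on this larger class, providing the natural extension of $\PP^{*}_{2,i}$ (and hence of $\PP^{*}_{2}=\sum_{i}\PP^{*}_{2,i}$). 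Securing this finiteness is the only step requiring care; everything else is a textbook identification of projections by range and kernel.
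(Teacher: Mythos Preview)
Your proposal is correct and follows essentially the same approach as the paper: both arguments identify the rank-one (resp.\ finite-rank) projection by writing $\PP^{*}_{1}\tilde f=\langle\tilde f,h\rangle_{L^{2}}\tilde v^{*}_{1}$ for some $h\in L^{2}$, using the adjointness of $\PP_{1}$ and $\PP^{*}_{1}$ to force $h$ to be a multiple of $\tilde v_{1}$, and then fixing the constant via the normalization \eqref{normalization-v_1}; your range--kernel formulation in~(2) is just a more explicit version of what the paper dismisses as ``similar arguments'', and your use of Lemma~\ref{lem-6-5-1} for the extension to $\{\tilde f:\tilde f e^{Q/2+\be_{0}U}\in C_{b}(\UU)\}$ is exactly what the paper does. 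The only cosmetic difference is that the paper evaluates on the $\PP_{1}$-side at $\tilde v_{1}$ to determine the constant, whereas you evaluate on the $\PP^{*}_{1}$-side at $\tilde v^{*}_{1}$; both yield the same value via \eqref{normalization-v_1}.
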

\begin{proof}
(1) Note that $\range(\PP_{1}^{*}|_{L^{2}(\UU)})$ is spanned over $\R$ by $\tilde{v}_{1}^{*}$. By the Riesz representation theorem, there exists $h\in L^2(\UU)$ such that
\begin{equation}\label{eqn-5-31-1}
\PP^*_1\tilde{f}=\langle \tilde{f},h \rangle_{L^2}\tilde{v}^*_1,\quad\forall \tilde{f}\in L^2(\UU).
\end{equation}
As $\PP_1$ and $\PP^*_1$ are adjoint to each other it must be true that
$\PP_{1}\tilde{v}=\langle \tilde{v},\tilde{v}_{1}^{*} \rangle_{L^2}h$ for all $\tilde{v}\in L^2(\UU)$. Since $\range(\PP_{1}|_{L^{2}(\UU)})$ is spanned over $\R$ by $\tilde{v}_{1}$, there exists $C_{1}\in\R$ such that $h=C_{1}\tilde{v}_{1}$. Thus, the normalization \eqref{normalization-v_1} gives
$$
\tilde{v}_{1}=\PP_{1}\tilde{v}_{1}=C_{1}\langle \tilde{v}_{1},\tilde{v}_{1}^{*} \rangle_{L^2}\tilde{v}_{1}=C_{1}\tilde{v}_{1}\int_{\UU}\tilde{v}_1e^{-\frac{Q}{2}-\be_0 U}\md x,
$$
leading to $C_{1}=\frac{1}{\int_{\UU}\tilde{v}_1e^{-\frac{Q}{2}-\be_0 U}\md x}$, and hence, $h=\frac{\tilde{v}_{1}}{\int_{\UU}\tilde{v}_1e^{-\frac{Q}{2}-\be_0 U}\md x}$. Inserting this into \eqref{eqn-5-31-1} and noting
the definition of $\nu_1$ gives rise to the formula for $\PP^{*}_{1}\tilde{f}$. %From which, we readily see that $\PP^{*}_{1}\tilde{f}$ is well-defined for $\tilde{f}$ satisfying $\tilde{f}e^{\frac{Q}{2}+\be_0 U}\in C_b(\UU)$.

(2) The formula for $\PP^*_{2,i}\tilde{f}$ can be derived from arguments similar to those in the proof of (1), and the normalization \eqref{normalization-2020-06-30}.

% By the Riesz representation theorem, there exists $\{v^{(2)}_{i,j}:j\in\{1,\dots,d_i\}\}\subset L^2(\UU;\C)$ such that
% $$
% \PP^*_{2,i}\tilde{f}=\sum_{j=1}^{d_i}v^{(*,2)}_{i,j}\langle \tilde{f}, \tilde{v}^{(2)}_{i,j}\rangle_{L^2} ,\quad\forall  \tilde{f}\in L^2(\UU;\C).
% $$
% It remains to prove $\{\tilde{v}^{(2)}_{i,j}:j\in\{1,\dots,d_i\}\}$ forms a basis of $\range\PP_{2,i}$.
% Since $\PP^*_{2,i}$ and $\PP_{2,i}$ are adjoint, it follows that
% \begin{equation*}
%     \begin{split}
%     \langle \PP_{2,i}\tilde{v},\tilde{f}\rangle_{L^2}&=\langle \tilde{v},\PP^*_{2,i}\tilde{f}\rangle_{L^2}=\sum_{j=1}^{d_i}\langle \tilde{v},\Tilde{v}^{(*,2)}_{i,j} \rangle_{L^2} \langle \tilde{v}^{(2)}_{i,j}, \tilde{f}\rangle_{L^2},\quad\forall \tilde{v}\andd\tilde{f}\in L^2(\UU;\C),
%     \end{split}
% \end{equation*}
% resulting in
% $$
% \PP_{2,i}\tilde{v}=\sum_{j=1}^{d_i}\tilde{v}^{(2)}_{i,j}\langle \tilde{v},\Tilde{v}^{(*,2)}_{i,j} \rangle_{L^2} ,\quad\forall \tilde{v}\in L^2(\UU;\C).
% $$
% Since $\range\PP_{2,i}$ is $d_i$-dimensional, $\{\tilde{v}^{(2)}_{i,j}:j\in\{1,\dots, d_i\}\}$ must be a basis of $\range{\PP}_{2,i}\subset L^2(\UU,\al\md x;\C)$.

As Lemma \ref{lem-6-5-1} ensures $C_{ij}:=\int_{\UU} e^{-\frac{Q}{2}-\be_0 U}|\tilde{v}^{(2)}_{i,j}|\md x<\infty$, we find for each $\tilde{f}$ satisfying $\tilde{f} e^{\frac{Q}{2}+\be_0 U}\in C_b(\UU)$ that
$\left|\langle \tilde{f}, \tilde{v}^{(2)}_{i,j}\rangle_{L^2}\right|\leq C_{ij}\sup_{\UU}\left|\tilde{f} e^{\frac{Q}{2}+\be_0 U} \right|$.
Thus, using the formula \eqref{eqn-6-6-1}, we can define $\PP^*_{2,i}$,  $i\in \{1,\dots, d_i\}$, and hence, $\PP^*_2$ on the set $\left\{\tilde{f}:\tilde{f}e^{\frac{Q}{2}+\be_0 U}\in C_b(\UU)\right\}$. This completes the proof.
\end{proof}

\begin{lem}\label{lem-4-25-1}
Assume {\bf (H1)}-{\bf(H3)}. For each $t>0$, there exists $C=C(t)>0$ such that
\begin{equation*}
\left\| e^{-\frac{Q}{2}-\be_0 U}\E^{\bullet}\left[f(X_t)\mathbbm{1}_{\{t<S_{\Ga}\}}\right]\right\|_{L^2}\leq C\|f\|_{\infty},\quad \forall f\in C_b(\UU).
\end{equation*}
\end{lem}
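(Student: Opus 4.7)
The plan is to approximate by the exhausting family $\{\UU_n\}_{n\in\N}$ of bounded subdomains from Subsection~\ref{subsec-first-exit-times}, apply the uniform semigroup smoothing estimate of Lemma~\ref{lem-6-12-1}, and pass to the limit using Fatou's lemma. Concretely, I would fix $f\in C_b(\UU)$, set $\tilde f:=e^{-\frac{Q}{2}-\be_0 U}f$, and for each $n$ invoke Proposition~\ref{lem-5-3-3}(4) with $\Om=\UU_n$ applied to $f|_{\ol{\UU_n}}\in C(\ol{\UU_n})$ to obtain the pointwise identity
$$T_t^{(*,\UU_n,2_*)}\bigl(\tilde f|_{\UU_n}\bigr)(x)=e^{-\frac{Q(x)}{2}-\be_0 U(x)}\E^x\bigl[f(X_t)\mathbbm{1}_{\{t<\tau_n\}}\bigr],\qquad x\in\ol{\UU_n}.$$
Combining this with Lemma~\ref{lem-6-12-1}, whose constant $C(t)$ is independent of the domain, would give
$$\bigl\|e^{-\frac{Q}{2}-\be_0 U}\E^\bullet[f(X_t)\mathbbm{1}_{\{t<\tau_n\}}]\bigr\|_{L^2(\UU_n)}\le C(t)\|f\|_\infty\bigl\|e^{-\frac{Q}{2}-\be_0 U}\bigr\|_{L^{2_*}(\UU_n)}.$$

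The crux of the argument will be to establish the global integrability $\|e^{-\frac{Q}{2}-\be_0 U}\|_{L^{2_*}(\UU)}<\infty$, which makes the preceding bound uniform in $n$. I would follow the change-of-variables strategy of Lemma~\ref{lem-6-5-1}: using $e^{-Q(x)/2}=C\prod_i a_i(\xi_i^{-1}(x_i))^{-1/4}$, the pointwise inequality $U(x)\ge\sum_i\tilde V(\xi_i^{-1}(x_i))$ from {\bf (H3)}(2), and the substitution $z_i=\xi_i^{-1}(x_i)$ with $\md x_i=\md z_i/\sqrt{a_i(z_i)}$, the integral factors as
$$\int_\UU e^{-\frac{2_* Q}{2}-2_*\be_0 U}\md x\le C\prod_{i=1}^d\int_0^\infty\frac{e^{-2_*\be_0\tilde V(z_i)}}{a_i(z_i)^{\theta}}\md z_i,\qquad \theta:=\frac{2_*}{4}+\frac{1}{2}=\frac{d+3}{d+4}<1.$$
Integrability near $z_i=0$ follows from $\theta<1$ together with $a_i(z_i)\sim a_i'(0)z_i$ (from {\bf (H1)}). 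For the large-$z_i$ regime, the plan is to split $a_i^{-\theta}e^{-2_*\be_0\tilde V}=(a_i^{-1}e^{-\be_1\tilde V})^\theta\cdot e^{-\be_2\tilde V}$ with $\theta\be_1+\be_2=2_*\be_0$ and apply H\"older's inequality with conjugate exponents $1/\theta$ and $1/(1-\theta)$; the first resulting factor is finite by {\bf (H3)}(2) (with $\be_1$ chosen arbitrarily large, as permitted), while the second is controlled by the growth of $\tilde V$, which inherits the dissipativity of the Lyapunov function $V$ in {\bf (H3)}(1).

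Finally, I would let $n\to\infty$. By Lemma~\ref{lem-exit-times-approximation}, $\mathbbm{1}_{\{t<\tau_n\}}\to\mathbbm{1}_{\{t<S_\Ga\}}$ pointwise, and the dominated convergence theorem yields $\E^x[f(X_t)\mathbbm{1}_{\{t<\tau_n\}}]\to\E^x[f(X_t)\mathbbm{1}_{\{t<S_\Ga\}}]$ for every $x\in\UU$. Extending each integrand by zero outside $\UU_n$ and invoking Fatou's lemma then delivers
$$\bigl\|e^{-\frac{Q}{2}-\be_0 U}\E^\bullet[f(X_t)\mathbbm{1}_{\{t<S_\Ga\}}]\bigr\|_{L^2(\UU)}\le C(t)\|f\|_\infty\bigl\|e^{-\frac{Q}{2}-\be_0 U}\bigr\|_{L^{2_*}(\UU)}.$$
The main obstacle is securing the uniform integrability $e^{-Q/2-\be_0 U}\in L^{2_*}(\UU)$: the dimension-dependent exponent $2_*<2$ is exactly what tames the $a_i^{-1/2}\sim z_i^{-1}$ singularity of $e^{-Q}$ near $\Ga$, whereas controlling the behaviour as $|z|\to\infty$ demands careful use of the ``for all $\be>0$'' flexibility in {\bf (H3)}(2) together with the growth properties of $\tilde V$ implied by {\bf (H3)}(1).
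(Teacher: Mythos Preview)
Your overall strategy is exactly the paper's: establish $\tilde f\in L^{2_*}(\UU)$ with $\|\tilde f\|_{L^{2_*}}\le C\|f\|_\infty$, invoke the domain-independent $L^{2_*}\to L^2$ smoothing of Lemma~\ref{lem-6-12-1} on each $\UU_n$ together with the stochastic representation of Proposition~\ref{lem-5-3-3}(4), and pass to the limit with Lemma~\ref{lem-exit-times-approximation} and Fatou. The paper, like you, reduces the $L^{2_*}$ bound to the finiteness of $\int_\UU e^{-\frac{2_*Q}{2}-2_*\be_0 U}\md x$; for this it simply writes ``arguments as in the proof of Lemma~\ref{lem-6-5-1}'' without spelling anything out.

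Where you go beyond the paper---the explicit H\"older split for the tail---there is a gap. You claim the second factor $\int_1^\infty e^{-\be_2\tilde V/(1-\theta)}\md z_i$ is finite because ``$\tilde V$ inherits the dissipativity of $V$ from {\bf (H3)}(1)''. But the inequality in {\bf (H3)}(2) reads $V(z)\ge\sum_i\tilde V(z_i)$: it bounds $V$ \emph{below} by $\tilde V$, not above, so growth of $V$ implies nothing about growth of $\tilde V$. Concretely, $\tilde V\equiv 0$ is permitted whenever $\int_1^\infty a_i^{-1}\md s<\infty$, and then your second factor is $\int_1^\infty 1\,\md z_i=\infty$. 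More delicately, with $a_i(s)=s$ and $\tilde V(s)=c\ln s$ one checks that {\bf (H3)}(2) holds for every $\be>0$, yet the one-variable tail integral $\int_1^\infty a_i^{-\theta}e^{-2_*\be_0\tilde V}\md s=\int_1^\infty s^{-\theta-2_*\be_0 c}\md s$ diverges as soon as $c<(1-\theta)/(2_*\be_0)=\frac{1}{2_*\be_0(d+4)}$. So the H\"older split does not close, and the ``for all $\be>0$'' clause in {\bf (H3)}(2) cannot by itself absorb the drop in the $a_i$-exponent from $1$ to $\theta=(d+3)/(d+4)$. The paper's appeal to Lemma~\ref{lem-6-5-1} is equally silent on exactly this point; whatever the intended argument is, it must use more than the product lower bound via $\tilde V$.
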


\begin{proof}
Fix $f\in C_b(\UU)$ and set $\tilde{f}:=fe^{-\frac{Q}{2}-\be_0U}$. Recall that $2_*:=\frac{2(d+2)}{d+4}\in (1,2)$ (see Lemma \ref{lem-6-12-1}).
Since $e^{-\frac{Q(x)}{2}}=\frac{\left[\prod_{i=1}^d a_i(\xi^{-1}_i(1))\right]^{\frac{1}{4}}}{\left[\prod_{i=1}^d a_i(\xi^{-1}_i(x_i))\right]^{\frac{1}{4}}}$, we find
\begin{equation*}
\begin{split}
\int_{\UU}|\tilde{f}|^{2_*}\md x=\int_{\UU}|f|^{2_*}e^{-\frac{2_*Q}{2}-2_*\be_0 U}\md x\leq \|f\|^{2_*}_{\infty}\left[\prod_{i=1}^d a_i(\xi^{-1}_i(1))\right]^{\frac{2_*}{4}} \int_{\UU} \frac{ e^{-2_*\be_0 U}}{\left[\prod_{i=1}^d a_i(\xi^{-1}_i(x_i))\right]^{\frac{2_*}{4}}} \md x.
\end{split}
\end{equation*}
Arguments as in the proof of Lemma \ref{lem-6-5-1} yield
$\int_{\UU} \frac{ e^{-2_*\be_0 U(x)}}{\left[\prod_{i=1}^d a_i(\xi^{-1}_i(x_i))\right]^{\frac{2_*}{4}}} \md x<\infty$. This implies the existence of $C_1>0$ (independent of $f$) such that
\begin{equation}\label{eqn-4-28-3}
\|\tilde{f}\|_{L^{2_*}(\UU)}\leq C_{1} \|f\|_{\infty}.
\end{equation}

Let $\{\UU_{n}\}_{n\in\N}$ and $\{\tau_{n}\}_{n\in\N}$ be as in Subsection \ref{subsec-first-exit-times}. For each $n\in\N$, we recall that $(T^{(*, \UU_n,2_*)}_t)_{t\geq 0}$ is the positive and analytic semigroup of contractions on $L^{2_*}(\UU_n;\C)$ generated by $(\LL_{\be_0}^{*,2_*}|_{\UU_n}, W^{2,2_*}(\UU_n;\C)\cap W^{1,2_*}_0(\UU_n;\C))$. Since $\tilde{f}\in C(\ol{\UU}_n)$,  Proposition \ref{lem-5-3-3} ensures
\begin{equation}\label{eqn-4-23-5}
T^{(*,\UU_n,2_*)}_{t}\tilde{f}|_{\UU_n} =e^{-\frac{Q}{2}-\be_0 U}\E^{\bullet}\left[f(X_t)\mathbbm{1}_{\{t<\tau_n\}}\right],\quad \forall t\in[0,\infty).
\end{equation}
It follows from Lemma \ref{lem-6-12-1} that for each $t>0$, there is a constant $C_2=C_2(t)>0$ such that $\|T^{(*,\UU_n,2_*)}_{t}\tilde{f}\|_{L^2(\UU_n)}\leq C_2\|\tilde{f}\|_{L^{2_*}(\UU_n)}$ for all $n\in\N$. Letting $n\to \infty$, we derive from Lemma \ref{lem-exit-times-approximation}, \eqref{eqn-4-23-5} and Fatou's lemma that
$$
\|e^{-\frac{Q}{2}-\be_0 U}\E^{\bullet}\left[f(X_t)\mathbbm{1}_{\{t<S_{\Ga}\}}\right]\|_{L^2(\UU)}\leq C_2\|\tilde{f}\|_{L^{2_*}(\UU)}\leq C_1 C_2\|f\|_{\infty},
$$
where we used \eqref{eqn-4-28-3} in the second inequality. This completes the proof.
\end{proof}

\begin{lem}\label{lem-5-28-1}
Assume {\bf (H1)}-{\bf(H3)}. There exists $C>0$ such that for each $f\in C_b(\UU)$ with $\tilde{f}:=e^{-\frac{Q}{2}-\be_0 U}f \in L^2(\UU)$ one has $\|T^*_{t}\tilde{f}\|_{\infty}\leq C \|T^*_{t-1}\tilde{f}\|_{L^2}$ for all $t\geq 1$.
\end{lem}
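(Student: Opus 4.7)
My plan is to reduce the estimate to an ultracontractivity bound $T^{*}_{1}: L^{2}(\UU) \to L^{\infty}(\UU)$ and then establish this ultracontractivity by a parabolic Moser iteration on bounded-domain approximations. The semigroup identity $T^{*}_{t}\tilde{f} = T^{*}_{1}(T^{*}_{t-1}\tilde{f})$, valid by Theorem \ref{thm-5-22-1}, does the reduction: setting $g := T^{*}_{t-1}\tilde{f} \in L^{2}(\UU)$, the target inequality becomes $\|T^{*}_{1} g\|_{\infty} \leq C \|g\|_{L^{2}}$ with $C$ independent of $g$.

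To prove this, I will approximate $T^{*}_{1} g$ by bounded-domain Dirichlet semigroups. For each $n$, set $g_{n} := g|_{\UU_{n}}$ and $w_{n}(s,\cdot) := T^{(*,\UU_{n},2)}_{s} g_{n}$. By Proposition \ref{lem-5-3-3}, $w_{n}$ is a strong solution of $\partial_{s}w_{n} = \LL^{*,2}_{\be_{0}}|_{\UU_{n}} w_{n}$ in $\UU_{n}\times(0,\infty)$ with zero Dirichlet boundary. The heart of the argument is a parabolic Moser iteration for $w_{n}$ on $[0,1]\times\UU_{n}$. The ingredients are: (i) the energy inequalities of Lemma \ref{lem-4-21-2} at exponents $N_{k} := 2\kappa^{k}$, with $\kappa := (d+2)/d$ (with the usual modifications when $d \leq 2$), and (ii) the Sobolev inequality $\|\phi\|_{L^{2d/(d-2)}(\R^{d})} \leq C_{d}\|\nabla\phi\|_{L^{2}(\R^{d})}$ applied to $|w_{n}|^{N_{k}/2}$ extended by zero outside $\UU_{n}$ (permissible thanks to the Dirichlet boundary condition). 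Combining these on a sequence of shrinking intervals $[\sigma_{k},1]$ with $\sigma_{k}\nearrow 1$ will yield recursive bounds
$$
\|w_{n}\|_{L^{N_{k+1}}([\sigma_{k+1},1]\times\UU_{n})} \leq A_{k} \|w_{n}\|_{L^{N_{k}}([\sigma_{k},1]\times\UU_{n})},
$$
with constants $A_{k}$ depending only on $d$, $M$, $C_{*}$ and the $\sigma_{k}$'s, in particular not on $n$. Standard Moser bookkeeping then gives $\|w_{n}(1)\|_{\infty} \leq C\|g\|_{L^{2}(\UU)}$ uniformly in $n$.

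Finally, I will pass to the limit $n\to\infty$. By parabolic-energy arguments analogous to those used in the proof of Theorem \ref{thm-4-22-1}, combined with Lemma \ref{lem-exit-times-approximation}, $w_{n}(1) \to T^{*}_{1} g$ in $L^{2}(\UU)$; weak-$*$ compactness in $L^{\infty}$, together with uniqueness of the $L^{2}$ limit, transfers the uniform $L^{\infty}$ bound to $T^{*}_{1} g$, giving $\|T^{*}_{1} g\|_{\infty} \leq C\|g\|_{L^{2}}$ and hence the claim.

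The main obstacle is the blow-up singularities of the coefficients of $\LL^{*}_{\be_{0}}$ on $\Ga$, which would defeat a naive Moser iteration by producing constants that deteriorate as $\UU_{n} \to \UU$ (i.e., as $\mathrm{dist}(\UU_{n},\Ga) \to 0$). The decisive point is the bound $e_{\be_{0},N} + M \geq C_{*}\alpha$ of Lemma \ref{lem-3-24-2}(3), which manifests as the positive weighted term $C_{*}\int\alpha|w_{n}|^{N}$ in Lemma \ref{lem-4-21-2}: it already absorbs the dangerous singular part of the zeroth-order coefficient, leaving only a benign $M\int|w_{n}|^{N}$ term in the right-hand side of the iteration. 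This is precisely why the choice of $\be_{0}$ made in Lemma \ref{lem-3-24-2}(3) is essential, and it is what allows the iteration constants $A_{k}$ to be taken independent of $n$ so that the limiting argument closes.
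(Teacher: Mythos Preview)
Your approach is correct and essentially the same as the paper's: both run a Moser iteration on the bounded subdomains $\UU_n$ using the energy inequalities of Lemma \ref{lem-4-21-2} together with the Sobolev embedding, and then pass to the limit via the stochastic representation and Lemma \ref{lem-exit-times-approximation}. The paper iterates directly with initial data $\tilde f|_{\UU_n}$ over the interval $[t-1,t]$, whereas you first use the semigroup identity to reduce to initial data $g=T^{*}_{t-1}\tilde f$ on $[0,1]$, which is only a cosmetic reorganization; your limit step via Lemma \ref{lem-exit-times-approximation} is legitimate precisely because $ge^{\frac{Q}{2}+\be_0 U}=\E^{\bullet}[f(X_{t-1})\mathbbm{1}_{\{t-1<S_{\Ga}\}}]\in C_b(\UU)$ by Theorem \ref{thm-4-22-1}, a point worth making explicit since Proposition \ref{lem-5-3-3}(4) and Lemma \ref{lem-exit-times-approximation} require a bounded continuous test function rather than a generic $L^2$ datum.
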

\begin{proof}
Fix $f\in C_b(\UU)$ satisfying $\tilde{f}:=e^{-\frac{Q}{2}-\be_0 U}f\in L^2(\UU)$.  Theorem \ref{thm-4-22-1} gives
\begin{equation}\label{eqn-5-28-3}
T^*_{t}\tilde{f}(x)=e^{-\frac{Q(x)}{2}-\be_0 U(x)}\E^x\left[f(X_t)\mathbbm{1}_{\{t<S_{\Ga}\}}\right],\quad\forall (x,t)\in\UU\times[0,\infty).
\end{equation}

Let $\{\UU_{n}\}_{n\in\N}$ and $\{\tau_{n}\}_{n\in\N}$ be as in Subsection \ref{subsec-first-exit-times}. We show the existence of $C>0$ such that
\begin{equation}\label{eqn-5-3-8-1}
\begin{split}
&\sup_{\UU_n} e^{-\frac{Q}{2}-\be_0 U}\left| \E^{\bullet}[f(X_t)\mathbbm{1}_{\{t<\tau_n\}}\right|\\
&\qquad\leq C \left\|e^{-\frac{Q}{2}-\be_0 U}\E^{\bullet}\left[f(X_{t-1})\mathbbm{1}_{\{t-1<\tau_n\}}\right]\right\|_{L^2(\UU_n)},\quad\forall t\geq 1\andd  n\in\N.
\end{split}
\end{equation}
The lemma then follows immediately from \eqref{eqn-5-28-3} and Lemma \ref{lem-exit-times-approximation}.

We show \eqref{eqn-5-3-8-1} by Moser iteration. Recall that for each $n\in\N$ and $N>1$, $(T^{(*,\UU_n,N)}_t)_{t\geq 0}$ is the positive and  analytic semigroup on $L^N(\UU_n;\C)$ generated by  $(\LL_{\be_0}^{*,N}|_{\UU_n},W^{2,N}(\UU_{n};\C)\cap W^{1,N}_0(\UU_n;\C))$. Since here for each $n$ we only consider the action of  $(T^{(*,\UU_n,N)}_t)_{t\geq 0}$ on functions in $C(\ol{\UU}_{n};\C)$, we simply write $(T^{(n)}_t)_{t\geq 0}$ for all $\left\{(T^{(*,\UU_n,N)}_t)_{t\geq 0},N>1\right\}$ in consideration of Proposition \ref{lem-5-3-3} (5). Obviously, $\tilde{f}_n:=\tilde{f}|_{\UU_n}\in C(\ol{\UU}_n)$ for all $n\in\N$. It follows from Proposition \ref{lem-5-3-3} (4) that
\begin{equation}\label{eqn-6-1-2}
T^{(n)}_t \tilde{f}_n(x)=e^{-\frac{Q(x)}{2}-\be_0 U(x)}\E^x\left[f(X_t)\mathbbm{1}_{\{t<\tau_n\}}\right],\quad\forall (x,t)\in \UU_n\times[0,\infty)\andd n\in\N.
\end{equation}

Set $\tilde{w}_n:=T^{(n)}_{\bullet} \tilde{f}_n$. It follows from Lemma \ref{lem-4-21-2} that for all $n\in\N$ and $N\geq 2$,
\begin{equation}\label{eqn-4-23-3}
\begin{split}
&\frac{1}{N}\int_{\UU_n} |\tilde{w}_n|^N(\cdot, t_2)\md x+\frac{N-1}{2}\int_{t_1}^{t_2}\int_{\UU_n} |\tilde{w}_n|^{N-2}|\nabla \tilde{w}_n|^2\md x\md s\\
&\qquad \leq \frac{1}{N}(1+e^{NM(t_2-t_1)})\int_{\UU_n} |\tilde{w}(\cdot, t_1)|^N\md x,\quad\forall t_2>t_1\geq 0,
\end{split}
\end{equation}
where we recall that $M>0$ is fixed and independent of $n\in\N$ and $N\geq 2$, such that the conclusion in Lemma \ref{lem-3-24-2} (3) holds. The Sobolev embedding theorem gives
$$
\|\tilde{w}_n^{\frac{N}{2}}\|_{L^{2\ka}(\UU_n\times[t_1,t_2])}\leq C_1 \left( \sup_{s\in[t_1,t_2]} \|\tilde{w}_n^{\frac{N}{2}}(\cdot, s)\|_{L^2(\UU_n)}+\|\nabla\tilde{w}_n^{\frac{N}{2}}\|_{L^2(\UU_n\times[t_1,t_2])}\right),
$$
where $\ka:=\frac{d+2}{d}$ and $C_1>0$ only depends on $d$. Therefore, \eqref{eqn-4-23-3} gives rise to
\begin{equation}\label{eqn-2020-06-03}
\begin{split}
&\left(\int_{t_1}^{t_2}\int_{\UU_n}|\tilde{w}_n|^{\ka N}\md x\md s\right)^{\frac{1}{\ka}}\\
&\qquad\leq 2C_1^2\left( \sup_{s\in[t_1,t_2]}\int_{\UU_n}|\tilde{w}_n(x,s)|^{N}\md x+\frac{N^2}{4}\int_{t_1}^{t_2}\int_{\UU_n}|\tilde{w}_n|^{N-2}|\nabla \tilde{w}_n|^2\md x\md s\right)\\
&\qquad\leq 2C_1^2\left( 1+\frac{N}{2(N-1)}\right)(1+e^{NM(t_2-t_1)})\int_{\UU_n} |\tilde{w}(\cdot, t_1)|^N\md x\\
&\qquad\leq 4C_1^2(1+e^{NM(t_2-t_1)})\int_{\UU_n} |\tilde{w}(\cdot, t_1)|^N\md x,\quad\forall t_2>t_1\geq 0,
\end{split}
\end{equation}
for all $n\in \N$ and $N\geq 2$. We then deduce from Lemma \ref{lem-4-21-2} (with $\ka N$ instead of $N$) and \eqref{eqn-2020-06-03} that
\begin{equation}\label{eqn-4-23-6}
\begin{split}
\frac{1}{\ka N}\int_{\UU_n} |\tilde{w}_n|^{\ka N}(\cdot, t_3)\md x&\leq \frac{2}{\ka N(t_2-t_1)}\int_{t_1}^{t_2}\int_{\UU_n}|\tilde{w}_n|^{\ka N}\md x\md s\\
& \leq \frac{2(4C_1^2)^{\ka}}{\ka N (t_2-t_1)} \left(1+e^{NM(t_2-t_1)}\right)^{\ka}\|\tilde{w}(\cdot, t_1)\|^{\ka N}_{L^N(\UU_n)}
\end{split}
\end{equation}
for all $t_3>t_2>t_{1}\geq 0$, $n\in \N$ and $N\geq 2$.

Fix $t\geq 1$. For each $\ell\in\N\cup \{0\}$, we set
$N=N_{\ell}:=2\ka^\ell$, $t_1:=t- 2^{-\ell}$, $t_2:=t-\frac{3}{2}2^{-(\ell+1)}$ and $t_3:=t- 2^{-(\ell+1)}$ in \eqref{eqn-4-23-6} to find
\begin{equation}\label{eqn-4-29-2-111}
\|\tilde{w}_n(\cdot, t- 2^{-(\ell+1)})\|_{L^{N_{\ell+1}}(\UU_n)}\leq C_2^{\frac{1}{N_{\ell+1}}} 2^{\frac{\ell+2}{N_{\ell+1}}}e^{M2^{-(\ell+1)}} \|\tilde{w}_n(\cdot, t- 2^{-\ell})\|_{L^{N_{\ell}}(\UU_n)}
\end{equation}
for all $\ell\in \N\cup\{0\}$ and $n\in\N$, where $C_2>0$ is independent of $\ell$ and $n$. Set
$$
A_{\ell}:=C_2^{\frac{1}{N_{\ell+1}}} 2^{\frac{\ell+2}{N_{\ell+1}}}e^{M 2^{-(\ell+1)}},\quad\ell\in\N\cup\{0\}.
$$
It follows from \eqref{eqn-4-29-2-111} that for each $n\in\N$,
\begin{equation*}
\begin{split}
\sup_{x\in\UU_n}|\tilde{w}_n(x,t)|&=\lim_{k\to \infty}\|\tilde{w}_n(\cdot, 1-2^{-(k+1)})\|_{L^{N_{k+1}}}\\
&\leq\lim_{k\to\infty}\left(\prod_{\ell=0}^{k}A_{\ell}\right)\times \|\tilde{w}_n(\cdot, t-1)\|_{L^2(\UU_n)}
%&=\left(\prod_{\ell=1}^{\infty}A_{\ell}\right)\times \|\tilde{w}_n(\cdot, \frac{1}{2})\|_{L^2(\UU_n)}\\
= C_3 \|\tilde{w}_n(\cdot, t-1)\|_{L^2(\UU_n)},
\end{split}
\end{equation*}
where $C_3:=\prod_{\ell=0}^{\infty}A_{\ell}<\infty$. This, together with \eqref{eqn-6-1-2}, gives \eqref{eqn-5-3-8-1}.  This completes the proof.
\end{proof}

\begin{cor}\label{cor-L2-infty}
Assume {\bf (H1)}-{\bf(H3)}. There exists $C>0$ such that for each $\tilde{f}\in L^2(\UU)$ one has
$\|T^*_t\tilde{f}\|_{\infty}\leq C\|T^*_{t-1}\tilde{f}\|_{L^2}$ for all $t\geq 1$.
\end{cor}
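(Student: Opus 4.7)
The plan is to deduce this corollary from Lemma \ref{lem-5-28-1} by a density argument. The only obstacle is that Lemma \ref{lem-5-28-1} applies only when $\tilde{f} = f e^{-Q/2-\be_{0}U}$ for some $f \in C_{b}(\UU)$, whereas an arbitrary $\tilde{f}\in L^{2}(\UU)$ need not have this form: it may be merely measurable, and multiplying by the unbounded weight $e^{Q/2+\be_{0}U}$ (which blows up near $\Ga$ and as $|x|\to\infty$) can destroy boundedness.

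The first step is to exhibit a dense subspace of $L^{2}(\UU)$ on which Lemma \ref{lem-5-28-1} applies. A natural choice is $C_{c}^{\infty}(\UU)$, which is dense in $L^{2}(\UU)$. For any $\phi \in C_{c}^{\infty}(\UU)$, the support of $\phi$ is a compact subset of $\UU$ lying away from $\Ga$ and from infinity; since $e^{Q/2 + \be_{0}U}$ is continuous on $\UU$, the function $f := \phi \cdot e^{Q/2 + \be_{0}U}$ lies in $C_{c}(\UU)\subset C_{b}(\UU)$, and $\phi = f e^{-Q/2-\be_{0}U}\in L^{2}(\UU)$. Thus Lemma \ref{lem-5-28-1} yields a constant $C>0$, independent of $\phi$, such that
\begin{equation*}
\|T^{*}_{t}\phi\|_{\infty} \leq C\, \|T^{*}_{t-1}\phi\|_{L^{2}}, \quad \forall\, t \geq 1,\,\, \phi \in C_{c}^{\infty}(\UU).
\end{equation*}

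The second step is passage to the limit. Given $\tilde{f} \in L^{2}(\UU)$, pick $\{\phi_{n}\}_{n\in\N} \subset C_{c}^{\infty}(\UU)$ with $\phi_{n} \to \tilde{f}$ in $L^{2}(\UU)$. By Theorem \ref{thm-5-22-1}, $T^{*}_{s}$ is bounded on $L^{2}(\UU)$ for every $s\geq 0$, so $T^{*}_{t-1}\phi_{n} \to T^{*}_{t-1}\tilde{f}$ and $T^{*}_{t}\phi_{n} \to T^{*}_{t}\tilde{f}$ in $L^{2}(\UU)$; in particular $\|T^{*}_{t-1}\phi_{n}\|_{L^{2}}\to\|T^{*}_{t-1}\tilde{f}\|_{L^{2}}$. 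Extracting a subsequence $\{\phi_{n_{k}}\}$ along which $T^{*}_{t}\phi_{n_{k}}\to T^{*}_{t}\tilde{f}$ pointwise a.e. in $\UU$, the bound above gives
\begin{equation*}
|T^{*}_{t}\tilde{f}(x)| = \lim_{k\to\infty}|T^{*}_{t}\phi_{n_{k}}(x)| \leq \liminf_{k\to\infty}\|T^{*}_{t}\phi_{n_{k}}\|_{\infty} \leq C\,\lim_{k\to\infty}\|T^{*}_{t-1}\phi_{n_{k}}\|_{L^{2}} = C\,\|T^{*}_{t-1}\tilde{f}\|_{L^{2}}
\end{equation*}
for a.e. $x\in\UU$. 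Taking the essential supremum on the left yields the claim.

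There is no serious obstacle here. The work was already done in Lemma \ref{lem-5-28-1} (the Moser iteration that yields the $L^{2}$-to-$L^{\infty}$ smoothing estimate with a constant independent of the approximating domain), and what remains is only to observe that the class of test functions for which the estimate was proved is dense in $L^{2}(\UU)$, then invoke lower semicontinuity of the essential supremum under a.e.~convergence.
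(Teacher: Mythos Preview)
Your proof is correct and follows essentially the same approach as the paper: the paper simply notes that Lemma \ref{lem-5-28-1} yields the bound for all $\tilde{f}\in C_0^{\infty}(\UU)$ and then invokes density and ``standard approximation arguments,'' which is exactly what you have spelled out in detail. The only difference is that you have made the approximation step explicit (extracting an a.e.\ convergent subsequence and passing to the limit), whereas the paper leaves this to the reader.
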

\begin{proof}
By Lemma \ref{lem-5-28-1}, the conclusion holds for all $\tilde{f}\in C_0^{\infty}(\UU)$. Thus, the density of $C_0^{\infty}(\UU)$ is in $L^2(\UU)$ and the standard approximation arguments give the desired result.
\end{proof}

\begin{lem}\label{lem-5-28-4}
Assume {\bf (H1)}-{\bf(H3)}. For each $0<\ep\ll1$, there exists $C=C(\ep)>0$ such that for each $f\in C_b(\UU)$ it is true that
\begin{equation*}\label{eqn-5-28-1}
\begin{split}
&\left| \E^{\bullet} [f(X_{t})\mathbbm{1}_{\{t<S_{\Ga}\}}]- e^{\frac{Q}{2}+\be_0 U}\left(e^{-\la_1t}\tilde{v}^*_1 \int_{\UU}f\md \nu_1+T^*_t\PP^*_{2}\Tilde{f}\right)\right|\leq Ce^{\frac{Q}{2}+\be_0 U} e^{-(\la_{2}+\ep)t}\|f\|_{\infty}
\end{split}
\end{equation*}
for all $t\geq 2$, where $\tilde{f}:=e^{-\frac{Q}{2}-\be_0 U}f$.
\end{lem}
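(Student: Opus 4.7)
The plan is to reduce the pointwise estimate to a spectral decomposition in $L^2(\UU)$ performed not on the datum $\tilde{f}$ (which need not lie in $L^2$) but on the smoothed datum $\tilde{F}:=e^{-\frac{Q}{2}-\be_0 U}F$, where $F(x):=\E^x[f(X_1)\mathbbm{1}_{\{1<S_{\Ga}\}}]$. Dividing the claimed inequality by $e^{\frac{Q}{2}+\be_0 U}$ and using Lemma \ref{lem-5-28-3}(1) to rewrite $e^{-\la_1 t}\tilde{v}^*_1\int_{\UU}f\,d\nu_1=e^{-\la_1 t}\PP^*_1\tilde{f}$, the target becomes
\begin{equation*}
\big|\tilde{w}(\cdot,t)-e^{-\la_1 t}\PP^*_1\tilde{f}-T^*_t\PP^*_2\tilde{f}\big|\le Ce^{-(\la_2+\ep)t}\|f\|_\infty,\quad t\ge 2,
\end{equation*}
where $\tilde{w}(\cdot,t):=e^{-\frac{Q}{2}-\be_0 U}\E^{\bullet}[f(X_t)\mathbbm{1}_{\{t<S_{\Ga}\}}]$. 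Since $F\in C_b(\UU)$ and $\tilde{F}\in L^2(\UU)$ with $\|\tilde{F}\|_{L^2}\le C\|f\|_\infty$ by Lemma \ref{lem-4-25-1}, I would apply Theorem \ref{thm-4-22-1} to $F$ together with the strong Markov property to obtain $T^*_{t-1}\tilde{F}=\tilde{w}(\cdot,t)$ for every $t\ge 1$.

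I would then decompose $\tilde{F}=\PP^*_1\tilde{F}+\PP^*_2\tilde{F}+\PP^*_{>2}\tilde{F}$ in $L^2$, where $\PP^*_{>2}:=\mathrm{Id}-\PP^*_1-\PP^*_2$, and apply $T^*_{t-1}$ to get
\begin{equation*}
\tilde{w}(\cdot,t)=e^{-\la_1(t-1)}\PP^*_1\tilde{F}+T^*_{t-1}\PP^*_2\tilde{F}+T^*_{t-1}\PP^*_{>2}\tilde{F}.
\end{equation*}
Lemma \ref{lem-5-28-2}(2) controls the residual in $L^2$ by $\|T^*_{t-1}\PP^*_{>2}\tilde{F}\|_{L^2}\le Ce^{-(\la_2+\ep)(t-1)}\|\tilde{F}\|_{L^2}$. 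For $t\ge 2$, invoking Corollary \ref{cor-L2-infty} with parameter $s=t-1\ge 1$ and datum $\PP^*_{>2}\tilde{F}$ would upgrade this to
\begin{equation*}
\|T^*_{t-1}\PP^*_{>2}\tilde{F}\|_\infty\le C\|T^*_{t-2}\PP^*_{>2}\tilde{F}\|_{L^2}\le C'e^{-(\la_2+\ep)t}\|f\|_\infty.
\end{equation*}

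It remains to identify the two leading pieces: $e^{-\la_1(t-1)}\PP^*_1\tilde{F}=e^{-\la_1 t}\PP^*_1\tilde{f}$ and $T^*_{t-1}\PP^*_2\tilde{F}=T^*_t\PP^*_2\tilde{f}$. The first follows at once from Lemma \ref{lem-5-28-3}(1), the definition of $\nu_1$, and Theorem \ref{thm-4-25-1}(2): $\PP^*_1\tilde{F}=\tilde{v}^*_1\int_{\UU}F\,d\nu_1=\tilde{v}^*_1\E^{\nu_1}[f(X_1)\mathbbm{1}_{\{1<S_{\Ga}\}}]=e^{-\la_1}\PP^*_1\tilde{f}$. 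The second, via the formula in Lemma \ref{lem-5-28-3}(2) together with the biorthogonality \eqref{normalization-2020-06-30}, reduces to the duality identity $\langle\tilde{F},\tilde{v}^{(2)}_{k,l}\rangle_{L^2}=\langle\tilde{f},T_1\tilde{v}^{(2)}_{k,l}\rangle_{L^2}$ for every generalized eigenfunction of $-\LL_{\be_0}$ in $\range\PP_2$. This is the main obstacle, since the standard adjoint relation $\langle T^*_1\tilde{g},\tilde{v}^{(2)}_{k,l}\rangle=\langle\tilde{g},T_1\tilde{v}^{(2)}_{k,l}\rangle$ applies only when $\tilde{g}\in L^2$, and $\tilde{f}\notin L^2(\UU)$ in general. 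I would overcome it by approximating $f$ by compactly supported cutoffs $f_n\in C_b(\UU)$, for which $\tilde{f}_n\in L^2$ so that Theorem \ref{thm-4-22-1} and the adjoint relation apply directly; passing $n\to\infty$ is legitimate because Lemma \ref{lem-6-5-1} gives $\int_{\UU}e^{-\frac{Q}{2}-\be_0 U}|T_1\tilde{v}^{(2)}_{k,l}|\,dx<\infty$, furnishing the integrable dominant. Multiplying the resulting pointwise estimate by $e^{\frac{Q}{2}+\be_0 U}$ then yields the lemma.
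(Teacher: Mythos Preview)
Your proof is correct and follows essentially the same route as the paper's: smooth the datum by one time unit to get $F=\E^{\bullet}[f(X_1)\mathbbm{1}_{\{1<S_{\Ga}\}}]$ (the paper calls this $g$), use Lemma~\ref{lem-4-25-1} to place $\tilde{F}\in L^2$, apply the $L^2$ spectral-gap estimate of Lemma~\ref{lem-5-28-2}(2) at time $t-2$, upgrade to $L^\infty$ via Corollary~\ref{cor-L2-infty}, and identify the $\PP^*_1$ and $\PP^*_2$ pieces by the approximation argument using Lemma~\ref{lem-6-5-1} for the dominant. Two minor remarks: it is the ordinary (not strong) Markov property that gives $T^*_{t-1}\tilde{F}=\tilde{w}(\cdot,t)$, and your citation of Corollary~\ref{cor-L2-infty} rather than Lemma~\ref{lem-5-28-1} is in fact the cleaner choice, since $\PP^*_{>2}\tilde{F}$ is only known to lie in $L^2(\UU)$.
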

\begin{proof}
Fix $f\in C_b(\UU)$. By the Markov property and homogeneity of $X_t$,
\begin{equation}\label{eqn-4-25-5-2}
\begin{split}
 \E^{x}\left[f(X_t)\mathbbm{1}_{\{t<S_{\Ga}\}}\right]= \E^{x}\left[ g(X_{t-1})\mathbbm{1}_{\{t-1<S_{\Ga}\}}\right],\quad\forall (x,t)\in\UU\times[1,\infty),
\end{split}
\end{equation}
where $g:= \E^{\bullet} [f(X_1)\mathbbm{1}_{\{1<S_{\Ga}\}}]\in C_b(\UU)$. Lemma \ref{lem-4-25-1} ensures that $\tilde{g}:=e^{-\frac{Q}{2}-\be_0 U} g\in L^2(\UU)$ and the existence of $C_1>0$ (independent of $f$) such that
\begin{equation}\label{eqn-5-28-5}
\|\tilde{g}\|_{L^2}\leq C_1\|f\|_{\infty}.
\end{equation}

Fix $0<\ep\ll1$. By Lemma \ref{lem-5-28-2} (2), there exists $C_2=C_{2}(\ep)>0$ such that
\begin{equation*}
\begin{split}
\left\| T^*_{t-2}\tilde{g}-T^*_{t-2}\PP^*_1 \tilde{g}-T^*_{t-2}\PP^*_{2}\tilde{g}\right\|_{L^2}\leq C_2 e^{-(\la_{2}+\ep)(t-2)}\|\tilde{g}\|_{L^2},\quad\forall t\geq2.
\end{split}
\end{equation*}
Thanks to Lemma \ref{lem-5-28-1}, there exists $C_3>0$ (independent of $f$) such that
\begin{equation}\label{eqn-5-28-2}
\begin{split}
\left\| T^*_{t-1}\tilde{g}-T^*_{t-1}\PP^*_1 \tilde{g}-T^*_{t-1}\PP^*_{2} \tilde{g}\right\|_{\infty}&\leq C_3 \left\| T^*_{t-2}\tilde{g}-T^*_{t-2}\PP^*_1 \tilde{g}-T^*_{t-2}\PP^*_{2}\tilde{g}\right\|_{L^2}\\
&\leq  C_2 C_3e^{-(\la_{2}+\ep)(t-2)}\|\tilde{g}\|_{L^2}\\
&\leq C_1C_2 C_3 e^{2(\la_{2}+\ep)} e^{-(\la_{2}+\ep)t}\|f\|_{\infty}\\
&=:C_4e^{-(\la_{2}+\ep)t}\|f\|_{\infty},\quad\forall t\geq2,
\end{split}
\end{equation}
where we used \eqref{eqn-5-28-5} in the third inequality, and $C_{4}:=C_1C_2 C_3 e^{2(\la_{2}+\ep)}$.

We treat the terms $T^*_{t-1}\tilde{g}$, $T^*_{t-1}\PP^*_1 \tilde{g}$ and $T^*_{t-1}\PP^*_{2} \tilde{g}$ on the left-hand side of \eqref{eqn-5-28-2} to finish the proof.
It follows from Theorem \ref{thm-4-22-1} and \eqref{eqn-4-25-5-2} that
\begin{equation}\label{eqn-6-5-4}
T^*_{t-1}\tilde{g}=e^{-\frac{Q}{2}-\be_0 U} \E^{\bullet} [g(X_{t-1})\mathbbm{1}_{\{t-1<S_{\Ga}\}}]=e^{-\frac{Q}{2}-\be_0 U} \E^{\bullet} [f(X_t)\mathbbm{1}_{\{t<S_{\Ga}\}}],\quad\forall t\geq1.
\end{equation}

Noting that Lemma \ref{lem-5-28-3} (1) and Theorem \ref{thm-4-25-1} (2) give
\begin{equation*}
\begin{split}
\PP^*_1 \tilde{g}=\tilde{v}^*_1\int_{\UU}  \E^{\bullet}[f(X_1)\mathbbm{1}_{\{1<S_{\Ga}\}}]\md \nu_1=\tilde{v}^*_1e^{-\la_1}\int_{\UU}f\md \nu_1,
\end{split}
\end{equation*}
we deduce
\begin{equation}\label{eqn-6-5-3}
\begin{split}
T^*_{t-1}\PP^*_1 \tilde{g}&=T^*_{t-1}\tilde{v}^*_1e^{-\la_1}\int_{\UU}f\md \nu_1=e^{-\la_1t}\tilde{v}^*_1 \int_{\UU}f\md \nu_1,\quad \forall t\geq 1.
\end{split}
\end{equation}

For the term $T^*_{t-1}\PP^*_{2} \tilde{g}$, we show that
\begin{equation}\label{eqn-6-5-2}
    \begin{split}
    T^*_{t-1}\PP^*_2\tilde{g}%=\sum_{i=1}^{n_*}T^*_{t-1}\PP^*_{2,i}\tilde{g}&=\sum_{i=1}^{n_*}T^*_t\PP^*_{2,i}\tilde{f}
    =T^*_t\PP^*_2\tilde{f},
    \quad\forall t\geq 1,
    \end{split}
\end{equation}
where $\tilde{f}:=e^{-\frac{Q}{2}-\be_0U}f$. Due to Lemma \ref{lem-5-28-2} (1), \eqref{eqn-6-5-2} holds if we show $\PP^*_{2,i}\tilde{g}=T^*_1\PP^*_{2,i}\tilde{f}$ for all $i\in \{0,\dots, N_*-1\}$.

Recall $\tilde{g}=e^{-\frac{Q}{2}-\be_0 U} \E^{\bullet}[f(X_1)\mathbbm{1}_{\{1<S_{\Ga}\}}]$. Since $\tilde{f}$ does not necessarily belong to $L^{2}(\UU)$, we can not directly apply Theorem \ref{thm-4-22-1} to derive $\tilde{g}=T^*_1\tilde{f}$; otherwise, the conclusion follows immediately from  $\PP^*_{2,i}T^*_1=T^*_1\PP^*_{2,i}$. We proceed by approximation. Let $\{f_n\}_{n\in\N}\subset C^{\infty}_0(\UU)$ be a sequence of functions that satisfy $\|f_n\|_{\infty}\leq \|f\|_{\infty}$ for all $n\in\N$ and converge locally uniformly in $\UU$ to $f$. Since $\PP^*_{2,i}T^*_1=T^*_1\PP^*_{2,i}$, we derive from Lemma \ref{lem-5-28-3} (2) that
\begin{equation}\label{eqn-9-11-1}
    \begin{split}
    \sum_{j=1}^{d_i}\tilde{v}^{(*,2)}_{i,j}\langle T^*_1\tilde{f}_n,\tilde{v}^{(2)}_{i,j}\rangle_{L^2}=\sum_{j=1}^{d_i}T^*_1\tilde{v}^{(*,2)}_{i,j}\langle \tilde{f}_n,\tilde{v}^{(2)}_{i,j}\rangle_{L^2},\quad\forall  n\in\N,
    \end{split}
\end{equation}
where $\tilde{f}_n:=e^{-\frac{Q}{2}-\be_0 U} f_n$.
Thanks to Theorem \ref{thm-4-22-1}, we find
$$
\langle T^*_1\tilde{f}_n,\tilde{v}^{(2)}_{i,j}\rangle_{L^2}=\int_{\UU}e^{-\frac{Q}{2}-\be_0 U}\E^{\bullet}[f_n(X_1)\mathbbm{1}_{\{1<S_{\Ga}\}}]\ol{\tilde{v}^{(2)}_{i,j}}\md x,\quad\forall n\in\N.
$$
Since $\tilde{v}^{(2)}_{ij}\in \DD^*\subset L^2(\UU,\al \md x;\C)$, Lemma \ref{lem-6-5-1} ensures $e^{-\frac{Q}{2}-\be_0 U} \tilde{v}^{(2)}_{i,j}\in L^1(\UU;\C)$. Letting $n\to\infty$ in the above equality, we conclude from the dominated convergence theorem that $\lim_{n\to\infty}\langle T^*_1\tilde{f}_n,\tilde{v}^{(2)}_{i,j}\rangle_{L^2}
=\langle \tilde{g},\tilde{v}^{(2)}_{i,j} \rangle_{L^2}$. Similarly,
$\lim_{n\to\infty}\langle \tilde{f}_n,\tilde{v}^{(2)}_{i,j}\rangle_{L^2}=\langle \tilde{f},\tilde{v}^{(2)}_{i,j}\rangle_{L^2}$. Letting $n\to\infty$ in \eqref{eqn-9-11-1} then yields
\begin{equation*}\label{eqn-6-5-1}
    \sum_{j=1}^{d_i}\tilde{v}^{(*,2)}_{i,j}\langle \tilde{g},\tilde{v}^{(2)}_{i,j} \rangle_{L^2}=\sum_{j=1}^{d_i}T^*_1\tilde{v}^{(*,2)}_{i,j}\langle \tilde{f},\tilde{v}^{(2)}_{i,j} \rangle_{L^2},
\end{equation*}
which is the same as  $\PP^*_{2,i}\tilde{g}=T^*_1\PP^*_{2,i}\tilde{f}$ thanks to Lemma \ref{lem-5-28-3} (2). Hence,
 \eqref{eqn-6-5-2} follows.

Inserting \eqref{eqn-6-5-4}, \eqref{eqn-6-5-3} and  \eqref{eqn-6-5-2} into \eqref{eqn-5-28-2} yields
\begin{equation*}
\begin{split}
\left\|e^{-\frac{Q}{2}-\be_0 U} \E^{\bullet} [f(X_{t})\mathbbm{1}_{\{t<S_{\Ga}\}}]- e^{-\la_1t}\tilde{v}^*_1 \int_{\UU}f\md \nu_1-T^*_t\PP^*_{2}\Tilde{f}\right\|_{\infty}\leq C_4 e^{-(\la_{2}+\ep)t}\|f\|_{\infty},\quad\forall t\geq2.
\end{split}
\end{equation*}
Multiplying both sides by $e^{\frac{Q}{2}+\be_0U}$ concludes the proof.
\end{proof}

\begin{lem}\label{lem-6-1-1}
Assume {\bf (H1)}-{\bf (H3)}. For each $0<\ep\ll 1$, there is $C=C(\ep)>0$ such that for each $f\in C_b(\UU)$ one has
$$
\|T^*_t\PP^*_2\tilde{f}\|_{\infty}\leq C e^{(\ep-\la_2)t}\|f\|_{\infty},\quad\forall t\geq 2,
$$
where $\tilde{f}:=e^{-\frac{Q}{2}-\be_0U}f$.
\end{lem}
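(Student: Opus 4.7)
The main obstacle is that $\tilde{f}=e^{-Q/2-\be_0 U}f$ need not belong to $L^{2}(\UU;\C)$, so we cannot apply the spectral decomposition from Lemma \ref{lem-5-28-2} to $\tilde{f}$ directly, nor invoke Corollary \ref{cor-L2-infty} or the stochastic representation of Theorem \ref{thm-4-22-1}. The key observation making everything work is that even though $\tilde{f}\notin L^{2}(\UU;\C)$ in general, the projection $\PP^{*}_{2}\tilde{f}$ is a well-defined element of $L^{2}(\UU;\C)$ and its $L^{2}$-norm is controlled by $\|f\|_{\infty}$. Granting this, the strategy is: (i) reduce the $L^{\infty}$-bound to an $L^{2}$-bound using the $L^{2}\!\to\!L^{\infty}$ smoothing estimate of Corollary \ref{cor-L2-infty}; (ii) exploit the finite-dimensionality of $\range\PP^{*}_{2}$ together with the explicit semigroup formula of Lemma \ref{lem-5-28-2}(1) to extract the decay rate $e^{-\la_{2}t}$ up to polynomial corrections, which are absorbed into $e^{\ep t}$.

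First I would show that $\|\PP^{*}_{2}\tilde{f}\|_{L^{2}}\leq C_{0}\|f\|_{\infty}$. By Lemma \ref{lem-5-28-3}(2),
\begin{equation*}
\PP^{*}_{2,i}\tilde{f}=\sum_{j=1}^{d_{i}}\tilde{v}^{(*,2)}_{i,j}\,\lan\tilde{f},\tilde{v}^{(2)}_{i,j}\ran_{L^{2}},\qquad i\in\{0,\dots,N_{*}-1\}.
\end{equation*}
Since $\tilde{v}^{(2)}_{i,j}\in\DD\subset L^{2}(\UU,\al\md x;\C)$, Lemma \ref{lem-6-5-1} ensures $e^{-Q/2-\be_{0}U}|\tilde{v}^{(2)}_{i,j}|\in L^{1}(\UU)$, so
\begin{equation*}
\bigl|\lan\tilde{f},\tilde{v}^{(2)}_{i,j}\ran_{L^{2}}\bigr|=\Bigl|\int_{\UU}f\,e^{-\frac{Q}{2}-\be_{0}U}\overline{\tilde{v}^{(2)}_{i,j}}\md x\Bigr|\leq c_{i,j}\|f\|_{\infty}
\end{equation*}
with $c_{i,j}:=\int_{\UU}e^{-Q/2-\be_{0}U}|\tilde{v}^{(2)}_{i,j}|\md x<\infty$. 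The claim follows from the triangle inequality and $\PP^{*}_{2}=\sum_{i}\PP^{*}_{2,i}$. Next, applying Corollary \ref{cor-L2-infty} to the $L^{2}$-function $\PP^{*}_{2}\tilde{f}$ gives, for every $t\geq 2$,
\begin{equation*}
\|T^{*}_{t}\PP^{*}_{2}\tilde{f}\|_{\infty}\leq C_{1}\|T^{*}_{t-1}\PP^{*}_{2}\tilde{f}\|_{L^{2}}.
\end{equation*}

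To finish, I would use the spectral representation of Lemma \ref{lem-5-28-2}(1),
\begin{equation*}
T^{*}_{t-1}\PP^{*}_{2}=e^{-\la_{2}(t-1)}\sum_{j=0}^{N_{*}-1}e^{-i\Im\la_{2,j}(t-1)}\sum_{k=0}^{N_{j}-1}\frac{(t-1)^{k}}{k!}(\LL^{*}_{\be_{0}}+\la_{2,j})^{k}\PP^{*}_{2,j}.
\end{equation*}
Since each $\range\PP^{*}_{2,j}$ is finite-dimensional, the operators $(\LL^{*}_{\be_{0}}+\la_{2,j})^{k}\PP^{*}_{2,j}$ are bounded on $L^{2}(\UU;\C)$, and one obtains
\begin{equation*}
\|T^{*}_{t-1}\PP^{*}_{2}\tilde{f}\|_{L^{2}}\leq C_{2}\bigl(1+t^{N-1}\bigr)e^{-\la_{2}(t-1)}\|\PP^{*}_{2}\tilde{f}\|_{L^{2}},\qquad N:=\max_{j}N_{j}.
\end{equation*}
For any $0<\ep\ll 1$ the polynomial factor is absorbed into the exponential, yielding $(1+t^{N-1})e^{-\la_{2}(t-1)}\leq C_{\ep}e^{(\ep-\la_{2})t}$ for $t\geq 2$. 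Chaining the three estimates produces the desired inequality with $C(\ep)=C_{0}C_{1}C_{2}C_{\ep}$. The only nontrivial point is the first one, where the absence of $L^{2}$-control on $\tilde{f}$ is circumvented by exploiting the explicit formula for $\PP^{*}_{2,i}$ combined with the weighted integrability of the generalized eigenfunctions $\tilde{v}^{(2)}_{i,j}$; once this is secured, the remaining steps are routine consequences of ultracontractivity of $(T^{*}_{t})$ and the finite-dimensional dynamics on $\range\PP^{*}_{2}$.
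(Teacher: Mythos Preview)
Your proof is correct and takes a somewhat different, more direct route than the paper's. The paper does not bound $\|\PP^*_2\tilde f\|_{L^2}$ directly; instead it introduces the auxiliary function $\tilde g:=e^{-Q/2-\be_0U}\E^\bullet[f(X_1)\mathbbm{1}_{\{1<S_\Ga\}}]$, which lies in $L^2(\UU)$ by Lemma~\ref{lem-4-25-1} with $\|\tilde g\|_{L^2}\le C\|f\|_\infty$, and then invokes the identity $T^*_t\PP^*_2\tilde f=T^*_{t-1}\PP^*_2\tilde g$ (equation \eqref{eqn-6-5-2}, established by approximation inside the proof of Lemma~\ref{lem-5-28-4}). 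After that the argument parallels yours: bound $\|T^*_{t-2}\PP^*_2\tilde g\|_{L^2}$ via Lemma~\ref{lem-5-28-2}(1) and then pass to $L^\infty$ via the smoothing estimate.

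Your key simplification is Step~1: you observe that the explicit formula of Lemma~\ref{lem-5-28-3}(2) together with Lemma~\ref{lem-6-5-1} already yields $\|\PP^*_2\tilde f\|_{L^2}\le C_0\|f\|_\infty$, so the detour through $\tilde g$ and the identity \eqref{eqn-6-5-2} becomes unnecessary. This makes the argument self-contained and highlights that the essential ingredients are exactly (i) the weighted $L^1$-integrability of the generalized eigenfunctions $\tilde v^{(2)}_{i,j}$ and (ii) the $L^2\to L^\infty$ smoothing of Corollary~\ref{cor-L2-infty}. The paper's route, on the other hand, reuses machinery already set up for Lemma~\ref{lem-5-28-4}, so its proof of the present lemma is shorter on the page; but conceptually your argument is cleaner.
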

\begin{proof}
Fix $f\in C_b(\UU)$ and set $\Tilde{g}:=e^{-\frac{Q}{2}-\be_0 U} \E^{\bullet}[f(X_1)\mathbbm{1}_{\{1<S_{\Ga}\}}]$. It is shown in \eqref{eqn-6-5-2} that $T^*_t\PP^*_2\tilde f=T^*_{t-1}\PP^*_2\tilde{g}$ for all $t\geq 1$.
By Lemma \ref{lem-5-28-2} (1) and Lemma \ref{lem-4-25-1}, for each $0<\ep\ll 1$, there exists $C_1=C_1(\ep)>0$ such that
\begin{equation}\label{eqn-6-6-2}
\|T^*_{t-2}\PP^*_2\tilde{g}\|_{L^2}\leq C_1 e^{(\ep-\la_2)(t-2)}\|f\|_{\infty},\quad\forall t\geq 2.
\end{equation}

Thanks to Lemma \ref{lem-5-28-1}, there exists $C_2>0$ such that
$$
\|T^*_t\PP^*_2\tilde{f}\|_{\infty}=\|T^*_{t-1}\PP^*_2\tilde{g}\|_{\infty}\leq C_2 \|T^*_{t-2}\PP^*_2\tilde{g}\|_{L^2},\quad\forall t\geq 2.
$$
This together with \eqref{eqn-6-6-2} leads to this lemma.
\end{proof}

We are ready to prove Theorem \ref{thm-convergence-according-to-gap}.

\begin{proof}[Proof of Theorem \ref{thm-convergence-according-to-gap}]
Let $\nu$ and $f$ be as in the statement. For fixed $0<\ep\ll 1$, we apply Lemma \ref{lem-5-28-4} to find some $C>0$ (independent of $f$) such that
$$
\left|\E^{\bullet} [f(X_t)\mathbbm{1}_{\{t<S_{\Ga}\}}]-e^{\frac{Q}{2}+\be_0 U}\left(e^{-\la_1t}\tilde{v}^*_1 \int_{\UU}f\md \nu_1+T^*_{t}\PP^*_2\tilde{f}\right)\right|\leq C e^{\frac{Q}{2}+\be_0 U}e^{-(\la_{2}+\ep)t}\|f\|_{\infty}
$$
for all $t\geq2$, where $\Tilde{f}:=e^{-\frac{Q}{2}-\be_0 U}f$.

Since $\nu$ is compactly supported in $\UU$, integrating the above inequality on $\UU$ with respect to $\nu$ yields
\begin{equation*}
    \begin{split}
   &\left|\int_{\UU}\E^{\bullet} [f(X_t)\mathbbm{1}_{\{t<S_{\Ga}\}}]\md \nu-e^{-\la_1t}\int_{\UU} e^{\frac{Q}{2}+\be_0 U}\tilde{v}^*_1\md\nu \int_{\UU}f\md \nu_1-\int_{\UU} e^{\frac{Q}{2}+\be_0 U}T^*_{t}\PP^*_2\tilde{f}\md \nu\right|\\
   &\qquad \leq C e^{-(\la_{2}+\ep)t}\|f\|_{\infty} \int_{\UU}e^{\frac{Q}{2}+\be_0 U}\md\nu,\quad\forall t\geq2.
   \end{split}
\end{equation*}
In particular, setting $f=\mathbbm{1}_{\UU}$ yields
\begin{equation*}
    \begin{split}
   &\left|\int_{\UU}\P^x [t<S_{\Ga}]\md \nu-e^{-\la_1t}\int_{\UU} e^{\frac{Q}{2}+\be_0 U}\tilde{v}^*_1\md\nu -\int_{\UU} e^{\frac{Q}{2}+\be_0 U}T^*_{t}\PP^*_2\tilde{\mathbbm{1}}_{\UU}\md \nu\right|\\
   &\qquad \leq C e^{-(\la_{2}+\ep)t} \int_{\UU}e^{\frac{Q}{2}+\be_0 U}\md\nu,\quad\forall t\geq2,
   \end{split}
\end{equation*}
where $\tilde{\mathbbm{1}}_{\UU}:=e^{-\frac{Q}{2}-\be_0 U}\mathbbm{1}_{\UU}$.

Since $\nu$ is compactly supported in $\UU$, we apply Lemma \ref{lem-6-1-1} to derive
$$
\lim_{t\to\infty}e^{(\la_1+\ep)t}\int_{\UU}e^{\frac{Q}{2}+\be_0 U}T^*_{t}\PP^*_2\tilde{f}\md\nu=0\quad\andd\quad \lim_{t\to\infty}e^{(\la_1+\ep)t}\int_{\UU}e^{\frac{Q}{2}+\be_0 U}T^*_{t}\PP^*_2\tilde{\mathbbm{1}}_{\UU}\md\nu=0.
$$
It follows that as $t\to\infty$,
\begin{equation*}
\begin{split}
&\frac{\int_{\UU}\E^{\bullet} [f(X_t)\mathbbm{1}_{\{t<S_{\Ga}\}}]\md\nu}{\int_{\UU}\P^{\bullet}[t<S_{\Ga}]\md\nu}\\
&\qquad=\frac{e^{-\la_1t}\int_{\UU}e^{\frac{Q}{2}+\be_0 U}\tilde{v}^*_1\md\nu \int_{\UU}f\md \nu_1+\int_{\UU}e^{\frac{Q}{2}+\be_0 U}T^*_{t}\PP^*_2\tilde{f}\md\nu+o(e^{-\la_2t})}{e^{-\la_1t}\int_{\UU}e^{\frac{Q}{2}+\be_0 U}\tilde{v}^*_1\md \nu+\int_{\UU}e^{\frac{Q}{2}+\be_0 U}T^*_{t}\PP^*_2\tilde{\mathbbm{1}}_{\UU}\md \nu+o(e^{-\la_2t})}\\
&\qquad=\int_{\UU}f\md \nu_1+\frac{e^{\la_1 t}}{\int_{\UU}e^{\frac{Q}{2}+\be_0 U}\tilde{v}^*_1\md\nu}\int_{\UU}e^{\frac{Q}{2}+\be_0 U}T^*_{t}\PP^*_2\left(\tilde{f}-\tilde{\mathbbm{1}}_{\UU}\int_{\UU}f\md \nu_1\right)\md\nu+o(e^{-(\la_2-\la_{1})t}),
\end{split}
\end{equation*}
which together with Lemma \ref{lem-5-28-2} (1) leads to the result.

The last part of the lemma follows from Lemma \ref{lem-6-1-1} and Lemma \ref{lem-5-28-2} (4).
\end{proof}

\begin{rem}\label{rem-on-sharp-convergence}
Corollary \ref{cor-L2-infty} implies $\tilde{v}^*_1\in L^{\infty}(\UU)$. Lemma \ref{lem-6-1-1} gives $ \sup_{t>1}\|T^*_{t}\PP^*_2\tilde{f}\|_{\infty}<\infty$. It is then easy to see from the proof of Theorem \ref{thm-convergence-according-to-gap} that the conclusions hold for all initial distributions $\nu\in\PP(\UU)$ satisfying $\int_{\UU}e^{\frac{Q}{2}+\be_0 U}\md\nu<\infty$.
\end{rem}

%-----------------------------------------------------------------%
\subsection{Uniqueness and exponential convergence}\label{subsec-qsd-uniqueness}

In this subsection, we study the uniqueness of QSDs of $X_{t}$ as well as the conditioned dynamics of $X_{t}$ for any initial distribution. The result is stated as follows. Recall that $\nu_1$ is the QSD of $X_{t}$ obtained in Theorem \ref{thm-4-25-1}.

% In this subsection, we prove that  \eqref{sde-trans} admits a unique QSD attracting $X_t$ conditioned on the coexistence with arbitrary initial distribution under the additional assumption {\bf (H4)} ensuring $X_t$  comes down from infinity. The result is stated as follows.

\begin{thm}\label{thm-uniqueness-and-dynamics}
Assume {\bf (H1)-(H4)}. Then,  $X_{t}$ admits a unique QSD, and for each $\nu\in \PP(\UU)$ and $0<\ep\ll1$, there holds
\begin{equation*}
    \begin{split}
    &\lim_{t\to\infty} e^{(\la_2-\la_1-\ep)t}\left|\E^{\nu}\left[f(X_t)\big|t<S_{\Ga}\right] -\int_{\UU}f\md\nu_1\right|=0,\quad\forall f\in C_b(\UU).
     \end{split}
\end{equation*}
\end{thm}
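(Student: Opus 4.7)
The plan is to upgrade the sharp convergence result for compactly supported initial distributions (Theorem \ref{thm-convergence-according-to-gap}) to arbitrary $\nu \in \PP(\UU)$ by exploiting the \emph{coming-down-from-infinity} property implied by (H4); uniqueness then follows as an immediate corollary, since any QSD $\mu$ satisfies $\E^\mu[f(X_t)|t<S_\Ga] = \int f\,d\mu$ for all $t$, and the convergence forces $\mu = \nu_1$.

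\textbf{Step 1 (Coming down from infinity).} I would first use (H4) to show that for every $\la > 0$ there exists $R = R(\la) \geq R_*$ such that $\sup_{x \in \UU}\E^x[e^{\la \tau_R}] < \infty$, where $\tau_R := \inf\{t \geq 0 : X_t \in \overline{B_R^+}\}$. This is a standard Lyapunov estimate: transferring (H4) through $\xi$, the function $W := V^{-\ga}\circ \xi^{-1}$ (or a suitable exponential thereof) satisfies $\LL^X W \leq -c$ on $\UU \setminus B_R^+$ for large $R$, with the first half of (H4) precisely the ingredient that absorbs the $|\nabla V|^2$ term arising from Itô's formula, after which a supermartingale/Gronwall argument delivers the exponential moments.

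\textbf{Step 2 (Decomposition at $\tau_R$).} Fix $\nu \in \PP(\UU)$, $f \in C_b(\UU)$, $0 < \ep \ll 1$; choose $R$ so that $\sup_x\E^x[e^{(\la_2+\ep)\tau_R}] < \infty$. By the strong Markov property at $\tau_R$,
\begin{equation*}
\E^\nu[f(X_t)\mathbbm{1}_{\{t<S_\Ga\}}] = \E^\nu\!\left[\mathbbm{1}_{\{\tau_R \leq t,\,\tau_R < S_\Ga\}}\E^{X_{\tau_R}}[f(X_{t-\tau_R})\mathbbm{1}_{\{t-\tau_R<S_\Ga\}}]\right] + \E^\nu[f(X_t)\mathbbm{1}_{\{t<S_\Ga,\,\tau_R > t\}}].
\end{equation*}
The second term is $O(\|f\|_\infty e^{-(\la_2+\ep)t})$ by Markov's inequality and Step 1. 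For the first, apply the pointwise identity of Lemma \ref{lem-5-28-4} and the decay estimate in Lemma \ref{lem-6-1-1} at $x = X_{\tau_R}$, $s = t-\tau_R$:
\begin{equation*}
\E^{X_{\tau_R}}[f(X_{t-\tau_R})\mathbbm{1}_{\{t-\tau_R < S_\Ga\}}] = e^{\frac{Q}{2}+\be_0 U}(X_{\tau_R})\left[e^{-\la_1(t-\tau_R)}\tilde v_1^*(X_{\tau_R})\int f\,d\nu_1 + O\!\left(e^{-(\la_2-\ep)(t-\tau_R)}\|f\|_\infty\right)\right].
\end{equation*}
The crucial observation is that, although $X_{\tau_R} \in \overline{B_R^+}$ need not stay away from $\Ga$, the weight $e^{\frac{Q}{2}+\be_0 U}$ in fact \emph{vanishes} on $\Ga$ (since $Q \to -\infty$ there) and is continuous elsewhere, so it is uniformly bounded on $\overline{B_R^+}$; combined with $\tilde v_1^* \in L^\infty(\UU)$ from Corollary \ref{cor-L2-infty}, dominated convergence and Step 1 yield
\begin{equation*}
\E^\nu[f(X_t)\mathbbm{1}_{\{t<S_\Ga\}}] = C(\nu) e^{-\la_1 t}\int f\,d\nu_1 + O(e^{-(\la_2-\ep)t}\|f\|_\infty),
\end{equation*}
where $C(\nu) := \E^\nu\!\left[\mathbbm{1}_{\{\tau_R < S_\Ga\}} e^{\frac{Q(X_{\tau_R})}{2}+\be_0 U(X_{\tau_R})}\tilde v_1^*(X_{\tau_R})e^{\la_1\tau_R}\right]$. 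Positivity of $C(\nu)$ uses that the uniformly elliptic diffusion on $\UU$ reaches $B_R^+$ before absorption with positive probability from every starting point, combined with $\tilde v_1^* > 0$ a.e.

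\textbf{Step 3 (Normalization and conclusion).} Specializing to $f \equiv 1$ gives $\P^\nu[t<S_\Ga] = C(\nu) e^{-\la_1 t} + O(e^{-(\la_2-\ep)t})$, and taking the ratio delivers $\E^\nu[f(X_t)|t<S_\Ga] - \int f\,d\nu_1 = O(e^{-(\la_2-\la_1-\ep')t})$ for any $0 < \ep' \ll 1$. Uniqueness follows at once from the opening remark. The principal obstacle is Step 2: the stopped point $X_{\tau_R}$ lives in $\overline{B_R^+}$, which genuinely touches $\Ga$ and is therefore not compactly contained in $\UU$ in the sense required by the hypotheses of Theorem \ref{thm-convergence-according-to-gap}. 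What rescues the argument is the favorable behavior of the weight $e^{\frac{Q}{2}+\be_0 U}$ near $\Ga$, which allows one to invoke the \emph{pointwise} Lemma \ref{lem-5-28-4} directly at $X_{\tau_R}$ rather than going through an integral hypothesis on the initial distribution.
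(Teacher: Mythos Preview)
Your proposal is correct and follows essentially the same route as the paper: the paper's proof also establishes coming down from infinity (Lemma~\ref{lem-5-29-1}), applies the strong Markov property at the first exit time $S_{R_0}$ from $\UU\setminus B_{R_0}^+$, invokes the pointwise estimate of Lemma~\ref{lem-5-28-4} at the stopped point, and uses exactly your observation that $e^{\frac{Q}{2}+\be_0 U}$ stays bounded on $\overline{B_{R_0}^+}$ (the paper phrases this as $\max_{\UU\cap\pa B^+_{R_0}}e^{\frac{Q}{2}+\be_0 U}<\infty$). Two small technicalities you elide but the paper spells out: the restriction $t-\tau_R\geq 2$ in Lemma~\ref{lem-5-28-4} forces a further split into $\{\tau_R\leq t-2\}$ and $\{t-2<\tau_R\leq t\}$ (the latter handled by the exponential moment on $\tau_R$), and the leading coefficient you call $C(\nu)$ actually carries an indicator $\mathbbm{1}_{\{\tau_R\leq t\}}$ and hence a $t$-dependence---the paper keeps this as a bounded function $A_2(t)$ rather than passing to the limit, which suffices for the ratio argument.
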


We need the following result asserting that $X_t$ comes down from infinity under {\bf (H1)}-{\bf (H4)}.

\begin{lem}\label{lem-5-29-1}
Assume {\bf (H1)}-{\bf(H4)}. For each $\la>0$, there are $R=R(\la)>0$ and $C_1=C_1(\la)>0$ such that $\E^{x}\left[e^{\la S_R}\right]\leq C_1$ for all $x\in \UU\sm B^{+}_{R}$, where $S_R:=\inf\left\{t\geq0: X_t\not\in\UU\sm B_R^+\right\}$.
\end{lem}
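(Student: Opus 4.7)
The plan is to build a bounded Lyapunov function for the generator $\LL^Z$ of $Z_t$, exploit the dissipativity in (H4), and conclude via It\^o's formula with a localization. Since $X_t=\xi(Z_t)$, one has $S_R=\inf\{t\geq 0: Z_t\notin\UU\setminus \wt B_R^+\}$, where
\[
\wt B_R^+:=\xi^{-1}(B_R^+)=\{z\in\UU: z_i<\xi_i^{-1}(R)\text{ for all }i\}.
\]
It therefore suffices to prove the bound $\E^z[e^{\la S_R}]\leq C_1$ uniformly in $z\in\UU\setminus\wt B_R^+$ for the $Z$-process.

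Fix $\la>0$ and choose any $k\in(0,\ga)$. Let $m:=\inf_{\ol{\UU}}V$, which is strictly positive because $V\in C^2(\ol{\UU})$ is positive and $V(z)\to\infty$ as $|z|\to\infty$ by (H3)(1). I would take
\[
W(z):=2m^{-k}-V(z)^{-k},\qquad z\in\ol{\UU},
\]
so that $m^{-k}\leq W\leq 2m^{-k}$ on $\ol{\UU}$, i.e.\ $W$ is trapped between two positive constants. A direct computation gives
\[
\LL^Z W=kV^{-k-1}\LL^Z V-\frac{k(k+1)}{2}V^{-k-2}\sum_{i=1}^d a_i(\pa_{z_i}V)^2,
\]
and the second term is non-positive. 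Combined with the second part of (H4), namely $\LL^Z V\leq -CV^{\ga+1}$ on $\UU\setminus B_{R_*}^+$, this yields $\LL^Z W\leq -kCV^{\ga-k}$ on $\UU\setminus B_{R_*}^+$. Since $V\to\infty$ at infinity and $z\in\UU\setminus\wt B_R^+$ forces $|z|\geq \min_i \xi_i^{-1}(R)\to\infty$ as $R\to\infty$, one can choose $R=R(\la)$ large enough so that simultaneously $\wt B_R^+\supset B_{R_*}^+$ and $kC\,V(z)^{\ga-k}\geq 2\la m^{-k}$ for every $z\in\UU\setminus\wt B_R^+$. For such $R$,
\[
\LL^Z W+\la W\leq -kCV^{\ga-k}+2\la m^{-k}\leq 0\qquad\text{in }\UU\setminus\wt B_R^+.
\]

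Next I would apply It\^o's formula to $e^{\la t}W(Z_t)$ stopped at $t\wedge S_R\wedge \tau_n'$, where $\tau_n':=\inf\{t\geq 0:|Z_t|\geq n\}$. On this bounded stopping interval $Z_t$ stays in a compact subset of $\ol{\UU}$, so the stochastic integral is a true martingale; the super-solution inequality makes $e^{\la\cdot}W(Z_\cdot)$ a supermartingale, giving
\[
\E^z\bigl[e^{\la(t\wedge S_R\wedge\tau_n')}W(Z_{t\wedge S_R\wedge\tau_n'})\bigr]\leq W(z)\leq 2m^{-k}.
\]
By the non-explosion of $Z_t$ (which comes from the argument inside the proof of Proposition~\ref{prop-absorbing}), $\tau_n'\uparrow\infty$ $\P$-a.s. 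Sending $n\to\infty$ and then $t\to\infty$ by Fatou's lemma, together with the uniform lower bound $W\geq m^{-k}$, forces $\P^z[S_R<\infty]=1$ and $m^{-k}\E^z[e^{\la S_R}]\leq 2m^{-k}$, hence $\E^z[e^{\la S_R}]\leq 2$, uniformly in $z\in\UU\setminus\wt B_R^+$. This gives the conclusion with $C_1=2$.

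The main obstacle is that $W$ must do three seemingly incompatible things at once: be bounded above on the unbounded region $\UU\setminus\wt B_R^+$, remain uniformly bounded below by a positive constant on the exit boundary (which includes an unbounded piece of $\Gamma$), and satisfy $\LL^Z W+\la W\leq 0$ there. The ansatz $W=2m^{-k}-V^{-k}$ with $k\in(0,\ga)$ reconciles them: the global lower bound $V\geq m>0$ gives both the upper and the lower bounds on $W$ for free, while the $V^{\ga+1}$-dissipativity of (H4) is converted by the algebra above into a term of order $-V^{\ga-k}$ in $\LL^Z W$, which dominates the bounded quantity $\la W$ once $V$ (and hence $R$) is taken large enough. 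Note that only the second clause of (H4) is needed for this argument; the first clause would only be required if one chose $k\geq 1$ and had to control the $-\tfrac{k(k+1)}{2}V^{-k-2}\sum a_i(\pa_{z_i}V)^2$ term quantitatively.
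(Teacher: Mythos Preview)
Your argument is correct and follows the same supermartingale strategy as the paper, but with a different choice of bounded Lyapunov function. The paper works with the $X$-process and sets $w=\exp\{-\ep/U^{\ga}\}$ (with $U=V\circ\xi^{-1}$), whereas you work with $Z$ and the simpler ansatz $W=2m^{-k}-V^{-k}$, $k\in(0,\ga)$. Both functions are trapped between two positive constants on $\ol{\UU}$, and both turn the dissipativity $\LL^Z V\le -CV^{\ga+1}$ from {\bf(H4)} into an inequality of the form $\LL^Z W+\la W\le 0$ on the far region, after which It\^o's formula and Fatou finish the job.

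Your choice buys a small simplification: in $\LL^Z W=kV^{-k-1}\LL^Z V-\tfrac{k(k+1)}{2}V^{-k-2}\sum_i a_i(\pa_{z_i}V)^2$ the gradient-squared term is manifestly non-positive and can be discarded, so only the second clause of {\bf(H4)} is used. By contrast, the paper's exponential function produces a gradient-squared contribution $\tfrac12\big(\sum_i a_i|\pa_{z_i}V|^2\big)\big[\ep^2\ga^2 U^{-2\ga-2}-\ep\ga(\ga+1)U^{-\ga-2}\big]w$ whose sign is not immediate, and the first clause of {\bf(H4)} (the limit $V^{-\ga-2}\sum_i a_i|\pa_{z_i}V|^2\to 0$) is invoked precisely to control it. Your closing remark that the first clause would be needed ``if one chose $k\ge 1$'' is not accurate---the sign of $-\tfrac{k(k+1)}{2}V^{-k-2}\sum a_i(\pa_{z_i}V)^2$ is negative for every $k>0$---but this is a side comment and does not affect the proof.
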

\begin{proof}
Recall from \eqref{definition-U} that $U=V\circ \xi^{-1}$. Set $w:=\exp\left\{-\frac{\ep}{U^{\ga}}\right\}$, where $\ep>0$ is a parameter to be chosen. According to the assumptions on $V$, we can modify $V$ on a bounded domain to make sure $\inf_{\UU} V>0$, while preserving the other properties. We thus assume without loss of generality that $\inf_{\UU} V>0$. This together with $\lim_{|z|\to \infty}V(z)=\infty$ implies
\begin{equation}\label{bounds-on-w-2020-05-13-1}
    0<\inf_{\UU} w\leq \sup_{\UU} w\leq 1.
\end{equation}

Let $C$, $R_*$ and $\ga$ be as in {\bf(H4)}. Recall $\LL^{X}=\frac{1}{2}\De+(p_i-q_i)\pa_i$. Straightforward calculations give
%$$
%\pa_i U=(\sqrt{a_i}\pa_{z_i} V)\circ \xi^{-1}
%$$
%and
$$
\LL^{X}U=\left(\LL^{Z}V\right)\circ\xi^{-1}\leq -CU^{1+\ga}\quad\text{in}\quad \UU\sm \xi(B^+_{R_*}).
$$
It follows that
\begin{equation*}
\begin{split}
\LL^{X}w+\la w&=\frac{\ep\ga w\LL^{X}U}{U^{\ga+1}}+\frac{1}{2}\left(a_i|\pa_{z_i} V|^2\right) \circ \xi^{-1}\left[-\frac{\ep\ga(\ga+1)}{U^{\ga+2}}+\frac{\ep^2\ga^2}{U^{2\ga+2}}\right]w+\la w\\
&\leq (-C\ep\ga+\la)w+\frac{1}{2}\left(a_i|\pa_{z_i} V|^2\right) \circ \xi^{-1}\left[-\frac{\ep\ga(\ga+1)}{U^{\ga+2}}+\frac{\ep^2\ga^2}{U^{2\ga+2}}\right]w\,\,\text{in}\,\, \UU\sm \xi(B_{R_*}^+),
\end{split}
\end{equation*}
where we used {\bf (H4)} in the inequality.

Set $\ep:=\frac{3\la}{2C\ga}$. As {\bf (H4)} ensures
$\lim_{|z|\to \infty} a_i|\pa_{z_i} V|^2\left[ -\frac{\ep\ga(\ga+1)}{V^{\ga+2}}+\frac{\ep^2\ga^2}{V^{2\ga+2}}\right] =0$,
there must exist $R>0$ such that
\begin{equation}\label{eqn-5-17-3-1}
    \LL^{X}w+\la w\leq -\frac{\la}{3}w\quad\text{in}\quad \UU\sm B_R^+.
\end{equation}

We recall from Remark \ref{rem-X-t-sde} that $X_t$ satisfies the SDE \eqref{sde-trans} before hitting $\Ga$. An application of It\^o's formula gives
\begin{equation*}
    \md e^{\la t}w(X_t)=(\LL^{X}w+\la w)(X_t)e^{\la t}\md t+\pa_{i}w(X_t) e^{\la t}\md W^i_t\quad\text{in}\quad \UU.
   % \md e^{\la t}w(Z_t)=(Hw+\la w)(Z_t)e^{\la t}\md t+\pa_{z_i}w(Z_t) e^{\la t}\sqrt{a_i (Z^i_t)}\md W^i_t\quad\text{in}\quad \UU.
\end{equation*}
It follows from \eqref{eqn-5-17-3-1} that for each $(x,t)\in (\UU\sm B_R^+) \times[0,\infty)$,
\begin{equation*}
\begin{split}
    \E^{x}\left[e^{\la(t\wedge S_R)}w(X_{t\wedge S_R})\right]=w(x)+\E^{x}\left[\int_0^{t\wedge S_R} (\LL^{X}w+\la w)(X_s)e^{\la s}\md s\right]\leq w(x) ,
\end{split}
\end{equation*}
where $S_R$ is as in the statement of the lemma. Thanks to \eqref{bounds-on-w-2020-05-13-1}, we pass to the limit $t\to\infty$ in the above inequality to conclude $\E^{x}\left[e^{\la S_R}\right]\leq \frac{1}{\inf w}$ for all $x\in \UU\sm B^{+}_{R}$. This completes the proof.
\end{proof}

\begin{rem}\label{lem-5-29-1-1}
Since $Z_t=\xi^{-1}(X_t)$ and $\xi^{-1}:\ol{\UU}\to \ol{\UU}$ is a homeomorphism, we find from the above lemma that for each $\la>0$, there exists $R=R(\la)>0$ such that $\sup_{z\in \UU\sm B^+_{R}}\E^z[e^{\la T_R}]<\infty$, where $T_R:=\inf\{t\geq 0: Z_t\notin \UU\sm B^+_R\}$.
\end{rem}

We next prove Theorem \ref{thm-uniqueness-and-dynamics}.

\begin{proof}[Proof of Theorem \ref{thm-uniqueness-and-dynamics}]
 Fix $\nu\in \PP(\UU)$, $f\in C_b(\UU)$ and $0<\ep\ll 1$. Set $\la:=\la_1+\la_2$. By Lemma \ref{lem-5-29-1}, there exist $R_0>0$ and $C_1>0$ such that
\begin{equation}\label{eqn-5-29-8}
\sup_{(x,t)\in (\UU\sm B_{R_0}^+) \times[0,\infty) }e^{\la t}\P^{x}\left[t<S_{R_0}\right]\leq \sup_{x\in \UU\sm B^+_{R_0}}\E^{x}\left[e^{\la S_{R_0}}\right]\leq C_{1}.
\end{equation}
% which leads to
% \begin{equation}\label{eqn-5-29-2}
% \sup_{x\in \UU\sm B^+_{R_0}}\P^{x}\left[t<S_{R_0}\right]\leq C_{2}e^{-\la t},\quad\forall t\geq0.
% \end{equation}
Clearly, the above inequality holds with $R>R_0$ replacing $R_0$. Choosing $R_0$ large enough, we may assume without loss of generality that $\nu(B^+_{R_0})>0$. We split
$$
\E^{\nu}\left[f(X_t)\mathbbm{1}_{\{t<S_{\Ga}\}}\right]=\int_{B^+_{R_0}} \E^{\bullet}\left[f(X_t)\mathbbm{1}_{\{t<S_{\Ga}\}}\right]\md\nu+\int_{\UU\sm B^+_{R_0}} \E^{\bullet}\left[f(X_t)\mathbbm{1}_{\{t<S_{\Ga}\}}\right]\md\nu,\quad\forall t\geq 0.
$$

Applying Lemma \ref{lem-5-28-4} and Lemma \ref{lem-6-1-1}, we find the existence of $C_2>0$ such that
\begin{equation}\label{eqn-5-29-7}
\begin{split}
\left|\E^{\bullet}\left[f(X_t)\mathbbm{1}_{\{t<S_{\Ga}\}}\right]-e^{\frac{Q}{2}+\be_0 U}e^{-\la_1 t}\tilde{v}^*_1\int_{\UU}f\md\nu_1\right|\leq C_2 e^{\frac{Q}{2}+\be_0 U} e^{(\ep-\la_{2})t}\|f\|_{\infty},\quad\forall t\geq2.
\end{split}
\end{equation}
% and (if in particular $f=\mathbbm{1}_{\UU}$)
% \begin{equation}\label{eqn-5-29-4}
%     \begin{split}
%     \left| \P^{\bullet}[t<S_{\Ga}]-e^{\frac{Q}{2}+\be_0 U}e^{-\la_1 t}\tilde{v}^*_1\right|\leq C_1e^{\frac{Q}{2}+\be_0 U} e^{(2\ep-\la_{2})t}\quad\text{in}\quad\UU\times[2,\infty).
%      \end{split}
% \end{equation}
It follows that
\begin{equation}\label{eqn-6-3-1}
    \begin{split}
        \left| \int_{B^+_{R_0}}\E^{\bullet}\left[f(X_t)\mathbbm{1}_{\{t<S_{\Ga}\}}\right]\md \nu-A_1e^{-\la_1 t}\int_{\UU}f\md\nu_1\right|\leq C_2 e^{(\ep-\la_{2})t}\|f\|_{\infty} \int_{B^+_{R_0}} e^{\frac{Q}{2}+\be_0 U}\md\nu, \quad\forall t\geq 2,
    \end{split}
\end{equation}
where $A_1:=\int_{B^+_{R_0}}e^{\frac{Q}{2}+\be_0 U}\tilde{v}^*_1\md\nu$. Since $\tilde{v}^*_1>0$ in $\UU$ and $\nu(B^+_{R_0})>0$, there holds $A_1>0$.

We claim the existence of a bounded function $A_2:[0,\infty)\to [0,\infty)$ and a $C_3>0$ such that
\begin{equation}\label{eqn-6-3-2}
    \begin{split}
      &\left|\int_{\UU\sm B^+_{R_0}}\E^{\bullet}\left[f(X_t)\mathbbm{1}_{\{ t<S_{\Ga}\}}\right]\md \nu-A_2(t)e^{-\la_1 t}\int_{\UU}f\md\nu_1\right|\leq C_3 e^{(\ep-\la_{2})t}\|f\|_{\infty},\quad\forall t\gg 1.
    \end{split}
\end{equation}
This together with \eqref{eqn-6-3-1} leads to the existence of $C_4>0$ such that
\begin{equation*}
    \begin{split}
      &\left|\E^{\nu}\left[f(X_t)\mathbbm{1}_{\{ t<S_{\Ga}\}}\right]-(A_1+A_2(t))e^{-\la_1 t}\int_{\UU}f\md\nu_1\right| \leq C_4 e^{(\ep-\la_{2})t}\|f\|_{\infty},\quad\forall t\gg 1.
    \end{split}
\end{equation*}
In particular, setting $f=\mathbbm{1}_{\UU}$ yields
$\left|\P^{\nu}[t<S_{\Ga}]-(A_1+A_2(t))e^{-\la_1 t}\right|\leq C_4 e^{(\ep-\la_{2})t}$ for all $t\gg 1$. Since $A_1>0$, this implies $\P^{\nu}[t<S_{\Ga}]>0$ for $t\gg 1$. Consequently, we deduce
\begin{equation*}\label{eqn-5-29-5}
    \begin{split}
    &\left|\E^{\nu}\left[f(X_t)\big |t<S_{\Ga}\right]-\int_{\UU}f\md\nu_1\right|\\
    &\qquad\leq \left| \frac{\E^{\nu}\left[f(X_t)\mathbbm{1}_{\{t<S_{\Ga}\}}\right] }{\P^{\nu}\left[t<S_{\Ga}\right] }-\frac{(A_1+A_2(t))e^{-\la_1 t} \int_{\UU}f\md\nu_1}{\P^{\nu}[t<S_{\Ga}]}\right|\\
    &\qquad\quad+\left|\frac{(A_1+A_2(t))e^{-\la_1 t}\int_{\UU}f\md\nu_1}{\P^{\nu}[t<S_{\Ga}]}-\int_{\UU}f\md\nu_1\right|\\
    &\qquad= \frac{1}{\P^{\nu}\left[t<S_{\Ga}\right]}\left|\E^{\nu}\left[f(X_t)\mathbbm{1}_{\{t<S_{\Ga}\}}\right]-(A_1+A_2(t))e^{-\la_1 t}\int_{\UU}f\md\nu_1\right|\\
    &\qquad\quad+\frac{ \int_{\UU}|f|\md\nu_1}{\P^{\nu}[t<S_{\Ga}]} \left|(A_1+A_2(t))e^{-\la_1 t}-\P^{\nu}[t<S_{\Ga}]\right|\\
    &\qquad\leq \frac{2C_4e^{(\ep-\la_2)t}\|f\|_{\infty}}{\P^{\nu}[t<S_{\Ga}]},\quad\forall t\gg 1.
     \end{split}
\end{equation*}
The theorem follows immediately.

\medskip
It remains to prove \eqref{eqn-6-3-2}. To do so, we write for $(x,t)\in (\UU\sm B^+_{R_0})\times [0,\infty)$,
\begin{equation*}\label{eqn-5-3-4-1}
\begin{split}
\E^x[f(X_t)\mathbbm{1}_{\{t<S_{\Ga}\}}]=\E^x[f(X_t)\mathbbm{1}_{\{t<S_{R_0}\}}]+\E^x[f(X_t)\mathbbm{1}_{\{S_{R_0}\leq t<S_{\Ga}\}}]=:E_{1}(x,t)+E_{2}(x,t).
\end{split}
\end{equation*}
It follows from \eqref{eqn-5-29-8} that
\begin{equation}\label{eqn-6-1-1111}
\begin{split}
\int_{\UU\sm B^+_{R_{0}}}|E_{1}(\cdot,t)|\md\nu&\leq  \|f\|_{\infty} \sup_{x\in \UU\sm B^+_{R_{0}}}\P^{x}\left[t<S_{R_{0}}\right]\md \nu\\
&\leq \|f\|_{\infty}e^{-\la t}\sup_{x\in \UU\sm B^+_{R_{0}}}\E^x[e^{\la S_{R_0}}]\leq C_{1}\|f\|_{\infty}e^{-\la t},\quad\forall t\geq0.
\end{split}
\end{equation}

To treat $E_{2}$, we set $h(x,t):=\E^{x}\left[f(X_t)\mathbbm{1}_{\{t<S_{\Ga}\}}\right]$ for $(x,t)\in\ol{\UU}\times[0,\infty)$. Obviously, $\|h\|_{\infty}\leq \|f\|_{\infty}$ and $h(x,t)=0$ for $(x,t)\in \Ga\times[0,\infty)$. The strong Markov property and homogeneity of $X_{t}$ yield that for each $(x,t)\in (\UU\sm B^+_{R_{0}})\times[0,\infty)$,
\begin{equation*}\label{eqn-5-31-2}
\begin{split}
E_{2}(x,t)&=\E^{x}\left[f(X_t)\mathbbm{1}_{\{S_{R_{0}}\leq t<S_{\Ga}\}}\right]\\
%&=\E^{x}\left[\E^{x}\left[f(X_t)\mathbbm{1}_{\{S_{R_{0}}\leq t<S_{\Ga}\}}\big|\FF_{S_{R_0}}\right]\right]\\
&=\E^{x}\left[h(X_{S_{R_{0}}},t-S_{R_{0}})\mathbbm{1}_{\{S_{R_0}\leq t\}}\right]\\
&=\E^{x}\left[h(X_{S_{R_{0}}},t-S_{R_{0}})\mathbbm{1}_{\{S_{R_0}\leq t\leq  S_{R_0}+2\}}\right]+\E^{x}\left[h(X_{S_{R_{0}}},t-S_{R_{0}})\mathbbm{1}_{\{t>  S_{R_0}+2\}}\right]\\
&=:E_{21}(x,t)+E_{22}(x,t).
\end{split}
\end{equation*}

Note that \eqref{eqn-5-29-8} ensures
\begin{equation}\label{eqn-5-31-3}
    \begin{split}
    \int_{\UU\sm B^+_{R_0}}|E_{21}(\cdot,t)|\md \nu&\leq \|h\|_{\infty}\int_{\UU\sm B^+_{R_0}}\P^x[t< S_{R_0}+2]\md\nu\\
    &\leq \|f\|_{\infty} e^{-\la t}  \int_{\UU\sm B^+_{R_0}}\E^x[e^{\la (S_{R_0}+2)}]\md \nu\leq C_1\|f\|_{\infty} e^{2\la -\la t},\quad\forall t\geq 0.
    \end{split}
\end{equation}

Fix $0<\ep\ll 1$. Setting $\Phi:=\exp\left\{\frac{Q(X_{S_{R_{0}}})}{2}+\be_0 U(X_{S_{R_{0}}})\right\}$, we see from \eqref{eqn-5-29-7} that on the event $\{t\geq  S_{R_0}+2\}$ there holds
\begin{equation*}
\begin{split}
\left| h(X_{S_{R_{0}}},t-S_{R_{0}})-\Phi e^{-\la_1 (t-S_{R_0})}\tilde{v}^*_1(X_{S_{R_0}})\int_{\UU}f\md\nu_1\right|\leq C_2\Phi e^{(\ep-\la_{2})(t-S_{R_0})}\|f\|_{\infty}.
\end{split}
\end{equation*}

Since $S_{R_0}\leq S_{\Ga}$ and $h(X_{S_{R_0}},t-S_{R_0})=0$ if $S_{R_0}=S_{\Ga}$, we deduce
$$
E_{22}(x,t)=\E^{x}\left[ h(X_{S_{R_0}},t-S_{R_0})\mathbbm{1}_{\{S_{R_0}<S_{\Ga}\wedge(t-2)\}}\right],\quad\forall(x,t)\in (\UU\sm B^+_{R_0})\times[0,\infty).
$$
As a result, there holds
\begin{equation}\label{eqn-6-1-3}
    \begin{split}
    &\left|\int_{\UU\sm B^+_{R_0}}E_{22}(\cdot,t)\md \nu-e^{-\la_1 t}\int_{\UU\sm B^+_{R_0}}\E^{\bullet}\left[\Phi e^{\la_1 S_{R_0}} \tilde{v}^*_1(X_{S_{R_0}})\mathbbm{1}_{\{S_{R_0}<S_{\Ga}\wedge(t-2)\}}\right]\md \nu\int_{\UU}f\md\nu_1\right|\\
    &\qquad\leq C_2 e^{(\ep-\la_{2})t}\int_{\UU\sm B^+_{R_0}} \E^{\bullet} \left[\Phi e^{-(\ep-\la_2)S_{R_0}}\mathbbm{1}_{\{S_{R_0}<S_{\Ga}\wedge(t-2)\}}\right]  \md\nu \|f\|_{\infty}\\
    &\qquad\leq C_2 e^{(\ep-\la_{2})t}\|f\|_{\infty}\left(\max_{\UU\cap\pa B^+_{R_0}} e^{\frac{Q}{2}+\be_0 U}\right)\left(\sup_{\UU\sm B^+_{R_0}}\E^{\bullet}\left[e^{-(\ep-\la_2)S_{R_0}}\right]\right)\\
    &\qquad\leq C_5e^{(\ep-\la_{2})t}\|f\|_{\infty},\quad\forall t\geq 0.
     \end{split}
\end{equation}
where we used \eqref{eqn-5-29-8} and the fact $\max_{\UU\cap \pa B^+_{R_0}}e^{\frac{Q}{2}+\be_0 U}<\infty$ to conclude the existence of $C_5>0$ in the last inequality.

Set
$$
A_2(t):=\int_{\UU\sm B^+_{R_0}}\E^{\bullet}\left[\Phi e^{\la_1 S_{R_0}} \tilde{v}^*_1(X_{S_{R_0}})\mathbbm{1}_{\{S_{R_0}<S_{\Ga}\wedge(t-2)\}}\right]\md \nu,\quad \forall t\geq 0.
$$
Obviously, $A_2$ is non-negative and bounded. Since
$$
\int_{\UU\sm B^+_{R_0}}\E^{\bullet}\left[f(X_t)\mathbbm{1}_{t<S_{\Ga}\}}\right] \md\nu=\int_{\UU\sm B^+_{R_0}}\left[E_{1}(\cdot,t)+E_{21}(\cdot,t)+E_{22}(\cdot,t)\right]\md\nu,\quad\forall t\geq0,
$$
we deduce from \eqref{eqn-6-1-1111}, \eqref{eqn-5-31-3} and \eqref{eqn-6-1-3} that
\begin{equation*}
    \begin{split}
      \left|\int_{\UU\sm B^+_{R_0}}\E^{\bullet}\left[f(X_t)\mathbbm{1}_{t<S_{\Ga}\}}\right] \md\nu-A_2(t)e^{-\la_1 t}\int_{\UU}f\md\nu_1\right|\leq \left[C_5 e^{(\ep-\la_{2})t}+C_1(1+e^{2\la}) e^{-\la t}\right]\|f\|_{\infty}
    \end{split}
\end{equation*}
for all $t\geq 0$. Since $\la=\la_1+\la_2$ and $0<\ep\ll 1$, \eqref{eqn-6-3-2} follows. This completes the proof.
\end{proof}

\subsection{Proof of Theorem \ref{thm-qsd-existence-dynamics} and Theorem \ref{thm-uniqueness}}\label{subsec-proof-thm-A-B}

Because of the fact $X_{t}=\xi(Z_{t})$ and Proposition \ref{prop-equi-qsd}, conclusions in Theorem \ref{thm-qsd-existence-dynamics} and Theorem \ref{thm-uniqueness} follow directly from Theorem \ref{thm-4-25-1}, Theorem \ref{thm-convergence-according-to-gap} and Theorem \ref{thm-uniqueness-and-dynamics}.

%-----------------------------------------------------------------%

\section{\bf Applications}\label{sec-app}

In this section, we discuss a series of important applications of Theorem \ref{thm-qsd-existence-dynamics} and Theorem \ref{thm-uniqueness}. We first provide a general result that holds for most ecological models and then show how to apply this result to specific situations, including:  stochastic Lotka-Volterra systems of competitive, predator-prey or cooperative type, systems modelled by Holling type functional responses and predator-prey systems modelled by Beddington-DeAngelis functional responses.

Consider the following stochastic system:
\begin{equation}\label{eqn-6-16-1}
    \md Z^i_{t}=Z^i_{t}f_i(Z_{t})\md t+\sqrt{\ga_{i}Z^i_{t}}\md W^i_t,\quad i\in\{1,\dots, d\},
\end{equation}
where $Z_{t}=(Z^i_{t})\in\ol{\UU}$, $\{f_i\}_i$ belong to $ C^{1}(\ol{\UU})$, $\{\ga_{i}\}_i$ are positive constants, and $\{W^i\}_i$  are independent standard one-dimensional Wiener processes on some probability space. We make the following assumption.
\begin{enumerate}
\item [{\bf (A)}] There exist $m\geq 0$, $0\leq n\leq m$, $C_1, C_2, C_3, C_4>0$ and $R>0$ such that
\begin{equation}\label{eqn-6-26-1}
-C_1\left(1+\sum_{j=1}^d z_j^m\right)\leq f_i(z)\leq C_2\mathbbm{1}_{[0,R]}(z_i)-C_3z_i^m\mathbbm{1}_{(R,\infty)}(z_i)+\de\sum_{j\neq i} z_j^n,\quad\forall z\in\ol{\UU},
\end{equation}
and
\begin{equation}\label{eqn-7-9-1}
    |\pa_{z_i} f_i(z)|\leq C_4 |z|^{m-1},\quad\forall z\in \UU\sm B^+_{R},
\end{equation}
for all $i\in\{1,\dots, d\}$ and $\de\geq 0$ if $n<m$ or $\de\in \left[0,\frac{C_3}{d-1}\right)$ if $n=m$.
\end{enumerate}

\begin{rem}
Conditions \eqref{eqn-6-26-1} and \eqref{eqn-7-9-1} say that $f_i$ and $\partial_{z_i}f_i$ are bounded above and below by simple polynomials.  Conditions in the case $n<m$ tells us that the intraspecific competition dominates the interactions among species. In the case $n=m$, we can only treat weakly cooperative interactions among species -- this is reflected by the smallness of $\de$. These are natural assumptions that can be applied to many population dynamics models: competitive Lotka-Volterra, weakly cooperative Lotka-Volterra, predator-prey Lotka-Volterra as well as more complex systems modelled by Holling type-II/III functional responses. These assumptions also allow us to use a very simple Lyapunov function $V(z)=|z|^{m+1}$ which satisfies {\bf (H1)}-{\bf (H3)} and sometimes {\bf(H4)}.
\end{rem}

Under the assumption {\bf(A)}, the stochastic system \eqref{eqn-6-16-1} generates a diffusion process $Z_{t}$ that has $\Ga$ as an absorbing set. Furthermore, $Z_t$ hits $\Ga$ in finite time almost surely.

\begin{thm}\label{thm-app}
Assume {\bf (A)}.
\begin{enumerate}
    \item[\rm(1)] $Z_{t}$ admits a QSD $\mu_1$, and there exists  $r_1>0$ such that
\begin{itemize}
\item for any $0<\ep\ll1$ and $\mu\in\PP(\UU)$ with compact support in $\UU$ one has
$$
\lim_{t\to\infty}e^{(r_{1}-\ep)t}\left|\E^{\mu}\left[f(Z_{t})\big|t<T_{\Ga}\right]-\int_{\UU}f\md\mu_{1}\right|=0,\quad\forall f\in C_{b}(\UU);
$$
\item there exists $f\in C_{b}(\UU)$ such that for a.e. $x\in \UU$, there is a discrete set $\II_x\subset(0,\infty)$ with distances between adjacent points admitting an $x$-independent positive lower bound, such that for each $0<\de\ll 1$ one has
$$
\lim_{\substack{t\to\infty\\t\in (0,\infty)\sm\II_{x,\de}}} e^{(r_1+\ep)t}\left|\E^{x} [f(X_t)\big |t<T_{\Ga}]-\int_{\UU}f\md\mu_1\right|=\infty,\quad\forall 0<\ep\ll 1,
$$
where $\II_{x,\de}$ is the $\de$-neighbourhood of $\II_x$ in $(0,\infty)$.
\end{itemize}

\item[\rm(2)] If, in addition, {\bf (A)} holds with $m>0$, then $Z_{t}$ admits a unique QSD, and for any $0<\ep\ll1$ and $\mu\in\PP(\UU)$, there holds
$$
\lim_{t\to\infty}e^{(r_{1}-\ep)t}\left|\E^{\mu}\left[f(Z_{t})\big|t<T_{\Ga}\right]-\int_{\UU}f\md\mu_{1}\right|=0,\quad\forall f\in C_{b}(\UU).
$$
\end{enumerate}
\end{thm}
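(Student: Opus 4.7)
The plan is to reduce Theorem \ref{thm-app} to direct applications of Theorem \ref{thm-qsd-existence-dynamics} and Theorem \ref{thm-uniqueness} by verifying the hypotheses {\bf (H1)}--{\bf (H3)} for part (1) and additionally {\bf (H4)} for part (2) on the system \eqref{eqn-6-16-1}. With $a_i(s)=\gamma_i s$ and $b_i(z)=z_i f_i(z)$, hypothesis {\bf (H1)} is immediate: $a_i'(0)=\gamma_i>0$, $|a_i'|^2/a_i=\gamma_i/s$ is bounded, $a_i''\equiv 0$, and $\int_1^{\infty}(\gamma_i s)^{-1/2}\md s=\infty$. Hypothesis {\bf (H2)} follows since $f_i\in C^1(\ol{\UU})$ makes $b_i=z_i f_i\in C^1(\ol{\UU})$ vanish on $\{z_i=0\}$. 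For the remaining conditions I propose the polynomial Lyapunov function
\[
V(z)=1+\sum_{i=1}^d z_i^{m+1}
\]
(with a smooth modification in a bounded neighbourhood of $\Ga$ to keep $V\in C^2(\ol{\UU})$ when $0<m<1$, without altering its behaviour for large $|z|$), giving $\partial_{z_i}V=(m+1)z_i^m$, $\partial^2_{z_iz_i}V=m(m+1)z_i^{m-1}$, and $b\cdot\nabla_z V=(m+1)\sum_i z_i^{m+1} f_i(z)$ for $|z|$ large.

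To verify {\bf (H3)}(1) I combine the upper bound in \eqref{eqn-6-26-1} with Young's inequality
\[
z_i^{m+1}z_j^n\leq \frac{m+1}{2m+1}z_i^{2m+1}+\frac{m}{2m+1}z_j^{(2m+1)n/m}
\]
(and the trivial bound when $n=0$) to absorb the cooperative cross terms $\delta\sum_{j\neq i}z_i^{m+1}z_j^n$ into the intra-specific dissipation $-C_3\sum_i z_i^{2m+1}\mathbbm{1}_{z_i>R}$. When $n<m$ this is automatic by order considerations; when $n=m$, the symmetric Young estimate yields $\sum_{i\neq j}z_i^{m+1}z_j^m\leq (d-1)\sum_i z_i^{2m+1}$, and the constraint $\delta<C_3/(d-1)$ keeps a strictly negative leading coefficient, forcing $b\cdot\nabla_z V\to-\infty$ at order $-|z|^{2m+1}$. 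For {\bf (H3)}(2) I take $\tilde V(s)=s^{m+1}$ (or $\tilde V(s)=s$ when $m=0$), using $\int_1^{\infty}e^{-\beta s^{m+1}}/(\gamma_i s)\md s<\infty$ for every $\beta>0$. For {\bf (H3)}(3) each term in the numerator is of order $|z|^m$ (using \eqref{eqn-7-9-1} for $|\partial_{z_i}b_i|$ and the identity $a_i'b_i/a_i=f_i$), while the denominator has order $|z|^{2m+1}$. For {\bf (H3)}(4), a direct calculation gives $\sum_i(a_i|\partial_{z_i}V|^2+b_i^2/a_i)\lesssim|z|^{2m+1}\lesssim -b\cdot\nabla_z V$ outside $B_R^+$. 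For part (2) with $m>0$, I verify {\bf (H4)} with $\gamma=m/(m+1)>0$ chosen so that $V^{\gamma+1}\sim|z|^{2m+1}$ matches the order of $-b\cdot\nabla_z V$; the curvature contribution $\tfrac{1}{2}\sum_i a_i\partial^2_{z_iz_i}V=\tfrac{m(m+1)}{2}\sum_i\gamma_i z_i^m$ is of lower order $|z|^m$ and is absorbed, while $V^{-\gamma-2}\sum_i a_i|\partial_{z_i}V|^2\sim|z|^{2m+1-(m+1)(\gamma+2)}=|z|^{-m-1}\to 0$.

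The main technical obstacle will be the borderline case $n=m$ with $\delta>0$, where the dissipation margin $C_3-\delta(d-1)>0$ has to be preserved through Young's inequality exactly as above: any suboptimal constant would destroy the strict negativity of the leading coefficient. The degenerate case $m=0$ of part (1) is handled in parallel with $V(z)=1+\sum_i z_i$, since $f_i\leq -C_3+\delta(d-1)<0$ for $z_i>R$ yields $b\cdot\nabla_z V\to -\infty$ directly, and the remaining pieces of {\bf (H3)} are verified by the same order considerations (with $\partial^2 V\equiv 0$ in this case). Once these hypothesis verifications are complete, Theorem \ref{thm-app} is an immediate consequence of Theorem \ref{thm-qsd-existence-dynamics} and Theorem \ref{thm-uniqueness}.
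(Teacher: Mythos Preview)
Your proposal is correct and follows essentially the same route as the paper: verify {\bf (H1)}--{\bf (H3)} (and {\bf (H4)} when $m>0$) for a polynomial Lyapunov function of degree $m+1$, then invoke Theorem \ref{thm-qsd-existence-dynamics} and Theorem \ref{thm-uniqueness}. The only difference is the choice of Lyapunov function---the paper takes the isotropic $V(z)=|z|^{m+1}$, while you take the separable $V(z)=1+\sum_i z_i^{m+1}$; both lead to the same key estimate $b\cdot\nabla_z V\lesssim -|z|^{2m+1}$ via the same Young-inequality mechanism (with the same sharp constant $(d-1)$ in the borderline case $n=m$), and the remaining verifications of {\bf (H3)}(2)--(4) and {\bf (H4)} with $\gamma=\frac{m}{m+1}$ are parallel. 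One small wording point: your ``bounded neighbourhood of $\Ga$'' is imprecise since $\Ga$ is unbounded; what you actually need (and clearly intend) is to replace each summand $z_i^{m+1}$ by a $C^2$ function $\psi(z_i)$ agreeing with $z_i^{m+1}$ for $z_i\geq\de_0$, which only introduces bounded corrections and does not affect any of the asymptotic conditions.
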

\begin{proof}
Let $m,\,\, C_1,\,\, C_2, \,\,C_3,\,\,C_4,\,\, R$ and $\de$ be as in {\bf (A)}.
Set $V(z):=|z|^{m+1}$ for $z\in\UU$. Since $\pa_i V=(m+1)|z|^{m-1}z_i$, we deduce from {\bf (A)} that
\begin{equation}\label{eqn-2020-10-16}
\begin{split}
\sum_{i=1}^d z_if_i\pa_i V&\leq (m+1)|z|^{m-1}\sum_{i=1}^d z_i^2\left( C_2\mathbbm{1}_{[0,R]}(z_i)-C_3 z_i^m\mathbbm{1}_{(R,\infty)}(z_i)+\de\sum_{j\neq i} z_j^n\right)\\
&\leq (m+1)|z|^{m-1}\left[\left(C_2R+C_3R^{m+1}\right)\sum_{i=1}^d z_i-C_3 \sum_{i=1}^d  z_i^{m+2}+\de\sum_{i=1}^d\sum_{j\neq i} z_i^2 z_j^n \right]
\end{split}
\end{equation}
for all $z\in\UU$.

We apply Young's inequality to find
$$
z_i^2 z_j^n\leq
\begin{cases}\frac{2\al}{m+2}z_i^{m+2}+ \frac{m\al^{-\frac{2}{m}}}{m+2}z_j^{\frac{n(m+2)}{m}},\quad& \text{if}\quad  m>0\andd n\in[0,m],\\
\frac{2\al}{m+2}z_i^{m+2}+ \frac{m\al^{-\frac{2}{m}}}{m+2}z_j^{m+2},\quad&\text{if}\quad m=n=0,
\end{cases}
$$
where $\al>0$ is a parameter to be determined. For convenience, we set $\be(m,n)=\frac{n(m+2)}{m}\in [0,m+2]$ if $m>0$ and $n\in[0,m]$ and $\be(n,m)=m+2$ if $m=n=0$. Thus, it follows from \eqref{eqn-2020-10-16} that
\begin{equation}\label{eqn-2020-10-13}
    \begin{split}
       \sum_{i=1}^d z_if_i\pa_i V&\leq (m+1)(C_2R+C_3R^{m+1})|z|^{m-1}\sum_{i=1}^d z_i-(m+1)C_3|z|^{m-1} \sum_{i=1}^d z_i^{m+2}\\
       &\quad+\de(m+1)|z|^{m-1}\sum_{i=1}^d\sum_{j\neq i}\left( \frac{2\al}{m+2}z_i^{m+2}+ \frac{m\al^{-\frac{2}{m}}}{m+2}z_j^{\be(m,n)} \right)\\
       &= (m+1)(C_2R+C_3R^{m+1})|z|^{m-1}\sum_{i=1}^d z_i\\
       &\quad+\frac{\de m(m+1)\al^{-\frac{2}{m}}(d-1)}{m+2}|z|^{m-1}\sum_{i=1}^d z_i^{\be(m,n)}\\
       &\quad-(m+1)\left(C_3-\frac{2\de\al(d-1)}{m+2}\right) |z|^{m-1}\sum_{i=1}^d z_i^{m+2}
    \end{split}
\end{equation}
for all $z\in\UU$.

Note that $0<\be(m,n)<m+2$ if $n<m$ and $\be(n,m)=1$ if $n=m$. We consider the following two cases.
\begin{itemize}
    \item If $n<m$, we set $\al=\frac{(m+2)C_3}{4\de (d-1)}$ in \eqref{eqn-2020-10-13} (so that $C_3-\frac{2\de\al (d-1)}{m+2}=\frac{1}{2} C_3>0$) to find the existence of $C_5, R_1>0$ such that
    \begin{equation}\label{eqn-7-2-1}
        \sum_{i=1}^d z_if_i\pa_i V\leq -C_5|z|^{2m+1}\quad \text{in} \quad\UU\sm B^+_{R_1}.
    \end{equation}
    \item If $n=m$, setting $\al =1$ in \eqref{eqn-2020-10-13} leads to
    \begin{equation*}
    \begin{split}
        \sum_{i=1}^d z_i f_i \pa_i V&=(m+1)(C_2R+C_3R^{m+1})|z|^{m-1}\sum_{i=1}^d z_i\\
        &\quad-(m-1)[C_3-\de(d-1)]|z|^{m-1}\sum_{i=1}^d z_i^{m+2}.
    \end{split}
    \end{equation*}
    Then, it is easy to conclude from $\de\in [0,\frac{C_3}{d-1})$ the existence of positive constants $C'_5$ and $R'_1$ such that \eqref{eqn-7-2-1} holds with $C_5$ and $R_1$ replaced by $C'_5$ and $R'_1$, respectively.
\end{itemize}

As a result, we no longer distinguish the above two cases and assume \eqref{eqn-7-2-1} always holds for some $C_5>0$ and $R_1>0$.

Now, we verify {\bf (H1)}-{\bf (H3)}. It is easy to check that {\bf (H1)} and {\bf (H2)} hold. As $V\geq C(d)\sum_{i=1}^d z_i^{m+1}$ in $\UU$ for some $C(d)>0$ and $\int_{1}^{\infty}\frac{1}{s}\exp\left\{-\be s^{m+1}\right\}\md s<\infty$ for any $\be>0$, {\bf (H3)} (1)(2) follow from \eqref{eqn-7-2-1}.
Since
\begin{equation*}
\begin{split}
    \pa_i(z_if_i)&=f_i(z)+z_i\pa_{z_i}f_i(z),\\
    \ga_iz_i\pa^2_{z_iz_i} V&=\ga_i (m+1)(m-1)|z|^{m-3}z_i^3+\ga_i (m+1)|z|^{m-1}z_i,\\
    \ga_iz_i |\pa_{z_i}V|^2&=\ga_i(m+1)^2|z|^{2m-2}z_i^3,
\end{split}
\end{equation*}
it is straightforward to verify {\bf (H3)} (3)(4) by applying \eqref{eqn-6-26-1},  \eqref{eqn-7-9-1} and \eqref{eqn-7-2-1}. Hence, an application of Theorem \ref{thm-qsd-existence-dynamics} gives the conclusions in (1).

If $m>0$, {\bf (H4)} holds with $\ga:=\frac{m}{m+1}$. The conclusion in (2) follows from Theorem \ref{thm-uniqueness}.
\end{proof}

In the following, we apply Theorem \ref{thm-app} to various important ecological models.

\begin{ex}[Lotka-Volterra systems]\label{ex-logistic-feller}
For each $i\in\{1,\dots, d\}$ let
$$
f_i(z)=r_i-\sum_{j=1}^d c_{ij} z_j,\quad z\in\ol{\UU},
$$
where $r_{i}\in\R$, $c_{ii}>0$ and $c_{ij}\in\R$ for $j\neq i$.
\end{ex}

\begin{cor}
Consider the stochastic system \eqref{eqn-6-16-1} with  $f_{i}$, $i\in\{1,\dots,d\}$ being as in Example \ref{ex-logistic-feller}. Assume
\begin{equation}\label{e:LV}
-\min_{i\neq j}c_{ij}< \frac{1}{d-1}\min_{i}c_{ii}.
\end{equation}
Then, there exists a unique QSD of \eqref{eqn-6-16-1} such that the conclusions of Theorem \ref{thm-app} hold.
\end{cor}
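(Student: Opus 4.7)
The plan is to verify assumption \textbf{(A)} of Theorem \ref{thm-app} with $m=n=1$ under the hypothesis \eqref{e:LV}, and then read off both the existence and uniqueness statements from parts (1) and (2) of that theorem.

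First I would establish the lower bound in \eqref{eqn-6-26-1}. Since $f_i(z)=r_i-\sum_{j=1}^{d}c_{ij}z_j$, the trivial estimate
\[
f_i(z)\geq -|r_i|-\sum_{j=1}^{d}|c_{ij}|z_j\geq -C_1\Big(1+\sum_{j=1}^{d}z_j\Big)
\]
with $C_1:=\max_i|r_i|+\max_{i,j}|c_{ij}|$ is immediate, giving the lower bound with $m=1$. Next, for the upper bound I would split $f_i$ as $r_i-c_{ii}z_i+\sum_{j\neq i}(-c_{ij})z_j$ and estimate $(-c_{ij})z_j\leq (-c_{ij})^{+}z_j$. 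Fix constants $C_3$ and $\delta$ with
\[
\max_{i\neq j}(-c_{ij})^{+}<\delta<\frac{C_3}{d-1},\qquad C_3<\min_i c_{ii};
\]
this is possible precisely because \eqref{e:LV} asserts that $\max_{i\neq j}(-c_{ij})^{+}=-\min_{i\neq j}c_{ij}<\frac{1}{d-1}\min_ic_{ii}$, so there is room to insert such $\delta$ and $C_3$ simultaneously. Choosing $R>\max_i|r_i|/(\min_ic_{ii}-C_3)$ and $C_2:=\max_i|r_i|$, I find that on $\{z_i\leq R\}$,
\[
f_i(z)\leq |r_i|+\sum_{j\neq i}(-c_{ij})^{+}z_j\leq C_2+\delta\sum_{j\neq i}z_j,
\]
while on $\{z_i>R\}$ the choice of $R$ yields $r_i-c_{ii}z_i\leq -C_3 z_i$, whence
\[
f_i(z)\leq -C_3 z_i+\delta\sum_{j\neq i}z_j.
\]
Combining the two cases gives the upper bound in \eqref{eqn-6-26-1} with $m=n=1$. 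The gradient bound \eqref{eqn-7-9-1} is trivial since $\partial_{z_i}f_i\equiv -c_{ii}$ is a constant and $m-1=0$.

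Hence assumption \textbf{(A)} holds with $m=n=1$ and $\delta\in\bigl[0,\tfrac{C_3}{d-1}\bigr)$ (the $n=m$ branch of \textbf{(A)}). Since $m=1>0$, part (2) of Theorem \ref{thm-app} applies, yielding the unique QSD $\mu_1$ and the sharp exponential convergence stated in the corollary for every $\mu\in\PP(\UU)$ and every $f\in C_b(\UU)$.

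The only delicate point is the interplay between the three constants: $C_3$ must be slightly below $\min_i c_{ii}$ (to absorb the linear growth of $-c_{ii}z_i$ after a bounded perturbation) while simultaneously $\delta(d-1)<C_3$ must accommodate $\delta\geq\max_{i\neq j}(-c_{ij})^{+}$. Condition \eqref{e:LV} is exactly what is needed for such a choice to exist, and this is where the hypothesis enters. No further obstacle arises; the verification is otherwise routine.
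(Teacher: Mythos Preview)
Your proof is correct and follows exactly the same route as the paper: verify assumption \textbf{(A)} with $m=n=1$ and invoke Theorem~\ref{thm-app}(2). The paper's own proof is a single sentence asserting \textbf{(A)} holds with $C_3=\min_i c_{ii}$ and $\delta=-\min_{i\neq j}c_{ij}$; your version is more carefully fleshed out (in particular, taking $C_3$ strictly below $\min_i c_{ii}$ to absorb the constant $r_i$ for large $z_i$ is the right move). One trivial slip: the identity $\max_{i\neq j}(-c_{ij})^{+}=-\min_{i\neq j}c_{ij}$ fails in the fully competitive case (left side $0$, right side negative), but then one simply takes $\delta=0$ and nothing changes.
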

\begin{proof}
It is straightforward to check that the assumption {\bf (A)} with $m=n=1$, $C_3=\min_{i}c_{ii}$ and $\de=-\min_{i\neq j}c_{ij}$ is satisfied. The corollary then follows from Theorem \ref{thm-app}.
\end{proof}
\begin{rem}
If the system is competitive, namely, $c_{ij}\geq 0$ for all $i\neq j$, then \eqref{e:LV} is trivially satisfied. If the Lotka-Volterra system has either cooperation or predation, the condition \eqref{e:LV} says that the intraspecific competition terms have to dominate in some sense the cooperative and the predation terms. Note that cooperative systems are known to behave poorly: see \cite[Example 2.3]{HN18} for details as to how a two-species stochastic cooperative system can exhibit either blow-up in finite time or have no stationary distributions.
\end{rem}

\begin{ex}[Holling type-\RN{2}/\RN{3} functional response]\label{ex-2}
For each $i\in\{1,\dots, d\}$,
$$
f_i(z)=r_i-\sum_{j=1}^d \frac{c_{ij} z^{k}_j}{1+z^{k}_j},\quad z\in\ol{\UU},
$$
where $k\in\{1,2\}$, $r_{i}\in\R$, $c_{ii}>0$ and $c_{ij}\in\R$ for $j\neq i$. In literature, $k=1$ and $k=2$ correspond to Holling type-\RN{2} and -\RN{3} functional responses, respectively.
\end{ex}

\begin{cor}\label{cor-holling-2}
Consider the stochastic system \eqref{eqn-6-16-1} with  $f_{i}$, $i\in\{1,\dots,d\}$ being as in Example \ref{ex-2}. Assume
$$c_{ii}>r_i,\quad\forall i\in \{1,\dots, d\}\quad\andd\quad -\min_{i\neq j}c_{ij}< \frac{1}{d-1}\min_{i}(c_{ii}-r_i).
$$
Then, the conclusions of Theorem \ref{thm-app} (1) hold.
\end{cor}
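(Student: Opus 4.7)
The plan is to verify assumption \textbf{(A)} of Theorem \ref{thm-app} with the choice $m=n=0$ and then invoke Theorem \ref{thm-app}(1). The exponent $m=0$ is forced because each $f_i$ is bounded on $\ol{\UU}$ (the Holling responses saturate at $c_{ij}$), so no polynomial growth of positive order is available. This also explains why only the conclusion of part (1) is claimed: Theorem \ref{thm-app}(2) requires $m>0$ and correspondingly the strict dissipativity \textbf{(H4)}, which fails for saturating functional responses.

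For the lower bound in \eqref{eqn-6-26-1}, observe that $\frac{c_{ij}z_j^k}{1+z_j^k}\leq\max(c_{ij},0)$ uniformly, whence $f_i(z)\geq r_i-\sum_j\max(c_{ij},0)$, so a uniform lower bound $-C_1$ suffices. For the upper bound, fix a small $\ep>0$ and choose $R$ so large that $\frac{c_{ii}}{1+R^k}<\ep$ for every $i$. Then for $z_i>R$,
$$
f_i(z)\leq r_i-c_{ii}+\ep+\sum_{j\neq i}(-c_{ij})^{+}\frac{z_j^k}{1+z_j^k}\leq -\bigl(\min_i(c_{ii}-r_i)-\ep\bigr)+(d-1)\de,
$$
where $\de:=\max\{0,-\min_{i\neq j}c_{ij}\}$. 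Taking $C_2$ to be an upper bound for $f_i$ on $\{z_i\leq R\}$ and $C_3:=\min_i(c_{ii}-r_i)-\ep$, the upper bound in \eqref{eqn-6-26-1} is satisfied. The hypothesis $-\min_{i\neq j}c_{ij}<\frac{1}{d-1}\min_i(c_{ii}-r_i)$ says $(d-1)\de<\min_i(c_{ii}-r_i)$, so after shrinking $\ep$ one also has $\de<C_3/(d-1)$, which places us precisely in the admissible regime of \textbf{(A)} for $m=n=0$.

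For the derivative bound \eqref{eqn-7-9-1}, a direct computation gives $\pa_{z_i}f_i(z)=-\frac{c_{ii}kz_i^{k-1}}{(1+z_i^k)^2}$, a function of $z_i$ alone. What is actually needed in the proof of Theorem \ref{thm-app} (through the term $|\pa_{z_i}b_i|=|f_i+z_i\pa_{z_i}f_i|$ in the verification of \textbf{(H3)}(3)) is the uniform boundedness of both $f_i$ and $z_i|\pa_{z_i}f_i(z)|=\frac{c_{ii}kz_i^k}{(1+z_i^k)^2}$, both of which hold for $k\in\{1,2\}$. Combined with the dissipation $b\cdot\nabla V\leq -C_5|z|$ that follows from the upper bound above with $V(z)=|z|$, this gives $|\pa_{z_i}b_i|/|b\cdot\nabla V|\to 0$ as $|z|\to\infty$. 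The remaining conditions of \textbf{(H3)}(3)(4) are verified similarly, essentially as in the proof of Theorem \ref{thm-app}.

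The main subtlety is quantitative: one must arrange the constants $R,\ep,C_3,\de$ so that the strict inequality $(d-1)\de<C_3$ survives; this is exactly what the hypothesis $c_{ii}>r_i$ together with $-\min_{i\neq j}c_{ij}<\frac{1}{d-1}\min_i(c_{ii}-r_i)$ guarantees. Once this is secured, Theorem \ref{thm-app}(1) applies and yields the two sharp conditional-convergence statements of the corollary.
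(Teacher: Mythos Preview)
Your proposal is correct and follows essentially the same route as the paper: verify assumption \textbf{(A)} with $m=n=0$ and invoke Theorem \ref{thm-app}(1). Your choice of $\ep$ plays the role of the paper's parameter $\alpha\in(0,1)$, and the verification of the upper and lower bounds in \eqref{eqn-6-26-1} is the same in substance.

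You are in fact more careful than the paper on one point. The paper asserts that ``straightforward calculations give \eqref{eqn-7-9-1}'', but with $m=0$ that inequality reads $|\pa_{z_i}f_i(z)|\leq C_4|z|^{-1}$ on $\UU\setminus B_R^+$, which fails: take $z_i$ fixed and send some other coordinate $z_j\to\infty$, so that $|\pa_{z_i}f_i(z)|=\frac{c_{ii}kz_i^{k-1}}{(1+z_i^k)^2}$ stays bounded away from zero while $|z|^{-1}\to 0$. Your observation that the proof of Theorem \ref{thm-app} only uses \eqref{eqn-7-9-1} through the term $|\pa_{z_i}b_i|=|f_i+z_i\pa_{z_i}f_i|$ in the verification of \textbf{(H3)}(3), and that for this it suffices that $f_i$ and $z_i\pa_{z_i}f_i$ be uniformly bounded, is the correct repair. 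So your argument is sound, and it quietly patches a minor inaccuracy in the paper's formulation of \textbf{(A)} for the borderline case $m=0$.
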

\begin{proof}
 By Theorem \ref{thm-app}, it suffices to verify the assumption {\bf (A)} with $m=n=0$. Clearly,  the first inequality in \eqref{eqn-6-26-1} holds. To check the second inequality in \eqref{eqn-6-26-1}, we note from the assumptions that there exists $\al\in(0,1)$ such that
$-\min_{i\neq j}c_{ij}< \frac{\al}{d-1}\min_{i}\{c_{ii}-r_i\}$. For this $\al>0$, there exists $R>0$ such that
$$
r_i-\frac{c_{ii}z^k_i}{1+z^k_i}\leq \al (r_i-c_{ii})\leq -\al\min_{i}(c_{ii}-r_i),\quad\forall z_i\in (R,\infty) \andd i\in\{1,\dots,d\},
$$
leading to
\begin{equation*}
    r_i-\sum_{j=1}^d\frac{c_{ij}z^k_j}{1+z^k_j}\leq
    \begin{cases}
    r_i-(d-1)\times\min_{i\neq j}c_{ij},&\forall z\in\{z\in\ol{\UU}: z_i\in [0,R]\},\\
    -\al\min_{i}(c_{ii}-r_i)-(d-1)\times\min_{i\neq j}c_{ij},& \forall z\in\{z\in\ol{\UU}: z_i\in (R,\infty)\}.
    \end{cases}
\end{equation*}
This shows the second inequality in \eqref{eqn-6-26-1} with $C_2=\max_i\{r_i\}$, $C_3=\al\min_{i}(c_{ii}-r_i)$ and $\de=-\min_{i\neq j} c_{ij}$.
Straightforward calculations give \eqref{eqn-7-9-1}. Hence, the assumption {\bf (A)} with $m=0$ holds.
\end{proof}

% The same derivation gives the application of Theorem \ref{thm-app} to stochastic Lotka-Voltarra system with Holling type \RN{3} functional response.

% \begin{ex}[Holling type-\RN{3} functional response]\label{ex-3}
% For each $i\in\{1,\dots, d\}$,
% $$
% f_i(z)=r_i-\sum_{j=1}^d \frac{c_{ij} z_j^2}{1+z_j^2},\quad z\in\ol{\UU},
% $$
% where $r_{i}\in\R$, $c_{ii}>0$ and $c_{ij}\in\R$ for $j\neq i$.
% \end{ex}

% \begin{cor}
% Consider the stochastic system \eqref{eqn-6-16-1} with  $f_{i}$, $i\in\{1,\dots,d\}$ being as in Example \ref{ex-3}. Assume
% $$
% c_{ii}>r_i,\quad\forall i\in \{1,\dots, d\}\quad\andd\quad -\min_{i\neq j}c_{ij}< \frac{1}{d-1}\min_{i}\{c_{ii}-r_i\}.
% $$
% Then, the conclusions of Theorem \ref{thm-app} (1) hold.
% \end{cor}
% \begin{proof}
% \eqref{eqn-7-9-1} follows from straightforward calculations. The verification of \eqref{eqn-6-26-1} with $m=0$ follows in the same way as in the proof of Corollary \ref{cor-holling-2}.
% \end{proof}

\begin{rem}
For the stochastic Lotka-Volterra system with Holling type-\RN{2}/\RN{3} functional response considered in Example \ref{ex-2} or Corollary \ref{cor-holling-2}, the existence of a unique QSD that attracts all initial distributions supported in $\UU$ is not expected. This is essentially due to the weak dissipativity of the system. Indeed, in the case $d=1$, these properties are equivalent to showing that the process comes down from infinity, and therefore, according to \cite[Theorem 7.3 and Proposition 7.5]{CCLMMS09}, equivalent to Assumption (H5) in \cite{CCLMMS09}. However, it is easy to check that (H5) in \cite{CCLMMS09} is not satisfied for the Holling type-II/III functional responses.

The situation in higher dimensions is worse. Even in the competitive case, the dissipativity of the system is weaker than that of the system with $f_{i}(z)=r_{*}-c_{*}\sum_{j=1}^{d}\frac{z_{j}^{k}}{1+z_{j}^{k}}$ for all $i\in\{1,\dots,d\}$, where $r_{*}=\min_{i\in\{1,\dots,d\}}r_{i}$ and $c_{*}=\max_{i,j\in\{1,\dots,d\}}c_{ij}$. This latter system does not come down from infinity as it is bounded from below by a decoupled system whose individual components do not come down from infinity. In fact, we have
$$
r_{*}-c_{*}\sum_{j=1}^{d}\frac{z_{j}^{k}}{1+z_{j}^{k}}\geq r_{*}-c_{*}(d-1)-c_{*}\frac{z_{i}^{k}}{1+z_{i}^{k}},\quad\forall i\in\{1,\dots,d\}\andd z\in\ol{\UU}.
$$
Hence, the stochastic system in Example \ref{ex-2} or Corollary \ref{cor-holling-2} does not come down from infinity.
% In Corollary \ref{cor-holling-2}, the incapability of attracting all initial distributions of the constructed QSD and the lack of the uniqueness of QSDs can be attributed to the weak dissipativity of the system. In the one-dimensional case, by testing the condition {\bf (H5)} in \cite{CCLMMS09}, we find the solution process in Example \ref{ex-2} does not come down from infinity, which is however a necessary and sufficient condition for the unique QSD attracting all initial distribution. The situation in higher dimensional case is much more complex but follows similar routine. We mention that even in the pure competitive case, the inter-specific  competition is as weak as the intra-specific one and hence, as in the one-dimensional case, the solution process still does not come down from infinity. Based on this, we believe that there is no QSD attracting all initial distribution in Example \ref{ex-2}.
\end{rem}
We exhibit below a few more types of functional responses that can be treated by our framework.
\begin{ex}\label{ex-regular-holling}
Consider the functional response
$$
f_i(z)=r_i-c_{ii}z_i-\sum_{j\neq i} \frac{c_{ij} z^{k}_j}{1+z^{k}_j},\quad z\in\ol{\UU},
$$
where $k\in\{1,2\}$, $r_{i}\in\R$, $c_{ii}>0$ and $c_{ij}\in\R$ for $j\neq i$.
This is a combination of the regular intraspecific competition of the form $-c_{ii}z_i$ and Holling type functional responses for the interspecific competition/predation.
\end{ex}

\begin{cor}
Consider the stochastic system \eqref{eqn-6-16-1} with  $f_{i}$, $i\in\{1,\dots,d\}$ being as in Example \ref{ex-regular-holling}. Then, there exists a unique QSD of \eqref{eqn-6-16-1} such that the conclusions of Theorem \ref{thm-app} hold.
\end{cor}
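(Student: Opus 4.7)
The plan is to verify that the drift in Example~\ref{ex-regular-holling} satisfies assumption {\bf (A)} of Theorem~\ref{thm-app} with parameters $m=1$, $n=0$ and $\delta=0$, and then invoke that theorem directly. The key observation is that the intraspecific term $-c_{ii}z_i$ supplies linear decay at infinity (making $m=1$ the natural choice), whereas each saturated Holling contribution $c_{ij}z_j^k/(1+z_j^k)$ is uniformly bounded by $|c_{ij}|$ and therefore slots into the constant part of \eqref{eqn-6-26-1} — no $z_j^n$-correction is needed, so $n=0$ and $\delta=0$ suffice. Because $m=1>0$, both conclusions of Theorem~\ref{thm-app} will be available.

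First, for the upper bound in \eqref{eqn-6-26-1}, the trivial estimate $|c_{ij}z_j^k/(1+z_j^k)|\leq|c_{ij}|$ yields $f_i(z)\leq r_i+\sum_{j\neq i}|c_{ij}|-c_{ii}z_i$ for every $i$ and every $z\in\ol{\UU}$. Setting
\begin{equation*}
C_2:=\max_i\Bigl(|r_i|+\sum_{j\neq i}|c_{ij}|\Bigr),\qquad C_3:=\tfrac12\min_i c_{ii},\qquad R:=\frac{2C_2}{\min_i c_{ii}}+1,
\end{equation*}
one obtains $f_i(z)\leq C_2$ on $\{z_i\leq R\}$ and $f_i(z)\leq C_2-c_{ii}z_i\leq-C_3 z_i$ on $\{z_i>R\}$, which is precisely the required upper bound. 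The lower bound in \eqref{eqn-6-26-1} follows from $f_i(z)\geq-C_2-c_{ii}z_i\geq-(C_2+\max_i c_{ii})(1+|z|)$, and the derivative bound \eqref{eqn-7-9-1} is immediate: $\partial_{z_i}f_i(z)=-c_{ii}$ is a constant, and the requirement reads $|\partial_{z_i}f_i|\leq C_4|z|^{m-1}=C_4$, which holds with $C_4:=\max_i c_{ii}$.

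Since the verification places us in the regime $n<m$, no smallness constraint is imposed on the off-diagonal coefficients $c_{ij}$, which may be of any sign or magnitude. Both parts of Theorem~\ref{thm-app} therefore apply to deliver existence, uniqueness and sharp exponential convergence to the QSD for every initial distribution in $\PP(\UU)$. There is genuinely no difficult step here: the boundedness of the Holling response, which forced $m=0$ (and hence cost us uniqueness) in the pure Holling case of Example~\ref{ex-2}, is harmless once a truly linear self-regulating term $-c_{ii}z_i$ is present, since the latter alone gives enough dissipativity to push the process back from infinity. The only point needing attention is to select $C_3$ \emph{before} $R$ so that the strict inequality $C_3 R\geq C_2$ holds by construction; this is pure book-keeping.
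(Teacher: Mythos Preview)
Your proof is correct and takes essentially the same approach as the paper, which simply asserts that Assumption {\bf (A)} holds with $m=1$ and $n=0$ and then invokes Theorem~\ref{thm-app}. You have supplied the explicit constants and verification that the paper omits as ``straightforward to check''; in particular your observation that $\partial_{z_i}f_i=-c_{ii}$ is constant and that the Holling terms are uniformly bounded by $|c_{ij}|$ is exactly the reason the check is trivial.
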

\begin{proof}
It is straightforward to check that Assumption {\bf (A)} holds with $m=1$ and $n=0$. Then, the application of Theorem \ref{thm-app} yields the conclusion.
\end{proof}

\begin{ex}\label{ex-BD-predator-prey}
{Consider the extensively used Beddington-DeAngelis predator-prey dynamics.
For each $i\in\{1,\dots, d\}$, let
$$
f_i(z)=r_i-c_{ii}z_i-\sum_{j\neq i} \frac{c_{ij} z_j}{1+\sum_{l=1}^d z_l},\quad z\in\ol{\UU},
$$
where $r_{i}\in\R$, $c_{ii}>0$, and $c_{ij}\in\R$ for $j\neq i$.
This system was first proposed in \cite{B75,D75} in order to better explain certain predator-prey interactions.}
\end{ex}

\begin{cor}
Consider the stochastic system \eqref{eqn-6-16-1} with  $f_{i}$, $i\in\{1,\dots,d\}$ being as in Example \ref{ex-BD-predator-prey}. Then, there exists a unique QSD of \eqref{eqn-6-16-1} such that the conclusions of Theorem \ref{thm-app} hold.
\end{cor}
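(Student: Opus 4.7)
The plan is to verify Assumption \textbf{(A)} of Theorem \ref{thm-app} with parameters $m=1$ and $n=0$, and then invoke part (2) of that theorem to obtain both uniqueness of the QSD and the exponential convergence statement.

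The crux is the elementary observation that for any $z\in\ol{\UU}$ and any $j\in\{1,\dots,d\}$,
$$
\frac{z_{j}}{1+\sum_{l=1}^{d}z_{l}}\leq 1,
$$
so the Beddington--DeAngelis interaction terms are uniformly bounded: setting $B_{i}:=\sum_{j\neq i}|c_{ij}|$, one has
$$
\Big|\sum_{j\neq i}\frac{c_{ij}z_{j}}{1+\sum_{l}z_{l}}\Big|\leq B_{i},
$$
which immediately yields the two-sided bound
$$
r_{i}-c_{ii}z_{i}-B_{i}\leq f_{i}(z)\leq r_{i}+B_{i}-c_{ii}z_{i},\qquad \forall z\in\ol{\UU}.
$$

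From the left inequality, first I would deduce the lower bound in \eqref{eqn-6-26-1} by choosing $C_{1}$ larger than $\max_{i}(|r_{i}|+B_{i}+c_{ii})$; since $m=1$, the control $-C_{1}(1+\sum_{j}z_{j})\leq f_{i}(z)$ is immediate. For the upper bound in \eqref{eqn-6-26-1} with $n=0$ (so that the last term $\delta\sum_{j\neq i}z_{j}^{n}=\delta(d-1)$ is a constant that we can simply absorb by taking $\delta=0$), I would fix any $C_{3}\in(0,\min_{i}c_{ii})$, say $C_{3}=\tfrac{1}{2}\min_{i}c_{ii}$, and then pick $R>0$ large enough so that $r_{i}+B_{i}\leq(c_{ii}-C_{3})R$ for every $i$. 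For $z_{i}\in[0,R]$ the right inequality is trivial with $C_{2}:=\max_{i}(r_{i}+B_{i})^{+}$, and for $z_{i}>R$ the choice of $R$ gives $f_{i}(z)\leq r_{i}+B_{i}-c_{ii}z_{i}\leq -C_{3}z_{i}$, as required. Since $n=0<1=m$, Assumption \textbf{(A)} allows arbitrary $\delta\geq 0$, so this works.

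For the derivative condition \eqref{eqn-7-9-1}, I would compute
$$
\partial_{z_{i}}f_{i}(z)=-c_{ii}+\sum_{j\neq i}\frac{c_{ij}z_{j}}{\bigl(1+\sum_{l}z_{l}\bigr)^{2}},
$$
and bound it using the same observation $z_{j}/(1+\sum_{l}z_{l})^{2}\leq 1$, giving $|\partial_{z_{i}}f_{i}(z)|\leq c_{ii}+B_{i}$. Because $m=1$, the required upper envelope is $C_{4}|z|^{m-1}=C_{4}$, so any $C_{4}\geq\max_{i}(c_{ii}+B_{i})$ works.

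Having verified \textbf{(A)} with $m=1>0$ and $n=0$, part (2) of Theorem \ref{thm-app} applies directly and gives both the uniqueness of the QSD and the exponential convergence for every initial distribution, as claimed. There is no serious obstacle here: the entire proof is a verification, and the only substantive step is the uniform boundedness of the Beddington--DeAngelis interaction terms, which allows us to set $n=0$ (and thus $\delta$ arbitrary, in particular $\delta=0$), reducing the analysis to the dominant intraspecific self-regulation $-c_{ii}z_{i}$.
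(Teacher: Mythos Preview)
Your proof is correct and follows exactly the paper's approach: verify Assumption \textbf{(A)} with $m=1$ and $n=0$, then invoke Theorem \ref{thm-app}(2). The paper's proof is a single sentence declaring this verification ``straightforward,'' so your write-up simply supplies the details the paper omits.
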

\begin{proof}
It is straightforward to check that Assumption {\bf (A)} holds with $m=1$ and $n=0$. Then, the application of Theorem \ref{thm-app} yields the conclusion.
\end{proof}

\begin{ex}\label{ex-CM}
Let $d=2$. Consider the Crowley-Martin dynamics.
Let
$$
f_1(z)=r_1-c_{11}z_1-z_2\frac{z_1}{\beta+\alpha z_1 + \alpha_2 z_2+\alpha_3 z_1 z_2},\quad z\in\ol{\UU},
$$
$$
f_2(z)=-r_2-c_{22}z_2+z_1\frac{z_1}{\beta+\alpha z_1 + \alpha_2 z_2+\alpha_3 z_1 z_2},\quad z\in\ol{\UU},
$$
where $c_{11}, c_{22}, \beta >0$ and all the other quantities are nonnegative. This system was first proposed in \cite{CM89} to study dragonflies.
\end{ex}

\begin{cor}
Consider the stochastic system \eqref{eqn-6-16-1} in the case $d=2$ with  $f_1$ and $f_{2}$  being as in Example \ref{ex-CM}. Assume $\al> \frac{2}{3\min\{2c_{11},c_{22}\}}$. Then, there exists a unique QSD of \eqref{eqn-6-16-1} such that the conclusions of Theorem \ref{thm-app} hold.
\end{cor}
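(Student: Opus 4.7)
The plan is to follow the proof of Theorem \ref{thm-app} with $V(z)=|z|^2$ (corresponding to $m=1$), while exploiting the asymmetric structure of the Crowley-Martin interaction: the predation contribution to $f_1$ is non-positive, so only $f_2$ carries a positive cross-term in $z_1$. This asymmetry is what allows the threshold on $\alpha$ stated in the corollary, which is weaker than the symmetric condition $\alpha>1/\min\{c_{11},c_{22}\}$ obtained by a direct application of \eqref{eqn-6-26-1}.

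First I would record the elementary bounds
$$
f_1(z)\le r_1-c_{11}z_1,\qquad f_2(z)\le -r_2-c_{22}z_2+\frac{z_1}{\alpha},
$$
the second from $\beta+\alpha z_1+\alpha_2 z_2+\alpha_3 z_1 z_2\ge \alpha z_1$. With $V(z)=|z|^2$, these give, for $|z|$ large,
$$
b\cdot\nabla_z V=2z_1^2 f_1+2z_2^2 f_2\le -2c_{11}z_1^3-2c_{22}z_2^3+\frac{2z_1 z_2^2}{\alpha}+O(|z|^2).
$$
The weighted AM-GM inequality $z_1 z_2^2\le \frac{1}{3}z_1^3+\frac{2}{3}z_2^3$ then yields
$$
b\cdot\nabla_z V\le -2\left(c_{11}-\frac{1}{3\alpha}\right)z_1^3-2\left(c_{22}-\frac{2}{3\alpha}\right)z_2^3+O(|z|^2),
$$
and the hypothesis $\alpha>\frac{2}{3\min\{2c_{11},c_{22}\}}$ is exactly what is needed to make both coefficients $c_{11}-\frac{1}{3\alpha}$ and $c_{22}-\frac{2}{3\alpha}$ strictly positive, producing the dissipation $b\cdot\nabla_z V\le -C|z|^3$ on $\UU\sm B_R^+$ for some $C,R>0$.

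The remaining conditions mirror the proof of Theorem \ref{thm-app}. {\bf (H1)}-{\bf (H2)} are immediate; {\bf (H3)}(1)-(2) follow from the dissipation together with $V\ge \frac{1}{2}|z|^2$ (taking $\tilde V(s)=s^2/2$); {\bf (H3)}(3)-(4) follow by direct differentiation of the explicit quotient defining $f_i$, which shows $|\partial_{z_i}f_i|=O(1)$ and hence $|\partial_{z_i}b_i|=O(|z|)$ (using $D\ge \alpha z_1$), so each summand in {\bf (H3)}(3) is $O(|z|^2)$ against a dissipation of order $|z|^3$; and {\bf (H4)} holds with $\gamma=1/2$, since $V^{\gamma+1}=|z|^3$ matches the dissipation rate. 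Theorem \ref{thm-qsd-existence-dynamics} and Theorem \ref{thm-uniqueness} then deliver both the uniqueness and the exponential convergence stated in the corollary. The main technical subtlety is the asymmetric AM-GM split tuned to the single positive cross-term contribution; no further obstacles arise.
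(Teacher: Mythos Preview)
Your approach is correct and matches the paper's own (very brief) proof: both use $V(z)=|z|^2$ together with $f_1\le r_1-c_{11}z_1$ and $f_2\le -r_2-c_{22}z_2+z_1/\alpha$, then verify {\bf (H1)}--{\bf (H4)} by following the proof of Theorem~\ref{thm-app}. Your explicit AM--GM split $z_1z_2^2\le\frac{1}{3}z_1^3+\frac{2}{3}z_2^3$ nicely accounts for the asymmetric threshold $\alpha>\frac{2}{3\min\{2c_{11},c_{22}\}}$ that the paper leaves implicit; one small caveat is that the intermediate claim $|\partial_{z_i}f_i|=O(1)$ can fail when $\alpha_3>0$ (using only $D\ge\alpha z_1$ one gets $|\partial_{z_2}f_2|=O(z_1)$), but $|\partial_{z_i}b_i|=O(|z|^2)$ still follows and this is all that is needed against the $|z|^3$ dissipation in {\bf (H3)}(3).
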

\begin{proof}
Note that $f_1(z)\leq r_1-c_{11}z_1$ and $f_2(z)\leq -r_2-c_{22}z_2+\frac{z_1}{\al}$. Following the arguments as in the proof of Theorem \ref{thm-app}, it is straightforward to see that $V(z):=|z|^2$ for $z\in \UU$ is a Lyapunov function satisfying {\bf (H1)}-{\bf (H4)}. From which, the conclusions of Theorem \ref{thm-app} hold.
\end{proof}

%%%%%%%%%%%%%%%%%%%%%%%%%%%%%%

\appendix

\section{\bf Proof of technical lemmas}\label{sec-app-proof-technical-lem}

We prove technical lemmas in this appendix.

\subsection{Proof of Lemma \ref{lem-3-24-2}}\label{appendix-1}

We need the following result.

\begin{lem}\label{lem-3-24-1}
Assume {\bf (H1)}. For each $i\in \{1,\dots, d\}$, there exists $C_i>0$ such that
\begin{equation*}
\lim_{x_i\to 0} x_i^2\left[q^2_i(x_i)-q'_i(x_i)\right]=C_i.
\end{equation*}
\end{lem}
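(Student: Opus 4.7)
The plan is to reduce $q_i^2 - q_i'$ to an explicit rational expression in $a_i(z_i), a_i'(z_i), a_i''(z_i)$ (with $z_i = \xi_i^{-1}(x_i)$), then control the small-$x_i$ asymptotics of $z_i$ using $a_i(0)=0$ and $a_i'(0)>0$, and finally read off the constant. I expect the constant $C_i$ to actually be independent of $i$ (equal to $3/4$), which is a pleasant consequence of the Liouville-type conjugation producing the usual $\frac{3}{4x^2}$ inverse-square singularity at the degenerate boundary.

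First I would use the identity $\xi_i'(z_i)=1/\sqrt{a_i(z_i)}$, which gives $\dfrac{dz_i}{dx_i}=\sqrt{a_i(z_i)}$. Differentiating $q_i(x_i)=\dfrac{a_i'(z_i)}{4\sqrt{a_i(z_i)}}$ by the chain rule yields, after a short calculation,
\[
q_i'(x_i)=\frac{a_i''(z_i)}{4}-\frac{[a_i'(z_i)]^2}{8\,a_i(z_i)},
\qquad
q_i^2(x_i)=\frac{[a_i'(z_i)]^2}{16\,a_i(z_i)},
\]
so that
\[
q_i^2(x_i)-q_i'(x_i)=\frac{3\,[a_i'(z_i)]^2}{16\,a_i(z_i)}-\frac{a_i''(z_i)}{4}.
\]
The second term stays bounded as $z_i\to 0$ by $a_i\in C^2([0,\infty))$, so it will contribute $0$ after multiplying by $x_i^2$. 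The burden is on the first term.

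Next I would analyze $\xi_i^{-1}(x_i)$ near $0$. Since $a_i'(0)>0$ and $a_i\in C^2$, Taylor expansion gives $a_i(s)=a_i'(0)s+O(s^2)$ as $s\to 0$, hence $\frac{1}{\sqrt{a_i(s)}}=\frac{1}{\sqrt{a_i'(0)\,s}}(1+O(s))$. Integrating from $0$ to $z_i$ yields
\[
\xi_i(z_i)=\frac{2\sqrt{z_i}}{\sqrt{a_i'(0)}}\bigl(1+O(z_i)\bigr),
\]
and inverting gives $z_i=\xi_i^{-1}(x_i)=\frac{a_i'(0)}{4}x_i^2(1+o(1))$ as $x_i\to 0^+$. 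Consequently $a_i'(z_i)\to a_i'(0)$, $a_i(z_i)=a_i'(0)z_i(1+o(1))\sim \frac{[a_i'(0)]^2}{4}x_i^2$, and thus
\[
x_i^2\cdot \frac{3\,[a_i'(z_i)]^2}{16\,a_i(z_i)}\;\longrightarrow\;\frac{3\,[a_i'(0)]^2}{16\cdot [a_i'(0)]^2/4}=\frac{3}{4},
\]
while $x_i^2\cdot a_i''(z_i)/4\to 0$. Adding the two contributions shows $\lim_{x_i\to 0}x_i^2[q_i^2(x_i)-q_i'(x_i)]=\tfrac{3}{4}=:C_i>0$, which is the desired conclusion.

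The calculation itself is routine; the only mild obstacle is keeping track of the two small-parameter expansions (of $\xi_i^{-1}$ and of $a_i$ near $0$) simultaneously to ensure the remainder terms really are lower order after multiplication by $x_i^2$. Hypothesis {\bf (H1)} (specifically $a_i(0)=0$, $a_i'(0)>0$, and $a_i\in C^2$) is exactly what is needed for both the leading behavior $z_i\sim \frac{a_i'(0)}{4}x_i^2$ and the boundedness of $a_i''(z_i)$ near $0$, so no additional assumptions enter.
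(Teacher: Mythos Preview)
Your proof is correct and follows essentially the same route as the paper: compute $q_i^2-q_i'$ explicitly in terms of $a_i,a_i',a_i''$ evaluated at $z_i=\xi_i^{-1}(x_i)$, then use the Taylor expansion $a_i(s)\sim a_i'(0)s$ to obtain $\xi_i^{-1}(x_i)\sim \tfrac{a_i'(0)}{4}x_i^2$ and hence $a_i(\xi_i^{-1}(x_i))\sim \tfrac{[a_i'(0)]^2}{4}x_i^2$. As a bonus, you identify the explicit value $C_i=\tfrac{3}{4}$ (independent of $i$), which the paper leaves implicit.
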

\begin{proof}
Fix $ i\in\{1,\dots,d\}$. Recall that $q_i(x_i)=\frac{a'_i(\xi^{-1}_i(x_i))}{4\sqrt{a_i(\xi^{-1}_i(x_i))}}$. It is straightforward to calculate
$$
q'_i(x_i)=\frac{1}{4}a''_i(\xi^{-1}_i(x_i))-\frac{|a'_i|^2(\xi^{-1}_i(x_i))}{8a_i(\xi^{-1}_i(x_i))},
$$
which results in
\begin{equation}\label{formula-q2-qderivative}
(q^2_i-q'_i)(x_i)=\frac{3|a'_i|^2(\xi^{-1}_i(x_i))}{16 a_i(\xi^{-1}_i(x_i))}-\frac{1}{4}a''_i(\xi^{-1}_i(x_i)).
\end{equation}

Since $\xi_i^{-1}\in C([0,\infty))$ and $\xi_{i}^{-1}(0)=0$, we see from {\bf (H1)} that
$\lim_{x_i\to 0} a'_i(\xi^{-1}_i(x_i))=a_{i}'(0)>0$ and $\lim_{x_i\to 0} a''_i(\xi^{-1}_i(x_i))=a''_{i}(0)$. Hence,
\begin{equation*}\label{eqn-3-24-1}
\left(q^2_i-q'_i\right)(x_i)\sim \frac{3|a'_i|^2(0)}{16 a_i(\xi^{-1}_i(x_i))}-\frac{1}{4}a''_{i}(0)\quad \text{as}\quad x_i\to 0.
\end{equation*}
The conclusion follows if there is $C>0$ such that
\begin{equation}\label{eqn-5-9-1}
a_i(\xi^{-1}_i(x_i))\sim C x^2_i\quad \text{as}\quad x_i\to 0.
\end{equation}

We show that \eqref{eqn-5-9-1} holds with $C=\frac{|a'_i(0)|^2}{4}$. The assumption {\bf (H1)} and Taylor's expansion give
\begin{equation}\label{eqn-5-7-1}
a_i(z_i)\sim a'_{i}(0)z_i+o(z^2_i)\quad \text{as}\quad x_i\to 0,
\end{equation}
leading to
\begin{equation*}
\xi_i(z_i)=\int_0^{z_i}\frac{\md s}{\sqrt{a_i(s)}}=\int_0^{z_i}\frac{\md s}{\sqrt{a'_i(0)s+o(s^2)}}\sim \frac{2\sqrt{z_i}}{\sqrt{a'_i(0)}}\quad \text{as}\quad z_i\to 0.
\end{equation*}
Thus, $\xi^{-1}_i(x_i)\sim \frac{a'_{i}(0)x_i^2}{4}$ as $x_i\to 0$. Inserting this into \eqref{eqn-5-7-1} yields \eqref{eqn-5-9-1} with $C=\frac{|a'_i(0)|^2}{4}$.
This completes the proof.
\end{proof}

\begin{rem}\label{rem-about-Q-near-Ga}
Thanks to \eqref{eqn-5-9-1}, it is straightforward to check from the definition of $Q$ given in \eqref{eqn-function-Q} that $Q(x)$ behaves like $\sum_{i=1}^{d}\ln x_{i}$ as $x$ approaches to $\Ga$. Hence, $e^{-\frac{Q}{2}}$ is as singular as $\prod_{i=1}^{d}\frac{1}{\sqrt{x_{i}}}$ near $\Ga$.
\end{rem}

\begin{proof}[Proof of Lemma \ref{lem-3-24-2}]
We first prove (1). Recall that $U$ is given in \eqref{definition-U}. Clearly,
$$
\pa_{x_i} U(x)=\pa_{z_i} V(\xi^{-1}(x))\sqrt{a_i(\xi_i^{-1}(x_i))},\quad\forall x\in\UU.
$$
We derive from {\bf (H3)} (4) the existence of $C_1>0$ and $R_1>0$ such that
$$
\left(|\nabla U|^2+|p|^2\right)(x)\leq -C_1(b\cdot\nabla_{z} V)(\xi^{-1}(x))\leq C_1\al(x),\quad \forall x\in \UU\sm B_{R_1}^+.
$$

 Since $\sup_{B^+_{R_1}} \left(|\nabla U|^2+|p|^2\right)<\infty$ due to {\bf (H2)} and {\bf (H3)}(1) and $\inf_{\UU}\al>0$, there must exist some $C_2>0$ such that
$(|\nabla U|^2+|p|^2)<C_2\al$ in $B^+_{R_1}$. Setting $C:=\min\{C_1,C_2\}$ yields the result.

\medskip
The rest of the proof is arranged as follows. In {\bf Step 1}, we analyze the asymptotic behaviors of terms in $e_{\be,N}$ near the boundary $\Ga$ and in the vicinity of infinity. Based on these, the asymptotic behaviors of $e_{\be,N}$ are derived in {\bf Step 2}. The proof of (2) and (3) are respectively given in {\bf Step 3} and {\bf Step 4}. Recall that $R_0$ and $\de_0$ are fixed in Subsection \ref{subsec-alpha} when defining $\al$.

\medskip

\paragraph{\bf Step 1} We analyze the asymptotic behaviors of terms in $e_{\be,N}$.

\begin{itemize}
\item For the term $p\cdot \nabla U$, we see from {\bf (H3)} (1) that
\begin{equation}\label{eqn-6-25-1}
(p\cdot \nabla U)(x)%=\frac{b_i(\xi^{-1}(x))}{\sqrt{a_i(\xi^{-1}_i(x_i))}}\pa_i V(\xi^{-1}(x)) \sqrt{a_i(\xi^{-1}_i(x_i))}
=(b\cdot\nabla V)(\xi^{-1}(x))\to -\infty\quad \text{as}\quad |x|\to \infty.
\end{equation}

\item For the term $\frac{1}{2}\sum_{i=1}^d (q_i^2-q'_i)$, Lemma \ref{lem-3-24-1} ensures the existence of $\de_{*}\in(0,\de_0)$ and $C_3, C_4>0$ such that
\begin{equation}\label{eqn-8-23-1}
\frac{C_3}{x_i^2}\leq \frac{1}{2}(q_i^2-q'_i)(x_i)\leq \frac{C_4}{x_i^2},\quad \forall x_i\in (0,\de_*]\andd i\in\{1,\dots, d\}.
\end{equation}

Since {\bf (H1)} gives $\limsup_{s\to \infty}\left(\frac{|a'_i(s)|^2}{a_i(s)}+a''_i(s)\right)<\infty$, we find from \eqref{formula-q2-qderivative} and \eqref{eqn-6-25-1}  that for any $0<\ep_1\ll 1$, there exists $R_2=R_{2}(\ep_{1})>0$ such that
\begin{equation}\label{eqn-8-23-2}
\frac{1}{2}|q_i^2-q'_i|(x_i)\leq -\frac{\ep_1}{d} (p\cdot\nabla U)(x),\quad\forall x\in\{x\in \UU:x_i\in (R_2,\infty)\}\andd i\in\{1,\dots, d\}.
\end{equation}

% Thus, setting $C_5:=\frac{1}{2}\max_{i}\max_{x_i\in[\de_*,R_2]}|q_i^2-q'_i|(x_i)$ yields
% \begin{equation}\label{eqn-3-24-5}
%     C_3\sum_{i=1}^d\frac{1}{x_i^2}\mathbbm{1}_{(0,\de_*)}(x_i)-dC_5\leq \frac{1}{2}\sum_{i=1}^d(q^2_i-q'_i)(x_i)\leq C_4\sum_{i=1}^d\frac{1}{x_i^2}+dC_5,
% \end{equation}
% for $x\in B^+_{R_2}$ and
% \begin{equation}\label{eqn-3-24-2}
%     C_3\sum_{i=1}^d\frac{1}{x_i^2}\mathbbm{1}_{(0,\de_*)}(x_i)-dC_5+\ep_1 p\cdot \nabla U\leq \frac{1}{2}\sum_{i=1}^d(q^2_i-q'_i)\leq C_4\sum_{i=1}^d\frac{1}{x_i^2}+dC_5-\ep_1 p\cdot \nabla U,
% \end{equation}
% for $x\in\UU\sm B^+_{R_2}$.

\item  For the terms $\De U$, $p\cdot q$ and $\nabla \cdot p$, we calculate
\begin{equation*}
    \begin{split}
        \pa_{x_ix_i}^{2}U(x)&=\left[\pa^2_{z_iz_i}V(\xi^{-1}(x)) a_i(\xi^{-1}_i(x_i))+\frac{1}{2}\pa_{z_i} V(\xi^{-1}(x)) a'_i(\xi^{-1}_i(x_i))\right],\\
        p_i(x)q_i(x_{i})&=\frac{b_i(\xi^{-1}(x)) a'_i(\xi^{-1}_i(x_i))}{4a_i(\xi^{-1}_i(x_i))},\\
        \pa_{x_i} p_i(x)&=\pa_{z_i}b_i(\xi^{-1}(x)) -\frac{b_i(\xi^{-1}(x)) a'_i(\xi^{-1}_i(x_i))}{2a_i(\xi^{-1}_i(x_i))}.
    \end{split}
\end{equation*}

By {\bf (H1)}-{\bf (H3)}, we have $U\in C^2(\ol{\UU})$, and $p\cdot q,\nabla\cdot p\in C(\ol{\UU})$. Moreover, {\bf (H3)}(3) and \eqref{eqn-6-25-1} guarantee that for any $0<\ep_2\ll 1$, there exists $R_3=R_{3}(\ep_{2})>0$ such that
\begin{equation}\label{eqn-3-24-3}
|\De U|+|p\cdot q|+|\nabla\cdot p |\leq -\ep_2 p\cdot\nabla U\quad \text{in}\quad\UU\sm B^+_{R_3}.
\end{equation}

\item For the term $\frac{1}{2}|\nabla U|^2$, we find from $|\nabla U|^2(x)=\sum_{i=1}^d |\pa_{z_i}V|^2(\xi^{-1}(x)) a_i(\xi^{-1}_i(x_i))$, the assumption {\bf (H3)}(4) and \eqref{eqn-6-25-1} that there are $C_5>0$ and $R_4>0$ such that
\begin{equation}\label{eqn-3-24-4}
\frac{1}{2}|\nabla U|^2\leq -C_5(p\cdot\nabla U)\quad\text{in}\quad \UU\sm B^+_{R_4}.
\end{equation}
\end{itemize}

\medskip

\paragraph{\bf Step 2} We analyze the asymptotic behaviors of $e_{\be,N}$ near  $\Ga$ and in the vicinity of infinity.

Set $R_*:=\max\{R_0,R_2,R_3,R_4\}$ and $C_6:=\frac{1}{2}\max_{i}\max_{x_i\in[\de_*,R_*]}|q_i^2-q'_i|(x_i)$. It is obvious that $R_*$ and $C_6$ depend on $\ep_1$ and $\ep_2$, which are to be determined in the proof of (3). Since $\al$ is piecewise defined, we analyze $e_{\be,N}$ in four subdomains: $\Ga_{\de_*}\bigcap B_{R_*}^+$, $\Ga_{\de_*}\bigcap (\UU\sm B_{R_*}^+)$, $(\UU\sm \Ga_{\de_*})\bigcap B_{R_*}^+$ and $(\UU\sm \Ga_{\de_*})\bigcap (\UU\sm B_{R_*}^+)$ separately,  where we recall that $\Ga_{\de_{*}}:=\{x\in\UU: x_i\leq \de_{*} \text{ for some } i\in\{1,\dots, d\}\}$.

For notational simplicity, we set
$$
\Psi= \frac{\be}{2}|\De U|+\frac{\be^2}{2}|\nabla U|^2+\be |p\cdot\nabla U|+|p\cdot q|+|\nabla \cdot p|.
$$

\begin{enumerate}
\item [(a)] In $\Ga_{\de_*}\bigcap B_{R_*}^+$. We see from $U\in C^2(\ol{\UU})$ and $p\cdot\nabla U$, $p\cdot q, \nabla\cdot p\in C(\ol{\UU})$ that
$\max_{\Ga_{\de_*}\cap B_{R_*}^+}\Psi<\infty$. It follows from \eqref{eqn-8-23-1} that
\begin{equation*}
\begin{split}
|e_{\be,N}|&\leq \sum_{i=1}^d\left(\frac{C_4}{x_i^2}\mathbbm{1}_{(0,\de_*)}(x_i)+C_6\mathbbm{1}_{(\de_*,R_*)}(x_i)\right)+\max_{\Ga_{\de_*}\cap B_{R_*}^+}\Psi\\
%&\qquad\qquad+\max_{\Ga_{\de_*}\cap B_{R_*}^+}\left[\left|\frac{1}{N}-\frac{1}{2}\right|\be |\De U|+ \frac{\be^2}{2}|\nabla U|^2+\be |p\cdot\nabla U|+|p\cdot q|+\frac{|\nabla \cdot p|}{N}\right]\\
&\leq C_4\sum_{i=1}^d\frac{1}{x_i^2}+dC_6+\max_{\Ga_{\de_*}\cap B_{R_*}^+}\Psi
\end{split}
\end{equation*}
and
\begin{equation*}
\begin{split}
e_{\be,N}&\geq C_3\sum_{i=1}^d\frac{1}{x_i^2}\mathbbm{1}_{(0,\de_*)}(x_i)-dC_6-\max_{\Ga_{\de_*}\cap B_{R_*}^+}\Psi\\
&\geq C_3\sum_{i=1}^d\max\left\{\frac{1}{x_i^2},1\right\}-d\left(\frac{C_3}{\de_*^2}+C_6\right)-\max_{\Ga_{\de_*}\cap B_{R_*}^+}\Psi.
\end{split}
\end{equation*}

\item [(b)] In $\Ga_{\de_*}\bigcap (\UU\sm B_{R_*}^+)$. It follows from \eqref{eqn-8-23-1},  \eqref{eqn-3-24-3} and \eqref{eqn-3-24-4} that
\begin{equation*}
    \begin{split}
        |e_{\be,N}|&\leq  C_4\sum_{i=1}^d\frac{1}{x_i^2}+dC_6-\left(\be+\ep_2(1+\frac{\be}{2})+C_5\be^2\right)p\cdot\nabla U,\\
        e_{\be,N}&\geq  C_3\sum_{i=1}^d\frac{1}{x_i^2}\mathbbm{1}_{(0,\de_*)}(x_i)-dC_6-\left(\be-\ep_2(1+\frac{\be}{2})-C_5\be^2\right)p\cdot\nabla U\\
        &\geq  C_3\sum_{i=1}^d\max\left\{\frac{1}{x_i^2},1\right\}-d\left(\frac{C_3}{\de_*^2}+C_6\right)-\left(\be-\ep_2(1+\frac{\be}{2})-C_5\be^2\right)p\cdot\nabla U.
    \end{split}
\end{equation*}

\item [(c)] In $(\UU\sm \Ga_{\de_*})\bigcap B_{R_*}^+$. There hold
\begin{equation*}
\begin{split}
|e_{\be,N}|&\leq \max_{(\UU\sm \Ga_{\de_*})\cap B_{R_*}^+}\left[\Psi+\frac{1}{2}\sum_{i=1}^d |q_i^2-q'_i|\right],\\
e_{\be,N}&\geq - \max_{(\UU\sm \Ga_{\de_*})\cap B_{R_*}^+}\left[\Psi+\frac{1}{2}\sum_{i=1}^d |q_i^2-q'_i|\right].
\end{split}
\end{equation*}

\item [(d)]  In $(\UU\sm \Ga_{\de_*})\bigcap (\UU\sm B_{R_*}^+)$. It follows from \eqref{eqn-8-23-2}, \eqref{eqn-3-24-3} and \eqref{eqn-3-24-4} that
\begin{equation*}
    \begin{split}
        |e_{\be,N}|&\leq dC_6 -\left(\be+\ep_1+\ep_2(1+\frac{\be}{2})+C_5\be^2\right)p\cdot\nabla U,\\
        e_{\be,N}&\geq -d C_6 -\left(\be-\ep_1-\ep_2(1+\frac{\be}{2})-C_5\be^2\right)p\cdot\nabla U.
    \end{split}
\end{equation*}
\end{enumerate}

\medskip

\paragraph{\bf Step 3} We prove (2). As $\al\geq\sum_{i=1}^d\max\left\{\frac{1}{x_i^2},1\right\}$ in $\Ga_{\de_*}$ and $\inf_{\UU}\al>0$, we deduce from {\bf Step 2} (a) the existence of $D_1(\be)>0$ such that $e_{\be,N}\leq D_1(\be)\al$ in $\Ga_{\de_*}\cap B_{R_*}^+$ for all $N\geq 1$.

Since $\inf_{\UU} \al >0$ and
\begin{equation*}
\al=\begin{cases}
\displaystyle\sum_{i=1}^d\max\left\{\frac{1}{x_i^2},1\right\}-p\cdot \nabla U&\quad\text{in}\quad \Ga_{\de_*}\bigcap (\UU\sm B_{R_*}^+),\\
-p\cdot \nabla U&\quad\text{in}\quad  (\UU\sm \Ga_{\de_*})\bigcap (\UU\sm B_{R_*}^+),
\end{cases}
\end{equation*}
we see from {\bf Step 2} (b) and (d) that there exists $D_2(\be)>0$ such that $|e_{\be,N}|\leq D_2(\be)\al$ in $\UU\sm B_{R_*}^+$ for all $N\geq 1$.

Thanks to $\inf_{\UU} \al >0$, it follows from {\bf Step 2} (c) the existence of $D_3(\be)>0$ such that $|e_{\be,N}|\leq D_3(\be)\al$ in $(\UU\sm \Ga_{\de_*})\cap B^+_{R_*}$ for all $N\geq 1$.

Setting $C(\be):=\max\{D_1(\be),D_2(\be),D_3(\be)\}$ yields (2).

\medskip

\paragraph{\bf Step 4} We show (3). Setting $\be_0:=\frac{1}{2C_5}$, $\ep_1:=\min\left\{1,\frac{1}{16C_5}\right\}$ and $\ep_2:=\min\left\{1,\frac{1}{2+8C_5}\right\}$, we deduce from {\bf Step 2} (b) and (d) that
\begin{equation}\label{eqn-3-30-2}
\begin{split}
e_{\be_0,N}&\geq C_3\sum_{i=1}^d\max\left\{\frac{1}{x_i^2},1\right\}-d\left(\frac{C_3}{\de_*^2}+C_6\right)-\left(\be_0-\ep_2(1+\frac{\be_0}{2})-C_5\be^2_0\right)p\cdot\nabla U\\
&\geq C_3\sum_{i=1}^d\max\left\{\frac{1}{x_i^2},1\right\}-\frac{p\cdot\nabla U}{8C_5}-d\left(\frac{C_3}{\de_*^2}+C_6\right)\\
&\geq \min\left\{C_3,\frac{1}{8C_5}\right\} \al-d\left(\frac{C_3}{\de_*^2}+C_6\right)\quad\text{in}\quad \Ga_{\de_*}\bigcap (\UU\sm B_{R_*}^+)
\end{split}
\end{equation}
and
\begin{equation}\label{eqn-3-30-3}
\begin{split}
e_{\be_0,N}&\geq -dC_6-\left(\be_0-\ep_1-\ep_2(1+\frac{\be_0}{2})-C_5\be^2_0\right)p\cdot\nabla U\\
&\geq  -\frac{p\cdot\nabla U}{16C_5}-dC_6\geq \frac{1}{16C_5}\al -dC_6 \quad\text{in}\quad (\UU\sm \Ga_{\de_*})\bigcap (\UU\sm B_{R_*}^+).
\end{split}
\end{equation}

Since $\al\leq \sum_{i=1}^d\max\left\{\frac{1}{x_i^2},1\right\}+\max_{\Ga_{\de_*}\bigcap B_{R_*}^+} |p\cdot \nabla U|$ in $\Ga_{\de_*}\bigcap B_{R_*}^+$
and $\sup_{(\UU\sm \Ga_{\de_*})\bigcap B_{R_*}^+}\al <\infty$,
we conclude from (a) and (c)  the existence of positive constants $C_7$ and $M>d\left(\frac{C_3}{\de_*^2}+C_6\right)$ such that
$$
e_{\be_0,N}+M\geq C_7\al \quad \text{in}\quad B_{R_*}^+,\quad\forall N\geq 1,
$$
which together with \eqref{eqn-3-30-2} and \eqref{eqn-3-30-3} implies that
$$
e_{\be_0,N}+M\geq C_*\al\quad\text{in}\quad\UU,\quad\forall N\geq1,
$$
where $C_*:=\min\{C_3,\frac{1}{16C_5}, C_7\}$. This proves (3), and completes the proof.

%%%%%%%%%%%%%%%%%%%%%%%%%%%%
%%%%%%%%%%%%%%%%%%%%%%%%%%%%

\subsection{Proof of Lemma \ref{lem-4-21-1}}\label{subsec-appendix-2}

Suppose $\tilde{w}\in C(\UU\times[0,\infty))\bigcap L^2([0,\infty),\HH^{1}(\UU))$ is a weak solution of \eqref{eqn-4-16-2-1}. The proof is broken into two steps.

\medskip

\paragraph{\bf Step 1} We show
\begin{equation}\label{eqn-5-22-8}
\frac{1}{2}\int_{\UU}\tilde{w}^2(\cdot, t)\md x+\frac{1}{2}\int_0^t\int_{\UU}|\nabla \tilde{w}|^2\md x\md s+\int_0^t\int_{\UU}e_{\be_0,2} \tilde{w}^2\md x\md s=\frac{1}{2}\int_{\UU}\tilde{f}^2\md x, \quad\forall t\in [0,\infty).
\end{equation}

The idea of proving \eqref{eqn-5-22-8} is based on the classical ``energy method". But, we have to deal with the fact that $\tilde{w}$ lacks the differentiability in $t$. For each $0<h\ll1$, we define
$$
\tilde{w}_h(x,t):=\frac{1}{h}\int_t^{t+h}\tilde{w}(x,s)\md s,\quad(x,t)\in\UU\times[0,\infty).
$$
Obviously, $\tilde{w}_h\in C(\UU\times[0,\infty))\bigcap L^2([0,\infty),\HH^{1}(\UU))$ and $\pa_t\tilde{w}_h\in L^2(\UU\times[0,T])$ for each $T>0$. It is easy to verify that $\tilde{w}_h$ is a weak solution of \eqref{eqn-4-16-2-1} with $\tilde{f}$ replaced by $\tilde{f}_h:=\tilde{w}_h(\cdot, 0)=\frac{1}{h}\int_0^h\tilde{w}_h(\cdot,s)\md s$. Namely, for each $\phi\in C_0^{1,1}(\UU\times[0,\infty))$, one has
\begin{equation}\label{eqn-5-17-1}
\begin{split}
    &\int_{\UU}\tilde{w}_h(\cdot,t)\phi(\cdot,t)\md x-\int_{\UU}\tilde{f}_h\phi(\cdot,0)\md x-\int_{0}^t\int_{\UU}\tilde{w}_h\pa_t \phi\md x\md s\\
    &\qquad=-\frac{1}{2}\int_{0}^{t}\int_{\UU}\nabla\tilde{w}_h\cdot\nabla \phi\md x\md s-\int_0^t\int_{\UU}(p_i+\be_0\pa_i U)\tilde{w}_h\pa_i\phi\md x\md s\\
    &\qquad\quad-\int_0^t\int_{\UU}e_{\be_0}\tilde{w}_h\phi\md x\md s,\quad \forall t\in[0,\infty).
\end{split}
\end{equation}

Let $\{\eta_n\}_{n\in\N}\subset C_0^{\infty}(\UU)$ be a sequence of  functions taking values in $[0,1]$ and satisfying
\begin{equation*}
\eta_n(x)=
\begin{cases}
1,& x\in\left(\UU\sm \Ga_{\frac{2}{n}} \right)\bigcap B^+_{\frac{n}{2}}, \\
0,& x\in \Ga_{\frac{1}{n}}\bigcup\left(\UU\sm B^+_{n} \right),
\end{cases}
\quad
\text{and}
\quad
|\nabla \eta_n(x)|\leq
\begin{cases}
2n,& x\in\Ga_{\frac{2}{n}}\sm \Ga_{\frac{1}{n}},\\
4,& x\in\left(\UU\sm \Ga_{\frac{2}{n}} \right)\bigcap \left( B^+_{n}\sm B^+_{\frac{n}{2}}\right).
\end{cases}
\end{equation*}
By standard approximation arguments, we deduce that \eqref{eqn-5-17-1} holds with $\phi$ replaced by $\eta_n^2\tilde{w}_h$ for each $n\in\N$ and $0<h\ll1$, namely,
\begin{equation*}
\begin{split}
&\int_{\UU}\tilde{w}^2_h(\cdot,t)\eta_n^2\md x-\int_{\UU}\tilde{f}_h\eta_n^2\tilde{w}_h(\cdot,0)\md x-\int_{0}^t\int_{\UU}\tilde{w}_h\pa_t(\eta_n^2\tilde{w}_{h})\md x\md s\\
&\qquad=-\frac{1}{2}\int_0^t\int_{\UU}\nabla \tilde{w}_h\cdot \nabla(\eta_n^2 \tilde{w}_h)\md x\md s-\int_0^t\int_{\UU}(p+\be_0\nabla U)\cdot\tilde{w}_h\nabla(\eta_n^2 \tilde{w}_h)\md x\md s\\
&\qquad\quad-\int_0^t\int_{\UU}e_{\be_0} \eta_n^2\tilde{w}_h^2\md x\md s,\quad \forall t\in[0,\infty).
\end{split}
\end{equation*}
Note that the left hand side of the above equality equals $\frac{1}{2}\int_{\UU}(\eta_n^2\tilde{w}_h^2)(\cdot, t)\md x-\frac{1}{2}\int_{\UU}\eta_n^2\tilde{f}_h^2\md x$. Thus, for each $t\in[0,\infty)$, $n\in\N$ and $0<h\ll1$,
\begin{equation}\label{identity-to-take-limit}
\begin{split}
&\frac{1}{2}\int_{\UU}(\eta_n^2\tilde{w}_h^2)(\cdot, t)\md x-\frac{1}{2}\int_{\UU}\eta_n^2\tilde{f}_h^2\md x\\
&\qquad=-\frac{1}{2}\int_0^t\int_{\UU}\nabla \tilde{w}_h\cdot \nabla(\eta_n^2 \tilde{w}_h)\md x\md s-\int_0^t\int_{\UU}(p_i+\be_0\pa_i U)\tilde{w}_h\pa_i(\eta_n^2 \tilde{w}_h)\md x\md s\\
&\qquad\quad-\int_0^t\int_{\UU}e_{\be_0} \eta_n^2\tilde{w}_h^2\md x\md s.
\end{split}
\end{equation}

We claim that passing to the limit $h\to 0$ in \eqref{identity-to-take-limit} yields that for each $t\in[0,\infty)$ and $n\in\N$,
\begin{equation}\label{eqn-5-22-7}
\begin{split}
&\frac{1}{2}\int_{\UU}(\eta_n^2\tilde{w}^2)(\cdot, t)\md x-\frac{1}{2}\int_{\UU}\eta_n^2\tilde{f}^2\md x\\
&\qquad=-\frac{1}{2}\int_0^t\int_{\UU}\nabla \tilde{w}\cdot \nabla(\eta_n^2 \tilde{w})\md x\md s-\int_0^t\int_{\UU}(p_i+\be_0\pa_i U)\tilde{w}\pa_i(\eta_n^2 \tilde{w})\md x\md s\\
&\qquad\quad-\int_0^t\int_{\UU}e_{\be_0} \eta_n^2\tilde{w}^2\md x\md s.
\end{split}
\end{equation}
Assuming \eqref{eqn-5-22-7}, we conclude \eqref{eqn-5-22-8} from letting $n\to \infty$ in \eqref{eqn-5-22-7} and arguments as in the proof of Lemma \ref{lem-3-25-1} (2).

It remains to justify \eqref{eqn-5-22-7}. Fix $t\in[0,\infty)$ and $n\in\N$. Note for each $0<h\ll1$, there hold
\begin{equation*}
\tilde{w}_h(\cdot,t)-\tilde{w}(\cdot,t)=\frac{1}{h}\int_t^{t+h}[\tilde{w}(\cdot, s)-\tilde{w}(\cdot, t)]\md s=\int_0^1[\tilde{w}(\cdot, t+hs)-\tilde{w}(\cdot, t)]\md s,
\end{equation*}
and
$$
\tilde{f}_h-\tilde{f}=\frac{1}{h}\int_0^h[\tilde{w}(\cdot,s)-\tilde{f}]\md s=\int_0^1[\tilde{w}(\cdot,hs)-\tilde{f}]\md s.
$$
Since $\tilde{w}\in C(\UU\times[0,\infty))$, we find for each compact set $K\subset\UU$,
\begin{equation}\label{eqn-5-22-2}
\sup_{K\times[0,t]}|\tilde{w}_h-\Tilde{w}|\to 0\quad \text{as}\quad h\to 0
\end{equation}
and
$$
\sup_{K}|\tilde{f}_h-\tilde{f}|\to 0\quad\text{as}\quad h\to 0.
$$
It follows that
\begin{equation}\label{eqn-5-22-3}
\lim_{h\to 0}\left[\frac{1}{2}\int_{\UU}(\eta_n^2\tilde{w}_h^2)(\cdot, t)\md x-\frac{1}{2}\int_{\UU}\eta_n^2\tilde{f}_h^2\md x\right]=\frac{1}{2}\int_{\UU}(\eta_n^2\tilde{w}^2)(\cdot, t)\md x-\frac{1}{2}\int_{\UU}\eta_n^2\tilde{f}^2\md x,
\end{equation}
and
\begin{equation}\label{eqn-5-22-4}
\lim_{h\to 0} \int_0^t\int_{\UU}e_{\be_0} \eta_n^2\tilde{w}_h^2\md x\md s=\int_0^t\int_{\UU}e_{\be_0} \eta_n^2\tilde{w}^2\md x\md s.
\end{equation}

Since
\begin{equation*}
\nabla\tilde{w}_h(\cdot,t)-\nabla\tilde{w}(\cdot,t)=\frac{1}{h}\int_t^{t+h}[\nabla\tilde{w}(\cdot, s)-\nabla\tilde{w}(\cdot, t)]\md s=\int_0^1[\nabla\tilde{w}(\cdot, t+hs)-\nabla\tilde{w}(\cdot, t)]\md s,
\end{equation*}
we apply H\"older's inequality and Fubini's theorem to find
\begin{equation}\label{eqn-5-22-11}
\begin{split}
\int_0^t\int_{\UU}|\nabla\tilde{w}_h-\nabla\tilde{w}|^2\md x\md t'&\leq\int_0^t\int_{\UU}\int_0^1|\nabla\tilde{w}(x, t'+hs)-\nabla\tilde{w}(x, t')|^2\md s\md x\md t'\\
&=\int_0^1\int_0^t\int_{\UU}|\nabla\tilde{w}(x, t'+hs)-\nabla\tilde{w}(x, t')|^2\md x\md t'\md s\\
&\leq \sup_{s\in [0,h]}\int_0^t\int_{\UU}|\nabla\tilde{w}(x, t'+s)-\nabla\tilde{w}(x, t')|^2\md x\md t'.
\end{split}
\end{equation}

Since $\nabla\tilde{w}\in L^2(\UU\times[0,2t])$ and $C_0(\UU\times[0,2t])$ is dense in $L^2(\UU\times[0,2t])$, for each $\ep>0$, we could find some $\Phi\in C_0(\UU\times[0,2t])$ such that
$\|\Phi-\nabla \tilde{w}\|_{L^2(\UU\times[0,2t])}<\ep$. Obviously, $\Phi$ is uniformly continuous on $\UU\times[0,2t]$, resulting in
$\sup_{s\in [0,h]}\int_0^t\int_{\UU}|\Phi(x, t'+s)-\Phi(x, t')|^2\md x\md t'\to 0$ as $h\to 0$. Therefore,
\begin{equation*}
    \begin{split}
       & \sup_{s\in [0,h]}\int_0^t\int_{\UU}|\nabla\tilde{w}(x, t'+s)-\nabla\tilde{w}(x, t')|^2\md x\md t'\\
        &\qquad=\sup_{s\in [0,h]}\int_0^t\int_{\UU}|\nabla\tilde{w}(x, t'+s)-\Phi(x,t'+s)|^2\md x\md t'\\
        &\qquad\quad+\sup_{s\in [0,h]}\int_0^t\int_{\UU}|\Phi(x, t'+s)-\Phi(x, t')|^2\md x\md t'\\
        &\qquad\quad+\sup_{s\in [0,h]}\int_0^t\int_{\UU}|\Phi(x, t')-\nabla \tilde{w}(x, t')|^2\md x\md t'\\
        &\qquad\leq 2\|\Phi-\nabla \tilde{w}\|^2_{L^2(\UU\times[0,2t])}+\sup_{s\in [0,h]}\int_0^t\int_{\UU}|\Phi(x, t'+s)-\Phi(x, t')|^2\md x\md t'\\
        &\qquad\leq 2\ep +\sup_{s\in [0,h]}\int_0^t\int_{\UU}|\Phi(x, t'+s)-\Phi(x, t')|^2\md x\md t'.
    \end{split}
\end{equation*}
Letting $h\to0$ in the above estimates, we find from the arbitrariness of $\ep>0$ and \eqref{eqn-5-22-11} that
\begin{equation}\label{eqn-5-22-1}
  \int_0^t\int_{\UU}|\nabla\tilde{w}_h-\nabla\tilde{w}|^2\md x\md s\to 0\quad \text{as}\quad h\to 0.
\end{equation}
Hence, for each $n\in\N$, one has $\lim_{h\to 0}\int_0^t\int_{\UU}\eta_n^2|\nabla\tilde{w}_h|^2\md x\md s=\int_0^t\int_{\UU}\eta_n^2|\nabla\tilde{w}|^2\md x\md s$.

Since
\begin{equation*}
    \begin{split}
        &\int_0^t\int_{\UU}\eta_n\tilde{w}_h\nabla \tilde{w}_h\cdot\nabla\eta_n\md x\md s-\int_0^t\int_{\UU}\eta_n\tilde{w}\nabla \tilde{w}\cdot\nabla\eta_n\md x\md s\\
        &\qquad=\int_0^t\int_{\UU}\eta_n(\tilde{w}_h-\tilde{w})\nabla \tilde{w}_h\cdot\nabla\eta_n\md x\md s+\int_0^t\int_{\UU}\eta_n\tilde{w}(\nabla \tilde{w}_h-\nabla\tilde{w})\cdot\nabla\eta_n\md x\md s,
    \end{split}
\end{equation*}
we apply H\"older's inequality to deduce from \eqref{eqn-5-22-2} and \eqref{eqn-5-22-1} that
$$
\lim_{h\to 0}\int_0^t\int_{\UU}\eta_n\tilde{w}_h\nabla \tilde{w}_h\cdot\nabla\eta_n\md x\md s=\int_0^t\int_{\UU}\eta_n\tilde{w}\nabla \tilde{w}\cdot\nabla\eta_n\md x\md s.
$$
Hence,
\begin{equation}\label{eqn-5-22-5}
    \begin{split}
        &\lim_{h\to 0}\int_0^t\int_{\UU}\nabla \tilde{w}_h\cdot \nabla(\eta_n^2 \tilde{w}_h)\md x\md s\\
        &\qquad=\lim_{h\to 0}\int_0^t\int_{\UU}\eta_n^2|\nabla\tilde{w}_h|^2\md x\md s+2\int_0^t\int_{\UU}\eta_n\tilde{w}_h\nabla \tilde{w}_h\cdot\nabla\eta_n\md x\md s\\
        &\qquad=\int_0^t\int_{\UU}\eta_n^2|\nabla\tilde{w}|^2\md x\md s+2\int_0^t\int_{\UU}\eta_n\tilde{w}\nabla \tilde{w}\cdot\nabla\eta_n\md x\md s\\
        &\qquad=\int_0^t\int_{\UU}\nabla \tilde{w}\cdot \nabla(\eta_n^2 \tilde{w})\md x\md s.
    \end{split}
\end{equation}

Similar arguments yield
\begin{equation}\label{eqn-5-22-6}
    \begin{split}
        \lim_{h\to 0}\int_0^t\int_{\UU}(p+\be_0\nabla U)\cdot\tilde{w}_h\nabla(\eta_n^2 \tilde{w}_h)\md x\md s=\int_0^t\int_{\UU}(p+\be_0\nabla U)\cdot\tilde{w}\nabla(\eta_n^2 \tilde{w})\md x\md s.
    \end{split}
\end{equation}

Consequently, letting $h\to0$ in \eqref{identity-to-take-limit}, we conclude \eqref{eqn-5-22-7} from \eqref{eqn-5-22-3}, \eqref{eqn-5-22-4}, \eqref{eqn-5-22-5} and \eqref{eqn-5-22-6}.

\medskip
\paragraph{\bf Step 2} We show that $\int_{\UU}\tilde{w}^2(\cdot,t)\md x\leq \frac{e^{2Mt}}{M}\int_{\UU}\Tilde{f}^2\md x$ for all $t\in[0,\infty)$. Hence, $\tilde{w}=0$ if $\tilde{f}=0$. This proves the lemma.

As $e_{\be_0,2}+M\geq 0$ by Lemma \ref{lem-3-24-2} (3), we derive from \eqref{eqn-5-22-8} that
\begin{equation}\label{eqn-5-21-1}
\frac{1}{2}\int_{\UU}\tilde{w}^2(\cdot, t)\md x\leq M\int_0^t\int_{\UU}\tilde{w}^2\md x\md s+\int_{\UU}\tilde{f}^2\md x,\quad\forall t\in[0,\infty).
\end{equation}
Setting $g(t)=\int_0^t\int_{\UU}\tilde{w}^2\md x\md s$ for $t\in [0,\infty)$, we arrive at $\frac{1}{2} g'(t)\leq Mg(t)+\int_{\UU}\tilde{f}^2\md x$ for all $t\in [0,\infty)$. The conclusion then follows from Gronwall's inequality.
\end{proof}

%-----------------------------------------------------------------%
\bibliographystyle{amsplain}

\end{document}